\newtheorem{theorem}{Theorem}
\newtheorem{lemma}[theorem]{Lemma}
\newtheorem{proposition}[theorem]{Proposition}
\newtheorem{corollary}[theorem]{Corollary}
{\theorembodyfont{\rmfamily}%
  \newtheorem{example}[theorem]{Example}
   }
\newenvironment{proof}{\noindent\textit{Proof.}}
{\QED\vskip\theorempostskipamount} 
\newenvironment{proofof}[1]{\noindent\textit{Proof
    \protect{#1}.}}
                       {\QED\vskip\theorempostskipamount}
\def\petitcarre{\vrule height4pt width 4pt depth0pt}
\def\QED{\relax\ifmmode\eqno{\hbox{\petitcarre}}\else{%
  \unskip\nobreak\hfil\penalty50\hskip2em\hbox{}\nobreak\hfil
  \petitcarre
  \parfillskip=0pt \finalhyphendemerits=0\par\smallskip}
  \fi}
\newcommand\A{\mathcal{A}}
\def\G{\mathcal{G}}
\newcommand\cL{\mathcal{L}}
\newcommand\RR{\mathcal{R}}
\newcommand{\Orb}{\mathcal O}
\newcommand{\XS}{\mathsf X}
\newcommand{\N}{\mathbb{N}}
\newcommand{\Z}{\mathbb{Z}}
\def\un(#1){\underline{#1}\,}
\newcommand{\edge}[1]{\stackrel{#1}{\rightarrow}}
\newcommand{\longedge}[1]{\stackrel{#1}{\longrightarrow}}
\newcommand{\dpower}[1]{\stackrel{\leftarrow}{#1}}
\newcommand{\odpower}{\overleftarrow}
\DeclareMathOperator{\mex}{mex}
\DeclareMathOperator{\Card}{Card}
\definecolor{ivoire}{rgb}{0.99,0.99,0.8}
\definecolor{light-gray}{gray}{0.7}
\newcommand{\resp}{{resp.}\xspace}
\newcounter{hours}\newcounter{minutes}
\numberwithin{theorem}{section}
\numberwithin{equation}{section}
\numberwithin{figure}{section}
\numberwithin{table}{section}
\definecolor{lime}{HTML}{A6CE39}
\DeclareRobustCommand{\orcidicon}{%
	\begin{tikzpicture}
	\draw[lime, fill=lime] (0,0)
	circle [radius=0.16]
	node[white] {{\fontfamily{qag}\selectfont \tiny ID}};
	\draw[white, fill=white] (-0.0625,0.095)
	circle [radius=0.007];
	\end{tikzpicture}
	\hspace{-2mm}
}
\xdef\csname orcid\x\endcsname{\noexpand%
 \href{https://orcid.org/\csname orcidauthor\x\endcsname}{\noexpand\orcidicon}}
\title{Decidable problems in substitution shifts}
\author{Marie-Pierre B\'eal,\orcidA{}%
  Dominique Perrin \\
   Universit\'e Gustave Eiffel, CNRS, LIGM, F-77454 Marne-la-Vall\'ee, France,\\
  and Antonio Restivo\\
   Universit\`a di Palermo}
\begin{document}

\maketitle

\begin{abstract}
  In this paper, we investigate the structure of the most general kind of substitution shifts, including non-minimal ones, and allowing erasing morphisms. We prove the decidability of many properties of these morphisms with respect to the shift space generated by iteration, such as aperiodicity, recognizability
  and (under an additional assumption) irreducibility, or minimality.
\end{abstract}
\tableofcontents
\section{Introduction}
Substitution shifts are an important class of symbolic dynamical systems
which has been studied extensively. However, most results have
been formulated for primitive substitution
shifts, which are generated by primitive morphisms or, slightly more
generally, for minimal substitution shifts. 

In this paper, we investigate the more general situation,
allowing in particular the morphisms to be erasing.
Our work is a continuation of the investigations
of \cite{Yuasa2007}
(who pleasantly writes in his introduction that
`the complement of primitive morphisms is uncharted territory')
and of \cite{Shimomura2018}. Our work is also related
with early results on the so-called D0L-systems,
initiated by Lindemayer (see~\cite{RozenbergSalomaa1980}).
It has also important connections with the study
of numeration systems and automatic sequences (see~\cite{AlloucheShallit2003}).

We investigate  the structure
of these shift spaces in terms of periodic points, fixed points
or subshifts. A striking feature is that all these properties
(such as, for example, the existence of periodic points)
are found to be decidable.

We show that, if the shift $\XS(\sigma)$ generated by an endomorphism
$\sigma$ is non-empty,
it contains  fixed
points of some power of $\sigma$
(Theorem~\ref{propositionFixedPointMinimal2})
and we describe them. We show that there is a finite and computable
number of orbits of fixed points.
We extend the notion of fixed point to that of quasi-fixed point,
which corresponds to the orbits stable under the action of the morphism.
We generalize to quasi-fixed points the results concerning
fixed points (Proposition \ref{propositionAdmissibleQuasiFP}). These
results play a role in the characterization
of periodic morphisms developed below.

We also investigate periodic points in a substitution shift.
We show that their existence is decidable
(Theorem~\ref{theoremDecidableAperiodic}). This
generalizes a result of Pansiot \cite{Pansiot1986} which implies
the same result for primitive morphisms.
We show that both the property of being aperiodic (no periodic points)
(Theorem~\ref{theoremDecidableAperiodic}), and
the property of being periodic (all
points periodic) (Theorem~\ref{theoremDecidablePeriodic})
are decidable.

Recognizability is an important property of morphisms.
A morphism $\sigma\colon A^*\to B^*$ is recognizable
in a shift space $X$ for a point $y\in B^\Z$ if
there is at most one pair $(k,x)$ with
$0\le k<|\sigma(x_0)|$ and $x\in X$ such that
$y=S^k(\sigma(x))$.
It is shown in \cite{BealPerrinRestivo2021} that
every endomorphism $\sigma$ is recognizable for aperiodic points
on $\XS(\sigma)$ (see Section~\ref{sectionRecognizability}
for the background of this result). We show here
that it is decidable whether $\sigma$ is fully recognizable in $\XS(\sigma)$
(Theorem~\ref{theoremDecidabilityRecognizable}).

Two basic notions concerning shift spaces (and, more generally
topological dynamical systems) are irreducibility and minimality.

Concerning irreducibility, we prove a partial result,
which implies its decidability under an additional hypothesis
(Theorem~\ref{theoremIrreducible}).

We  next prove that, under an additional assumption,
it is decidable whether a substitution shift
is minimal (Theorem~\ref{theoremDecidableMinimal}).
The proof uses a characterization of minimal substitution shifts
(Theorem~\ref{theoremCaractMinimal})
proved in~\cite{DamanikLenz2006}.
We also generalize to arbitrary morphisms several results
known for more restricted classes. In particular, we prove
that every minimal substitution shift is conjugate to
a primitive one (Theorem~\ref{theoremMaloneyRust}),
a result proved in~\cite{MaloneyRust2018} for non-erasing morphisms,
and also a particular case of a result of~\cite{DurandLeroy2020}.

We investigate in the last section the notion of quasi-minimal shifts
which are such that the number of subshifts is finite.
We prove, generalizing a result proved in~\cite{BertheSteinerThuswaldnerYassawi2019}
for non-erasing morphisms, that every substitution shift is quasi-minimal
(Proposition~\ref{lemmaLx}).

Our results show that all properties that we have considered are
decidable for a substitution shift. This raises two questions.
The first one is the possibility of natural undecidable problems
for substitution shifts. Actually, it is not known if the
equality or the conjugacy of two substitution shifts is decidable
(see \cite{DurandLeroy2020} for the proof of the decidability of conjugacy
for minimal substitution shifts).
The second question is whether there is a decidable logical language
in which these properties can be formulated. Such
a logical language exists for automatic sequences (which are
defined using fixed points of constant length morphisms).
It is an extension of the first-order logic of integers with
addition (also known as Presburger arithmetic)
with the function $V_p(x)$ giving the highest  power
of $p$ dividing $x$ (see \cite{BruyereHanselMichauxVillemaire1994}).
This logic is decidable and thus all properties which can
be expressed in this logic are decidable. This
has been applied in the software Walnut (see \cite{mousavi2016automatic}
and \cite{Shallit2021}).

Our paper is organized as follows. In the first section
(Section~\ref{sectionShiftSpaces}), we recall
the basic notions of symbolic dynamics.
In Section~\ref{sectionMorphisms}, we focus on substitution shifts.
We define the $k$-th higher block presentation of a non-erasing morphism
and prove that it defines a shift which is conjugate to the
original one (Proposition~\ref{propositionsigma_k}).
We prove several lemmas that are used in the sequel
(Lemma~\ref{lemmaGrowing} and \ref{lemmaSigma}).
We finally prove that the language of a
substitution shift
is decidable (Proposition~\ref{newPropositionDecidabilityL(X(sigma))}).

In Section~\ref{sectionGrowing}, we prove an important property
of substitution shifts, namely that there is a finite and computable
set of orbits of points formed of non-growing letters
(Proposition~\ref{lemmaNonGrowing}). This
result is used several times below.

In Section~\ref{sectionFixedPoints}, we prove several results
concerning fixed points of an endomorphism $\sigma$. We first describe
the one-sided fixed points in the one-sided shift associated to  $\XS(\sigma)$.
We prove that there is a finite and computable number of orbits
of these points (Proposition~\ref{lemmaOneSidedFixedPoint})
and we give a complete description of these fixed points
(Proposition~\ref{propositionAdmissibleOneSided}).
We next prove the analogous results for two-sided fixed points
(Theorem~\ref{propositionFixedPointMinimal2} and
Proposition~\ref{propositionAdmissibleTwoSided}).
We finally introduce the notion of quasi-fixed point,
which corresponds to the stability of orbits
under the action of the morphism $\sigma$. We characterize the
quasi-fixed points in the shift $\XS(\sigma)$ and
show, using a result from \cite{AlloucheShallit2003}, that there is a finite and computable number of orbits
of these points (Proposition~\ref{propositionQuasiFixedPoints}).

In Section~\ref{sectionPeriodic}, we investigate periodic points
in substitution shifts. We prove the decidability
of the existence of fixed points (Theorem~\ref{theoremDecidableAperiodic})
as well as that of the existence of non-periodic points, using the notion of quasi-fixed point (Theorem~\ref{theoremDecidablePeriodic}). This generalizes a result proved in \cite{Pansiot1986}
for primitive morphisms.

In Section~\ref{sectionErasable}, we prove a normalization result
concerning morphic sequences $x=\phi(\tau^\omega(u))$
image by a morphism $\phi$ of a fixed point of a morphism $\tau$.
We show that one may always replace
the pair $(\tau,\phi)$ by a pair $(\sigma,\theta)$ such that $\sigma$ is non-erasing
and $\theta$ is alphabetic (Theorem~\ref{theoremNormalization}).
This is essentially a result of Cobham \cite{Cobham1968}
(see also \cite{Pansiot1983}, \cite{CassaigneNicolas2003}).

In Section~\ref{sectionRecognizability}, we show that
it is decidable whether a morphism $\sigma$ is fully recognizable
in the shift $\XS(\sigma)$ (Theorem~\ref{theoremDecidabilityRecognizable}).

In Section~\ref{sectionIrreducible}, we characterize the
substitution shifts which are irreducible (Theorem~\ref{theoremIrreducible}),
under the additional assumption that $\cL(\XS(\sigma))=\cL(\sigma)$.

In Section \ref{sectionMinimal}, we show that, under an additional assumption,
it is decidable
whether a substitution shift is minimal (Theorem~\ref{theoremMaloneyRust}).
This uses a characterization of minimal substitution shifts
which had been proved in~\cite{DamanikLenz2006}.
We also prove, generalizing the result obtained in \cite{BertheSteinerThuswaldnerYassawi2019}
for non-erasing morphisms, that every substitution
shift has a finite number of subshifts (Proposition~\ref{lemmaLx}).
\paragraph{Acknowledgments} The authors thank Jeffrey Shallit
and Fabien Durand
for reading a draft of the manuscript and providing additional
references. They are also grateful to Herman Goulet-Ouellet for
reading the following version with great care and making many
useful comments. The referee is also thanked for helping
us to improve the final version.

This work was supported by the Agence Nationale
de la Recherche (ANR-22-CE40-0011).
\section{Shift spaces}\label{sectionShiftSpaces}

We refer to the classical sources, such as \cite{LindMarcus1995}
for the basic definitions concerning symbolic dynamics.
Concerning words, we refer to \cite{Lothaire1983}.

Given a finite alphabet $A$,
we let $A^*$ denote the set of words on $A$.
For a word $w\in A^*$, we let $|w|$ denote its length and,
for $a\in A$, by $|w|_a$ the number of occurrences of $a$ in $w$.
The alphabet $A$ is assumed to be finite.
We let $\varepsilon$ denote the empty word
and $A^+$ the set of nonempty words on $A$.
A nonempty word is \emph{primitive} if it is not a power of a shorter word.
We let $w^*$ denote the submonoid generated by a word $w$.

For a two-sided infinite sequence
$x\in A^\Z$, let $x=\cdots x_{-1} x_0x_1\cdots$ with $x_i\in A$.
For $i<j$, we denote $x_{[i,j]}=x_i\cdots x_{j}$
and $x_{[i,j)}=x_i\cdots x_{j-1}$. Similarly, $x_{(-\infty, j]}=\cdots x_{j-1}x_j$
with all obvious variants.

The set $A^\Z$   is a metric space
for the distance defined for $x,y\in A^\Z$ by $d(x,y)=2^{-r(x,y)}$
where $r(x,y)=\min\{n\ge 1\mid x_{[-n,n]}\ne y_{[-n,n]}\}$.
Since $A$ is finite, the topological space $A^\Z$ is compact.

  We let $S_A$ (or simply $S$) denote the shift on $A^\Z$. It is defined
  by $y=S(x)$ if
  \begin{equation}
    y_n=x_{n+1}\label{eqShift}
  \end{equation}
  for all $n\in \Z$. It is a homeomorphism
  from $A^\Z$ onto itself. We also let $S_A$ denote the shift transformation
  defined on $A^\N$ by \eqref{eqShift} for all $n\ge 0$.
For $x\in X$, we write sometimes $Sx$ instead of $S(x)$.

A \emph{shift space} on $A$ is a subset $X$ of $A^\Z$
which is closed for the topology
and invariant under the shift. An \emph{isomorphism}
(also called a conjugacy)
from  a shift space $X$ on $A$ onto a shift space $Y$
on $B$ is a homeomorphism $\varphi$ from $X$ onto $Y$
such that $\varphi\circ S_A=S_B\circ\varphi$.

For $x\in A^\Z$, we let
$x^+\in A^\N$ denote the right-infinite sequence $x_0x_1\cdots$
and by $x^-\in A^{-\N}$ the left-infinite sequence $\cdots x_{-2}x_{-1}$.
Thus $x^+=(x_n)_{n\ge 0}$ and $x^-=(x_{n-1})_{n\le 0}$.
Conversely, given $y\in A^{-\N}$ and $z\in A^{\N}$, we let
$x=y\cdot z$ denote the two-sided infinite sequence such that
$x^-=y$ and $x^+=z$.

 For a shift space $X$, we denote
 $X^+=\{x^+\mid x\in X\}$. A set $Y\subset A^\N$ \footnote{We use $\subset$
   everywhere instead of $\subseteq$}is called a \emph{one-sided shift space} if $Y=X^+$ for some shift space $X$.
 Equivalently, $Y$ is a one-sided shift space if it is closed
 and such that $S(Y)=Y$.

 Note that $X^+$ determines $X$ since
\begin{equation}
  X=\{x\in A^\Z\mid x_nx_{n+1}\cdots\in X^+\mbox{ for every $n\in \Z$}\}.
\end{equation}

The \emph{orbit} of a point $x$ in a shift space $X$
is the set
\begin{displaymath}
  \Orb(x)=\{S^n(x)\mid n\in\Z\}.
\end{displaymath}
When
$X$ is a one-sided shift space,
the orbit of $x$ is $\{y\in X\mid S^n(x)=S^m(y) \mbox{ for some $m,n\ge 0$}\}$.
 The  \emph{positive orbit} of a point $x$
is the set $\Orb^+(x)=\{S^n(x)\mid n\ge 0\}$.

A sequence $x\in A^\Z$ is \emph{periodic} if there is an $n\ge 1$ such that $S^n(x)=x$. The integer $n$ is called a \emph{period} of $x$. For a nonempty
word $w$, we let $w^\omega$ denote the right-infinite
sequence $ww\cdots$ and by $^\omega w$ the left-infinite
sequence $\cdots ww$. We denote $w^\infty={}^\omega w\cdot w^\omega$.
A periodic
sequence is of the form $w^\infty$ 
for a unique primitive word $w$. Its minimal period is then $|w|$.

\begin{example}
  Let $Y$ be the orbit of $^\omega a\cdot b^\omega$ and let
  $X$ be the topological
  closure of $Y$.
  Then $X=Y\cup \{a^\infty\}\cup\{b^\infty\}$ and
  $X^+=\{a^nb^\omega\mid n\ge 0\}\cup \{a^\omega\}$.
  \end{example}

A shift space is \emph{periodic} if it is formed of periodic sequences.
It is \emph{aperiodic} if it contains no periodic sequence.

We let $\cL(X)$ denote the \emph{language} of a shift space $X$, which is the
set of all factors of the sequences $x\in X$. We let
$\cL_n(X)$ (resp. $\cL_{\ge n}(X)$) denote the set of words of length $n$
(resp. $\ge n$) in $\cL(X)$. We also let $\cL(x)$ denote the set
of factors of a sequence $x$. Thus $\cL(X)=\cup_{x\in X}\cL(x)$.

A language $L$ is \emph{factorial} if it contains the factors of its
elements. A word $u\in L$ is \emph{extendable} in $L$
if there  are letters $a,b$ such that $aub\in L$.
A language $L$ is \emph{extendable} if every word in $L$ is extendable.
For every shift space $X$,
the language $\cL(X)$ is factorial and it is extendable. Conversely,
for every factorial extendable language $L$, there is a unique
shift space $X$ such that $\cL(X)=L$, which is
the set of sequences $x$ such that $\cL(x)\subset L$. In particular,
$\cL(X)$ determines $X$.

A word $w\in\cL(X)$ is called \emph{right-special} if there is
more than one letter $a\in A$ such that $wa\in\cL(X)$.
Symmetrically, $w\in\cL(X)$ is \emph{left-special} if
there is
more than one letter $a\in A$ such that $aw\in\cL(X)$.

A nonempty shift space $X$ is \emph{irreducible} if for every $u,v\in \cL(X)$
there is some $w$ such that $uwv\in\cL(X)$.

The following is classical (see~\cite[Proposition 2.1.3]{DurandPerrin2021}).
\begin{proposition}\label{propositionTopologicalTransitive}
  A shift space $X$ is irreducible if and only if there is some $x\in X$
  with a dense positive orbit.
\end{proposition}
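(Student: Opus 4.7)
The plan is to prove both implications directly from the definitions, leveraging that $\cL(X)$ is a countable, factorial, extendable language.

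For the direction from irreducibility to dense positive orbit, I would build by hand a point of $X$ whose positive orbit visits every cylinder. Enumerate $\cL(X)$ as $u_1, u_2, \ldots$ with each word appearing infinitely often in the list (for instance, via a diagonal enumeration). Starting with $p_1 = u_1$, apply irreducibility inductively to choose a word $t_i \in A^*$ with $p_i t_i u_{i+1} \in \cL(X)$, and set $p_{i+1} = p_i t_i u_{i+1}$; since each $p_i$ is a prefix of $p_{i+1}$, they converge to a right-infinite sequence $p \in A^\N$ whose finite factors all lie in $\cL(X)$. Extending $p$ to the left, which is possible since $\cL(X)$ is extendable, yields some $x \in X$ with $x^+ = p$. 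Every word of $\cL(X)$ appears at arbitrarily large positions in $p$, so $\Orb^+(x)$ meets every cylinder and is dense.

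For the converse, assume $x \in X$ has dense positive orbit and let $u,v \in \cL(X)$. The goal is to exhibit $w$ with $uwv \in \cL(X)$. By density applied to any point beginning with $u$, there is $m \ge 0$ with $x_{[m,m+|u|-1]} = u$. The main step is to find an occurrence of $v$ at some position $p \ge m + |u|$, because then $x_{[m,p+|v|-1]}$ has the form $uwv$ with $w = x_{[m+|u|,\,p-1]}$ and lies in $\cL(X)$. To force a late occurrence of $v$, I would pick $y \in X$ with $y_{[0,|v|-1]} = v$, set $k = m + |u|$, and apply density to the shifted point $z = S^{-k}(y) \in X$, which carries $v$ at position $k$. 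A positive-orbit approximation of $z$ on the window $[0,\,k+|v|-1]$ yields $n \ge 0$ with $x_{[n+k,\,n+k+|v|-1]} = v$ and $n+k \ge m+|u|$.

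The main obstacle is this last step of the converse: dense positive orbit by itself only asserts that each factor of $\cL(X)$ occurs at \emph{some} position in $x^+$, not that it occurs arbitrarily far to the right. The trick of approximating the shifted point $S^{-k}(y)$ rather than $y$ itself converts density into an occurrence of $v$ with any prescribed lower bound on its position, which is exactly what is needed to concatenate it after the chosen copy of $u$.
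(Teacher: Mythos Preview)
Your proof is correct. The paper does not give its own argument for this proposition; it simply cites \cite[Proposition 2.1.3]{DurandPerrin2021}, so there is nothing to compare against beyond noting that your approach is the standard one.

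One small remark on presentation: in the forward direction, the step ``extending $p$ to the left, which is possible since $\cL(X)$ is extendable'' is really a compactness argument rather than a direct use of extendability. What you need is that the right-infinite sequence $p$, all of whose factors lie in $\cL(X)$, is the right half of some $x\in X$; this follows by choosing, for each $n$, a point $x^{(n)}\in X$ with $x^{(n)}_{[0,n]}=p_{[0,n]}$ and passing to a convergent subsequence. Extendability alone only guarantees one-step left extensions of finite words. This is routine and does not affect the validity of your proof.
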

A nonempty shift space is \emph{minimal} if it does not contain properly
a nonempty shift space. A shift space is minimal if and only
if the orbit of every point is dense.

A minimal shift is either aperiodic or periodic (and in this case formed
of one periodic orbit). It is aperiodic if and only if the set
of right-special (\resp left-special) words is infinite.

A nonempty shift space is \emph{uniformly recurrent}
if for every $w\in \cL(X)$, there is an $n\ge 1$ such that
every word in $\cL_n(X)$ contains $w$.
The following result is classical (see \cite[Proposition 5.2]{Queffelec2010}).
\begin{proposition}\label{UR=Minimal}
A shift space is minimal if and only if it is uniformly recurrent.
\end{proposition}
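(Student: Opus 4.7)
The plan is to prove both implications by standard arguments, exploiting compactness of $X$ in one direction and density of orbits in the other.

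For the forward direction, assume $X$ is minimal and fix $w\in\cL(X)$. I would consider the cylinder $U_w=\{x\in X\mid x_{[0,|w|)}=w\}$, which is open in $X$. It is nonempty: since $w$ is a factor of some point $y\in X$, say $y_{[j,j+|w|)}=w$, the point $S^j(y)$ lies in $U_w$. Minimality says that every orbit is dense in $X$, so for each $x\in X$ there is some $n\in\Z$ with $S^n(x)\in U_w$, hence $X=\bigcup_{n\in\Z}S^{-n}(U_w)$. Compactness of $X$ extracts a finite subcover indexed by $F\subset\Z$; setting $N=\max_{n\in F}|n|$, every $x\in X$ contains $w$ as a factor within positions $[-N,N+|w|-1]$. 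A short bookkeeping then shows that every word of length $2N+|w|$ in $\cL(X)$ contains $w$, which is exactly uniform recurrence.

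For the converse, assume $X$ is uniformly recurrent and let $Y$ be a nonempty closed shift-invariant subset of $X$; it suffices to show $Y=X$. Pick any $y\in Y$ and any $x\in X$, and consider a basic neighborhood of $x$ of the form $\{z\in X\mid z_{[-k,k]}=x_{[-k,k]}\}$. Setting $w=x_{[-k,k]}\in\cL(X)$, uniform recurrence furnishes an integer $n$ such that every word of $\cL_n(X)$ contains $w$; applied to any factor of $y$ of length $n$, this forces $w$ to appear in $y$, say at position $j$, so that $S^{j+k}(y)$ lies in the chosen neighborhood of $x$. Hence $x\in\overline{\Orb(y)}\subset Y$, giving $Y=X$, which proves minimality.

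The only delicate point is the compactness step in the forward direction: one must cover $X$ by the open cylinders $S^{-n}(U_w)$ (rather than closed approximations) so that the resulting finite subcover genuinely yields a uniform window in which $w$ must occur. The reverse direction is essentially neighborhood-chasing once the basic open sets of the metric topology on $X$ are identified, and uses nothing about $X$ beyond the definitions.
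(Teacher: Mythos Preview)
Your proof is correct and is the standard compactness argument for this classical result. Note, however, that the paper does not actually give a proof of this proposition: it simply states it as classical and refers to \cite[Proposition 5.2]{Queffelec2010}, so there is no in-paper argument to compare against. Your write-up is exactly the textbook proof one would find behind that citation.
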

Every minimal shift $X$ is irreducible. Indeed, if $X$
is minimal, let $u,v\in\cL(X)$. There is an $n\ge 1$
such that every word in $\cL_n(X)$ contains $u$ and $v$.
Then every word in $\cL_{2n}(X)$ contains a word of the form $uwv$.
The converse is not true since, for example, the full shift
is irreducible but not minimal.

Given a shift space $X$ and $w\in\cL(X)$, a \emph{return word}
to $w$ is a nonempty word $u$ such that $wu$ is in $\cL(X)$ and has exactly
two occurrences of $w$, one as a prefix and the other one as a suffix.
An irreducible shift space is minimal if and only if the set of return words
to $w$ is  finite for every $w\in\cL(X)$.
Indeed, if this condition is satisfied, the shift $X$ is clearly
uniformly recurrent.

Let $X$ be a shift on $A$ and let $k\ge 1$ be an integer.
Consider an alphabet $A_k$ in one-to-one correspondence with
$\cL_k(X)$ via a bijection $f_k:\cL_k(X)\to A_k$.
The map $\gamma_k:X\rightarrow A_k^\Z$ defined for $x\in X$ by $y=\gamma_k(x)$
if for every $n\in\Z$
\begin{displaymath}
y_n=f_k(x_n\cdots x_{n+k-1})
\end{displaymath}
is the $k$-th \emph{higher block code}
 on $X$ (see Figure~\ref{figureSlidingBlock}).
The set $X^{(k)}=\gamma_k(X)$
 is a shift space on $A_k$, called the $k$-th
\emph{higher block presentation}
of $X$ (one also uses the term of coding by \emph{overlapping blocks} of length $k$).
\begin{figure}[hbt]
\centering

\tikzset{node/.style={draw,minimum size=0.4cm,inner sep=0pt}}
	\tikzset{title/.style={minimum size=0.5cm,inner sep=0pt}}
\begin{tikzpicture}
  \node[node](k) at (-0.2,1){$\quad x_n\quad $};
  \node[node](k+1) at (1,1){$\quad x_{n+1}\ \ $};
  \node[node](...) at (2,1){$\ \cdots\ $};
  \node[node](n+k-1) at (3,1){$\ x_{n+k-1}\ $};
  \node[node](y) at (-0.2,0){$\quad y_n\quad$};
  \draw[ ->,>=stealth] (k) edge node {} (y);
  \node[title] at (0,0.5){$f_k$};
\end{tikzpicture}
\caption{The $k$-th higher block code.}\label{figureSlidingBlock}
\end{figure}
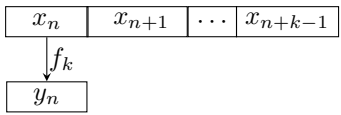

The following result follows easily from the definitions.
\begin{proposition}\label{propositionPi_k}
The higher block code is an isomorphism of shift spaces and the inverse
of $\gamma_k$ is
is the map $y \mapsto x$ such that, for all $n$, $x_n$ is the first letter of the
word $u$ such that $y_n = f_k(u)$.
\end{proposition}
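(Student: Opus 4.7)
The plan is to verify the three defining properties of an isomorphism of shift spaces (bijectivity, bicontinuity, commutation with the shift) while simultaneously exhibiting the claimed inverse. Define $\psi\colon X^{(k)}\to A^\Z$ by setting $\psi(y)_n$ to be the first letter of $f_k^{-1}(y_n)$, and the whole proof reduces to showing that $\psi$ is a well-defined continuous two-sided inverse of $\gamma_k$ intertwining the shifts.

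First I would check well-definedness and shift-equivariance of $\gamma_k$ directly from the formula: since $x_n\cdots x_{n+k-1}\in\cL_k(X)$, the image lies in $A_k^\Z$; and a one-line index shift shows that $(\gamma_k(S_A x))_n=f_k(x_{n+1}\cdots x_{n+k})=(\gamma_k(x))_{n+1}=(S_{A_k}\gamma_k(x))_n$. Next, for $y=\gamma_k(x)$, by construction $f_k^{-1}(y_n)=x_n\cdots x_{n+k-1}$, whose first letter is $x_n$; hence $\psi(\gamma_k(x))=x$. In particular $\gamma_k$ is injective, and since $X^{(k)}=\gamma_k(X)$ by definition, the restriction $\gamma_k\colon X\to X^{(k)}$ is a bijection with inverse $\psi$.

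The topological points are routine: $\gamma_k$ is continuous because $\gamma_k(x)_n$ only depends on the finite window $x_n,\ldots,x_{n+k-1}$, and $\psi$ is continuous because $\psi(y)_n$ only depends on the single coordinate $y_n$. Since $X$ is compact and $\gamma_k$ is continuous, $X^{(k)}$ is compact (hence closed in $A_k^\Z$) and shift-invariant by the equivariance already noted, so $X^{(k)}$ is indeed a shift space on $A_k$. Combining everything, $\gamma_k\colon X\to X^{(k)}$ is a bicontinuous bijection commuting with the shifts, i.e.\ an isomorphism, and its inverse is $\psi$ as described.

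There is no real obstacle here; the only subtlety worth flagging is the internal consistency of $\psi$ on $X^{(k)}$, namely that for any $y\in X^{(k)}$ the letters read off by $\psi$ from successive coordinates $y_n$ and $y_{n+1}$ agree on the overlap. This is automatic on the image $\gamma_k(X)$ because the overlapping blocks $x_n\cdots x_{n+k-1}$ and $x_{n+1}\cdots x_{n+k}$ share their common letters by construction, so no extra verification is required once we restrict $\psi$ to $X^{(k)}$.
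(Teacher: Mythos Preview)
Your proof is correct and matches the paper's approach: the paper simply states that the result ``follows easily from the definitions'' without writing out any details, and your proposal is precisely the routine verification that this sentence is gesturing at. There is nothing to add or correct.
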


\begin{example}
  Let $X=\{a,b\}^\Z$. Set $A_2=\{r,s,t,u\}$ with $f_2\colon r\mapsto aa, s\mapsto ab, t\mapsto ba, u\mapsto bb$. Then $X^{(2)}$ is the set of labels of
    bi-infinite paths in the graph of Figure~\ref{figureX2}.
    \begin{figure}[hbt]
      \centering
\tikzset{node/.style={circle,draw,minimum size=0.4cm,inner sep=0pt}}
\tikzset{title/.style={minimum size=0.5cm,inner sep=0pt}}
\tikzstyle{every loop}=[->,shorten >=1pt,looseness=8]
\tikzstyle{loop left}=[in=220,out=140,loop]
\tikzstyle{loop right}=[in=-40,out=40,loop]
        \begin{tikzpicture}
          \node[node](1)at(0,0){$1$};
          \node[node](2)at(2,0){$2$};
          \draw[->,left,loop left](1)edge node{$r$}(1);
          \draw[->,above,bend left](1)edge node{$s$}(2);
          \draw[->,below,bend left](2)edge node{$t$}(1);
          \draw[->,right,loop right](2)edge node{$u$}(2);
          \end{tikzpicture}
        \caption{The 2nd-higher block presentation of $\{a,b\}^\Z$.}\label{figureX2}
        \end{figure}
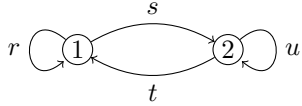
  \end{example}

\section{Morphisms}\label{sectionMorphisms}
A \emph{morphism} $\sigma\colon A^*\to B^*$ is a monoid morphism
from the free monoid $A^*$ to the free monoid $B^*$.
We allow  a letter $a\in A$ to be erased, that
is, to have $\sigma(a)=\varepsilon$.
We say that $\sigma$ is \emph{non-erasing} if $\sigma(a)\ne\varepsilon$
or all $a\in A$.

For $x=x_0x_1\cdots\in A^\N$, set
$\sigma(x)=\sigma(x_0)\sigma(x_1)\cdots$ if the right hand side
is infinite. Since
an infinity of the $x_i$ can possibly be erased, it
is a finite or right-infinite
sequence. Similarly, if $x=\cdots x_{-1}x_0\in A^{-\N}$,
$\sigma(x)=\cdots \sigma(x_{-1})\sigma(x_0)$ is finite or left-infinite. Next, for $x\in A^\Z$,
$\sigma(x)=\sigma(x^-)\cdot \sigma(x^+)$ is either finite,
left-infinite, right-infinite, or two-sided infinite.

Thus, a  morphism $\sigma\colon A^*\to B^*$
extends
to a map from $A^\N$ to $B^*\cup B^\N$. If $\sigma(a_n)=\varepsilon$
for $n\ge k$, then $\sigma(a_0a_1\cdots)=\sigma(a_0\cdots a_{k-1})$.
Otherwise $\sigma(a_0a_1\cdots)$ is the right-infinite sequence
$\sigma(a_0)\sigma(a_1)\cdots$.
Similarly, it extends also to a map from $A^\Z$ to
$B^*\cup B^{-\N}\cup B^\N\cup B^\Z$. For $x\in A^\Z$,
if $\sigma(x)$ is in $B^\Z$
(in particular if $\sigma$ is non-erasing),
  one has $\sigma(x)=\sigma(x^-)\cdot\sigma(x^+)$.

If $\sigma\colon A^*\to B^*$ is a morphism, we denote $|\sigma| = \sum_{a \in A} |\sigma(a)|$
and  define the \emph{size} of $\sigma$
as $|\sigma| + \Card(A)$.

Let $\sigma\colon A^*\to B^*$ be a morphism. A \emph{$\sigma$-representation}
of $y\in B^\Z$ is a pair $(x,k)$ of a sequence $x\in A^\Z$
and an integer $k$  such that
\begin{equation}
  y=S^k(\sigma(x)).\label{eqsigmaRep}
\end{equation}
The $\sigma$-representation $(x,k)$ is
\emph{centered}
if $0\le k<|\sigma(x_0)|$.

Note that, in particular, a centered $\sigma$-representation $(x,k)$ is
such that $\sigma(x_0)\ne\varepsilon$. 

The notion of centered representation is a normalization. Indeed, if $y=S^k(\sigma(x))$,
there is a unique centered $\sigma$-representation $(x',k')$ of $y$
such that $x'$ is a shift of $x$, namely $(S^nx,k)$
with $k=|\sigma(x_0\cdots x_{n-1})|+k'$.

\paragraph{Endomorphisms}
We now define notions specific to an \emph{endomorphism}
of $A^*$, that is, a morphism $\sigma$ from $A^*$ into itself.
In this case, one may iterate $\sigma$.

Given an endomorphism $\sigma\colon A^*\to A^*$, we let
$\cL(\sigma)$ denote the \emph{language} of $\sigma$.
It is formed of all factors of the words $\sigma^n(a)$
for $n\ge 0$ and $a\in A$. We let $\cL_n(\sigma)$
(resp. $\cL_{\ge n}(\sigma)$) denote the set
of words of length $n$ (resp. $\ge n$)
in $\cL(\sigma)$.

The shift $\XS(\sigma)$ \emph{associated}
to $\sigma$ is the set $\XS(\sigma)$ of two-sided infinite sequences $x\in A^\Z$
with all their factors in $\cL(\sigma)$. Such a shift space
is called a \emph{substitution shift}.

\begin{example}
  The morphism $\sigma\colon a\mapsto ab, b\mapsto a$ is
  called the \emph{Fibonacci morphism}. The associated shift
  $\XS(\sigma)$ is called the \emph{Fibonacci shift}. 
  \end{example}

We can also associate to $\sigma$ the set $\XS^+(\sigma)$
of right-infinite sequences having all their factors in $\cL(\sigma)$.

Note that it is not true in general that $\XS^+(\sigma)=\XS(\sigma)^+$,
because $X^+(\sigma)$ is not a one-sided shift,
as shown in the following example.
\begin{example}
  Let $\sigma\colon a\mapsto ab,b\mapsto b$. Then
  $\XS^+(\sigma)=ab^\omega\cup b^\omega$
  but, since $\XS(\sigma)=b^\infty$, we have $\XS(\sigma)^+=b^\omega$.
  \end{example}
We have the inclusion $\cL(\XS(\sigma))\subset \cL(\sigma)$
but the converse is not always true. This (unpleasant) phenomenon
occurs, for example, when $\sigma$ is the identity, since
then $\cL(\sigma)=A$ but $\XS(\sigma)$
(and thus $\cL(\XS(\sigma))$) is empty.

A letter $a\in A$ is \emph{erasable} if there is some $n\ge 1$
such that $\sigma^n(a)=\varepsilon$ (the term \emph{mortal}
is also used, see~\cite{Vitanyi1974} or \cite{AlloucheShallit2003}). A word is erasable if
all its letters are erasable. 

The \emph{mortality exponent}
of an erasable word $w$, denoted $\mex(w)$
 is the least integer $n$ such that $\sigma^n(w)=\varepsilon$. Note
 that $\mex(w)\le\Card(A)$, that is
 $\sigma^{\Card(A)}(w)=\varepsilon$.
 The \emph{mortality exponent}
  of $\sigma$,
  denoted $\mex(\sigma)$,
  is the maximal value of the mortality exponents of erasable letters.
  
A morphism is non-erasing
if no letter is erasable.

A word $w\in A^*$ is \emph{growing} if the sequence $|\sigma^n(w)|$ is unbounded.
A word is growing if some of its letters is growing. The
morphism $\sigma$ itself is said to be \emph{growing} if all letters
are growing.

An endomorphism $\sigma\colon A^*\to A^*$ is \emph{primitive} if
 there is an $n\ge 1$ such that for every $a,b\in A$, the
 letter $b$ appears in $\sigma^n(a)$.  

 For a primitive morphism $\sigma$, except the trivial case $A=\{a\}$
 and $\sigma(a)=a$, every letter is growing
 and $\cL(\sigma)=\cL(\XS(\sigma))$ (see~\cite{DurandPerrin2021} for example).

 An endomorphism $\sigma\colon A^*\rightarrow A^*$
 is \emph{minimal} if $\XS(\sigma)$ is a minimal shift space.
 The following
 statement is well known.
 \begin{proposition}\label{propositionPrimitiveisMinimal}
   Every primitive endomorphism not reduced to the identity
   on a one-letter alphabet is minimal.
 \end{proposition}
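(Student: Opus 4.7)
The plan is to prove $\XS(\sigma)$ is uniformly recurrent and then invoke Proposition~\ref{UR=Minimal}. I first dispose of the unary case: if $\Card(A)=1$, say $A=\{a\}$, then $\sigma(a)=a^k$ with $k\ge 2$ (the case $k=1$ being excluded by hypothesis), so $\XS(\sigma)=\{a^\infty\}$ is a single periodic orbit, hence minimal.

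In the remaining case I use the two facts recalled just before the statement: every letter is growing, and $\cL(\XS(\sigma))=\cL(\sigma)$. Fix $w\in\cL(\XS(\sigma))=\cL(\sigma)$; then $w$ is a factor of $\sigma^k(a)$ for some $a\in A$ and some $k\ge 0$. Let $n\ge 1$ be a primitivity constant, so that $a\in\Fact(\sigma^n(b))$ for every $b\in A$. Applying $\sigma^k$ gives that $\sigma^k(a)$ is a factor of $\sigma^{n+k}(b)$, so $w\in\Fact(\sigma^{n+k}(b))$ for every $b\in A$. Set $M=\max_{b\in A}|\sigma^{n+k}(b)|$.

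I then claim that every $u\in\cL(\XS(\sigma))$ with $|u|>2M$ contains $w$, which yields uniform recurrence. Such a $u$ is a factor of some $\sigma^j(c)$; by primitivity, for every $m\ge 1$ and every $c'\in A$ one has $c\in\Fact(\sigma^{mn}(c'))$ (iteratively using the case $m=1$), whence $\sigma^j(c)$ is a factor of $\sigma^{j+mn}(c')$, so we may replace $j$ by $j+mn$ for $m$ large enough and assume $j\ge n+k$. Writing $\sigma^{j-n-k}(c)=d_1\cdots d_\ell$, one has
\[
\sigma^j(c)=\sigma^{n+k}(d_1)\sigma^{n+k}(d_2)\cdots\sigma^{n+k}(d_\ell),
\]
a concatenation of blocks each of length at most $M$. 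Any factor of length strictly greater than $2M$ of such a concatenation must fully contain at least one interior block $\sigma^{n+k}(d_i)$, and therefore contains $w$. Hence $\XS(\sigma)$ is uniformly recurrent, and by Proposition~\ref{UR=Minimal} it is minimal.

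The main obstacle is only bookkeeping: ensuring that the "replace $j$ by a larger iterate" maneuver is legitimate (which is straightforward once primitivity is iterated), and using $\cL(\XS(\sigma))=\cL(\sigma)$ as a black-box fact from the preceding paragraph rather than reproving it. No growth-rate estimates beyond primitivity enter; the combinatorial heart is the standard observation that a window of length greater than $2M$ in a concatenation of blocks of size at most $M$ must entirely cover at least one block.
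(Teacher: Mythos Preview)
Your proof is correct. The paper does not actually prove this proposition; it merely refers the reader to \cite[Proposition~5.5]{Queffelec2010} and \cite[Proposition~2.4.16]{DurandPerrin2021}. The argument you give --- pushing $w$ into every $\sigma^{n+k}(b)$ by primitivity, then noting that any factor of length greater than $2M$ in the block decomposition $\sigma^{j}(c)=\sigma^{n+k}(d_1)\cdots\sigma^{n+k}(d_\ell)$ must swallow an entire block and hence contain $w$ --- is exactly the standard proof found in those references, so you have effectively reconstructed the cited argument.
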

 For a proof, see~\cite[Proposition 5.5]{Queffelec2010}
 or \cite[Proposition 2.4.16]{DurandPerrin2021}.

An endomorphism $\sigma$ is
\emph{aperiodic} (resp. \emph{periodic}) if the shift $\XS(\sigma)$ is aperiodic
(resp. periodic).

\begin{example}\label{exampleFibonacci}
  The Fibonacci morphism $\sigma\colon a\mapsto ab, b\mapsto a$
  is primitive. It is also aperiodic. Indeed, it is easy to verify
  by induction on $n$ that $\sigma^n(a)$ is left-special and thus
  that the number of left-special words is infinite.
  Since $X(\sigma)$ is minimal, this implies that it is aperiodic.
\end{example}
\begin{example}\label{exampleMorse}
  The \emph{Thue-Morse} morphism $\sigma\colon a\mapsto ab,b\mapsto ba$
  is also primitive and aperiodic (see \cite{DurandPerrin2021} for example).
  The shift $\XS(\sigma)$ is called the \emph{Thue-Morse shift}.
  It is well known that the language $\cL(\sigma)$ does not contain
  cubes, that is words of the form $www$, with $w$ nonempty.
  \end{example}
  \begin{example}\label{example(a,bab)}
    The morphism $\sigma\colon a\mapsto a,b\mapsto bab$ is not primitive.
    It is periodic and minimal
    because $\sigma(ab)=abab$ and thus $\XS(\sigma)=\{(ab)^\infty,(ba)^\infty\}$.
    \end{example}

  We now present a classical construction (see~\cite{Queffelec2010})
  which, for every $k\ge 1$  allows
one to replace a  non-erasing morphism
$\sigma$ by a morphism $\sigma_k$ acting on $k$-blocks.

Let $\sigma\colon A^*\to A^*$ be a non-erasing endomorphism.
For every integer $k\ge 1$, let $ u\in \cL_k(\sigma)\mapsto
\langle u \rangle \in A_k$
be a bijection from $\cL_k(\sigma)$ onto an alphabet $A_k$.


We define an endomorphism
$\sigma_k:A_k^*\rightarrow A_k^*$ as follows. Let
$u\in \cL_k(\sigma)$ and let $a$ be the first letter of $u$.
Set $s=|\sigma(a)|$.
To compute $\sigma_k(\langle u\rangle)$, we first compute
the word $\sigma(u)=b_1b_2\cdots b_\ell$.
Note that, since $\sigma$ is non-erasing,
we have $\ell\ge |\sigma(a)|+k-1=s+k-1$.
 We define
\begin{equation}
\sigma_k(\langle u \rangle )=\langle b_1b_2\cdots b_k\rangle\langle b_2b_3\cdots b_{k+1}\rangle\cdots
\langle b_s\cdots b_{s+k-1}\rangle.\label{equationsigma_k}
\end{equation}
The morphism $\sigma_k$
is called the \emph{$k$-th higher block presentation of $\sigma$}.

Let $\pi_k:A_k^*\rightarrow A^*$ be the morphism defined by
$\pi_k(\langle u\rangle)=a$ where
$a$ is  the first letter of $u$. 
Then we have for each $n\ge 1$ the following commutative diagram
which expresses the fact that $\sigma_k^n$ is the
counterpart of $\sigma^n$ for $k$-blocks. 

\begin{equation}
\begin{CD}
A_k^* @>{\sigma_k^n}>> A_k^*\\
@VV{\pi_k}V               @VV{\pi_k}V\\
A^* @>{\sigma^n}>> A^*
\end{CD}\label{DiagramExtension}
\end{equation}

Indeed, for every $\langle u \rangle\in A_k$, let $a=\pi_k(\langle u \rangle)$ and let
$s=|\sigma(a)|$. Set $\sigma(u)=b_1b_2\cdots b_\ell$.
We have by definition of $\sigma_k$,
\begin{displaymath}
  \pi_k\circ\sigma_k(\langle u \rangle)=b_1b_2\cdots b_s=\sigma(a)=\sigma\circ\pi_k(\langle u \rangle).
  \end{displaymath}
Since $\pi_k\circ\sigma_k$ and $\sigma\circ\pi_k$ are
morphisms, this proves that the diagram \eqref{DiagramExtension}
is commutative for $n=1$ and thus for all $n\ge 1$.

\begin{proposition}\label{propositionsigma_k}
  Let $\sigma$ be a non-erasing endomorphism and $k$ a positive integer.
  Then $\XS(\sigma_k)=\XS(\sigma)^{(k)}$.
\end{proposition}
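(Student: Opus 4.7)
The plan is to prove both inclusions of the claimed equality, guided by an explicit iteration formula for $\sigma_k$.

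First I would establish by induction on $n\ge 0$ that for every $u\in\cL_k(\sigma)$,
\[
\sigma_k^n(\langle u\rangle)=\langle c_1c_2\cdots c_k\rangle\langle c_2c_3\cdots c_{k+1}\rangle\cdots\langle c_tc_{t+1}\cdots c_{t+k-1}\rangle,
\]
where $\sigma^n(u)=c_1c_2\cdots c_m$ and $t=|\sigma^n(u_1)|$ with $u_1$ the first letter of $u$. The case $n=0$ is immediate. For the inductive step, apply the defining formula \eqref{equationsigma_k} of $\sigma_k$ to each letter of $\sigma_k^{n-1}(\langle u\rangle)$; the concatenation of the sliding $k$-blocks so produced assembles into the sliding $k$-blocks of $\sigma(\sigma^{n-1}(u))=\sigma^n(u)$ at positions $1,\ldots,t$. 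This is essentially the $k$-block upgrade of diagram \eqref{DiagramExtension}, and the non-erasing hypothesis guarantees $|\sigma^n(u)|\ge t+k-1$, so the formula is well posed.

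A direct consequence of the formula is that in any word of $\cL(\sigma_k)$ two consecutive letters $\langle u\rangle\langle v\rangle$ satisfy the overlap condition $u[2..k]=v[1..k-1]$. For the inclusion $\XS(\sigma_k)\subseteq\XS(\sigma)^{(k)}$, given $y\in\XS(\sigma_k)$ I would define $x\in A^\Z$ by $x_n=\pi_k(y_n)$; the overlap condition forces $y=\gamma_k(x)$. To verify $x\in\XS(\sigma)$, any factor of $x$ of length at least $k$ is the $\pi_k$-decoding of a factor $y_i\cdots y_j$ of $y$, which lies in $\cL(\sigma_k)$ and hence is a factor of some $\sigma_k^n(\langle u\rangle)$; by the formula, its decoding is a factor of $\sigma^n(u)\in\cL(\sigma)$. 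Thus $y\in\gamma_k(\XS(\sigma))=\XS(\sigma)^{(k)}$.

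Conversely, for $\XS(\sigma)^{(k)}\subseteq\XS(\sigma_k)$, take $x\in\XS(\sigma)$ and $y=\gamma_k(x)$. Every factor of $y$ is the $k$-block encoding of a factor $w$ of $x$ with $|w|\ge k$, and $w\in\cL(\XS(\sigma))\subseteq\cL(\sigma)$ lies inside some $\sigma^n(a)$ with $|\sigma^n(a)|\ge k$. Choosing $v\in\cL_k(\sigma)$ with first letter $a$, the formula identifies $\sigma_k^n(\langle v\rangle)$ as the $k$-encoding of a prefix of $\sigma^n(v)$ of length $|\sigma^n(a)|+k-1$, which contains $\sigma^n(a)$ entirely, so the $k$-encoding of $w$ appears as a factor of $\sigma_k^n(\langle v\rangle)\in\cL(\sigma_k)$.

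The delicate step is the existence of such a $v\in\cL_k(\sigma)$ with $v_1=a$, i.e.\ the extendability of $a$ to a length-$k$ word in $\cL(\sigma)$. Since $|w|\ge k$ the first letter of $w$ trivially extends (via $w[1..k]$), but $a$ itself may not. I would handle this by replacing the representation $w\subset\sigma^n(a)$ with one of the form $w\subset\sigma^{n+m}(b)$ for a letter $b$ in whose $\sigma^m$-descendants $a$ occurs and which is itself extendable to length $k$, and then applying the formula with a witness $v$ starting with $b$ so that the $k$-encoding of $w$ falls within the prefix of $\sigma^{n+m}(v)$ covered by $\sigma_k^{n+m}(\langle v\rangle)$.
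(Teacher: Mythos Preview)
Your approach is considerably more explicit than the paper's two-line argument, and you correctly isolate the crux: the reverse inclusion $\XS(\sigma)^{(k)}\subseteq\XS(\sigma_k)$ requires, for each $w\in\cL_{\ge k}(\XS(\sigma))$, a witness $v\in\cL_k(\sigma)$ whose first letter generates $w$ under iteration. Your forward inclusion and your iteration formula are fine.

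Your proposed fix for the delicate step---replacing $a$ by an ancestor $b$ with $a$ occurring in $\sigma^m(b)$ and $b$ right-extendable---fails in general, because $a$ may have no proper ancestor at all. Take $\sigma\colon a\mapsto bcb,\ b\mapsto bb,\ c\mapsto c$ and $k=2$. The letter $a$ occurs in no $\sigma^m(e)$ with $m\ge 1$, and $a$ starts no word of $\cL_2(\sigma)=\{bb,bc,cb\}$. Yet $w=bcb$ lies in $\cL(\XS(\sigma))$ (since ${}^\omega b\, c\, b^\omega\in\XS(\sigma)$) and $w$ is a factor of $\sigma^n(e)$ only for $e=a$; your fix produces nothing.

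In fact the obstruction is fatal to the statement itself. One computes $\sigma_2(\langle bb\rangle)=\langle bb\rangle^2$, $\sigma_2(\langle bc\rangle)=\langle bb\rangle\langle bc\rangle$, $\sigma_2(\langle cb\rangle)=\langle cb\rangle$, so $\cL(\sigma_2)$ consists only of the words $\langle bb\rangle^i$, $\langle bb\rangle^i\langle bc\rangle$ and the single letter $\langle cb\rangle$. In particular $\langle bc\rangle\langle cb\rangle\notin\cL(\sigma_2)$, whereas this word is a factor of $\gamma_2({}^\omega b\, c\, b^\omega)\in\XS(\sigma)^{(2)}$. Hence $\XS(\sigma)^{(2)}\not\subseteq\XS(\sigma_2)$ and the proposition is false as stated. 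The paper's short proof is equally affected: its claim that $\pi_k$ maps $\XS(\sigma_k)$ \emph{onto} $\XS(\sigma)$ is the unjustified step, and here $\pi_2(\XS(\sigma_2))=\{b^\infty\}\subsetneq\XS(\sigma)$.

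The statement becomes correct under the extra hypothesis that every letter of $A$ is right-extendable to length $k$ in $\cL(\sigma)$---for instance whenever $\cL(\sigma)=\cL(\XS(\sigma))$, hence for primitive $\sigma$---and under that hypothesis your delicate step is vacuous and your argument goes through cleanly.
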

\begin{proof}
  Since the diagram \eqref{DiagramExtension} is commutative,
  $\pi_k$ is a conjugacy from $\XS(\sigma_k)$ onto $\XS(\sigma)$.
  On the other hand, by Proposition~\ref{propositionPi_k},
  it is a conjugacy from $\XS(\sigma)^{(k)}$
  onto $\XS(\sigma)$.
\end{proof}
\begin{example}\label{exampleBlockPresentation}
Let $\sigma\colon a\mapsto ab,b\mapsto a$ be the Fibonacci morphism. We have
$\cL_2(X)=\{aa,ab,ba\}$. Set $A_2=\{x,y,z\}$ and let
$\langle aa \rangle=x, \langle ab \rangle= y, \langle ba \rangle= z$.  We have
$\sigma_2(x)=\sigma_2(\langle aa \rangle)= \langle ab \rangle \langle
ba \rangle=yz$. Similarly, we have $\sigma_2(y)=yz$ and
$\sigma_2(z)=x$.
\end{example}

\paragraph{Ordered trees}
An \emph{ordered tree} is a graph $T$ in which
the vertices are labeled.
The end $t$ of an edge starting in a vertex $s$ is called a \emph{child}
of $s$ and
$s$ is the \emph{parent}
of $t$. Each vertex has a label and the set
of children of each vertex is totally ordered.
There is a particular vertex called the \emph{root}
of $T$
and there is a unique path from the root to each vertex.
The \emph{level}
of a vertex $s$ is the length of the path from the root to $s$.
A \emph{leaf} of a tree is a vertex without
children.

Let $\sigma\colon A^*\to A^*$ be a morphism.
For each $a\in A$ and $n\ge 0$, we define
an ordered tree $T_\sigma(a,n)$ labeled by $A$,
called the \emph{derivation tree} of $a$
at order $n$. 
 The root of $T_\sigma(a,n)$ is labeled by $a$ and
 has no child if $n=0$. If $n\ge 1$, set
 $\sigma(a)=b_1b_2\cdots b_k$ with $b_i\in A$. The root of $T(a,n)$ has
  $k$ children  $r_1,r_2,\ldots,r_k$ which are the roots of  trees $T_i$ with
  $1\le i\le k$ and each $T_i$ is isomorphic to $T(b_i,n-1)$.
  The order on the children is $r_1<r_2<\ldots <r_k$.

  \begin{example}
    Let $\sigma\colon a\mapsto ab,b\mapsto a$ be the Fibonacci
    morphism. The tree $T_\sigma(a,2)$ is represented in Figure~\ref{figureT(a,2)}.
    \begin{figure}[hbt]
      \centering
\tikzset{node/.style={draw,minimum size=0.4cm,inner sep=0pt}}
	\tikzset{title/.style={minimum size=0.5cm,inner sep=0pt}}
\begin{tikzpicture}
  \node[node](a0)at(3.25,3){$a$};
  \node[node](a1)at(2.5,2){$a$};\node[node](b1)at(4,2){$b$};
  \node[node](a2)at(2,1){$a$};\node[node](b2)at(3,1){$b$};\node[node](a22)at(4,1){$a$};

  \draw[->](a0.south)--node{}(2.5,2.2);\draw[->](a0.south)--node{}(b1.north);
  \draw[->](a1.south)--node{}(a2.north);\draw[->](2.5,1.8)--node{}(3,1.2);
  \draw[->](b1.south)--node{}(a22.north);
\end{tikzpicture}
\caption{The tree $T_\sigma(a,2)$.}\label{figureT(a,2)}
      \end{figure}
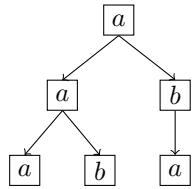
    \end{example}

\paragraph{Some useful lemmas}
The following three lemmas deal with questions that
concern non-growing or erasable letters.

We associate to an endomorphism $\sigma\colon A^*\to A^*$ the multigraph $G(\sigma)$ on $A$
with $|\sigma(a)|_b$ edges from $a$ to $b$. We let $M(\sigma)$ denote
the adjacency matrix of $G(\sigma)$, that is, the matrix $M$
such that $M_{a,b}=|\sigma(a)|_b$.

It is decidable in linear time
whether a letter is erasable or growing. To see this, we build the multigraph
$G(\sigma)$. A strongly connected component
of $G(\sigma)$ is \emph{trivial} if it has one vertex and no edges.

A letter $a$ is erasable if all paths going out of $a$ end only in
trivial strongly connected components of $G(\sigma)$.

A letter $a$ is growing if and only if either there is a path from $a$ in
$G(\sigma)$
to a non-trivial strongly connected component that is not reduced to
a cycle or there is a path from $a$ to a non-trivial strongly
connected component from which there is also a path to another non-trivial strongly
connected component.

\begin{example}
  Let $\sigma\colon a\mapsto ab, b\mapsto a$ be the Fibonacci morphism.
  The graph $G(\sigma)$ is shown in Figure~\ref{figureG(sigma)Fibo}.
    \begin{figure}[hbt]
    \centering
\tikzset{node/.style={circle,draw,minimum size=0.4cm,inner sep=0pt}}
\tikzset{title/.style={minimum size=0.5cm,inner sep=0pt}}
\tikzstyle{every loop}=[->,shorten >=1pt,looseness=8]
\tikzstyle{loop left}=[in=220,out=140,loop]
\tikzstyle{loop right}=[in=-40,out=40,loop]
    \begin{tikzpicture}
      \node[node](a)at(0,0){$a$};
      \node[node](b)at(2,0){$b$};

      \draw[->,>= stealth](a)edge[loop left]node{}(a);
      \draw[->,>= stealth,bend left](a)edge node{}(b);
      \draw[->,>= stealth,bend left](b)edge node{}(a);
      \end{tikzpicture}
    \caption{The graph $G(\sigma)$.}\label{figureG(sigma)Fibo}
    Since it is strongly connected and not reduced to a cycle,
    $\sigma$ is growing.
    \end{figure}
    
        The matrix $M(\sigma)$ is
    \begin{displaymath}
M(\sigma)=\begin{bmatrix}1&1\\1&0\end{bmatrix}
      \end{displaymath}
\end{example}

\begin{lemma}\label{lemmaGrowing}
  Let $\sigma\colon A^*\to A^*$ be an endomorphism. If 
  $u\in A^*$ is  growing, then $\lim_{n\to+\infty}|\sigma^n(u)|=+\infty$.
\end{lemma}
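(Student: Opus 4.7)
The plan is to prove the contrapositive by a pigeonhole-and-periodicity argument. First I would reduce the claim to showing that whenever $|\sigma^n(u)|$ is unbounded it must tend to infinity. This reduction is immediate: if $u$ contains a growing letter $a$, then writing $u = u_1 \cdots u_k$ gives $\sigma^n(u) = \sigma^n(u_1) \cdots \sigma^n(u_k)$, hence $|\sigma^n(u)| \ge |\sigma^n(a)|$, and $|\sigma^n(a)|$ is unbounded by the definition of growing.

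For the main step, assume for contradiction that $|\sigma^n(u)|$ is unbounded but does not tend to infinity. Then there is a constant $C$ such that $|\sigma^{n_k}(u)| \le C$ for infinitely many indices $n_k$. Since $A$ is finite, only finitely many words have length at most $C$, so by the pigeonhole principle some single word $w$ satisfies $\sigma^{n_k}(u) = w$ for infinitely many $k$. Choosing two such indices $n_1 < n_2$ and applying $\sigma^{n_2 - n_1}$ to the equality $\sigma^{n_1}(u) = w$ yields $\sigma^d(w) = w$ for $d = n_2 - n_1 > 0$.

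From $\sigma^d(w) = w$, an induction on $q$ gives $\sigma^{qd}(w) = w$ for every $q \ge 0$. For any $m \ge n_1$, writing $m = n_1 + qd + r$ with $0 \le r < d$ yields $\sigma^m(u) = \sigma^r(\sigma^{qd}(w)) = \sigma^r(w)$. Hence $|\sigma^m(u)| \le \max_{0 \le r < d} |\sigma^r(w)|$ for all $m \ge n_1$, which contradicts unboundedness. There is no genuine obstacle here; the only point requiring care is recognizing that, because $A$ is finite, the boundedness of infinitely many $\sigma^{n_k}(u)$ forces an exact repetition of the word $\sigma^{n_k}(u)$ itself, not just of its length, and it is that repetition that triggers the periodicity of the dynamics.
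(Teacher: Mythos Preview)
Your proof is correct and takes a genuinely different route from the paper's. The paper argues by induction on $\Card(A)$ using the graph $G(\sigma)$: it reduces to a single growing letter $a$ and then splits into cases according to whether the strongly connected component of $a$ strictly contains a cycle, is reduced to a cycle, or is trivial, invoking the induction hypothesis in the latter two cases. Your argument bypasses all of this structure: you observe directly that if $|\sigma^n(u)|$ fails to tend to infinity then infinitely many of the words $\sigma^{n_k}(u)$ lie in a finite set, forcing an exact repetition $\sigma^{n_1}(u)=\sigma^{n_2}(u)=w$, hence $\sigma^d(w)=w$ and eventual periodicity of the orbit, which contradicts unboundedness. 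Your approach is shorter and entirely self-contained, needing no graph-theoretic setup; the paper's approach, on the other hand, fits into the $G(\sigma)$ framework that is reused throughout the article and yields structural information about where the growth comes from. One minor remark: your first paragraph is not really a reduction, since in the paper ``$u$ is growing'' is \emph{defined} to mean $|\sigma^n(u)|$ is unbounded, so the hypothesis already gives you unboundedness directly.
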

\begin{proof}
  We may assume that $u=a$ is a growing letter.
  We argue by induction on $\Card(A)$. The statement is true if $\Card(A)=1$.
  If the strongly connected component in $G(\sigma)$
   of $a$ 
  contains strictly a cycle, then $\lim_{n\to+\infty}|\sigma^n(a)|=+\infty$. Next, if the strongly
  connected component $C(a)$ of $a$
  is reduced to a cycle, we have
  $\sigma^i(a)=vaw$ for some $i\ge 1$ and $v,w$ containing no letter
  of $C(a)$. Either $v$ or $w$, say $v$, has to be growing. Thus the result
  holds by induction hypothesis applied to $v$.
  
  Finally, if the strongly connected component  of $a$
  is trivial,
  the result holds by induction hypothesis.
\end{proof}

The following result  is
  proved in \cite[Lemma 2.2]{BealPerrinRestivo2021}.

\begin{proposition}\label{propositionErasing}
For every endomorphism $\sigma$ there is a finite
  number of words formed of erasable letters in $\cL(\sigma)$.
  \end{proposition}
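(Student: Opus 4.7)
The plan is to reduce the claim to a uniform bound on the length of \emph{erasable runs} in $\sigma^n(a)$, i.e.\ maximal factors of $\sigma^n(a)$ lying in $C^*$, where $C\subset A$ denotes the set of erasable letters. Since every $w\in\cL(\sigma)\cap C^*$ appears as a factor of some $\sigma^n(a)$ and therefore sits inside such a maximal run, this bound immediately yields the finiteness claimed.

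Two preliminary observations drive the proof. First, $\sigma(C^*)\subset C^*$: if $c\in C$ then every letter of $\sigma(c)$ is erasable, since a non-erasable letter in $\sigma(c)$ would survive all iterates of $\sigma$. Second, for any $a\notin C$ the word $\sigma^m(a)$ contains at least one non-erasable letter, where $m=\mex(\sigma)$; otherwise $\sigma^m(a)\in C^*$ would give $\sigma^{2m}(a)=\varepsilon$, contradicting $a\notin C$.

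The main simplification is to pass to $\tau:=\sigma^m$, which satisfies $\tau(c)=\varepsilon$ for every $c\in C$. For $a\notin C$ one writes $\tau(a)=w_0 a_1 w_1\cdots a_k w_k$ with $a_i\notin C$, $w_j\in C^*$, and $k\ge 1$ by the second observation. For $n\ge 2$ the factors $w_j$ are killed by $\tau^{n-1}$, giving the clean decomposition $\tau^n(a)=\tau^{n-1}(a_1)\cdots\tau^{n-1}(a_k)$. I introduce $P_n(a)$, $S_n(a)$, $I_n(a)$, the longest erasable prefix, suffix, and interior runs in $\tau^n(a)$, respectively. The decomposition gives $P_n(a)=P_{n-1}(a_1(a))$ and $S_n(a)=S_{n-1}(a_k(a))$ for $n\ge 2$, where $a_1(a),a_k(a)$ are the first and last non-erasable letters of $\tau(a)$; these are iterations of maps on the finite set $A\setminus C$, hence the sequences are eventually periodic and $P^*:=\sup_{n,a}P_n(a)$ and $S^*$ are both finite. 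For interior runs, the second observation applied to each $a_i$ ensures that every piece $\tau^{n-1}(a_i)$ itself contains a non-erasable letter, so an interior erasable run in $\tau^n(a)$ either lies within a single piece or spans exactly two consecutive pieces. This yields $I^*_n\le\max(I^*_{n-1},\,S^*+P^*)$, hence $I^*<\infty$.

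To conclude for an arbitrary $\sigma^n(a)$, I write $n=mq+r$ with $0\le r<m$ and view $\sigma^n(a)=\tau^q(\sigma^r(a))$ as the concatenation of the $\tau^q$-images of the non-erasable letters of $\sigma^r(a)$ (the erasable letters vanish under $\tau^q$ for $q\ge 1$); the same prefix/suffix/interior analysis then applies. The finitely many cases $q=0$ (i.e.\ $n<m$) are handled by inspecting the finite list of words $\sigma^r(a)$. The main obstacle in this argument is the interior-run bookkeeping: one must rule out an erasable run spanning three or more consecutive pieces $\tau^{n-1}(a_i)$, which is precisely what the second observation provides.
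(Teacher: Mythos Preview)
Your argument is correct. The paper does not actually prove this proposition; it simply cites \cite[Lemma~2.2]{BealPerrinRestivo2021}, so there is no in-paper proof to compare against.

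The device of passing to $\tau=\sigma^{\mex(\sigma)}$, so that erasable letters are genuinely erased and the decomposition $\tau^n(a)=\tau^{n-1}(a_1)\cdots\tau^{n-1}(a_k)$ becomes exact for $n\ge 2$, is the natural move, and your prefix/suffix/interior bookkeeping is sound. One detail worth making explicit: in the bound $I_n^*\le\max(I_{n-1}^*,\,S^*+P^*)$, a maximal erasable run of $\tau^n(a)$ lying inside a single piece $\tau^{n-1}(a_i)$ need not be an \emph{interior} run of that piece---it could be the piece's erasable prefix or suffix---but in that case it is bounded by $P^*$ or $S^*$, hence by $S^*+P^*$, so the stated inequality still holds. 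The final step (writing $n=mq+r$ and applying the same analysis to $\tau^q(\sigma^r(a))=\tau^q(b_1)\cdots\tau^q(b_l)$) is routine once you note that $\sigma^r(a)$ contains a non-erasable letter whenever $a\notin C$, by the same ``second observation'' argument.
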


\begin{lemma}\label{lemmaSigma}
  Let $\sigma\colon A^*\to A^*$ be a morphism. For every $x$ in $\XS(\sigma)$,
  the sequence $\sigma(x)$ is in $\XS(\sigma)$. The map
  $x\mapsto\sigma(x)$ is continuous on $\XS(\sigma)$.
\end{lemma}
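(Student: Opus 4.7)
The statement has two parts, and both rest on Proposition~\ref{propositionErasing}. The first nontrivial point, which must be addressed before anything else, is that $\sigma(x)$ is actually a two-sided infinite sequence in $A^\Z$: if $\sigma$ is erasing, it is a priori possible that $\sigma(x^+)$ or $\sigma(x^-)$ is finite. I would handle this by invoking Proposition~\ref{propositionErasing} to obtain an integer $K$ that bounds the length of words in $\cL(\sigma)$ made entirely of erasable letters. Since every factor of $x\in\XS(\sigma)$ lies in $\cL(\sigma)$, any factor of $x$ of length greater than $K$ must contain a non-erasable letter. Hence both $x^-$ and $x^+$ contain infinitely many non-erasable letters, so $\sigma(x^-)$ is left-infinite, $\sigma(x^+)$ is right-infinite, and $\sigma(x)=\sigma(x^-)\cdot\sigma(x^+)\in A^\Z$.

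Having established this, the fact that $\sigma(x)\in\XS(\sigma)$ is a routine unwinding of definitions: any factor $u$ of $\sigma(x)$ is a factor of $\sigma(w)$ for some finite factor $w$ of $x$; since $w\in\cL(\sigma)$, there exist $n\ge 0$ and $a\in A$ with $w$ a factor of $\sigma^n(a)$, so $u$ is a factor of $\sigma^{n+1}(a)$ and therefore lies in $\cL(\sigma)$.

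For continuity, given $M\ge 0$, the goal is to produce $N\ge 0$ such that $x_{[-N,N]}=y_{[-N,N]}$ forces $\sigma(x)_j=\sigma(y)_j$ for $-M\le j\le M$. Using the same bound $K$, every window of length $K+1$ in a point of $\XS(\sigma)$ contains a non-erasable letter, so any window of length $N$ contributes at least $\lfloor N/(K+1)\rfloor$ letters to the image under $\sigma$. Choosing $N=(K+1)(M+1)$ therefore guarantees $|\sigma(x_{[0,N]})|\ge M+1$ and $|\sigma(x_{[-N,0)})|\ge M$; since $\sigma(x)^+=\sigma(x_0)\sigma(x_1)\cdots$ and $\sigma(x)^-=\cdots\sigma(x_{-2})\sigma(x_{-1})$, agreement of $x$ and $y$ on $[-N,N]$ implies agreement of $\sigma(x)$ and $\sigma(y)$ on $[-M,M]$, yielding continuity.

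The main obstacle is conceptual rather than computational: one must recognize that the erasing case requires Proposition~\ref{propositionErasing} even to make sense of the claim $\sigma(x)\in\XS(\sigma)\subset A^\Z$. Once the uniform bound $K$ on erasable factors is in hand, both the well-definedness, the invariance of $\XS(\sigma)$ under $\sigma$, and the continuity follow by a single counting argument.
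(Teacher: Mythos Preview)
Your proof is correct and follows essentially the same approach as the paper's: both invoke Proposition~\ref{propositionErasing} to bound the length of erasable factors in $\cL(\sigma)$, use this to conclude that $\sigma(x)$ is two-sided infinite, and then derive continuity from the resulting quantitative statement that a factor of length roughly $kn$ in $x$ maps to a word of length at least $n$ (the paper phrases this as $\sigma(\cL_{kn}(\sigma))\subset\cL_{\ge n}(\sigma)$). Your version is more explicit about constants and spells out the intermediate step that factors of $\sigma(x)$ lie in $\cL(\sigma)$, which the paper leaves implicit, but the argument is the same.
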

\begin{proof}
 We have to prove that $\sigma(x)$ is two-sided infinite.
  By Proposition~\ref{propositionErasing}, $x$ has an infinite number of non-erasable letters on
  the left and on the right, and $\sigma(x)$ is two-sided infinite.
  By Proposition~\ref{propositionErasing} again, there
  is an integer $k\ge 1$ such that every word in $\cL_k(\sigma)$
  contains a non-erasable letter. This implies that $\sigma(\cL_{kn}(\sigma))$
  is contained in $\cL_{\geq n}(\sigma)$, whence the second statement.
\end{proof}
Note that, since $\XS(\sigma)$ is compact, the map $\sigma$ is actually
uniformly continuous on $\XS(\sigma)$.

We prove the following concerning $\sigma$-representations.
\begin{proposition}\label{propositionBKM}
  Let $\sigma\colon A^*\to A^*$ be a morphism. Every point $y$ in $\XS(\sigma)$
  has a $\sigma$-representation $y=S^k(\sigma(x))$ with $x$ in $\XS(\sigma)$.
\end{proposition}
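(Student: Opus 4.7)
The plan is to build $x$ as a limit, along a subsequence, of finite ``ancestor'' words obtained by desubstituting longer and longer central factors of $y$, and then to verify that the limit still lies in $\XS(\sigma)$.

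First, for each $n\ge 1$ we use that $y_{[-n,n]}\in\cL(\XS(\sigma))\subseteq\cL(\sigma)$, so $y_{[-n,n]}$ is a factor of some $\sigma^{m_n}(a_n)$; for $n$ large enough we may assume $m_n\ge 1$ and set $u_n=\sigma^{m_n-1}(a_n)\in\cL(\sigma)$, so that $y_{[-n,n]}$ appears inside $\sigma(u_n)$. The occurrence of $y_0$ in $\sigma(u_n)$ lies inside the image of a unique letter $u_n[j_n]$ at some offset $r_n$; since $y_0$ is itself a letter, $\sigma(u_n[j_n])\ne\varepsilon$ and $0\le r_n<|\sigma(u_n[j_n])|\le M$, where $M=\max_{a\in A}|\sigma(a)|$.

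Next I re-index $u_n$ so that the central letter $u_n[j_n]$ sits at position $0$. Because $y_{-n}$ and $y_n$ must both fit inside $\sigma(u_n)$, the position of $y_0$ in $\sigma(u_n)$ lies in $[n,|\sigma(u_n)|-n-1]$, and since each letter contributes at most $M$ to $\sigma(u_n)$, the number of letters of $u_n$ on either side of the central letter is at least $(n-M)/M$, and therefore tends to $\infty$. This is the step I expect to be the main subtle point: the bound just given holds uniformly even in the presence of erasing letters, because $M$ already caps the per-letter contribution to $\sigma(u_n)$ and the argument does not require the erasable letters to be controlled separately.

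A standard diagonal extraction then applies: $r_n$ takes values in the finite set $\{0,1,\ldots,M-1\}$ and each $u_n[j]$ (in the new indexing) takes values in the finite alphabet $A$, so we can pass to a subsequence along which $r_n$ is a constant $r$ and $u_n[j]\to x[j]$ for every $j\in\Z$. The resulting $x\in A^\Z$ lies in $\XS(\sigma)$ because every finite factor of $x$ is eventually a factor of some $u_n\in\cL(\sigma)$ and $\cL(\sigma)$ is factorial.

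Finally, for each fixed $i\in\Z$ and all sufficiently large $n$, the relation $y_i=\sigma(u_n)_{r+i}$ holds in the re-indexed $\sigma(u_n)$ (with origin at the start of $\sigma(u_n[0])$) by construction; passing to the limit and using Lemma~\ref{lemmaSigma} (which guarantees that $\sigma(x)$ is two-sided infinite) yields $y_i=\sigma(x)_{r+i}$ for all $i$, hence $y=S^r\sigma(x)$ with $x\in\XS(\sigma)$, as required.
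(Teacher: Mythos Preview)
Your proof is correct and follows essentially the same compactness approach as the paper: both extract, from the factorizations $y_{[-n,n]}\subset\sigma(u_n)$ with $u_n\in\cL(\sigma)$, a fixed shift value by pigeonhole and a limit sequence $x\in\XS(\sigma)$ by a diagonal argument. Your version is somewhat more explicit than the paper's (which dispatches both steps with the phrase ``by a compactness argument'') and in particular spells out why the number of letters of $u_n$ on each side of the central position tends to infinity and why Lemma~\ref{lemmaSigma} is needed to pass to the limit; the paper's proof leaves these points implicit.
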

\begin{proof}
Let $k=|\sigma|$ and let $y$ be in $\XS(\sigma)$. For every $n > 2k$, there is an integer
$m \geq 1$ such that $y_{[-n,n]}$ is a factor of $\sigma^m(a)$ for some letter $a
\in A$.

By a compactness argument, there is an integer $0 \leq i < k$ such that
for every $n > 2k$, there are words $u_n, v_n$ with $u_n v_n \in \cL(\sigma)$ such that
$y_{[-n +k, -i]}$  is a suffix of $\sigma(u_n)$ and $y_{[-i, n-k]}$
is a prefix of $\sigma(v_n)$. Further, $|u_n| \geq (n-k -i)/k$ and
$|v_n| \geq (n -k + i)/k$.

By a compactness argument again, we get that there is a point $x \in X(\sigma)$
such that $y = S^i(\sigma(x))$.
\end{proof}

\paragraph{The language $\cL(\sigma)$}
A language $L\subset A^*$ is \emph{recognizable} if
it can be recognized by a finite automaton.
The following result appears in \cite[Lemma 3]{Salo2017}
(see also~\cite{CartonThomas2002}).
We reproduce the proof for the sake of completeness.
\begin{lemma}\label{lemmaDecidabilityL(sigma)}
  Let $\sigma\colon A^*\to A^*$ be a morphism.
  It is decidable, for a given recognizable language $L$
  whether $L\cap \cL(\sigma)$ is empty. In particular,
  $\cL(\sigma)$ is decidable. If $L$ is
  moreover factorial, it is decidable whether
  $L\cap \cL(\sigma)$ is finite.
\end{lemma}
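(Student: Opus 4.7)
The plan is to handle all three assertions by tracking the action of iterated $\sigma$ through a finite state space derived from a DFA for $L$.

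For the emptiness question, I would first note that $L \cap \cL(\sigma) \ne \emptyset$ is equivalent to some $\sigma^n(a)$ belonging to the recognizable language $L' = A^* L A^*$. Let $\mathcal B = (Q, A, \delta, q_0, F)$ be a DFA for $L'$, and for each $n \ge 0$ define the finite relation
\[
R_n = \{(p, a, q) \in Q \times A \times Q : \delta^*(p, \sigma^n(a)) = q\}.
\]
From $\sigma^{n+1}(a) = \sigma^n(\sigma(a))$, writing $\sigma(a) = b_1 \cdots b_k$, one has $(p, a, q) \in R_{n+1}$ iff there exist $p = p_0, p_1, \ldots, p_k = q$ in $Q$ with $(p_{i-1}, b_i, p_i) \in R_n$ for each $i$. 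Hence $R_{n+1}$ is a computable function of $R_n$, and since $R_n \subseteq Q \times A \times Q$ lies in a finite set, the sequence $(R_n)$ is eventually periodic with preperiod plus period at most $2^{|Q|^2|A|}$. Enumerating until repetition, we decide $L \cap \cL(\sigma) \ne \emptyset$ iff some $(q_0, a, q) \in \bigcup_n R_n$ has $q \in F$. The decidability of $\cL(\sigma)$ is then immediate: apply this procedure with $L = \{w\}$.

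For the finiteness statement (with $L$ factorial), I would first dispose of the case where $\sigma$ has no growing letter: by Lemma~\ref{lemmaGrowing}, $|\sigma^n(a)|$ is then bounded for every $a \in A$, so $\cL(\sigma)$ and thus $L \cap \cL(\sigma)$ are finite, and this case is decidable since having a growing letter is decidable (as explained before Lemma~\ref{lemmaGrowing}). Otherwise, since $L \cap \cL(\sigma)$ is a factorial language, it is infinite iff it contains words of arbitrarily large length; and applying the emptiness procedure to the recognizable language $L \cap A^{\ge N}$ decides, for each $N$, whether a word of length $\ge N$ exists. The reduction to decidability thus hinges on producing a computable threshold $N_0$ such that the presence of one word of length $\ge N_0$ in $L \cap \cL(\sigma)$ already forces the set to be infinite.

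This threshold is the main obstacle. I would obtain it by a pigeonhole on the derivation tree $T_\sigma(a, n)$ of a long witness $w \in L \cap \cL(\sigma)$: once $n$ exceeds the preperiod plus period of $(R_n)$ relative to the DFA of $L$, some pair (letter, DFA-state) must repeat along a root-to-leaf branch contributing to $w$, and splicing in the loop between the two repetitions produces a strictly longer factor still lying in $L \cap \cL(\sigma)$; iterating yields infinitely many witnesses. The delicate point is that the pumped copy must simultaneously continue to read an accepting DFA run for $L$ and to sit inside $\cL(\sigma)$, so the bound $N_0$ depends not only on $|Q|$ and $|A|$ but also on the size of $\sigma$.
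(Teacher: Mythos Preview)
Your emptiness argument and the deduction that $\cL(\sigma)$ is decidable are correct and essentially identical to the paper's: the paper tracks the morphisms $\psi_n=\varphi\circ\sigma^n$ into the monoid of binary relations on $Q$, you track the equivalent data $R_n\subseteq Q\times A\times Q$, and both conclude by eventual periodicity.

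The finiteness argument, however, has a genuine gap, which you yourself flag as ``the main obstacle.'' Your pumping sketch does not work as written. If a pair $(\text{letter},\text{DFA-state})$ repeats at depths $i<j$ along a root-to-leaf branch of $T_\sigma(a,n)$, then ``splicing in the loop'' does produce a longer word in $\cL(\sigma)$, but there is no reason it lies in $L$: the DFA state you record at the repeated node is the state after reading a \emph{prefix of $\sigma^n(a)$}, not a prefix of $w$, and once you pump, every leaf to the right of the pumped subtree shifts, so the DFA run over the remainder is altered and the factor that was accepted before need no longer be. You also have not argued which branch to pump so that the lengthening occurs inside $w$ rather than in the flanking pieces $\alpha,\beta$ of $\sigma^n(a)=\alpha w\beta$. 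The paper bypasses all of this: taking an automaton for the factorial language $L$ with $I=T=Q$, it asserts that $L\cap\cL(\sigma)$ is infinite if and only if some \emph{growing} letter $a$ has $\psi_k(a)\ne\emptyset$ (i.e.\ $\sigma^k(a)\in L$) for every $k$ up to the period of the sequence $(\psi_n)$. This is a finite check on the already-computed periodic sequence, with no threshold and no derivation-tree surgery.
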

\begin{proof}
  Since $L$ is recognizable, so is $A^*LA^*$.
  Let $\A=(Q,I,T)$ be a finite automaton recognizing $A^*LA^*$.
  For $w\in A^*$, denote by $\varphi(w)$ the binary relation on $Q$
  defined by
  \begin{displaymath}
    \varphi(w)=\{(p,q)\in Q\times Q\mid p\longedge{w}q\}.
  \end{displaymath}
  The map $w\mapsto \varphi(w)$ is a morphism from $A^*$ into
  the finite monoid of binary relations on $Q$.
  Let $\psi_n=\varphi\circ \sigma^n$. Since $\sigma$ and $\varphi$
  are morphisms, so is each $\psi_n$. Since $\varphi(A^*)$ is a finite
  set, there is a finite number of morphisms
  $\psi_n$ and thus there are $n<m$ such that $\psi_n=\psi_m$. Note
  that $\psi_n=\psi_m$ implies $\psi_{n+r}=\psi_{m+r}$
  because $\psi_{n+r}=\psi_n\circ\sigma^{r}=\psi_m\circ\sigma^r=\psi_{m+r}$.
  Then
  $L\cap \cL(\sigma)\ne\emptyset$ if and only if there is
  some $k\le m$ such that $(i,t)\in \psi_k(A)$
  for some $i\in I$ and $t\in T$.
  
  Assume now that $L$ is factorial. We consider
  this time an automaton recognizing $L$,
  in which we may assume that $I=T=Q$.
  We define $\varphi$ and $\psi_n$ as above.
  Then $L \cap \cL(\sigma)$  is infinite if and only if
  there is a growing
  letter $a$ such that $\psi_k(a)\ne\emptyset$
  for every $k<m$.

  Finally, one can decide if $w$ is in $\cL(\sigma)$ since
  it is equivalent to $\{w\}\cap\cL(\sigma)\ne\emptyset$.
  \end{proof}

\paragraph{The language $\cL(\XS(\sigma))$}
The following results deal with substitution shifts that fail
to satisfy the condition $\cL(\XS(\sigma))=\cL(\sigma)$.
We have already seen that the equality holds
 for primitive morphisms with $\Card(A)\ge 2$.
More generally,  one has
$\cL(\XS(\sigma))=\cL(\sigma)$ if and only if
$\cL(\sigma)$ is extendable, which for a non-erasing morphism,
occurs if and only if every letter is extendable.

In the general case, let us first note the following property.
\begin{proposition}\label{propositionCaractSubstitution}
  Let $\sigma\colon A^*\to A^*$ be a morphism.
  One has $\cL(\sigma)=\cL(\XS(\sigma))$
  if and only if every letter $a\in A$ is in $\cL(\XS(\sigma))$.
  \end{proposition}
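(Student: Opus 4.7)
The forward implication is immediate: every letter $a\in A$ lies in $\cL(\sigma)$ (take $\sigma^0(a)=a$), so if $\cL(\sigma)=\cL(\XS(\sigma))$, then $a\in\cL(\XS(\sigma))$.

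For the converse, the inclusion $\cL(\XS(\sigma))\subset\cL(\sigma)$ holds by the definition of $\XS(\sigma)$, so the plan is to establish $\cL(\sigma)\subset\cL(\XS(\sigma))$ under the hypothesis. Given $w\in\cL(\sigma)$, by definition there exist $a\in A$ and $n\ge 0$ such that $w$ is a factor of $\sigma^n(a)$. By assumption, $a\in\cL(\XS(\sigma))$, so we may pick $x\in\XS(\sigma)$ and an index $i$ with $x_i=a$. My strategy is then to apply $\sigma^n$ to $x$: by Lemma~\ref{lemmaSigma} together with an immediate induction on $n$, the sequence $\sigma^n(x)$ is a well-defined two-sided infinite element of $\XS(\sigma)$. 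Since $\sigma^n(x)$ is the bi-infinite concatenation $\cdots\sigma^n(x_{-1})\sigma^n(x_0)\sigma^n(x_1)\cdots$, the block $\sigma^n(a)=\sigma^n(x_i)$ appears as a factor of $\sigma^n(x)$, and hence so does $w$. This yields $w\in\cL(\XS(\sigma))$.

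The only mild subtlety is handling erasable letters. If $\sigma^n(a)=\varepsilon$, then necessarily $w=\varepsilon$, which is trivially in $\cL(\XS(\sigma))$ since the hypothesis guarantees that $\XS(\sigma)$ is nonempty (pick any letter in $\cL(\XS(\sigma))$). More importantly, one must ensure that $\sigma^n(x)$ really is two-sided infinite, so that $\sigma^n(a)$ sits as a genuine factor of a point of the shift rather than being truncated. This is exactly the content of Lemma~\ref{lemmaSigma}, which rests on Proposition~\ref{propositionErasing} (finitely many erasable words in $\cL(\sigma)$). Once this is invoked, no further computation is required, and the proof closes cleanly.
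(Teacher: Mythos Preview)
Your proof is correct and follows essentially the same route as the paper's: pick $x\in\XS(\sigma)$ containing $a$, apply Lemma~\ref{lemmaSigma} to get $\sigma^n(x)\in\XS(\sigma)$, and read off $w$ as a factor. Your extra remarks on the erasable case are sound but not strictly needed, since Lemma~\ref{lemmaSigma} already guarantees $\sigma^n(x)$ is two-sided infinite.
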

\begin{proof}
  Assume that the condition is satisfied. Every $w\in \cL(\sigma)$ is a factor
  of some $\sigma^n(a)$ for $a\in A$ and $n\ge 0$. Let $x\in \XS(\sigma)$
  be such that $a\in\cL(x)$. By Lemma~\ref{lemmaSigma}, we have
  $\sigma^n(x)\in \XS(\sigma)$. Since $w\in\cL(\sigma^n(x))$, we obtain
  $w\in \cL(\XS(\sigma))$. The converse is obvious.
  \end{proof}

The following lemma shows that, for a morphism $\sigma\colon A^*\to A^*$,
it is decidable whether a word is a factor of some $\sigma^n(a)$
for $a\in A$ for an infinite number of $n$.
 Its proof uses the technique of the
proof of Lemma  \ref{lemmaDecidabilityL(sigma)}.
\begin{lemma}\label{lemmaDecidabilityL(sigma)Plus}
  Let $\sigma\colon A^*\to A^*$ be a morphism and let $u\in A^*$.
  It is decidable whether there  an infinite number
  of integers $n\ge 0$ such  $u$ is a factor of some $\sigma^n(a)$
  with $a$ in $A$.
\end{lemma}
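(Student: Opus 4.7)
The plan is to imitate the technique from Lemma~\ref{lemmaDecidabilityL(sigma)}, but instead of only asking whether some $\psi_n(A)$ hits $I\times T$, we will track the sequence $(\psi_n)_{n\ge 0}$ itself and exploit its ultimate periodicity.

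First, since $L=A^*uA^*$ is recognizable, pick a finite automaton $\A=(Q,I,T)$ recognizing it. Let $\varphi\colon A^*\to \mathcal R_Q$ be the transition morphism into the finite monoid of binary relations on $Q$, so that for $w\in A^*$ we have $w\in L$ iff $\varphi(w)\cap(I\times T)\ne\emptyset$. Define $\psi_n=\varphi\circ\sigma^n$ for each $n\ge 0$; each $\psi_n$ is a morphism from $A^*$ to $\mathcal R_Q$, and the key observation is that
\[
u\in\Fact(\sigma^n(a))\quad\Longleftrightarrow\quad \sigma^n(a)\in L\quad\Longleftrightarrow\quad \psi_n(a)\cap(I\times T)\ne\emptyset.
\]
So if we let $P$ denote the (decidable, purely combinatorial) predicate ``$\exists a\in A,\ \psi(a)\cap(I\times T)\ne\emptyset$'' on morphisms $\psi\colon A^*\to \mathcal R_Q$, the question is exactly whether $\{n\ge 0\mid P(\psi_n)\}$ is infinite.

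The second step is the ultimate-periodicity argument, identical to the one used in Lemma~\ref{lemmaDecidabilityL(sigma)}. The set of morphisms from $A^*$ to $\mathcal R_Q$ is finite (a morphism is determined by its values on $A$, and $\mathcal R_Q$ is finite), so by the pigeonhole principle there exist integers $n<m$ with $\psi_n=\psi_m$. Then $\psi_{n+r}=\psi_n\circ\sigma^r=\psi_m\circ\sigma^r=\psi_{m+r}$ for every $r\ge 0$, so the sequence $(\psi_\ell)_{\ell\ge n}$ is periodic with period dividing $m-n$ and takes only the values $\psi_n,\psi_{n+1},\ldots,\psi_{m-1}$.

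It follows that $\{\ell\ge 0\mid P(\psi_\ell)\}$ is infinite if and only if there is some $k$ with $n\le k<m$ satisfying $P(\psi_k)$: if such a $k$ exists, then $\psi_{k+j(m-n)}=\psi_k$ satisfies $P$ for all $j\ge 0$; conversely, if no such $k$ exists then every $\ell\ge n$ yields $\psi_\ell\in\{\psi_n,\ldots,\psi_{m-1}\}$, none of which satisfies $P$, so the set is contained in $\{0,1,\ldots,n-1\}$ and is finite. The decision procedure is therefore: compute $\psi_0,\psi_1,\ldots$ until a repetition $\psi_n=\psi_m$ is found (guaranteed within a bounded number of steps), and check the finitely many $\psi_k$ for $n\le k<m$ against the predicate $P$.

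There is no real obstacle here beyond making the bookkeeping precise; the only subtle point is the small shift in emphasis from Lemma~\ref{lemmaDecidabilityL(sigma)}, where one only cared about existence of a single hit, to the present statement, where one must observe that the periodic tail of $(\psi_n)$ determines whether the property occurs infinitely often.
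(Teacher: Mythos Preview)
Your proof is correct and follows essentially the same approach as the paper's own proof: both build the automaton for $A^*uA^*$, form the sequence $\psi_n=\varphi\circ\sigma^n$, exploit its ultimate periodicity, and reduce the question to checking whether some $\psi_k$ in the periodic tail hits $I\times T$. Your write-up is in fact slightly more explicit in justifying the ``if and only if'' at the end.
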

\begin{proof}
  Let $u\in A^*$ be a word.
  Let $\A=(Q,I,T)$ be a finite automaton recognizing $A^*uA^*$.
  For $w\in A^*$, denote by $\varphi(w)$ the binary relation on $Q$
  defined by
  \begin{displaymath}
    \varphi(w)=\{(p,q)\in Q\times Q\mid p\longedge{w}q\}.
  \end{displaymath}
  The map $w\mapsto \varphi(w)$ is a morphism from $A^*$ into
  the finite monoid of binary relations on $Q$.
  Let $\psi_n=\varphi\circ \sigma^n$. 
 
  Thus $\psi_n(w)=\{(p,q)\in Q\times Q\mid p\xrightarrow{\sigma^n(w)}q\}$.

  There are integers $m<m'$ such that $\psi_{m+r}=\psi_{m'+r}$ for any
  nonnegative integer $r$ (see the proof of Lemma \ref{lemmaDecidabilityL(sigma)}).
  
  Then $u$ is factor 
  of an infinite number of some $\sigma^n(a)$
  with $a$ in $A$ if and only there is
  some $m \leq k\leq m'$ such that $(i,t)\in \psi_k(A)$
  for some $i\in I$ and $t\in T$.
\end{proof}

\begin{lemma} \label{lemmaExtensible}
  Let $\sigma\colon A^*\to A^*$ be a morphism. There is a constant $K$
  such that a word $u$ belongs to $\cL(\XS(\sigma))$ if
  and only if there are words $w,z$ of length $K$
  such that $wuz$ is a factor of words $\sigma^n(a)$
  with $a$ in $A$  for an infinite number of $n\ge 0$.
\end{lemma}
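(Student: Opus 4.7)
The plan is to prove the two implications separately, with the constant $K$ chosen in terms of $\sigma$ only for the purposes of the backward direction; the forward direction works for any $K$.

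For the forward direction, suppose $u\in\cL(\XS(\sigma))$ and pick $x\in\XS(\sigma)$ with $u=x_{[i,j]}$. Setting $w=x_{[i-K,\,i-1]}$ and $z=x_{[j+1,\,j+K]}$, one has $wuz\in\cL(x)\subset\cL(\sigma)$. Iterating Proposition~\ref{propositionBKM}, for every $N\ge 0$ there exist $y_N\in\XS(\sigma)$ and an integer $k_N$ with $x=S^{k_N}(\sigma^N(y_N))$, hence $wuz$ is a factor of $\sigma^N(v_N)$ for some finite factor $v_N$ of $y_N$. Since $v_N\in\cL(\sigma)$, it is itself a factor of $\sigma^{m_N}(a_N)$ for some $a_N\in A$ and $m_N\ge 0$, so $wuz\in\Fac(\sigma^{N+m_N}(a_N))$. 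As $N$ ranges over $\N$, the exponents $N+m_N\ge N$ take infinitely many values, as required.

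For the backward direction, I plan to choose $K$ so that every word $v\in\cL(\sigma)$ admitting both a left and a right extension of length $K$ in $\cL(\sigma)$ automatically lies in $\cL(\XS(\sigma))$. Writing $L_v$ and $R_v$ for the maximum length of a left and a right extension of $v$ in $\cL(\sigma)$, respectively (with value $\infty$ when arbitrarily long extensions exist), a standard compactness argument gives $v\in\cL(\XS(\sigma))$ if and only if $L_v=R_v=\infty$. One then shows that the finite values of $L_v$ and $R_v$ are bounded above by a constant depending only on $\sigma$; this rests on Proposition~\ref{propositionErasing} (which bounds the length of all-erasable factors of $\cL(\sigma)$) together with a description of the eventually periodic dynamics of the first and last letters of $\sigma^n(a)$ as $n$ varies. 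Taking $K$ strictly greater than this bound, the hypothesis on $wuz$ yields $L_u\ge K$ and $R_u\ge K$, forcing $L_u=R_u=\infty$ and hence $u\in\cL(\XS(\sigma))$.

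The main obstacle is establishing the uniform bound on the finite values of $L_v$ and $R_v$: while each individual $v\notin\cL(\XS(\sigma))$ has at least one of $L_v,R_v$ finite, controlling these bounds uniformly across all such $v$ requires a careful combinatorial analysis of the boundary of $\cL(\sigma)$, tracking how non-growing letters combine to produce long but ultimately non-extendable contexts around $u$.
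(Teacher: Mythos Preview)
Your forward direction is fine. The backward direction, however, has two genuine gaps.

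First, the equivalence ``$v\in\cL(\XS(\sigma))$ if and only if $L_v=R_v=\infty$'' does not follow from compactness and is in fact false. Compactness yields a left-infinite extension of $v$ and, separately, a right-infinite one, but nothing forces these to glue into a single two-sided sequence with all factors in $\cL(\sigma)$. Concretely, take $\sigma\colon a\mapsto ab,\ b\mapsto c,\ c\mapsto c,\ d\mapsto bd$. Then $\sigma^n(a)=abc^{n-1}$ and $\sigma^n(d)=c^{n-1}bd$, so $bc^k$ and $c^kb$ lie in $\cL(\sigma)$ for every $k$, whence $L_b=R_b=\infty$. But the only length-$3$ words of $\cL(\sigma)$ with $b$ in the middle are $abc$ and $cbd$, and neither $a$ nor $d$ admits any further one-sided extension; hence $\XS(\sigma)=\{c^\infty\}$ and $b\notin\cL(\XS(\sigma))$.

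Second, the uniform bound on the finite values of $L_v$ and $R_v$ that you need also fails. For $\sigma\colon a\mapsto abc,\ b\mapsto bb,\ c\mapsto c$ one has $\sigma^n(a)=abcb^2cb^4c\cdots b^{2^{n-1}}c$; the word $v_m=cb^{2^m}c$ occurs at a single fixed position in each $\sigma^n(a)$, and its maximal left extension is the full prefix $abcb^2c\cdots b^{2^{m-1}}$ of length $2^m+m-1$. Thus $L_{v_m}$ is finite but unbounded in $m$, so no constant $K_0$ as you describe can exist.

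More tellingly, your argument never invokes the hypothesis that $wuz$ is a factor of $\sigma^n(a)$ for \emph{infinitely many} $n$; you use only that $wuz\in\cL(\sigma)$. The paper's proof, by contrast, works directly with this infinite family: it writes $\sigma^n(a)=w^{(n)}wuzz^{(n)}$, locates the first and last growing letters $e,f$ along the derivation of $\sigma^n(a)$, finds a bounded level $k<k'$ at which the pair $(e,f)$ repeats, and then argues case by case (according to whether the boundary words $v',v'',t',t''$ arising from $\sigma^{k'-k}(e)$ and $\sigma^{k'-k}(f)$ are erasable) that the contexts $w^{(n)},z^{(n)}$ can be made to grow without bound along a subsequence. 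The self-similarity coming from the repetition of $(e,f)$ across the infinitely many $n$ is exactly the mechanism that your one-sided quantities $L_v,R_v$ fail to capture.
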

\begin{proof}
  Define
  \begin{displaymath}
    N = \max \{|\sigma^n(a)| \mid a \text{ non-growing}\},\mbox{ and }
    M = \max \{|\sigma^n(a)| \mid a \text{ erasing}\}.
  \end{displaymath}
  
  Let $r = \max \{|\sigma^{i}| \mid 0 \leq i \leq \Card(A)^2\}$.
  We set $K = (N+M)r$.

 Let $u$ be a word for which there are words $w,z$ of length $K$
  such that $wuz$ is factor of words $\sigma^n(a)$
  with $a$ in $A$ for an infinite number of $n\ge 0$.
  We set $\sigma^n(a) = w^{(n)}wuzz^{(n)}$
  with $w^{(n)}$ as short  as possible. .

  Let $e_n$ (\resp $f_n$) be the first (\resp last)
  growing letter of $\sigma^n(a)$.
  For $n$ large enough, there are integers
  $0 \leq k  \leq k' \leq \Card(A)^2$ such
  that $e_k = e_{k'} = e$ and $f_k = f_{k'} = f$.
  This implies that $e_{k +i} = e_{k'+i}$ and $f_{k +i} = f_{k'+i}$
  for any integer $i$.
  
  Let $\sigma^k(a) = vesft$, $\sigma^{k'-k}(e) = v'ev''$ and
  $\sigma^{k'-k}(f) = t'ft''$ with $v, t, v', t''$ non-growing
  (see Figure~\ref{dessinDecidabilityL(X(sigma))}) or,
  in the case where $e,f$ coincide,
  $\sigma^k(a) = vet$, $\sigma^{k'-k}(e) = v'ev''$ with $v, t, v', v''$ non-growing.
  
  \begin{figure}[hbt]
\centering
    \tikzset{node/.style={draw,minimum size=.4cm,inner sep=0pt}}
    \tikzset{title/.style={minimum width=.4cm,minimum height=.4cm}}
    \begin{tikzpicture}
      \node[node](a)at(0,5){$a$};
      \node[node,text width=2cm](v)at(-2,4){\qquad $v$};
      \node[node](e)at(-.8,4){$e$};
      \node[node,text width=1.2cm](s)at(0,4){\quad $s$};
      \node[node](f)at(.8,4){$f$};
      \node[node,text width=2cm](t)at(2,4){\qquad $t$};
      \node[node,text width=1cm](v')at(-2,3){\quad $v'$};
      \node[node]at(-1.3,3){$e$};
      \node[node,text width=1cm](v'')at(-.6,3){\quad $v''$};
      \node[node,text width=1cm](t')at(.6,3){\quad $t'$};
      \node[node]at(1.3,3){$f$};
      \node[node,text width=1cm](t'')at(2,3){\quad $t''$};
      \node[node,text width=4cm]at(-3.5,1){\hspace{2cm} $w^{(n)}$};
      \node[node,text width=1cm]at(-1,1){\quad $w$};
      \node[node,text width=1cm]at(0,1){\quad $u$};
      \node[node,text width=1cm]at(1,1){\quad $z$};
      \node[node,text width=4cm]at(3.5,1){\hspace{2cm} $z^{(n)}$};

      \draw[->,above](-.2,4.8)-- node{$\sigma^k$}(-3,4.2);
      \draw[->,above](.2,4.8)-- node{}(3,4.2);
      \draw[->,left](-1,3.8)--node{$\sigma^{k'-k}$}(-2.5,3.2);
      \draw[->,left](-.6,3.8)--node{}(-.1,3.2);
      \draw[->](.6,3.8)--node{}(.1,3.2);
      \draw[->](1,3.8)--node{}(2.5,3.2);
      \draw[->,left](-3,3.8)--node{$\sigma^{n-k}$}(-5.5,1.2);
      \draw[->,left](3,3.8)--node{}(5.5,1.2);
    \end{tikzpicture}
    \caption{Decidability of $\cL(\XS(\sigma))$}
    \label{dessinDecidabilityL(X(sigma))}
  \end{figure}
  Since $v, t$ are non-growing, we have $|\sigma^{n-k}(v)| \leq Nr$ and $|\sigma^{n-k}(t)| \leq Nr$. We claim that there is an infinite
  sequence $(i_n)$ such that the lengths of $w^{(i_n)}$ and $z^{(i_n)}$
  tend to infinity. We first prove it for $w^{(n)}$.

  Case 1. 
  If $\sigma^{n-k}(ve)$ is a prefix of length at most
  $|w^{(n)}wu|$
  of $w^{(n)}wuzz^{(n)}$ for an infinite number of $n$,
  then,  since $e$ is growing,
  $\sigma^{n-k}(esft)$ (or $\sigma^{n-k}(et)$ in the case $e=f$) s equal to
  $w'^{(n)}uz'^{(n)}$ with $w'^{(n)}$ suffix of $w^{(n)}$,
$z'^{(n)}$ prefix of $zz^{(n)}$ and $|w'^{(n)}|$ going to the
  infinity.
  
  Case 2. Let us assume that $\sigma^{n-k}(ve)$ is a prefix $w^{(n)}wut^{(n)}$ of
  $w^{(n)}wuzz^{(n)}$ for an infinite number of $n$.
  
   Case 2(i). If $v'$ is non-erasable, then again $\sigma^{n-k}(esft)$ (or $\sigma^{n-k}(et)$) is equal to
   $w'^{(n)}uz'^{(n)}$ with  $w'^{(n)}$ suffix of $w^{(n)}$,
   $z'^{(n)}$ prefix of $zz^{(n)}$ and $|w'^{(n)}|$ going to the
   infinity.
   
   Case 2(ii). Let us assume that $v'$ and $v''$ are erasable. Since $|\sigma^{n-k}(v')| \leq Mr$
   and $|\sigma^{n-k}(v'')| \leq Mr$, this case is impossible.

   Case 2(iii). If $v'$ erasable and $v''$ is non-erasable, since
   $|\sigma^{n-k}(v')| \leq Mr$, $\sigma^{n-k}(e) =
   w'^{(n)}wut'^{(n)}$ with $w'^{(n)}w$ suffix of $w^{(n)}$, $t'^{(n)}$ prefix of $zz^{(n)}$
   and $|w'^{(n)}|$ going to the infinity.

   This proves that there is an
   infinite sequence of integers $(i_n)$ such that the size of $w^{(i_n)}$ tends to infinity.

Similarly there is a infinite subsequence $(j_n)$ of $(i_n)$ such that 
the size of $z^{(j_n)}$ tends to  infinity.
This proves the claim.
   It  implies that $u$ belongs to $\cL(\XS(\sigma))$.
\end{proof}

\begin{proposition} \label{newPropositionDecidabilityL(X(sigma))}
  Let $\sigma\colon A^*\to A^*$ be a morphism. The language
  $\cL(\XS(\sigma))$ is decidable.
\end{proposition}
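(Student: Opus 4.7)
The plan is to combine Lemma~\ref{lemmaExtensible} with Lemma~\ref{lemmaDecidabilityL(sigma)Plus}. Given an input word $u\in A^*$, the membership test $u\in\cL(\XS(\sigma))$ is reduced by Lemma~\ref{lemmaExtensible} to the existence of a pair of context words $w,z$ of length exactly $K$ such that $wuz$ is a factor of $\sigma^n(a)$ for infinitely many $n\ge 0$ and some $a\in A$. Since this is a condition ranging over a finite set of pairs (there are $\Card(A)^{2K}$ of them), it suffices to enumerate all such pairs and decide, for each one, whether $wuz$ occurs as a factor of $\sigma^n(a)$ for infinitely many $n$. The latter is exactly the property whose decidability is granted by Lemma~\ref{lemmaDecidabilityL(sigma)Plus}.

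To make this procedure effective, I would first check that the constant $K=(N+M)r$ introduced in the proof of Lemma~\ref{lemmaExtensible} is computable from $\sigma$. The integer $r$ is bounded by $|\sigma^i|$ for $i\le\Card(A)^2$, which is a finite number of computable quantities; $N$ is finite because non-growing letters have bounded orbits $\{|\sigma^n(a)|\mid n\ge 0\}$, which stabilize on a detectable cycle in the graph $G(\sigma)$ (identifying non-growing letters is itself decidable, as noted before Lemma~\ref{lemmaGrowing}); and $M$ is finite because every erasable letter satisfies $\sigma^{\Card(A)}(a)=\varepsilon$, so the supremum $M$ is attained among finitely many values $|\sigma^n(a)|$ with $n\le\Card(A)$ and $a$ erasable. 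So $K$ is effectively computable, and the enumeration above is an algorithm.

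Putting this together: on input $u$, compute $K$, enumerate all pairs $(w,z)\in A^K\times A^K$, and for each pair invoke the decision procedure of Lemma~\ref{lemmaDecidabilityL(sigma)Plus} on the word $wuz$. Answer \emph{yes} if at least one pair yields an affirmative answer, and \emph{no} otherwise. By the characterization of Lemma~\ref{lemmaExtensible}, this correctly decides membership in $\cL(\XS(\sigma))$. The main (and essentially only) technical point is the computability of $K$ described above; once this is settled, the proposition follows immediately from the two preceding lemmas.
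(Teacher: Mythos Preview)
Your proof is correct and follows essentially the same approach as the paper: apply Lemma~\ref{lemmaExtensible} to reduce membership in $\cL(\XS(\sigma))$ to a finite search over contexts $(w,z)\in A^K\times A^K$, then invoke Lemma~\ref{lemmaDecidabilityL(sigma)Plus} to decide the infinite-occurrence condition for each $wuz$. You add a useful verification that the constant $K$ is computable, which the paper leaves implicit.
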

\begin{proof}
  Let $K = (N+M)r$ be defined as in the proof of Lemma \ref{lemmaExtensible}.
  By Lemma \ref{lemmaExtensible}, a word $u$ belongs to $\cL(\XS(\sigma))$ if
  and only if there are words $w,z$ of length $K$
  such that there is an infinite number of $n$ for which
  $wuz$ is factor of some $\sigma^n(a)$  with $a$ in $A$.
  We thus may check whether there is a word $wuz$ of length $2K + |u|$
  having this property by Lemma \ref{lemmaDecidabilityL(sigma)Plus}.
\end{proof}

Proposition \ref{newPropositionDecidabilityL(X(sigma))} implies that it
is decidable whether a letter belongs to $\cL(\XS(\sigma))$. The
following proposition shows that this can be decided in linear time.

\begin{proposition}\label{lemmaDecidability}
Let $\sigma$ be a morphism. It is decidable in linear time in the size
of $\sigma$ whether a
letter belongs to $\cL(\XS(\sigma))$.
\end{proposition}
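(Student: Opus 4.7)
The plan is to reduce membership of a letter $a$ in $\cL(\XS(\sigma))$ to a reachability problem on the graph $G(\sigma)$ enriched with the (linear-time computable) classifications of letters as growing or erasable, both of which are already described just after Proposition~\ref{propositionErasing}.

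I would first recast the question by a compactness argument: $a\in\cL(\XS(\sigma))$ iff, for every $N\geq 1$, there exist $u,v\in A^N$ with $uav\in\cL(\sigma)$. One direction is immediate from the inclusion $\cL(\XS(\sigma))\subset\cL(\sigma)$. For the converse, given words $u_Nav_N\in\cL(\sigma)$ of unbounded length centered at $a$, a diagonal extraction in the compact space $A^\Z$ produces a limit $x$ with $x_0=a$ whose every factor appears as a factor of some $u_Nav_N$, hence lies in the factorial language $\cL(\sigma)$; thus $x\in\XS(\sigma)$.

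I would then translate this bi-extension condition into a local condition on $G(\sigma)$. Decorate each edge of $G(\sigma)$, corresponding to an occurrence of a letter $c$ at some position in $\sigma(b)$, with two boolean flags: the first records whether the prefix of $\sigma(b)$ strictly before this occurrence of $c$ contains a non-erasable letter, and the second records the analogous fact for the suffix after $c$. Using finiteness of the alphabet, the existence of arbitrarily long $uav\in\cL(\sigma)$ is equivalent to the existence of a finite ancestry walk $a=c_0\leftarrow c_1\leftarrow c_2\leftarrow\cdots$ in $G(\sigma)$ that enters a strongly connected component of $G(\sigma)$ and meets, along the walk, at least one edge carrying each of the two flags; the presence of the flags guarantees, under iteration of $\sigma$, unbounded growth of the two sides of the corresponding occurrence of $a$, while its absence bounds one or both sides.

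Finally, since each primitive — decorating the edges, computing growing and erasable letters, SCC decomposition of $G(\sigma)$, and reachability in the resulting DAG — runs in time $O(|\sigma|+\Card(A))$, the full test runs in linear time in the size of $\sigma$. The main obstacle will be ensuring simultaneity: the long left- and right-extensions must be realized inside one common word of $\cL(\sigma)$, not across different ones. I would resolve this by observing that once the ancestry walk enters a non-trivial SCC along which both flags appear, traversing the cycle repeatedly and then applying further powers of $\sigma$ simultaneously extends both sides of a single occurrence of $a$; conversely, the absence of such a doubly-flagged SCC along any ancestry of $a$ bounds either the left or the right extensions separately, and hence also their joint realization, precluding $a\in\cL(\XS(\sigma))$.
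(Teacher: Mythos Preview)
Your compactness reduction is correct: $a\in\cL(\XS(\sigma))$ if and only if for every $N$ there exist $u,v\in A^N$ with $uav\in\cL(\sigma)$. The flag criterion, however, fails under either of the two readings you give. If you require both flags on the cycle of the SCC, take $\sigma\colon e\mapsto cad$, $a\mapsto a$, $c\mapsto cc$, $d\mapsto dd$: here $\sigma^n(e)=c^{2^{n-1}}ad^{2^{n-1}}$, so $a\in\cL(\XS(\sigma))$, yet the only non-trivial SCC in the ancestry of $a$ is $\{a\}$, whose loop $a\to a$ has empty prefix and suffix and hence carries neither flag. If instead you only require both flags somewhere on a walk that touches an SCC, take $\sigma\colon f\mapsto ec$, $e\mapsto ba$, $a\mapsto a$, $b\mapsto b$, $c\mapsto c$: the walk $a\leftarrow a\leftarrow e\leftarrow f$ starts in the SCC $\{a\}$ and carries a left flag on $e\to a$ (prefix $b$ non-erasable) and a right flag on $f\to e$ (suffix $c$ non-erasable), but $\sigma^n(f)=bac$ for all $n\ge 2$, so $a\notin\cL(\XS(\sigma))$.

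The root problem is the claim that a non-erasable prefix yields unbounded growth under iteration: a non-erasable but non-growing word stays bounded under every power of $\sigma$. Along a derivation path $d=c_0\to\cdots\to c_n=a$ with $\sigma(c_i)=p_i\,c_{i+1}\,s_i$, the left extension of $a$ in $\sigma^n(d)$ has length $\sum_i|\sigma^{n-1-i}(p_i)|$. Making this sum diverge over a family of paths that wind around a cycle requires either that some $p_i$ \emph{on} the cycle be non-erasable (so each lap contributes at least one), or that some $p_i$ encountered \emph{before entering} the cycle be growing (so that single term diverges as the path lengthens). The paper's proof encodes exactly this disjunction --- separating SCCs strictly larger than a cycle from pure cycles, and for the latter testing, on each side independently, ``off-cycle prefix growing \emph{or} on-cycle prefix non-erasable'' --- which a single non-erasable flag cannot express.
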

\begin{proof}
We use the graph $G(\sigma)$. We will define letters of type $1,2,3$ as follows.

The letters of type 1 are the letters accessible in $G(\sigma)$ from
non-trivial strongly connected components not reduced to a cycle.
Letters of type 1 belong to $\cL(\XS(\sigma))$. Indeed, if $a$ belongs
to a non-trivial strongly connected component of $G(\sigma)$
which is not reduced to a cycle,
there is an integer $k$ such that $\sigma^k(a)$ contains
at least two occurrences of $a$. It follows that $\XS(\sigma)$ contains
a sequence with an infinite number of $a$ on the right and on the left.
Letters accessible from $a$ have the same property.


The letters of type $2$ can be of type $2'$ or $2''$.
The letters of type $2'$ are the letters accessible from a non-trivial strongly
connected component $C$ reduced to a cycle such that there are a
letter $a$ in a trivial component, a letter $b$ in $C$, integers $k,p$ such that $\sigma^k(a)
= ubv$, $\sigma^p(b) =wbz$  with
$u,v,w,z$ satisfying the two following conditions.
\begin{enumerate}
\item[(i)] either $u$ is growing or $w$ is non-erasable
\item[(ii)] either $v$ is growing or $z$ is non-erasable.
\end{enumerate}
The letters of type $2''$ are the letters accessible from a non-trivial strongly
connected component $C$ reduced to a cycle such that there is
a letter $b$ in $C$, an integer $p$ such that $\sigma^p(b) =wbz$ and 
$w$ and $z$ non-erasable. 
The letters of type 2 belong to $\cL(\XS(\sigma))$. Indeed,
if the conditions for the type $2'$ are satisfied,
$\sigma^{k+np}(a) = \sigma^{np}(u) \sigma^{np}(w)\ldots wbv \ldots \sigma^{np}(v)\sigma^{np}(z)$,
making $b$ a letter of $\cL(\XS(\sigma))$.
Next, a letter of
type $2''$ is accessible from a letter $b$ which is in $\cL(\XS(\sigma))$
and it is therefore in $\cL(\XS(\sigma))$.

The letters of type 3 are the letters accessible from a non-trivial strongly
connected component $C'$ itself accessible from another strongly
component $C$ which is a cycle. The letters of type 3 belong to
$\cL(\XS(\sigma))$. Indeed, there is a letter $a$ in $C$, a letter
$b$ in $C'$ and an integer $p$ such that $\sigma^p(a) = uav$ and
$\sigma^p(b) = wbz$, where $b$ is
a letter of $u$ or of $v$.
Le us assume that $b$ is a letter of $v$.
Then $\sigma^{np}(a)= \sigma^{(n-1)p}(u) \cdots
\sigma^p(u)uav\sigma^p(v) \cdots \sigma^{(n-1)p}(v)$.
Each $\sigma^{rp}(v)$ contains the letter $b$. Thus there is a
infinite number of words $xby$ with $|x| = |y| = n$ in $\cL(\sigma)$
and $b$ belongs to some point in $\XS(\sigma)$. Note that letters may have
several types.

We now show that if a letter is neither a letter of type 1 to 3, then it is
not in $\cL(\XS(\sigma))$. Let $c$ be a letter which is neither of type
1 to 3. Assume that $c \in \cL(\XS(\sigma))$.
Let $x$ be a point of $\XS(\sigma)$ containing
the letter $c$. We may assume that $x_0 = c$.

Since $x \in \XS(\sigma)$, for each integer $n$, the word
$x_{[-n,n]}$ is a factor of some $\sigma^{k_n}(a)$ for some fixed
letter $a$. Let $T_\sigma(a,k_n)$ be the derivation tree at order $k_n$ rooted
at $a$. For $n$ large enough, there are integers $0 \leq m < m' \leq \Card(A)$ such that
the unique path $\pi$ going from $a$ to $x_0$ in $T_\sigma(a,k_n)$ goes through a
vertex labeled by
$b$ at the level $m$ and at the level $m'$. Thus $\sigma^m(a) = ubv$
and $\sigma^p(b) = wbz$ with $p = m'- m$.
Since $c$ is not of type 1 the letter $b$ belongs to a non-trivial
component $C$ reduced to a cycle.

If $a$ is in a trivial
component, since $c$ is not of type $2'$, we have either
$u$ non-growing and $w$ erasable, or
$v$ non-growing and $z$ erasable.

Assume that $u$ is non-growing and $w$ is erasable, the other case
being symmetrical.
There is a constant $K$ such that the lengths of all
$\sigma^n(u)$ are bounded by $K$ and a constant $M$ such that $\sigma^{M}(w)  = \varepsilon$.

Thus for a large enough $n$, there is are integers $r_n, i_n$ with $0 \leq i_n < p$ such that
$x_{[-n,0)}$ is a factor of $\sigma^{r_n}(u)\sigma^{i_n}
(\sigma^{Mp}(w) \ldots  \sigma^{2p}(w)\sigma^p(w)w$, a contradiction.

If $a$ is in a non-trivial
component, this component is $C$, since $c$ is not of type 3, and we may
assume that $a = b$. Since $c$ is not of type $2''$, we have either
$w$ or $z$ erasable. Assume that $w$ is erasable.
As above, $x_{[-n,0)}$ is a factor of $\sigma^{r_n}(u)\sigma^{i_n}
(\sigma^{Mp}(w) \ldots  \sigma^{2p}(w)\sigma^p(w)w$ for some integers
$r_n, i_n$ with $0 \leq
i_n < p$, a contradiction.

We now investigate the complexity
of testing whether a letter is in $\cL(\XS(\sigma))$.
The graph $G(\sigma)$ can be built in linear time in the size of
$\sigma$. Letters of type 1, 2, or 3 can be computed in linear time in
the size of $\sigma$. Indeed, it is clear for types 1 and 3.

For  type 2, one can compute for each component $C$ reduced to a
cycle whether there is an integer $p$ such that $\sigma^p(a) = uav$
with $a \in C$, and $u$ erasable (\resp $v$ erasable).
This can be done in linear time in the size of $\sigma$ for all components.
Thus the computation of letters of type $2''$ can be done in
linear time.

The growing letters are computed in linear time in the size of
$G(\sigma)$. Now finding letters $a$ in trivial components with a path
from $a$ to a component reduced to a cycle containing a letter $b$
such that $\sigma^k(a) = ubv$ with $u$ growing (\resp $v$ growing)
can be done in linear time. Thus the computation of letters of type $2'$
can be done in
linear time.
\end{proof}

\begin{example}
  Let $\sigma\colon a\mapsto ab, b\mapsto bc, c\mapsto cc$.
  The graph $G(\sigma)$ is represented in Figure~\ref{figureExample3.17}
  on the left.
    \begin{figure}[hbt]
    \centering
\tikzset{node/.style={circle,draw,minimum size=0.4cm,inner sep=0pt}}
\tikzset{title/.style={minimum size=0.5cm,inner sep=0pt}}
\tikzstyle{every loop}=[->,shorten >=1pt,looseness=8]
\tikzstyle{loop left}=[in=220,out=140,loop]
\tikzstyle{loop above}=[in=120,out=60,loop]
\tikzstyle{loop aboveleft}=[in=140,out=90,loop]
\tikzstyle{loop aboveright}=[in=90,out=40,loop]
\tikzstyle{loop right}=[in=-40,out=40,loop]
\begin{tikzpicture}
      \node[node](a)at(0,0){$a$};
      \node[node](b)at(1.5,0){$b$};
     \node[node](c)at(3,0){$c$};
     
      \draw[->,>= stealth](a)edge[loop left]node{}(a);
      \draw[->,>= stealth](a)edge node{}(b);
      \draw[->,>= stealth](b)edge node{}(c);
      \draw[->,>= stealth](b)edge[loop above]node{}(b);
      \draw[->,>= stealth](c)edge[loop above]node{}(c);
      \draw[->,>= stealth](c)edge[loop right]node{}(c);
      \node[node](a)at(5,0){$a$};
      \node[node](b)at(6.5,.5){$b$};
      \node[node](c)at(6.5,-.5){$c$};
      \draw[->,>= stealth](a)edge[loop left]node{}(a);
      \draw[->,>= stealth](a)edge node{}(b);
      \draw[->,>= stealth](a)edge node{}(c);
      \draw[->,>= stealth](c)edge[loop right]node{}(c);
      \draw[->,>= stealth](b)edge[loop right]node{}(b);
      \end{tikzpicture}
    \caption{The graph $G(\sigma)$.}\label{figureExample3.17}

    \end{figure}
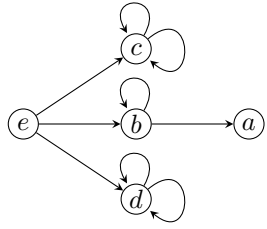  
  Then $c$ is a letter of type 1 and 3 (since it is accessible from
  $b$) and $b$ is a letter of type 3. The letter $a$ does not belong
  to $\cL(\XS(\sigma))$.
\end{example}
\begin{example}
  Let $\sigma\colon a\mapsto bac, b\mapsto b, c\mapsto c$.
  The graph $G(\sigma)$ is represented in Figure~\ref{figureExample3.17}
  on the right.
  Then $a, b, c $ are letters of type $2''$. The point $\cdots bbbb \cdot
  accccc \cdots$ belongs to $\XS(\sigma)$.
\end{example}

\begin{example} \label{example3.19}
  Let $\sigma\colon e\mapsto cbd, c\mapsto cc, d\mapsto dd, b \mapsto
  ba, a \mapsto \varepsilon$.
  The graph $G(\sigma)$ is represented in Figure~\ref{figureExample3.19}
  on the right.
      \begin{figure}[hbt]
    \centering
\tikzset{node/.style={circle,draw,minimum size=0.4cm,inner sep=0pt}}
\tikzset{title/.style={minimum size=0.5cm,inner sep=0pt}}
\tikzstyle{every loop}=[->,shorten >=1pt,looseness=8]
\tikzstyle{loop left}=[in=220,out=140,loop]
\tikzstyle{loop above}=[in=120,out=60,loop]
\tikzstyle{loop aboveleft}=[in=140,out=90,loop]
\tikzstyle{loop aboveright}=[in=90,out=40,loop]
\tikzstyle{loop right}=[in=-40,out=40,loop]
\begin{tikzpicture}
      \node[node](e)at(0,0){$e$};
      \node[node](b)at(1.5,0){$b$};
      \node[node](c)at(1.5,1){$c$};
      \node[node](d)at(1.5,-1){$d$};
      \node[node](a)at(3,0){$a$};
      \draw[->,>= stealth](e)edge node{}(b);
       \draw[->,>= stealth](e)edge node{}(c);
       \draw[->,>= stealth](e)edge node{}(d);
       \draw[->,>= stealth](b)edge node{}(a);
       \draw[->,>= stealth](b)edge[loop above]node{}(b);
      \draw[->,>= stealth](d)edge[loop above]node{}(d);
      \draw[->,>= stealth](d)edge[loop right]node{}(d);
     \draw[->,>= stealth](c)edge[loop above]node{}(c);
      \draw[->,>= stealth](c)edge[loop right]node{}(c);
      \end{tikzpicture}
    \caption{The graph $G(\sigma)$.}\label{figureExample3.19}

    \end{figure}  
  Then $a, b$ are letters of type $2'$ and $c,d$ are of type 1.
  The point $\cdots ccccc \cdot
  baddddd \cdots$ belongs to $\XS(\sigma)$.
\end{example}
Note that it is not possible to characterize the letters of
$\cL(\XS(\sigma))$ on the multigraph $G(\sigma)$. Indeed
If $\sigma$ is the morphism of Example \ref{example3.19}
and $\tau\colon e\mapsto bcd, c\mapsto cc, d\mapsto dd, b \mapsto
  ba, a \mapsto \varepsilon$, then $\sigma$ and $\tau$ have the same
  multigraph. But $b \in \cL(\XS(\sigma))$ while $b \notin \cL(\XS(\tau))$.

It follows from Proposition~\ref{lemmaDecidability} and Proposition \ref{propositionCaractSubstitution} that the
property $\cL(\sigma)=\cL(\XS(\sigma))$ is decidable in linear time.





\medskip

\section{Growing letters}\label{sectionGrowing}

Morphisms $\sigma$ having points in $\XS(\sigma)$ with only non-growing
letters are called \emph{wild} in \cite{MaloneyRust2018}.
The other ones are called \emph{tame}. A characterization
of tame morphisms is given in \cite[Theorem 2.9]{MaloneyRust2018}.

A \emph{finite fixed point} of a morphism $\sigma\colon A^*\to A^*$
is a word $w\in A^*$ such that $\sigma(w)=w$ (we shall come
back to fixed points in the next section).

\begin{proposition}\label{propositionFiniteFixedPoint}
  Let $\sigma\colon A^*\to A^*$ be a morphism. If, for some
  $a\in A$, one has $\sigma(a)=uav$ with $u,v$ erasable,
  then $w=\sigma^{\mex(\sigma)}(a)$ is a finite fixed point of $\sigma$.
\end{proposition}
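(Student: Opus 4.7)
The plan is to exploit the mortality exponent directly. Set $m = \mex(\sigma)$. By definition of the mortality exponent, for every erasable letter $c$ we have $\sigma^m(c) = \varepsilon$, and hence by the morphism property $\sigma^m(u) = \sigma^m(v) = \varepsilon$ since $u$ and $v$ consist only of erasable letters.

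Then $w = \sigma^m(a)$ is a fixed point: using that $\sigma$ is a morphism and that $\sigma^{m+1} = \sigma \circ \sigma^m = \sigma^m \circ \sigma$, one computes
\begin{displaymath}
\sigma(w) \;=\; \sigma(\sigma^m(a)) \;=\; \sigma^m(\sigma(a)) \;=\; \sigma^m(uav) \;=\; \sigma^m(u)\,\sigma^m(a)\,\sigma^m(v) \;=\; \sigma^m(a) \;=\; w.
\end{displaymath}

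There is essentially no obstacle beyond invoking the definition of $\mex(\sigma)$: once one observes that the mortality exponent uniformly kills every erasable word (a fact already noted just before the statement, since $\mex(w) \le \Card(A)$ for every erasable $w$, and $\mex(\sigma)$ is the supremum over erasable letters), the computation above is immediate.
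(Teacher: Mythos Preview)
Your proof is correct and follows essentially the same approach as the paper's: both rely on the key observation that $\sigma^{\mex(\sigma)}(u)=\sigma^{\mex(\sigma)}(v)=\varepsilon$. The only cosmetic difference is that the paper expands $w=\sigma^{\mex(\sigma)}(a)$ explicitly as $\sigma^{\mex(\sigma)-1}(u)\cdots\sigma(u)uav\sigma(v)\cdots\sigma^{\mex(\sigma)-1}(v)$ before applying $\sigma$, whereas you use the commutation $\sigma\circ\sigma^m=\sigma^m\circ\sigma$ to reach the same conclusion more directly.
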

\begin{proof}
  Since $u,v$ are erasable, we have
  $\sigma^{\mex(\sigma)}(u)=\sigma^{\mex(\sigma)}(v)=\varepsilon$.
  Since
  \begin{displaymath}
    w=\sigma^{\mex(\sigma)-1}(u)\cdots\sigma(u)uav\sigma(v)\cdots\sigma^{\mex(\sigma)-1}(v)
  \end{displaymath}
  we have $\sigma(w)=w$.
  \end{proof}

\begin{proposition}\label{propositionNonGrowingLetters}
  Let $\sigma\colon A^*\to A^*$ be a morphism.
  There are computable integers
  $i\ge 0$ and $p\ge 1$ depending only on $\sigma$ such that
  $\sigma^i(a)=\sigma^{i+p}(a)$ for every non-growing letter $a\in A$.
\end{proposition}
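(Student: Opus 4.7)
The plan is to reduce the statement to a finite pigeonhole argument on the orbit of each non-growing letter under $\sigma$. First I would observe that the set $N \subseteq A$ of non-growing letters is stable in the sense that $\sigma(N) \subseteq N^*$. Indeed, if $a \in N$ and some letter $b$ appearing in $\sigma(a)$ were growing, then $\sigma^n(a) = \sigma^{n-1}(\sigma(a))$ would contain $\sigma^{n-1}(b)$ as a factor, forcing $|\sigma^n(a)| \to \infty$ and contradicting $a \in N$. In particular, $\sigma^n(a) \in N^*$ for every $a \in N$ and every $n \ge 0$. The set $N$ itself is computable from the multigraph $G(\sigma)$ by the discussion preceding Lemma~\ref{lemmaGrowing}.

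Next, I would fix a non-growing letter $a$ and exploit the definition: the sequence $(|\sigma^n(a)|)_{n \ge 0}$ is bounded by some integer $B_a$. Consequently $(\sigma^n(a))_{n \ge 0}$ takes values in the finite set $N^{\le B_a}$, and by the pigeonhole principle there exist integers $0 \le i_a < j_a \le \Card(N^{\le B_a})$ such that $\sigma^{i_a}(a) = \sigma^{j_a}(a)$. Setting $p_a = j_a - i_a$, applying $\sigma$ repeatedly gives $\sigma^n(a) = \sigma^{n + p_a}(a)$ for every $n \ge i_a$. Crucially, the pair $(i_a, p_a)$ can be determined by simply iterating $\sigma$ starting from $a$ and stopping at the first repetition in the sequence of computed words; termination is guaranteed by the argument just given, hence $(i_a, p_a)$ is computable.

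Finally, setting
\begin{displaymath}
  i = \max_{a \in N} i_a, \qquad p = \operatorname{lcm}_{a \in N} p_a,
\end{displaymath}
I obtain integers $i \ge 0$ and $p \ge 1$, depending only on $\sigma$, such that $\sigma^i(a) = \sigma^{i+p}(a)$ for every non-growing letter $a \in A$. Both $i$ and $p$ are computable because $N$ is a computable subset of $A$ and each pair $(i_a, p_a)$ is obtained by a terminating effective procedure.

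The only real obstacle is confirming that the iterative search for $(i_a, p_a)$ halts uniformly in $a$, i.e.\ that the orbit $\{\sigma^n(a) \mid n \ge 0\}$ is truly finite and not merely locally so. This is exactly what the stability $\sigma(N) \subseteq N^*$ combined with the boundedness of $|\sigma^n(a)|$ ensures, so no further bound on $B_a$ needs to be computed in advance; one only needs to detect the first repetition, which is guaranteed to occur.
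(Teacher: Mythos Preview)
Your proof is correct and takes a genuinely simpler route than the paper's. You rely on a direct pigeonhole argument: since each non-growing letter $a$ has a bounded orbit $\{\sigma^n(a)\mid n\ge 0\}\subset N^{\le B_a}$, a repetition must occur, and detecting the first repetition by brute-force iteration is a terminating effective procedure. Taking the maximum of the indices and the least common multiple of the periods then gives uniform $i,p$.

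The paper instead works structurally on the multigraph $G(\sigma)$: for each non-growing non-erasable letter $a$ it defines $i(a)$ as the maximal path length from $a$ to a cycle and $p(a)$ as the least common multiple of the cycle lengths accessible from $a$, and then invokes Proposition~\ref{propositionFiniteFixedPoint} together with the mortality exponent $\mex(\sigma)$ to obtain the explicit formula
\begin{displaymath}
  i=\max\{i(a)+p(a)\mex(\sigma)\}+\mex(\sigma),\qquad p=\operatorname{lcm}_a p(a).
\end{displaymath}
This buys explicit, \emph{a priori} bounds on $i$ and $p$ in terms of $G(\sigma)$ and $\mex(\sigma)$, without iterating $\sigma$ at all. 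Your argument, by contrast, gives no such explicit bound in advance but is more elementary and entirely self-contained, needing neither Proposition~\ref{propositionFiniteFixedPoint} nor any analysis of the graph structure. Since the later uses of this proposition in the paper only require the existence of computable $i,p$ (not their explicit values), your approach suffices for all downstream applications.
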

\begin{proof}
    Let $a\in A$ be a non-growing  and non-erasable
  letter. Let $i(a)$
  be the maximal length of a path from $a$ to a cycle in $G(\sigma)$.
  Let $p(a)$ be the least common multiple of the lengths
  of cycles accessible from $a$. Then
  $\sigma^{i(a)+p(a)}=u\sigma^{i(a)}(a)v$ with $u,v$ erasable.

  This shows that for every non-growing and non-erasable letter $a$,
  by definition of $i(a)$,
  one has $\sigma^{i(a)}(a)=u_0a_0u_1\cdots a_ku_k$
  with the component of $a_i$ on a cycle
  and $u_i$ erasable. Each $a_i$ is such that $\sigma^{p(a)}(a_i)=v_ia_iw_i$
  with $v_i,w_i$ erasable. By Proposition~\ref{propositionFiniteFixedPoint}
  applied to $\sigma^{p(a)}$, this implies that $\sigma^{p(a)\mex(\sigma)}(a_i)$
  is a finite fixed point of $\sigma^{p(a)}$. Then
  \begin{eqnarray*}
 \sigma^{i(a)+p(a)+p(a)\mex(\sigma)}(a)&=&\sigma^{p(a)+p(a)\mex(\sigma)}(u_0a_1u_1\cdots a_ku_k)\\
 &=&\sigma^{p(a)\mex(\sigma)}(u_0a_1u_1\cdots a_ku_k)\\
 &=&\sigma^{i(a)+p(a)\mex(\sigma)}(a)
  \end{eqnarray*}
  The statement follows, choosing
  \begin{displaymath}
    i=\max\{i(a)+p(a)\mex(\sigma)\mid  \mbox{$a$ non-growing and non-erasable}\}+\mex(\sigma)
    \end{displaymath}
  and $p$ equal to the least common multiple of the $p(a)$
  for $a$ non-growing.
  \end{proof}

The following statement describes a special type
of fixed points of morphisms that arises only
when there are non-growing letters. It bears
some similarity with the result of \cite{MaloneyRust2018}
but proves additionally the existence of periodic points
without growing letters.

\begin{proposition}\label{lemmaNonGrowing}
Let $\sigma\colon A^*\to A^*$ be a morphism. If $x$ is a point of $\XS(\sigma)$
which has only non-growing letters, then there is an integer $k$ such that
$x = S^k( ^\omega u \cdot w v^\omega)$,
where $u,v,w$ are finite words of lengths bounded by a
computable integer depending
only on $\sigma$.
One can further choose the words $u, v, w$ such that
they are fixed points of some power of $\sigma$.
The finite set of orbits
of these points is effectively computable.
\end{proposition}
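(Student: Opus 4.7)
I aim to show three things in order: first, that if $B$ denotes the set of non-growing letters, $\sigma$ restricts to an endomorphism of $B^*$; second, that some explicit power $\tau=\sigma^N$ is idempotent on $B^*$, supplying a finite computable set of $\tau$-fixed words of bounded length; third, that every $x\in \XS(\sigma)\cap B^\Z$ decomposes as a shift of ${}^\omega u\cdot w v^\omega$ with $u,v,w\in\tau(B^*)$.

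To prove $\sigma(B^*)\subseteq B^*$, I argue by contradiction: if $b\in B$ and $\sigma(b)$ contained a growing letter $c$, then $|\sigma^{n+1}(b)|\ge |\sigma^n(c)|\to\infty$ by Lemma~\ref{lemmaGrowing}, contradicting $b\in B$. By Proposition~\ref{propositionNonGrowingLetters}, there are computable integers $i,p$ with $\sigma^i(b)=\sigma^{i+p}(b)$ for every $b\in B$; this makes $n\mapsto\sigma^n(b)$ eventually periodic of period dividing $p$. Taking $N$ to be any multiple of $p$ with $N\geq i$ makes $\tau:=\sigma^N$ idempotent on $B^*$, since for $b\in B$ one has $\tau^2(b)=\sigma^{2N}(b)=\sigma^N(b)=\tau(b)$. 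In particular every word of $\tau(B^*)$ is a fixed point of $\tau$, and the set of all such words of any prescribed length is finite and computable.

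For the structural step, given $x\in \XS(\sigma)\cap B^\Z$, iterating Proposition~\ref{propositionBKM} yields, for every $m\ge 1$, a preimage $y_m\in\XS(\sigma)$ and an offset $k_m$ with $x=S^{k_m}(\tau^m(y_m))$. By compactness of $\XS(\sigma)$ together with a pigeonhole on the finite local pattern of growing/non-growing letters near position~$0$ of $y_m$, I would pass to a subsequence along which this local pattern stabilizes. The non-growing letters of $y_m$ then contribute $\tau$-fixed blocks which assemble into the bounded middle word $w$. Growing letters in $y_m$ contribute $\tau^m$-images whose lengths grow with $m$; yet their visible portions inside $x$ lie entirely in $B^*$, so that the analysis of the derivation tree $T_\sigma(c,mN)$ of a growing letter $c$ along its leftmost and rightmost strands applies. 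Using the idempotence of $\tau$ on the non-growing side branches of this tree together with a pigeonhole on the finitely many (letter, offset) pairs along the extreme strands, the left and right tail contributions become ultimately periodic of computable period, yielding $x=S^k({}^\omega u\cdot w v^\omega)$ with $u,v,w\in\tau(B^*)$ and lengths bounded by a computable constant in $N$ and $\Card(A)$.

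Finally, since $u,v,w$ range over the finite computable set of $\tau$-fixed words of this bounded length in $B^*$ and $k$ over a correspondingly bounded range, the collection of orbits of such points $x$ is finite and effectively enumerable. The main obstacle I expect is the third step, specifically extracting \emph{bounded-period} ultimate periodicity of the two tails of $x$ from the preimage data: it is not enough that the tails have bounded-complexity factors, and the tree/strand analysis together with the idempotence of $\tau$ on non-growing side branches is the delicate ingredient that upgrades bounded complexity to an explicit periodic description by $\tau$-fixed words.
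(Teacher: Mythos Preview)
Your plan shares the paper's two key ingredients: the idempotence of some power $\tau=\sigma^N$ on non-growing words (via Proposition~\ref{propositionNonGrowingLetters}), and a derivation-tree analysis along extremal strands to extract bounded-period ultimate periodicity. The paper's execution is more direct, however. Instead of pulling $x$ back to $\tau^m$-preimages $y_m\in\XS(\sigma)$ via Proposition~\ref{propositionBKM}, it simply uses that each finite window $x_{[-n,n]}$ is a factor of some $\sigma^N(a)$ with $a$ a single \emph{growing} letter (for large $n$ this is forced, since non-growing letters have bounded $\sigma$-orbits). It then distinguishes three cases according to whether $x_{[-n,n]}$ lies before the first growing letter of $\sigma^N(a)$, after the last one, or between two consecutive growing letters. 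In each case a pigeonhole on the growing letters along the relevant strand of the derivation tree, combined with the idempotence of $\tau$ on the non-growing side words appearing along that strand, yields the explicit form with $\tau$-fixed $u,v,w$ of bounded length.

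Your BKM layer is an unnecessary detour: to make it work you would still need to locate a growing letter in $y_m$ within bounded distance of position~$0$ (such a letter must exist, else the $\tau^m$-image near~$0$ would have bounded length), and then run the same tree analysis from that letter --- at which point you are doing exactly the paper's Case~1/2/3. The ``main obstacle'' you correctly flag in your third step is precisely what that three-case split resolves; dropping the two-sided preimage step and working directly from $\sigma^N(a)$ would both simplify your argument and close the gap you anticipate.
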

\begin{proof}
  Let $x\in \XS(\sigma)$ be a point without growing letters.
  Let $i, p$ be the constants of Lemma \ref{propositionNonGrowingLetters}.
  Let $p'$ be a multiple of $p$ larger than $i$.
  Hence for each non-growing word $u$, $\sigma^{kp'}(u)=\sigma^{p'}(u)$
  for every  $k\ge 1$.

  For every $n\ge 1$, there is some $N \ge 1$
  and $a\in A$ such that $x_{[-n,n]}$ is a factor of $\sigma^{N}(a)$.
  We may assume that $a$ is growing.

  We distinguish three cases.
  We choose $n$ large enough so that $N > p'\Card(A)^2$. 
  (the exponent 2 will
  be needed only in Case 3).

 Case 1. We have, for $N$ large enough , $\sigma^N(a)=qbr$
  with $q$ non-growing $b\in A$ growing and $x_{[-n,n]}$ a factor of $q$.
  
  We can write $N=N_1+kN_2+N_3$ with $N_1,N_2,N_3\le p'\Card(A)$,
  $N_1,N_2$ multiple of $p'$, $N_2>0$ and
  \begin{equation}
    \sigma^{N_1}(a)=p_1cq_1,\quad \sigma^{N_2}(c)=p_2cq_2,\quad
    \sigma^{N_3}(c)=p_3bq_3,\label{equationN1N2N3}
  \end{equation}
  with $p_1,p_2,p_3$ non-growing.
  Note that, since $b$ is growing, $c$ is growing
  and is the first growing letter of
  $\sigma^{N_1}(a)$ and of $\sigma^{N_2}(c)$. Thus it
  is uniquely determined.
  \begin{figure}[hbt]
    
    \centering
    \tikzset{node/.style={draw,minimum size=.4cm,inner sep=0pt}}
    \tikzset{title/.style={minimum width=.4cm,minimum height=.4cm}}
    \begin{tikzpicture}
      
      \node[node](e)at(3,3){$a$};
      \node[node,text width=1cm](p1)at(2.3,2){};\node[title](pp1)at(2.3,2){$p_1$};
      \node[node](ch)at(3,2){$c$};
      \node[node,text width=1cm](q1)at(3.7,2){};\node[title](qq1)at(3.7,2){$q_1$};

      \node[node,text width=1cm](p1b)at(.3,1){};\node[title](pp1b)at(.3,1){$p''_2$};
      \node[node,text width=2cm](p2)at(1.8,1){};\node[title](uu2)at(1.8,1){${p'_2}^{k-1}p_2$};
      \node[node](cb)at(3,1){$c$};
      \node[node,text width=2cm](v2)at(4.2,1){};\node[title](vv2)at(4.2,1){$q'_2$};

      \node[node,text width=1cm](p1bb)at(-.7,0){};\node[title]at(-.7,0){$p''_3$};
      \node[node,text width=2cm](p3)at(.8,0){};\node[title](uu3)at(.8,0){$p'_3$};
      \node[node,text width=1cm](u3)at(2.3,0){};\node[title](uu3)at(2.3,0){$p_3$};
      \node[node](b)at(3,0){$b$};
      \node[node,text width=1cm](q3)at(3.7,0){$q_3$};

      \draw[->,above](2.8,2.8)--node{$\sigma^{N_1}$}(1.8,2.2);
      \draw[->,above](3.2,2.8)--node{}(4.2,2.2);

      \draw[->,above](1.8,1.8)--node{$\sigma^{kN_2}$}(-.2,1.2);
      \draw[->,left](2.8,1.8)--node{}(.8,1.2);
      \draw[->,left](3.2,1.8)--node{}(5.2,1.2);

      \draw[->,left](-.2,.8)--node{$\sigma^{N_3}$}(-1.2,.2);
      \draw[->,left](.8,.8)--node{}(-.2,.2);
      \draw[->,left](2.8,.8)--node{}(1.8,.2);
      \draw[->,left](3.2,.8)--node{}(4.2,.2);
     \end{tikzpicture}
    \caption{Case 1.}\label{figureLemma42Case1}
  \end{figure}

  Since $p_1,p_2$ are non-growing and since $N_2$
  is  multiple of $p'$,
  by Lemma~\ref{propositionNonGrowingLetters}, we have
  \begin{displaymath}
    \sigma^{kN_2}(p_1)=p''_2
  \end{displaymath}
  with $p''_2=\sigma^{N_2}(p_1)$ and 
  \begin{displaymath}
    \sigma^{kN_2}(c)={p'_2}^{k-1}p_2cq'_2
  \end{displaymath}
  with $p'_2=\sigma^{N_2}(p_2)$
  (see Figure~\ref{figureLemma42Case1}).
  
  Thus  the word $x_{[-n,n]}$ is a factor of $p=p''_3p'_3p_3$
  with $p''_3=\sigma^{N_3}(p''_2)$ and $p'_3=\sigma^{N_3}({p'_2}^{k-1}p_2)$.
  This implies that $x_{[-n,n]}$ is, up to a prefix and a suffix of bounded length,
  a word of period $\sigma^{N_3}(p'_2)$. Thus $x$ is a periodic
  point of the form $x=w^\infty$ where $w$ is a fixed point of a power
  of $\sigma$ (actually of $\sigma^{N_2}$) of bounded length.

  Case 2.  We have, for $N$ large enough, $\sigma^N(a)=pbq$
  with $q$ non-growing, $b\in A$ growing and $x_{[-n,n]}$ a factor of $q$.
  This case is symmetric to Case 1. We find that $x_{[-n,n]}$ is a factor
  of $q=q_3q'_3q''_3$ with $q'_2=\sigma^{N_2}(q_1)$, $q''_2=\sigma^{N_2}(q_2)$,
    $q'_3=\sigma^{N_3}(q_2(q'_2)^{k-1})$ and $q''_3=\sigma^{N_3}(q''_2)$
  (see Figure~\ref{figureLemma42Case2}). This implies, as in Case 1,
  that $x$ is periodic point of the form $x=w^\infty$ where $w$ is a fixed point of a power
  of $\sigma$ (actually of $\sigma^{N_2}$) of bounded length.
 \begin{figure}[hbt]
    
    \centering
    \tikzset{node/.style={draw,minimum size=.4cm,inner sep=0pt}}
    \tikzset{title/.style={minimum width=.4cm,minimum height=.4cm}}
    \begin{tikzpicture}
      
      \node[node](e)at(3,3){$a$};
      \node[node,text width=1cm](p1)at(2.3,2){};\node[title](pp1)at(2.3,2){$p_1$};
      \node[node](ch)at(3,2){$c$};
      \node[node,text width=1cm](q1)at(3.7,2){};\node[title](qq1)at(3.7,2){$q_1$};

      \node[node,text width=2cm](p2)at(1.8,1){$\quad p'_2$};
      \node[node](cb)at(3,1){$c$};
      \node[node,text width=2cm](v2)at(4.2,1){$\quad q_2(q'_2)^{k-1}$};
      \node[node,text width=1cm](v2)at(5.7,1){$\quad q''_2$};
      
      \node[node,text width=2cm](p3)at(.8,0){};\node[title](uu3)at(.8,0){$p'_3$};
      \node[node,text width=1cm](u3)at(2.3,0){};\node[title](uu3)at(2.3,0){$p_3$};
      \node[node](b)at(3,0){$b$};
      \node[node,text width=1cm](q3)at(3.7,0){};\node[title](qq3)at(3.7,0){$q_3$};
      \node[node,text width=2cm](q'3)at(5.2,0){$\quad q'_3$};
      \node[node,text width=1cm](q3)at(6.7,0){$\quad q''_3$};

      \draw[->,above](2.8,2.8)--node{$\sigma^{N_1}$}(1.8,2.2);
      \draw[->,above](3.2,2.8)--node{$\sigma^{N_1}$}(4.2,2.2);

      \draw[->,left](2.8,1.8)--node{$\sigma^{kN_2}$}(.8,1.2);
      \draw[->,left](3.2,1.8)--node{}(5.2,1.2);
      \draw[->,left](4.2,1.8)--node{}(6.2,1.2);
     
      \draw[->,left](.8,.8)--node{$\sigma^{N_3}$}(-.2,.2);
      \draw[->,left](2.8,.8)--node{}(1.8,.2);
      \draw[->,left](3.2,.8)--node{}(4.2,.2);
      \draw[->,left](5.2,.8)--node{}(6.2,.2);
      \draw[->,left](6.2,.8)--node{}(7.2,.2);
     \end{tikzpicture}
    \caption{Case 2.}\label{figureLemma42Case2}
 \end{figure}
 
  Case 3. We have, for $N$ large enough, $\sigma^N(a)=pbqcr$
  with $q$ non-growing, $b,c\in A$ growing and $x_{[-n,n]}$ a factor of $q$.

  \begin{figure}[hbt]
    
    \centering
    \tikzset{node/.style={draw,minimum size=.4cm,inner sep=0pt}}
    \tikzset{title/.style={minimum width=.4cm,minimum height=.4cm}}
    \begin{tikzpicture}

      \node[node](e)at(3,3){$a$};
      \node[node,text width=1cm](p1)at(1.6,2){};\node[title](pp1)at(1.6,2){$p_1$};
      \node[node](ch)at(2.3,2){$d$};
      \node[node,text width=1cm](q1)at(3,2){};\node[title](qq1)at(3,2){$q_1$};
      \node[node](dh)at(3.7,2){$e$};
      \node[node,text width=1cm](r1)at(4.4,2){};\node[title](rr1)at(4.4,2){$r_1$};
      \node[node,text width=1cm](p2)at(0,1){};
      \node[node](cb)at(.7,1){$d$};
      \node[node,text width=1.6cm](q2)at(1.7,1){$\ q_2(q'_2)^{k-1}$};
      \node[node,text width=1cm](q'2)at(3,1){};\node[title](qq'2)at(3,1){$s_2$};
      \node[node,text width=1.6cm](q''2)at(4.3,1){$(q'''_2)^{k-1}q''_2$};
      \node[node](db)at(5.3,1){$e$};
      \node[node,text width=1cm](r2)at(6,1){};
      \node[node,text width=1cm](p3)at(-.4,0){};\node[title](pp2)at(-.4,0){$p_3$};
      \node[node](a)at(.3,0){$b$};
      \node[node,text width=1cm](q3)at(1,0){};\node[title](qq2)at(1,0){$q_3$};
      \node[node,text width=3cm](q'3)at(3,0){};\node[title](qq'0)at(3,0){$q'_3$};
      \node[node,text width=1cm](q''3)at(5,0){};\node[title](qq''3)at(5,0){$q''_3$};
      \node[node](b)at(5.7,0){$c$};
      \node[node,text width=1cm](r3)at(6.4,0){};\node[title](rr3)at(6.4,0){$r_3$};

      \draw[->,above](2.8,2.8)--node{$\sigma^{N_1}$}(1.1,2.2);
      \draw[->,above](3.2,2.8)--node{}(4.9,2.2);
      \draw[->,left](2.1,1.8)--node{$\sigma^{kN_2}$}(-.5,1.2);
      \draw[->,left](2.5,1.8)--node{}(2.5,1.2);
      \draw[->,left](3.5,1.8)--node{}(3.5,1.2);
      \draw[->,left](3.9,1.8)--node{}(6.5,1.2);
      \draw[->,left](.5,.8)--node{$\sigma^{N_3}$}(-.9,.2);
      \draw[->,left](.9,.8)--node{}(1.5,.2);
      \draw[->,left](5.1,.8)--node{}(4.5,.2);
      \draw[->,left](5.5,.8)--node{}(6.9,.2);
    \end{tikzpicture}
    \caption{Case 3.}\label{figureLemma42Case3}
  \end{figure}
  For $N$ large enough, we have $N=N_1+kN_2+N_3$ with $N_1,N_2,N_3\le p'\Card(A)^2$,
  $N_1,N_2$ multiple of $p'$, $N_2>0$, and
  
  \begin{eqnarray*}
    \sigma^{N_1}(a)&=&p_1dq_1er_1,\\
    \sigma^{N_2}(d)=p_2dq_2,\quad \sigma^{N_2}(q_1)&=&s_2,\quad \sigma^{N_2}(e)=q''_2er_2,
  \end{eqnarray*}
  where $d,e\in A$ are growing letters. The words $q_1,q_2,s_2,q''_2$ 
  are non-growing.

  We have  (see Figure~\ref{figureLemma42Case3})
  
  \begin{eqnarray*}
        \sigma^{kN_2}(d)&=&p'_2dq_2(q'_2)^{k-1},\quad \sigma^{kN_2}(q_1)=s_2,\quad \sigma^{kN_2}(e)=(q'''_2)^{k-1}q''_2er'_2,\\
    \sigma^{N_3}(d)&=&p_3bq_3,\quad \sigma^{N_3}(q_2(q'_2)^{k-1}s_2(q'''_2)^{k-1}q''_2)=q'_3,\quad \sigma^{N_3}(e)=q''_3cr_3
  \end{eqnarray*}
 with $q'_2=\sigma^{N_2}(q_2)$, $q'''_2=\sigma^{N_2}(q''_2)$. The word $q_3,q'_3,q''_3$ are non-growing.
  
  Thus $x_{[-n,n]}$ is a factor of $q=q_3q'_3q''_3$.
  This implies that $x$ is a shift of $^\omega u\cdot vw^\omega$
  with $u=\sigma^{N_3}(q'_2)$, $v=\sigma^{N_3}(q''_2)$ and $w=\sigma^{N_3}(q'''_2)$.
  Each of these words is a fixed point of $\sigma^{N_2}$.

\end{proof}

\begin{example}
  Let $\sigma\colon a\mapsto abb,b\mapsto b$ (see Example~\ref{example(abb,b)}).
  We have $\XS(\sigma)=\{b^\infty\}$ and $b^\infty$ is a periodic point
  having only non-growing letters.
\end{example}

The following corollary of Proposition~\ref{lemmaNonGrowing} appears
in~\cite[Proposition 5.5]{BezuglyiKwiatkowskiMedynets2009} (and also in \cite[Corollary 2.11]{MaloneyRust2018})
in the case of non-erasing morphisms. 
\begin{corollary}
  If a morphism $\sigma$ is aperiodic, every point in $\XS(\sigma)$
  contains an infinite number of growing letters on the left and on
  the right.
\end{corollary}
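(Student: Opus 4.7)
The plan is to argue by contradiction, reducing to Proposition~\ref{lemmaNonGrowing}. Suppose $\sigma$ is aperiodic but some $x \in \XS(\sigma)$ has only finitely many growing letters on the right (the left case is symmetric). Then there exists $N$ such that $x_n$ is non-growing for all $n \ge N$.

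The first key step is to pass from this one-sided tail property to a full bi-infinite point of $\XS(\sigma)$ consisting entirely of non-growing letters. For this I would consider the sequence $(S^n(x))_{n \ge N}$ in the compact space $\XS(\sigma)$ and extract a convergent subsequence $S^{n_k}(x) \to y \in \XS(\sigma)$. For every window $[-m,m]$, the letters $y_{-m}, \ldots, y_m$ eventually coincide with $x_{n_k - m}, \ldots, x_{n_k + m}$, which lie in the non-growing tail as soon as $n_k - m \ge N$. Hence every letter of $y$ is non-growing.

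I then apply Proposition~\ref{lemmaNonGrowing} to $y$: it yields an integer $k$, nonempty finite words $u, v$, and a finite word $w$ such that $y = S^k({}^\omega u \cdot w v^\omega)$. Every power $u^n$ is then a factor of $y$, hence belongs to $\cL(\XS(\sigma))$; since $\cL(u^\infty)$ consists precisely of factors of powers of $u$, we have $\cL(u^\infty) \subset \cL(\XS(\sigma))$, and the characterization of a shift by its language forces $u^\infty \in \XS(\sigma)$. This periodic point contradicts the aperiodicity of $\sigma$.

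The main obstacle is the compactness step producing $y$: one must check that the property \emph{``all letters non-growing''} survives passage to the limit even though the growing letters in $x$ are only pushed further to the left under iterated shifts. Discreteness of the alphabet and the fact that $n_k + j \ge N$ for every fixed $j$ once $k$ is large are what make this work. Once $y$ is in hand, the invocation of Proposition~\ref{lemmaNonGrowing} and the extraction of the periodic point $u^\infty$ are routine.
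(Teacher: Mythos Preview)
Your proof is correct and is essentially the natural derivation the paper has in mind: the paper states the corollary immediately after Proposition~\ref{lemmaNonGrowing} without writing out a proof, and your compactness argument producing a point $y\in\XS(\sigma)$ with only non-growing letters, followed by the application of Proposition~\ref{lemmaNonGrowing} and the extraction of the periodic point $u^\infty$, is precisely the intended bridge. One minor remark: you assert that $u$ and $v$ are nonempty, which is not stated explicitly in Proposition~\ref{lemmaNonGrowing}, but this is forced by the requirement that ${}^\omega u\cdot w v^\omega$ be a genuine two-sided infinite sequence, so there is no gap.
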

We will also use the following result describing the points with
a finite number of growing letters on the left or on the right.
\begin{lemma}\label{lemmaNonGrowingBis}
Let $\sigma\colon A^*\to A^*$ be a morphism. 
If $x$ is a point of $\XS(\sigma)$
which has a growing letter at some position $i$ such that all letters
at smaller positions are non-growing, then there is an integer $k$
such that $x_{(-\infty,i)}= {}^\omega u v w^{k}z$ where
$u,v,w,z$ are finite words of lengths bounded by a value depending
only on $\sigma$. 
\end{lemma}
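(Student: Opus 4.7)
The plan is to adapt Case 2 of the proof of Proposition~\ref{lemmaNonGrowing} to this one-sided situation, by extracting from the derivation tree, at a growing letter whose left context is entirely non-growing, the structure of $x_{(-\infty,i)}$. After a shift we may assume $i=0$, and for each $n\ge 1$ we pick $N_n$ and a letter $a\in A$ (fixed along a subsequence by pigeonhole, and necessarily growing for $n$ large since $|\sigma^{N_n}(a)|\ge 2n+1$) such that $x_{[-n,n]}$ is a factor of $\sigma^{N_n}(a)$. I then trace the unique path in the derivation tree $T_\sigma(a,N_n)$ from the root down to the leaf at the position of $x_0$, writing $\sigma^j(a)=u_jc_jv_j$ at each level $j$. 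Because $x_0$ is growing, every $c_j$ on the path is growing as well (a non-growing letter has uniformly bounded iterated images, incompatible with $x_0$ being a growing descendant); dually, because every position to the left of $0$ in $x$ is non-growing, each $u_j$ consists only of non-growing letters.

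Let $i_0,p$ be the constants of Proposition~\ref{propositionNonGrowingLetters}; replacing $p$ by a suitable multiple of itself we may assume $p\ge i_0$. Applying pigeonhole to the letters $c_j$ at the levels $j\in\{0,p,2p,\dots,p\,\Card(A)\}$, I find $0\le N_1<N_1+N_2\le p\,\Card(A)$ with $N_2$ a positive multiple of $p$ and $c_{N_1}=c_{N_1+N_2}=:c$. This gives $\sigma^{N_2}(c)=p_2\,c\,q_2$, and $p_2$ is non-growing as a suffix of $u_{N_1+N_2}$. Iterating and using that $\sigma^{N_2}$ stabilizes the orbit of any non-growing word (by Proposition~\ref{propositionNonGrowingLetters}), we get, for every $k\ge 1$,
\[
\sigma^{kN_2}(c)=(p_2')^{k-1}\,p_2\,c\,q_2\,(q_2')^{k-1},
\]
where $p_2'=\sigma^{N_2}(p_2)$. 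Writing $N_n=N_1+k_nN_2+N_3$ with $0\le N_3<N_2$, and passing to a further subsequence to fix $N_3$ and the position $\pi_0$ of $x_0$ within $\sigma^{N_3}(c)$ (taking $N_n$ a multiple of $p$ so that $N_3$ is too), one obtains
\[
\sigma^{N_n}(a)=U\,W^{k_n-1}\,V\,\sigma^{N_3}(c)\,R,
\]
with $U=\sigma^{N_n-N_1}(u_{N_1})$, $W=\sigma^{N_3}(p_2')$, $V=\sigma^{N_3}(p_2)$ and an irrelevant right part $R$. The only growing letter in $\sigma^{k_nN_2}(c)$ is the middle $c$, so the path to $x_0$ must pass through it, confirming that $x_0$ indeed lies inside $\sigma^{N_3}(c)$. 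Since $u_{N_1}$, $p_2$, $p_2'$ are non-growing words of length bounded in terms of $\sigma$, Proposition~\ref{propositionNonGrowingLetters} gives that $|U|,|W|,|V|$ are bounded by a constant depending only on $\sigma$.

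Finally, let $P$ denote the prefix of $\sigma^{N_3}(c)$ of length $\pi_0$; this has bounded length since $N_3$ is bounded. For every $n$ in the subsequence, $x_{[-n,0)}$ is the length-$n$ suffix of $U\,W^{k_n-1}\,V\,P$. Because $k_n\to\infty$ while $|U|,|V|,|P|$ stay bounded, each fixed position $x_{-j}$ eventually lies inside the $W^{k_n-1}$ block at a position depending only on $j\bmod|W|$. Letting $n\to\infty$ along the subsequence yields
\[
x_{(-\infty,0)}={}^\omega W\cdot V\,P,
\]
which is of the required form ${}^\omega u\,v\,w^{k}z$ with $u=W$, $v=V$, $z=P$, $k=0$ and $w$ any bounded word (e.g.\ $w=\varepsilon$). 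The main obstacle will be the iterated pigeonhole bookkeeping needed to simultaneously fix the parameters $(a,N_1,N_2,N_3,c,\pi_0)$ along one subsequence of $n$; once this is set up, the factorization computation and the control on non-growing orbits via Proposition~\ref{propositionNonGrowingLetters} are routine.
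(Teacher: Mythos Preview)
Your proof has a genuine gap at the claim ``because every position to the left of $0$ in $x$ is non-growing, each $u_j$ consists only of non-growing letters.'' This inference is not valid. The word $u_j$ lives at level $j$ of the derivation tree, and its descendants at the leaf level form $\sigma^{N_n-j}(u_j)$, which is a \emph{prefix} of $u_{N_n}$. But $u_{N_n}=P\cdot x_{[-n,0)}$ where $P$ is the part of $\sigma^{N_n}(a)$ preceding the occurrence of $x_{[-n,n]}$. The hypothesis controls only $x_{[-n,0)}$; nothing prevents $P$ from containing growing letters, and hence nothing prevents $u_j$ from containing one.

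Here is a concrete counterexample. Take $\sigma\colon a\mapsto abc$, $b\mapsto bb$, $c\mapsto dc$, $d\mapsto d$, so that $d$ is the only non-growing letter. One checks that $\sigma^{n+2}(a)=\sigma^{n}(a)\,b^{2^{n}}\,d^{n}\,c\,b^{2^{n+1}}\,d^{n+1}\,c$, so $d^{n}cb^{n}\in\cL(\sigma)$ for all $n$ and the point $x={}^\omega d\cdot c\,b^\omega$ lies in $\XS(\sigma)$, with $x_0=c$ growing and $x_{(-\infty,0)}={}^\omega d$ non-growing. Taking $N_n=n+2$ and root $a$, the path to the relevant occurrence of $c$ begins $c_0=a$, $c_1=a$, $c_2=c$, whence $u_2=ab$, which is growing. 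Consequently, your pigeonhole on the levels $\{0,p,\dots,p\Card(A)\}$ either hits the repetition $c_0=c_1$ (giving $p_2=\varepsilon$, so $W=V=\varepsilon$ and the conclusion ${}^\omega W\cdot V\cdot P$ is vacuous) or the repetition $c_2=c_3$ (giving $u_{N_1}=ab$ growing, so $|U|=|\sigma^{N_n-2}(ab)|\to\infty$ and your boundedness claim fails). The word ``dually'' masks a false symmetry: your first claim (each $c_j$ is growing) concerns letters \emph{on} the path, while the second concerns letters \emph{off} the path, and the hypothesis on $x$ only constrains the leaf level inside $x_{[-n,0)}$, not the whole prefix.

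The paper proceeds differently: it does not trace the ancestry of $x_i$ at all, but instead locates the non-growing word $x_{[-n,i)}$ inside a maximal non-growing block of $\sigma^N(a)$ and reuses the three-case decomposition of Proposition~\ref{lemmaNonGrowing} (prefix block, suffix block, middle block between two growing letters). In Case~3 this produces two interacting periodicities (one from each flanking growing letter), which is why the statement allows a factor $w^{k}$ with $k$ unbounded; your argument, even if repaired, would need to account for this.
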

\begin{proof}
  
  For every $n\ge 1$, there is some $N\ge 1$
  and $a\in A$ such that $x_{[-n,i)}$ is a factor of $\sigma^N(a)$.
  We distinguish three cases as in the proof of Proposition~\ref{lemmaNonGrowing}.

  Case 1. We have, for $N$ large enough , $\sigma^N(a)=pbq$
  with $p,q$ non-growing $b\in A$ growing and $x_{[-n,i)}$ a factor
    of $p$. We define $N_k$, $p_k,q_k,p'_k,q'_k$ as in Case 1
    of the proof of Proposition~\ref{lemmaNonGrowing}.
 Thus  $x_{[-n,i)}$ is a factor of $\sigma^{N_3}(p''_2)\sigma^{N_3}((p'_2)^{k-1}p_2)$.
Since there is a finite number of possible $p_2, p'_2, p''_2, N_3$ for
all $n$, there is an infinite number of $n$ such that $x_{[-n,i)}$ is
a factor of $\sigma^{N_3}(p''_2)\sigma^{N_3}((p'_2)^{k-1}p_2)$
for some fixed $p_2, p'_2, p''_2, N_3$.
This implies that $\sigma^{N_3}(p_2)$ is non-empty and
$x_{(-\infty,i)} = {}^\omega\sigma^{N_3}(p'_2)s_2$ where $s_2$ is some prefix
of $\sigma^{N_3}(p_2)p_3$. 

Case 2. We have, for $N$ large enough , $\sigma^N(a)=pbq$
  with $p,q$ non-growing $b\in A$ growing and $x_{[-n,i)}$ a factor
  of $q$. We define $N_k$, $p_k,q_k,p'_k,q'_k$ as in Case 2.
  Thus  $x_{[-n,i)}$ is a factor of $q_3\sigma^{N_3}(q_2)\sigma^{N_3}(q'_2)^{k-1}q''_3$.
This implies that $\sigma^{N_3}(q'_2)$ is non-empty and
$x_{(-\infty,i)} = {}^\omega\sigma^{N_3}(q'_2)r_2$ where $r_2$ is some factor
of $\sigma^{N_3}(q'_2)q''_3$.

Case 3. We have, for $N$ large enough, $\sigma^N(a)=pbqcr$
  with $q$ non-growing, $b,c\in A$ growing and $x_{[-n,i)}$ a factor of $q$.
  We define $N_1, N_2, N_3$, $p_k,q_k, p'_k, q'_k$, $q''_k, r_k, r'_k$ as
  in Case 3. We get that $x_{[-n,i)}$ is a factor of
    \begin{displaymath}
      q=\sigma^{N_3}(q_2(q'_2)^{k-1} q'_1 (q'''_2)^{k-1}q''_2)
    \end{displaymath}
    for all $n$ 
 with a finite number of possible $q'_1, q_2, q'_2,q''_2,q'''_2, N_3$. 
 Then $x_{[-n,i)}$ is a
 factor of $q$
 for some fixed $N_3$, $q'_1, q_2, q'_2,q''_2,q'''_2$ for an infinite number of $n$.
 This implies that either $x_{[-n,i)}= {}^\omega
 \sigma^{N_3}(q'''_2)s_2$ with $s_2$ a prefix of $q''_2$, or
$x_{(-\infty,i)}= {}^\omega
\sigma^{N_3}(q'_2)r_2$ with $r_2$ a prefix of
$\sigma^{N_3} (q'_1(q'''_2)^{k-1}q''_2)$ for some non-negative
integer~$k$.
\end{proof}
A symmetric version of Lemma~\ref{lemmaNonGrowingBis} holds
for points with no growing letter after some position $i$.
The following example illustrates Lemma~\ref{lemmaNonGrowingBis}
\begin{example}
  Let $\sigma\colon a\mapsto bac, b\mapsto d,d\mapsto b,c\mapsto c$.
  Then $x={}^\omega(db)\cdot ac^\omega$ is in $\cL(\XS(\sigma))$ and
  has no growing letter at negative indices. Thus
  $x^-$ has the form indicated in Lemma~\ref{lemmaNonGrowingBis}
  with $u=db$ and $v=w=\varepsilon$. We have no example with
  $|vw^kz|$ unbounded.
  \end{example}
\section{Fixed points}\label{sectionFixedPoints}

We investigate fixed points of morphisms.  Beginning with
finite fixed points, we will next turn to one-sided infinite
and then to two-sided infinite ones.

\paragraph{Finite fixed points}
Let $\sigma\colon A^*\to A^*$ be a morphism. As
we have already seen before, a finite fixed point
of $\sigma$ is a word $w\in A^*$ such that $\sigma(w)=w$.

Set
\begin{displaymath}
  A(\sigma)=\{a\in A\mid \sigma(a)=uav\mbox{ with $u,v$ erasable}\}.
  \end{displaymath}
The following is from \cite{Shallit1999}
(see also \cite[Theorem 7.2.3]{AlloucheShallit2003}).
\begin{proposition}\label{propositionASTheorem723}
  The set of finite fixed points of $\sigma\colon A^*\to A^*$ is
  a submonoid of $A^*$ generated by the finite set
  \begin{displaymath}
    F(\sigma)=\{\sigma^{\Card(A)}(a)\mid a\in A(\sigma)\}.
    \end{displaymath}
\end{proposition}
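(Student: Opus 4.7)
The plan is to prove the two inclusions separately. First I would verify that every element of $F(\sigma)$ is a finite fixed point, so that $F(\sigma)^*$ is contained in the monoid of fixed points. For $a\in A(\sigma)$, Proposition~\ref{propositionFiniteFixedPoint} gives that $\sigma^{\mex(\sigma)}(a)$ is a finite fixed point; since $\mex(\sigma)\le\Card(A)$ and since applying $\sigma$ to a fixed point leaves it unchanged, $\sigma^{\Card(A)}(a)$ equals $\sigma^{\mex(\sigma)}(a)$ and is still a fixed point. The set of finite fixed points is closed under concatenation (because $\sigma$ is a morphism) and contains $\varepsilon$, so it is a submonoid, and the inclusion $F(\sigma)^*\subset\{w\mid\sigma(w)=w\}$ follows.

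For the reverse inclusion, I would take a non-empty finite fixed point $w=a_1\cdots a_n$ and let $b_1,\ldots,b_k$ denote the non-erasable letters of $w$ in order. Since $\sigma^{\Card(A)}(a)=\varepsilon$ whenever $a$ is erasable and $\sigma^{\Card(A)}(w)=w$, factoring out the erasable letters yields
\begin{equation*}
  w=\sigma^{\Card(A)}(w)=\sigma^{\Card(A)}(b_1)\,\sigma^{\Card(A)}(b_2)\cdots\sigma^{\Card(A)}(b_k).
\end{equation*}
It then remains to show that each $b_j$ belongs to $A(\sigma)$, which will imply $\sigma^{\Card(A)}(b_j)\in F(\sigma)$ and give the desired decomposition of $w$ as a product of generators.

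The main step is therefore to show $b_j\in A(\sigma)$, and I would do this by a counting argument on non-erasable letters. Observe that a letter $c$ is non-erasable if and only if $\sigma(c)$ contains at least one non-erasable letter: one direction is by induction on the iteration killing $c$, the other is immediate. Now $w$ contains exactly $k$ non-erasable letters, and, since $\sigma(w)=w$, the total number of non-erasable letters contributed by the factors $\sigma(a_i)$ is also $k$; erasable $a_i$ contribute $0$, and each of the $k$ non-erasable $b_j$ contributes at least $1$. Hence each $\sigma(b_j)$ contains \emph{exactly one} non-erasable letter, say $c_j$, and we can write $\sigma(b_j)=u_jc_jv_j$ with $u_j,v_j$ erasable.

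Finally, reading the non-erasable letters of $\sigma(w)=w$ from left to right produces the sequence $(c_1,\ldots,c_k)$ on the one hand, since erasable letters in the factorisation $\sigma(w)=\prod_i\sigma(a_i)$ contribute nothing, and the sequence $(b_1,\ldots,b_k)$ on the other hand, since these are by definition the non-erasable letters of $w$. Matching term by term gives $c_j=b_j$, so $\sigma(b_j)=u_jb_jv_j$ with $u_j,v_j$ erasable, i.e. $b_j\in A(\sigma)$, completing the proof. The main obstacle I expect is precisely this matching argument: one must be careful that the counting of non-erasable letters really forces the correspondence to be position-preserving, which it does because $\sigma$ preserves the left-to-right order of the factorisation.
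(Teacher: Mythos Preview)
Your proof is correct. The paper does not actually give its own proof of this proposition: it states the result and cites \cite{Shallit1999} and \cite[Theorem 7.2.3]{AlloucheShallit2003}, so there is no in-paper argument to compare against. Your argument is essentially the standard one found in those references: the forward inclusion follows from Proposition~\ref{propositionFiniteFixedPoint} together with $\mex(\sigma)\le\Card(A)$, and the reverse inclusion is obtained by the counting observation that the morphism $\sigma$ must permute (in fact, fix pointwise) the sequence of non-erasable letters of any finite fixed point, forcing each such letter into $A(\sigma)$. All steps are sound, including the claim that a letter $c$ is non-erasable if and only if $\sigma(c)$ contains a non-erasable letter, and the left-to-right matching of the sequences $(c_1,\ldots,c_k)$ and $(b_1,\ldots,b_k)$.
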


\paragraph{Infinite fixed points}
Let $\sigma\colon A^*\to A^*$ be a morphism.
A \emph{right-infinite fixed point} of $\sigma$ is a right-infinite sequence $x\in A^\N$ such that $\sigma(x)=x$.
Symmetrically, a \emph{left-infinite fixed point}
of $\sigma$ is a  left-infinite sequence $x\in A^{-\N}$
such that $\sigma(x)=x$. A \emph{two-sided infinite fixed point}
is a two-sided infinite sequence $x\in A^\Z$
such that $\sigma(x)=x$.

A morphism $\sigma\colon A^*\to A^*$ is \emph{right-prolongable}
on $u\in A^+$ if $\sigma(u)$ begins with $u$ and $u$ is growing.
In this case, there is a unique right-infinite sequence $x\in A^\N$
which has each $\sigma^n(u)$ as a prefix. We denote $x=\sigma^\omega(u)$.

Symmetrically, $\sigma$ is \emph{left-prolongable}
on $v\in A^+$ if $\sigma(v)$ ends with $v$ and $v$ is growing.
In this case, there is a unique left-infinite sequence $y\in A^{-\N}$
which has all $\sigma^n(v)$ as a suffix. We denote $y=\sigma^{\tilde{\omega}}(v)$.

We begin with an elementary result concerning periodic fixed points.
\begin{lemma}\label{lemmaPeriodicFixedPoints}
For every  $u\in A^+$, the following conditions are
equivalent.
\begin{enumerate}
  \item[\rm(i)] The right-infinite
 sequence $u^\omega$ is a right-infinite fixed point
of $\sigma$.
\item[\rm(ii)] The left-infinite
 sequence ${}^\omega u$ is a left-infinite fixed point
of $\sigma$.
\item[\rm(iii)] The  two-sided infinite
 sequence $u^\infty$ is a two-sided infinite fixed point
of $\sigma$.
\item[\rm(iv)] One has $\sigma(u)=u^n$ for some $n\ge 1$.
\end{enumerate}
\end{lemma}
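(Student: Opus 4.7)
The plan is to establish the cyclic chain of implications (iv) $\Rightarrow$ (i) $\Rightarrow$ (iv) and (iv) $\Rightarrow$ (ii) $\Rightarrow$ (iv), then show that (iii) is equivalent to the conjunction of (i) and (ii). The implications from (iv) are immediate: if $\sigma(u)=u^n$ with $n\ge 1$, then $\sigma(u^\omega)=\sigma(u)\sigma(u)\cdots = u^n u^n\cdots = u^\omega$, and symmetrically for ${}^\omega u$; putting the two together yields $\sigma(u^\infty)=u^\infty$. Conversely, from (iii) one reads off $\sigma(u^\infty)=\sigma({}^\omega u)\cdot\sigma(u^\omega)={}^\omega u\cdot u^\omega$, which by the anchored concatenation definition forces both halves to agree, hence (i) and (ii).

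The substantive direction is (i) $\Rightarrow$ (iv) (and symmetrically (ii) $\Rightarrow$ (iv)). First I observe that $\sigma(u)\ne\varepsilon$, since otherwise $\sigma(u^\omega)$ would reduce to the empty word and could not equal the infinite sequence $u^\omega$. Consequently $\sigma(u^\omega)=\sigma(u)^\omega$, so the hypothesis becomes the purely combinatorial equality $\sigma(u)^\omega=u^\omega$ of two right-infinite sequences.

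Let $w$ be the primitive root of $u$, so $u=w^b$ for some $b\ge 1$. Because $w$ is primitive, $w$ is the minimal period of $w^\omega=u^\omega$. Applying the classical word-combinatorics lemma that $x^\omega=y^\omega$ with $x,y$ nonempty forces $x$ and $y$ to be powers of a common primitive word, and since that primitive word is necessarily the (unique) primitive root $w$, we conclude $\sigma(u)=w^a$ for some $a\ge 1$. Now the morphism structure gives $\sigma(u)=\sigma(w)^b=w^a$; taking primitive roots of both sides (both equal $w$ by uniqueness of the primitive root) yields $\sigma(w)=w^k$ for some $k\ge 1$. Hence $\sigma(u)=\sigma(w)^b=w^{kb}=(w^b)^k=u^k$, proving (iv).

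The main technical obstacle is the primitive-root step: one has to be careful that the equality $\sigma(w)^b=w^a$ forces $\sigma(w)$ to be a power of $w$ itself, rather than merely a power of some common primitive word. This is handled by invoking uniqueness of the primitive root of a nonempty finite word, which yields $\sigma(w)=w^k$ and then $\sigma(u)=u^k$ by the morphism property. The argument for (ii) $\Rightarrow$ (iv) is word-for-word symmetric using left-infinite sequences and the lemma ${}^\omega x={}^\omega y \Rightarrow x,y$ are powers of a common primitive word.
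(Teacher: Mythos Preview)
Your proof is correct and follows essentially the same route as the paper's: the paper simply compresses your primitive-root reduction into the phrase ``we may assume that $u$ is primitive'' and then uses $u^\omega=\sigma(u)^\omega$ with $u$ primitive to conclude $\sigma(u)=u^n$, leaving the remaining implications as ``clear''. Your version just makes explicit the bookkeeping (nonemptiness of $\sigma(u)$, passage to the primitive root $w$, and the deduction $\sigma(w)=w^k$ from $\sigma(w)^b=w^a$) that the paper suppresses.
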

\begin{proof}
Assume that (i) holds. We may assume that $u$ is primitive.
Set $u'=\sigma(u)$. Then $u^\omega=u'^\omega$, which implies
$u'=u^n$ since $u$ is primitive. Thus (i) implies (iv).
Symmetrically, (ii) implies (iv).
The other implications are clear.
\end{proof}
We shall come back
to periodic fixed points in the next section.

We now describe more precisely the words $u$ such that
a morphism is right-prolongable on $u$.

\begin{lemma}\label{lemmaRightProl}
  If some power $\sigma^k$
  of a morphism $\sigma\colon A^*\to A^*$ is right-prolongable on $u\in A^+$,
  there is a growing letter $a$
  such that $u=praq$ with $\sigma^k(p)=p$, the word
  $r$ erasable, $\sigma^k$ right-prolongable on $ra$
  and $\sigma^{k\omega}(u)=\sigma^{k\omega}(pra)$.
  Moreover, if $u\in\cL(\sigma)$,
  such an integer $k$ can be bounded in terms of $\sigma$.
  \end{lemma}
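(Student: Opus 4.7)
The plan is to analyze what happens to the non-growing initial segment of $u$ under the iterates of $\sigma^k$, and to show that this segment stabilizes to the desired fixed word $p$.

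First I would isolate the relevant structure. Since $u$ is growing (this being part of the definition of right-prolongability), let $a$ be the first growing letter of $u$ and write $u = saq$ with $s \in A^*$ consisting only of non-growing letters. The key observation is that $\sigma^k(s)$ is itself non-growing: the image of a non-growing letter under any power of $\sigma$ remains non-growing, since its further iterates stay bounded in length. Because $\sigma^k(u) = uv$ begins with $u = saq$, while the letter immediately following $s$ in $u$ is the growing letter $a$, the non-growing prefix $\sigma^k(s)$ of $\sigma^k(u)$ cannot extend beyond position $|s|$. Hence $|\sigma^k(s)| \le |s|$ and $\sigma^k(s)$ is a prefix of $s$. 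The same argument applied to the iterate $\sigma^{nk}$ gives that $s_n := \sigma^{nk}(s)$ is a prefix of $s$ for every $n \ge 0$.

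Next I would prove by induction that the chain $(s_n)$ is non-increasing: if $s_n$ is a prefix of $s_{n-1}$, write $s_{n-1} = s_n t_n$; then the morphism identity $\sigma^k(s_{n-1}) = \sigma^k(s_n)\,\sigma^k(t_n)$ equals $s_n$, which shows that $s_{n+1} = \sigma^k(s_n)$ is a prefix of $s_n$. Since the lengths $|s_n|$ form a non-increasing sequence of non-negative integers, the chain stabilizes, say to $p$, and then $\sigma^k(p) = p$. Writing $s = pr$, the identity $p = s_n = p \cdot \sigma^{nk}(r)$ for large $n$ forces $\sigma^{nk}(r) = \varepsilon$, so $r$ is erasable.

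With the decomposition $u = praq$ in hand, right-prolongability of $\sigma^k$ on $ra$ is immediate: rewriting $\sigma^k(u) = uv$ as $\sigma^k(p)\,\sigma^k(ra)\,\sigma^k(q) = p\,ra\,qv$ and cancelling $\sigma^k(p) = p$ on the left, one sees that $\sigma^k(ra)$ starts with $ra$; and $ra$ is growing because $a$ is. The equality $\sigma^{k\omega}(u) = \sigma^{k\omega}(pra)$ then follows because $\sigma^{nk}(pra)$ is a prefix of $\sigma^{nk}(u)$ for every $n$, and by Lemma~\ref{lemmaGrowing} both lengths tend to infinity, so the two right-infinite limits coincide.

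For the final assertion, if $u \in \cL(\sigma)$ then I would bound $k$ by combining Proposition~\ref{propositionNonGrowingLetters} with Proposition~\ref{propositionErasing}. The former says that the powers $\sigma^k$ act eventually periodically on non-growing letters, with threshold and period depending only on $\sigma$, so the condition $\sigma^k(p) = p$ depends only on the residue of $k$ modulo a bounded integer; the latter bounds the length of erasable prefixes, constraining $r$. Together these replace any witnessing $k$ by one below a constant depending only on $\sigma$, by passing to a suitable residue in the same arithmetic progression on which right-prolongability holds. This last paragraph, extracting an explicit $\sigma$-only bound on $k$ from the eventual periodicity, is the step I expect to require the most care.
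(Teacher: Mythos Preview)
Your argument is correct and follows essentially the same route as the paper: both isolate the first growing letter $a$, write $u=saq$ with $s$ non-growing, and show that the non-growing prefix stabilizes to a $\sigma^k$-fixed word $p$ (the paper does this via the decomposition $\sigma^k(a)=vaw$ and the identity $s=\sigma^{nk}(s)\sigma^{(n-1)k}(v)\cdots v$, you via the descending prefix chain $s_n=\sigma^{nk}(s)$; these are two bookkeepings of the same computation). One small step worth making explicit: after cancelling $p$ you obtain $\sigma^k(ra)\sigma^k(q)=raq\cdots$, but to conclude that $\sigma^k(ra)$ itself begins with $ra$ you still need $|\sigma^k(ra)|\ge|ra|$---this holds because otherwise $\sigma^k(ra)$ would be a prefix of $r$, which is impossible since $\sigma^k(a)$ contains a growing letter while $r$ does not.
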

\begin{proof}
  Let $a$ be the first growing letter of $u$ and set $u=saq$. Then $a$
  is also the first growing letter of $\sigma^k(a)$. Set
  $\sigma^k(a)=vaw$. Since we have
  $s=\sigma^{nk}(s)\sigma^{(n-1)k}(v)\cdots\sigma(v)v$
  for all $n\ge 1$, the word $v$ is erasable
  and $r=\sigma^{k(\mex(\sigma)-1)}(v)\cdots\sigma^k(v)v$ is such that $\sigma^k$
  is right-prolongable on $ra$. Set
  $t=w\sigma^k(w)\cdots \sigma^{k(\mex(\sigma)-1)}(w)$ and $p=\sigma^{k\mex(\sigma)}(s)$.
  Then
  \begin{equation*}
   \sigma^{k\mex(\sigma)}(u)=prat\sigma^{k\mex(\sigma)}(q).
  \end{equation*}
  Thus $s=pr$ and $\sigma^{k\omega}(u)=\sigma^{k\omega}(pra)$.

  There remains to show that $k$ can be bounded. We prove separately
  that one can find  a bounded integer $k$ such that $\sigma^k(p)=p$
  and, provided $u\in\cL(\sigma)$,
  such that $\sigma^k$ is right-prolongable on $ra$. If $\sigma^k(p)=p$,
  then, by Proposition~\ref{propositionASTheorem723}, we have
  $p\in F(\sigma^k)^*$. Since $A(\sigma^k)\subset A(\sigma^{\mex(\sigma)!})$,
  we can choose $k\le\mex(\sigma)!$. Next, if $\sigma^k$ is
  right-prolongable on $ra$ with $r\in\cL(\sigma)$ erasable,
  the length of $r$ can be bounded in terms of $\sigma$
  by \cite[Lemma 2.2]{BealPerrinRestivo2021}. Since $a$
  is growing, there is a bounded integer $k$ such that $|\sigma^k(ra)|\ge |ra|$.
    Then $\sigma^k$ is right-prolongable on $ra$.
    \end{proof}

 The following 
 result is due to \cite{HeadLando1986} (see also \cite[Theorem 7.3.1]{AlloucheShallit2003}). Other proofs or
 variations are also
 given in \cite{ShallitWang2002} or \cite[Theorem 1]{DiekertKrieger2009}.

\begin{proposition}\label{theoremHeadLando}
  Every right-infinite fixed point of a morphism $\sigma\colon A^*\to A^*$
  is either in $F(\sigma)^\omega$ or of the form $\sigma^\omega(u)$
  where $\sigma$ is right-prolongable on $u\in A^+$.
  \end{proposition}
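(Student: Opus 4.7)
The plan is to split into two cases according to whether $x$ contains a growing letter.

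\textbf{Case 1: $x$ contains a growing letter.} Let $j$ be the least index such that $x_j$ is growing and set $u_0 = x_1 \cdots x_j$. Since $\sigma(x)=x$, the word $\sigma^n(u_0)$ is a prefix of $x$ for every $n\ge 0$. Because $u_0$ contains a growing letter, Lemma~\ref{lemmaGrowing} gives $|\sigma^n(u_0)|\to\infty$. A sequence of non-negative integers tending to infinity cannot be eventually strictly decreasing, so there are infinitely many indices $n$ with $|\sigma^{n+1}(u_0)|\ge|\sigma^n(u_0)|$; pick such an $n$ large enough that also $|\sigma^n(u_0)|\ge|u_0|$. Setting $u=\sigma^n(u_0)$, the words $u$ and $\sigma(u)=\sigma^{n+1}(u_0)$ are both prefixes of $x$ with $|\sigma(u)|\ge|u|$, so $u$ is a prefix of $\sigma(u)$; moreover $u$ is growing since $|\sigma^m(u)|=|\sigma^{m+n}(u_0)|\to\infty$. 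Thus $\sigma$ is right-prolongable on $u$, and the prefixes $\sigma^m(u)$ of $x$ have unbounded length, so $x=\lim_m\sigma^m(u)=\sigma^\omega(u)$.

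\textbf{Case 2: $x$ contains no growing letter.} The aim is to show $x\in F(\sigma)^\omega$ by iteratively peeling off a $\sigma$-fixed prefix. First, $x$ must contain infinitely many non-erasable letters; otherwise $\sigma^{\mex(\sigma)}(x)=x$ would be finite. Let $i_1$ be the least index with $x_{i_1}$ non-erasable. For $n\ge\mex(\sigma)$, the blocks $\sigma^n(x_i)$ with $i<i_1$ are empty, so $\sigma^n(x_{i_1})$ is a prefix of $x$ starting at position~$1$.

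The crux is showing that the orbit $(\sigma^n(x_{i_1}))_n$ actually stabilizes (period~$1$ in its eventually periodic regime). Since $x_{i_1}$ is non-growing, the orbit is bounded in length and hence eventually periodic; in that regime its elements are prefixes of $x$ starting at position~$1$, pairwise comparable, and hence have pairwise distinct lengths. Let $w$ be the shortest element; then $\sigma(w)$ is in the orbit, so $|\sigma(w)|\ge|w|$. If the orbit had size $p>1$, then $\sigma$ would act as a cyclic permutation of the orbit and the preimage $w'$ of $w$ would satisfy $w'\ne w$, hence $|w'|>|w|$. Writing $w'=wt$ with $t$ non-empty, we get $w=\sigma(w')=\sigma(w)\sigma(t)$, giving $|\sigma(w)|\le|w|$, hence $\sigma(w)=w$, contradicting $p>1$. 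So the orbit is trivial and $\sigma^n(x_{i_1})$ stabilizes to a finite $\sigma$-fixed word $w_1$; by Proposition~\ref{propositionASTheorem723}, $w_1\in F(\sigma)^*$. Writing $x=w_1x'$, one has $\sigma(x')=x'$ and $x'$ inherits the absence of growing letters, so the argument iterates, producing a decomposition $x=w_1w_2w_3\cdots$ with each $w_j\in F(\sigma)^*$ non-empty, whence $x\in F(\sigma)^\omega$.

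\textbf{Main obstacle.} The delicate point is the period-one claim in Case~$2$: eventual periodicity alone would only yield $x$ as a concatenation of fixed points of some power $\sigma^p$, not of $\sigma$ itself. The rigidity that forces $p=1$ comes from combining $\sigma(x)=x$ with the fact that all orbit elements share the same starting position in $x$, so they are totally ordered by length and can be compared by the above prefix computation.
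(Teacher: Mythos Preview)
The paper does not supply its own proof of this proposition: it is stated and attributed to Head and Lando~\cite{HeadLando1986}, with pointers to \cite[Theorem~7.3.1]{AlloucheShallit2003}, \cite{ShallitWang2002}, and \cite{DiekertKrieger2009}. So there is no in-paper argument to compare against, and your proposal stands or falls on its own.

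Your argument is correct. In Case~1 the only slightly non-obvious point is that the lengths $|\sigma^n(u_0)|$ need not be monotone when erasable letters are present, and you handle this properly by selecting an index $n$ with $|\sigma^{n+1}(u_0)|\ge|\sigma^n(u_0)|$ before setting $u=\sigma^n(u_0)$; once $u$ is a prefix of $\sigma(u)$, the chain $u,\sigma(u),\sigma^2(u),\ldots$ is automatically nested, so $\sigma^\omega(u)$ is well defined and equals $x$. (The extra condition $|\sigma^n(u_0)|\ge|u_0|$ is harmless but not actually needed.) Two small cosmetic remarks: you index $x$ from $1$ whereas the paper uses $x_0x_1\cdots$, and you should observe explicitly that $u\ne\varepsilon$ because $x_j$ is growing and hence non-erasable.

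In Case~2 the period-one argument is the heart of the matter and your proof of it is clean: all elements of the eventual periodic orbit of $\sigma^n(x_{i_1})$ (for $n\ge\mex(\sigma)$) are prefixes of $x$, hence linearly ordered by length; comparing the shortest element $w$ with its $\sigma$-preimage $w'=wt$ in the orbit forces $\sigma(w)=w$, collapsing the cycle. The iteration then peels off non-empty elements of $F(\sigma)^*$, giving $x\in F(\sigma)^\omega$. This is essentially the approach in the references cited by the paper.
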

A right-infinite fixed point
of a morphism $\sigma$ is  \emph{admissible}  if it is
in $\XS(\sigma)^+$. 
\begin{proposition}\label{lemmaOneSidedFixedPoint}
  Let $\sigma\colon A^*\to A^*$ be a morphism. The number
  of orbits of right-infinite admissible fixed points of a power of $\sigma$
  is finite and effectively bounded in terms of $\sigma$. It is nonzero
  provided $\XS(\sigma)$
  is non-empty.
\end{proposition}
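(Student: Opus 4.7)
The plan is to establish the two parts of the proposition separately, both grounded in the classification of right-infinite fixed points given by Proposition~\ref{theoremHeadLando}.

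For the finite bound, let $x \in \XS(\sigma)^+$ satisfy $\sigma^k(x) = x$. By Proposition~\ref{theoremHeadLando}, either $x \in F(\sigma^k)^\omega$ or $x = \sigma^{k\omega}(u)$ with $\sigma^k$ right-prolongable on $u$. In the first situation every letter of $x$ is non-growing, because each generator of $F(\sigma^k)$ is fixed by $\sigma^k$ and hence consists of non-growing letters. Writing $x = y^+$ for some $y \in \XS(\sigma)$, I would choose $y$ to either have no growing letter at all (and then invoke Proposition~\ref{lemmaNonGrowing}) or to have its rightmost growing letter at some position $i < 0$ (and then invoke the symmetric version of Lemma~\ref{lemmaNonGrowingBis} noted in the excerpt). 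Either way $x$ is eventually periodic with data bounded in terms of $\sigma$, yielding a bounded number of orbits. In the second situation, admissibility forces $u \in \cL(\XS(\sigma)) \subset \cL(\sigma)$, so Lemma~\ref{lemmaRightProl} gives an effective bound on $k$ and a decomposition $u = pra$ with $\sigma^k(p) = p$, $r$ erasable, $a$ a growing letter, and $\sigma^{k\omega}(u) = \sigma^{k\omega}(pra)$. Proposition~\ref{propositionErasing} bounds $|r|$. Since $\sigma^k(p) = p$, one has $x = p \cdot \sigma^{k\omega}(ra)$, so $S^{|p|}(x) = \sigma^{k\omega}(ra)$ lies in the orbit of $x$, and the admissible triples $(k, r, a)$ range over an effectively bounded finite set.

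For the non-vanishing claim, I would pick any $y \in \XS(\sigma)$. If all letters of $y$ are non-growing, Proposition~\ref{lemmaNonGrowing} yields $y = S^i({}^\omega u \cdot w v^\omega)$ with $v$ a fixed point of some $\sigma^K$, and $v^\omega$ is then an admissible right-infinite fixed point of $\sigma^K$. Otherwise $y$ has growing letters; iterating Proposition~\ref{propositionBKM} produces a sequence $y_0 = y, y_1, y_2, \dots$ in $\XS(\sigma)$ with $y_{n-1} = S^{k_n}(\sigma(y_n))$ and $0 \le k_n \le |\sigma|$. A pigeonhole argument applied to the sequence $(y_{n,0}, k_{n+1})$ in the finite set $A \times \{0, \dots, |\sigma|\}$, coupled with the compactness of $\XS(\sigma)$, yields indices $m < n$ at which the derivation-tree levels above $y_{m,0}$ and $y_{n,0}$ coincide. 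Feeding this coincidence into Lemma~\ref{lemmaRightProl} produces a power $p = n - m$ and a word $u \in \cL(\XS(\sigma))$ on which $\sigma^p$ is right-prolongable, so that $\sigma^{p\omega}(u)$ is the desired admissible fixed point.

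The principal obstacle I foresee is the existence step: making the pigeonhole-compactness argument rigorous enough to actually produce a fixed point (rather than merely an abstract accumulation point) and to guarantee admissibility of the resulting sequence in $\XS(\sigma)^+$. The $F(\sigma^k)^\omega$ part of the bound also requires some delicacy in picking the two-sided extension $y$ of $x$, so that Proposition~\ref{lemmaNonGrowing} or the symmetric form of Lemma~\ref{lemmaNonGrowingBis} can be applied cleanly; erasable letters in $r$ and in $p$ must be tracked carefully throughout.
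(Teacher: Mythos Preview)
Your treatment of the finiteness claim is correct and matches the paper's argument closely: the split via Proposition~\ref{theoremHeadLando}, the use of Proposition~\ref{lemmaNonGrowing} (together with the symmetric form of Lemma~\ref{lemmaNonGrowingBis}) for the $F(\sigma^k)^\omega$ case, and the reduction via Lemma~\ref{lemmaRightProl} and Proposition~\ref{propositionErasing} to finitely many triples $(k,r,a)$ in the right-prolongable case are exactly what the paper does. If anything, your handling of the non-growing case is slightly more careful than the paper's one-line appeal to Proposition~\ref{lemmaNonGrowing}.

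The existence part, however, has a genuine gap, and it is precisely where you suspected. Your pigeonhole on $(y_{n,0},k_{n+1})$ yields indices $m<n$ with $y_{m,0}=y_{n,0}=a$ and hence an occurrence of $a$ inside $\sigma^{n-m}(a)$, say $\sigma^{n-m}(a)=uav$. But nothing in this argument controls the prefix $u$: it may well contain growing letters (for instance with $\sigma\colon a\mapsto ba,\ b\mapsto ab$, the path in the derivation tree through a given occurrence of $a$ can have a growing prefix at every level). Lemma~\ref{lemmaRightProl} cannot help here, since it \emph{assumes} right-prolongability as a hypothesis rather than producing it; your appeal to it is in the wrong direction. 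And even if you obtained $\sigma^p(a)=uav$ with $u$ non-growing, you would still need to argue admissibility of the resulting one-sided fixed point, which is not automatic.

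The paper bypasses desubstitution entirely and instead argues structurally on the graph $G(\sigma)$. Lemma~\ref{lemmaABC} isolates a ``bottom'' set $B$ of growing letters (a primitive block of $M(\sigma^n)$ for suitable $n$) such that $\sigma^n(B)\subset(B\cup C\cup D\cup E)^*$ with $C,D,E$ non-growing. Inside $B$ one then finds a letter $a$ with $\sigma^m(a)=uav$ and $u\in(C\cup D\cup E)^*$ non-growing by construction; Lemma~\ref{lemma2cases} then produces either a periodic admissible fixed point (if $u$ is non-erasable) or a right-prolongable word (if $u$ is erasable), and a short case analysis on $v$ secures admissibility. The missing idea in your sketch is this graph-theoretic localisation to a minimal growing component, which is what forces the prefix $u$ to be non-growing.
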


The proof of Proposition~\ref{lemmaOneSidedFixedPoint}
 uses properties of non-negative matrices.
To such a matrix $M$, we associate the multigraph $G$
such that $M$ is the adjacency matrix of $G$.
The matrix is \emph{irreducible} if $G$ is strongly connected.
The \emph{period} of a graph is the greatest common divisor of the lengths
of its cycles.
The matrix $M$ is \emph {primitive} if it is irreducible
and the period of $G$ is $1$. 
By a well-known bound due to Wielandt \cite{Wielandt1950},
if $M$ is a primitive
matrix of dimension $k$, then $M^n$ is positive for
$n=(k-1)^2+1$ and thus for all $n\ge (k-1)^2+1$
(see also \cite[Exercise 8.7.8]{AlloucheShallit2003}).

It follows that if $M$ is irreducible and $G$ has period $p$,
then $M^p$ is a block diagonal matrix formed of $p$
primitive diagonal blocks. Thus $M^{mp}$ 
is block diagonal with $p$ positive diagonal blocks
for $m=(k/p-1)^2+1$.

\begin{lemma}\label{lemmaABC}
Let $\sigma\colon A^*\to A^*$ be a morphism. If $\XS(\sigma)$
is non-empty, there are disjoint subsets $B,C,D,E$ of $A$
with $B$ non-empty
  and an integer $n\le \Card(A)!$
  such that
  \begin{eqnarray*}
    \sigma^n(B)&\subset& (B\cup C\cup D\cup E)^*,\\
    \sigma^n(C)&\subset& (D\cup E)^*,\\
    \sigma^n(d)&\subset& E^*dE^*\mbox{ for every $d\in D$},\\
    \sigma^n(E)&=& \{\varepsilon\},
    \end{eqnarray*} 
  and, moreover, such that every letter in $B$ is growing and appears in every
  word $\sigma^n(b)$ for  every $n\ge \Card(A)^2$ and 
   $b\in B$.
\end{lemma}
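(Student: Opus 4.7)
The plan is to decompose $A$ using the structure of $G(\sigma)$: $E$ will gather the erasable letters, $D$ the non-growing non-erasable letters lying on cycles, $C$ the remaining non-growing non-erasable letters, and $B$ will be a phase class of a ``primitive'' SCC of growing letters. First I would show that $\XS(\sigma)\neq\emptyset$ forces $A$ to contain a growing letter whose SCC in $G(\sigma)$ contains a cycle. A growing letter must exist at all: otherwise, by Lemma~\ref{lemmaGrowing}, every $|\sigma^n(a)|$ would be bounded and $\cL(\sigma)$ would be finite. Iterating the same observation inside the forward cone of any growing letter---using that $\sigma^n$ of a word of non-growing letters has bounded length---one finds that every growing letter in a trivial SCC reaches, downstream in the DAG of SCCs, a growing letter in a nontrivial SCC, so after at most $\Card(A)$ descents we obtain one.

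Next I would build the decomposition. Let $E$ be the set of erasable letters; because a non-erasable letter appearing in $\sigma(e)$ would survive every iterate, $\sigma(E)\subset E^*$ and hence $\sigma^m(E)=\{\varepsilon\}$ for $m\geq\mex(\sigma)$. For a non-growing letter $d$ on a cycle of length $\ell_d$, writing $\sigma^{\ell_d}(d)=LdR$, a linear-growth argument forces $L,R\in E^*$: otherwise $|\sigma^{k\ell_d}(d)|$ would grow linearly in $k$, contradicting that $d$ is non-growing. Define $D$ as the set of all such cycle-letters; then $\sigma^n(d)\in E^*dE^*$ whenever $\ell_d\mid n$. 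The remaining non-growing non-erasable letters are ``transient'' and form a DAG of depth at most $\Card(A)$; calling them $C$, any path of length at least $\Card(A)$ out of $c\in C$ in $G(\sigma)$ ends in $D\cup E$, so $\sigma^n(C)\subset(D\cup E)^*$ as soon as $n\geq\Card(A)$. The four sets $B,C,D,E$ defined this way are pairwise disjoint by construction.

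For $B$, I would pick a bottom-most SCC $H$ of growing letters (one from which no other SCC of growing letters is reachable in $G(\sigma)$), let $p$ be its period, and take $B=H_0$ for one of its $p$ phase classes. The theory of non-negative matrices recalled just above the statement yields that $M(\sigma^p)|_B$ is primitive, so Wielandt's bound gives entrywise positivity of $(M(\sigma^p)|_B)^k$ as soon as $k\geq(|B|-1)^2+1\leq\Card(A)^2$. I would then set $n$ equal to the lcm of $p$, every cycle length in $G(\sigma)$, and any integer at least $\max(\mex(\sigma),\Card(A))$; since each cycle length is at most $\Card(A)$, this lcm divides $\Card(A)!$. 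The four inclusions follow directly: $\sigma^n(E)=\{\varepsilon\}$ and $\sigma^n(d)\in E^*dE^*$ by construction, $\sigma^n(C)\subset(D\cup E)^*$ from the transient analysis, and $\sigma^n(B)$ stays in $B$ modulo non-growing letters downstream of $H$, which all lie in $C\cup D\cup E$ by the bottom-most choice of $H$. The main obstacle will be the borderline case where $H$ itself is a simple cycle but its letters are nevertheless growing, because of non-erasable decorations around the cycle; there $|B|=1$, so primitivity is automatic from $p\mid n$, but one still has to verify that the decorations accumulated in $\sigma^n(b)$ consist only of non-growing letters and so fit into $C\cup D\cup E$, which again uses the bottom-mostness of $H$.
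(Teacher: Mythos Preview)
Your approach is essentially the same as the paper's, with two minor differences in presentation. First, the paper splits into two cases according to whether some leaf of the DAG of non-trivial strongly connected components of $G(\sigma)$ fails to be a cycle; your ``bottom-most SCC of growing letters'' is exactly that leaf $K$ in the paper's Case~2 and the component $H$ just above the leaves in Case~1, so you have simply unified the two cases. Second, the paper defines $C,D,E$ only as the letters of the relevant kind \emph{accessible from $B$}, whereas you take them globally; since the inclusions $\sigma^n(C)\subset(D\cup E)^*$ and $\sigma^n(d)\in E^*dE^*$ hold for all such letters anyway (your linear-growth argument for the latter is correct: a non-erasable letter in $L$ or $R$ would force $|\sigma^{k\ell_d}(d)|\geq k$), the global choice is harmless for the statement as written. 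One small wording issue: ``the lcm of $p$, every cycle length in $G(\sigma)$, and any integer at least $\max(\mex(\sigma),\Card(A))$'' is not well-formed; you mean that $n$ should be a common multiple of $p$ and all cycle lengths and also satisfy $n\geq\Card(A)\geq\mex(\sigma)$, for which $n=\Card(A)!$ works---exactly the paper's choice.
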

\begin{proof}
  We use the multigraph $G(\sigma)$.
  Consider the set $\cal S$
    of non-trivial strongly connected components of $G(\sigma)$.
    Let $\G$ be the graph on $\cal S$ with edges $(S,T)$
    if there is a path from a vertex of $S$ to a vertex of $T$.
    The \emph{leaves} of $\G$
     are the elements of $\cal S$ from which no other non-trivial
    component can be reached.
    We consider two cases.

    \noindent Case 1. All leaves of $\G$ are reduced to a cycle.
    Since $\XS(\sigma)$ is not empty, not all elements of $\cal S$
    can be leaves of $\cal G$. Considering a path of maximal length in $\G$
    ending in a leaf of $\G$,
    it follows that there is
    an element $H=(V_H,E_H)$ of $\cal S$ which is not a leaf in $\G$ and
    which can be followed only
    by leaves in $\G$.

    Let $p$ be the period of $H$ and let $B\subset V_H$ be the set
    of vertices of a strongly
  connected component of $G(\sigma^p)$.
  We choose for $D$ the set of vertices of all leaves  of $G$
  accessible from $B$.
We choose for $C$ the set of vertices of $G$ forming a
trivial component on a path from
$B$ to $D$ in $G$. Finally, we choose for $E$ the union of vertices
of trivial strongly connected
components not belonging to $C$ (and thus formed of erasable letters)
  accessible from $B$ in $G$.

  Set $n=\Card(A)!$.  Then $n$ is a multiple of the periods
  of all components $S$ and thus
  $M(\sigma^n)$ is the identity on the components $S$ in $D$. We have
  \begin{displaymath}
    ((\Card(B)-1)^2+1)p\le \Card(B)!p\le \Card(A)!.
  \end{displaymath}
  Thus, by Wielandt's bound,
  the matrix $M(\sigma^n)$
  is  positive on $B$. We also have
   $\sigma^n(E)=\{\varepsilon\}$
  and also  $\sigma^n(C)\subset(D\cup E)^*$.

  \noindent Case 2. There exist a  leaf $K=(V_K,E_K)$ of $\cal G$
   which is not reduced to a cycle. 
   Let $p$ be the period of $K$ and let $B$
   be the set of vertices in $V_K$ of  a strongly connected
  component of $G(\sigma^p)$. Since $((\Card(B)-1)^2+1)p\le \Card(A)!$
  as above, $M^n$ is positive on $B$.
   We choose $C=D=\emptyset$ and $E$
   is the set of erasable letters accessible from $B$.
   Since $n\ge\Card(E)$, we have also $\sigma^n(E)=\{\varepsilon\}$.
\end{proof}

Note that, as a reformulation
of the conditions of Lemma~\ref{lemmaABC}, the
restriction $M'$ to $B\cup C\cup D\cup E$
of the adjacency matrix of $G(\sigma^n)$ has the form
\begin{displaymath}
  M'=\kbordermatrix{&B&C&D&E\cr
    B&N&*&*&*\cr
    C&0&0&*&*\cr
    D&0&0&\begin{vmatrix}1&\cdots&0\\0&\ddots&0\\0&\cdots&1\end{vmatrix}&*\cr   
  E&0&0&0&0}
  \end{displaymath}
where $N>0$ is a primitive matrix with  $M_{b,d}\ne 0$
for at least one $b\in B$ and $d\in D$
(Case 1) or $N$ not the identity matrix of dimension $1$
and $C=D=\emptyset$ (Case 2).
We illustrate the first case of the proof in the following example.
  \begin{figure}[hbt]
    \centering
\tikzset{node/.style={circle,draw,minimum size=0.4cm,inner sep=0pt}}
\tikzset{title/.style={minimum size=0.5cm,inner sep=0pt}}
\tikzstyle{every loop}=[->,shorten >=1pt,looseness=8]
\tikzstyle{loop left}=[in=220,out=140,loop]
\tikzstyle{loop right}=[in=-40,out=40,loop]
    \begin{tikzpicture}
      \node[node](a)at(0,0){$a$};
      \node[node](b)at(2,0){$b$};
      \node[node](c)at(3,.5){$c$};
      \node[node](d)at(3,-.5){$d$};
     
      \draw[->,>= stealth](a)edge[loop left]node{}(a);
      \draw[->,>= stealth,bend left](a)edge node{}(b);
      \draw[->,>= stealth,bend right](a)edge node{}(b);
      \draw[->,>= stealth](b)edge node{}(c);
      \draw[->,>= stealth](b)edge node{}(d);
      \draw[->,>= stealth](c)edge[loop right]node{}(c);
      \draw[->,>= stealth](d)edge[loop right]node{}(d);
      \end{tikzpicture}
    \caption{The graph $G(\sigma)$.}\label{figureG(sigma)}
    
  \end{figure}
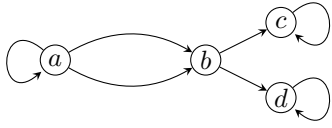
\begin{example}
  Let $\sigma\colon a\mapsto bab,b\mapsto cd,c\mapsto c,d\mapsto d$.
  The graph $G(\sigma)$ is represented in Figure~\ref{figureG(sigma)}.
  In this case, we choose $n=1$, $B=\{a\}$, $C=\{b\}$, $D=\{c,d\}$
  and $E=\emptyset$.
\end{example}
The following example illustrates the second case
in the proof of Lemma~\ref{lemmaABC}.
\begin{example}\label{example(baab,1)}
  Let $\sigma\colon a\mapsto baab,b\mapsto\varepsilon$. The
  graph $G(\sigma)$ is represented in Figure~\ref{figureG(sigma)2}.
  We choose this time $n=1$, $B=\{a\}$, $C=D=\emptyset$ and $E=\{b\}$.
    \begin{figure}[hbt]
    \centering
\tikzset{node/.style={circle,draw,minimum size=0.4cm,inner sep=0pt}}
\tikzset{title/.style={minimum size=0.5cm,inner sep=0pt}}
\tikzstyle{every loop}=[->,shorten >=1pt,looseness=8]
\tikzstyle{loop above}=[in=130,out=50,loop]
\tikzstyle{loop below}=[in=-130,out=-50,loop]
    \begin{tikzpicture}
      \node[node](a)at(0,0){$a$};
      \node[node](b)at(2,0){$b$};
 
A     
      \draw[->,>= stealth](a)edge[loop above]node{}(a);
      \draw[->,>= stealth](a)edge[loop below]node{}(a);
      \draw[->,>= stealth,bend right](a)edge node{}(b);
      \draw[->,>= stealth,bend left](a)edge node{}(b);
      \end{tikzpicture}
    \caption{The graph $G(\sigma)$.}\label{figureG(sigma)2}
    
  \end{figure}
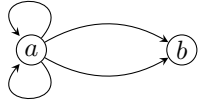
  \end{example}
  In the next lemma, we use the notation
  $\sigma^{\dpower{p}}(u)=\sigma^{p}(u)\cdots\sigma(u)u$.
\begin{lemma}\label{lemma2cases}
  Let $\sigma\colon A^*\to A^*$ be a morphism and let $a\in A$
  be a growing letter such that $\sigma(a)=uav$
  with $u$ non-growing. Let $i,p$ be such that
  $\sigma^{i+p}(u)=\sigma^i(u)$ with $p>0$. If $\sigma^i(u)=\varepsilon$,
   then $\sigma^i$ is right-prolongable
  on $\sigma^{\dpower{i}}(u)a$. Otherwise $w^\omega$,
  with $w=\sigma^{i+p-1}(u)\dots\sigma^{i+1}(u)\sigma^{i}(u)$  is
  an admissible right-infinite fixed point of $\sigma^p$.
\end{lemma}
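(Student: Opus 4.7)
The plan is to treat the two cases separately, each reducing to a direct telescoping computation together with Lemma~\ref{lemmaPeriodicFixedPoints}.

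For the first case $\sigma^i(u)=\varepsilon$, I first observe that then $\sigma^j(u)=\varepsilon$ for every $j\ge i$. Setting $w'=\sigma^{\dpower{i}}(u)a$, this collapses to $w'=\sigma^{i-1}(u)\cdots\sigma(u)u\cdot a$. Iterating $\sigma(a)=uav$ gives the explicit formula $\sigma^i(a)=\sigma^{i-1}(u)\cdots\sigma(u)u\cdot a\cdot v\sigma(v)\cdots\sigma^{i-1}(v)$, which begins with $w'$. Since $\sigma^i$ annihilates every $\sigma^j(u)$ with $j\ge i$, we also have $\sigma^i(\sigma^{\dpower{i}}(u))=\varepsilon$, hence $\sigma^i(w')=\sigma^i(a)$, which begins with $w'$. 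As $w'$ contains the growing letter $a$, it is growing, so $\sigma^i$ is right-prolongable on $w'$.

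For the second case $\sigma^i(u)\ne\varepsilon$, I first apply $\sigma$ repeatedly to $\sigma^{i+p}(u)=\sigma^i(u)$ to get $\sigma^{j+p}(u)=\sigma^j(u)$ for every $j\ge i$. With $w=\sigma^{i+p-1}(u)\cdots\sigma^{i+1}(u)\sigma^i(u)$, this gives
\[
\sigma^p(w)=\sigma^{i+2p-1}(u)\cdots\sigma^{i+p}(u)=\sigma^{i+p-1}(u)\cdots\sigma^i(u)=w.
\]
By Lemma~\ref{lemmaPeriodicFixedPoints} applied to $\sigma^p$, the sequence $w^\omega$ is then a right-infinite fixed point of $\sigma^p$ (and $w^\infty$ is a two-sided fixed point).

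It remains to prove admissibility, i.e.\ $w^\omega\in \XS(\sigma)^+$, and for this I will check $w^\infty\in\XS(\sigma)$ so that $(w^\infty)^+=w^\omega$ provides the witness. This reduces to showing every finite factor of $w^\infty$ lies in $\cL(\sigma)$, which I establish by proving by induction on $k\ge 1$ that $\sigma^{kp+i}(a)$ begins with $w^k\cdot\sigma^{\dpower{i-1}}(u)\cdot a$. The case $k=1$ is the direct expansion of $\sigma^{p+i}(a)$, grouping its prefix $\sigma^{p+i-1}(u)\cdots u$ as $w\cdot\sigma^{\dpower{i-1}}(u)$. The inductive step applies $\sigma^p$ to the prefix $w^k\cdot\sigma^{\dpower{i-1}}(u)\cdot a$ and uses $\sigma^p(w)=w$ together with the identity $\sigma^{p+i-1}(u)\cdots\sigma^p(u)\cdot\sigma^{p-1}(u)\cdots u=w\cdot\sigma^{\dpower{i-1}}(u)$ to recover a prefix $w^{k+1}\cdot\sigma^{\dpower{i-1}}(u)\cdot a$. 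Hence $w^k\in\cL(\sigma)$ for all $k$, so every factor of $w^\omega$ is in $\cL(\sigma)$ and $w^\infty\in\XS(\sigma)$.

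The only delicate point is the inductive bookkeeping in Case~2, specifically the regrouping of the prefixes of $\sigma^{kp+i}(a)$ into powers of $w$ using the telescoping identity above; everything else is either immediate from $\sigma(a)=uav$ and the hypothesis on $\sigma^i(u)$, or an invocation of Lemma~\ref{lemmaPeriodicFixedPoints}.
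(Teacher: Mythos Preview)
Your proof is correct and follows essentially the same route as the paper's. Both arguments unwind the identity $\sigma^n(a)=\sigma^{n-1}(u)\cdots\sigma(u)u\cdot a\cdot v\sigma(v)\cdots\sigma^{n-1}(v)$; in Case~1 this immediately gives $\sigma^i(w')=\sigma^i(a)\ge w'$, and in Case~2 both you and the paper show that $\sigma^{i+kp}(a)$ has $w^k$ as a prefix (the paper writes this as $w^{k-1}\sigma^{\odpower{i+p-1}}(u)a$, which regrouped is exactly your $w^k\sigma^{\dpower{i-1}}(u)a$) to obtain admissibility. Your explicit verification that $\sigma^p(w)=w$ and invocation of Lemma~\ref{lemmaPeriodicFixedPoints} is a cosmetic addition the paper leaves implicit.
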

\begin{proof}
If $\sigma^i(u)$ is empty, using the notation
   $w\ge w'$ if $w'$ is a prefix of $w$,
  we have $\sigma^i(a)\ge \sigma^{\dpower{i}}(u)a$.
  Moreover $\sigma^n(a)\ne\sigma^{\dpower{i}}(u)a$ because $a$ is growing.
  Set $t=\sigma^{\dpower{i}}(u)a$.
  Since
  $\sigma^i(t)\ge\sigma^i(a)\ge t$, this
  implies that $\sigma^i$
  is right-prolongable on $t$ and
  thus the conclusion in this case.

   Otherwise, we have for every $k\ge 1$, 
  \begin{eqnarray*}
    \sigma^{i+kp}(a)&\ge& \sigma^{\odpower{i+kp-1}}(u)a\\
    &\ge& (\sigma^{i+p-1}(u)\cdots\sigma^i(u))^{k-1}\sigma^{\odpower{i+p-1}}(u)a\\
    &\ge&w^{k-1}\sigma^{\odpower{i+p-1}}(u)a
      \end{eqnarray*}
 with $w=\sigma^{i+p-1}(u)\cdots\sigma^i(u)$.
 This shows that $w^\omega$ is in $\XS(\sigma)^+$ and is a right-infinite
 fixed point
  of $\sigma^p$.
\end{proof}
There is a symmetric version of Lemma~\ref{lemma2cases}. Assume that
that  $\sigma(a)=uav$ with $v$ non-growing. Let $i,p$ be such
that $\sigma^{i+p}(v)=\sigma^i(v)$ with $p>0$.
If $\sigma^i(v)=\varepsilon$, then $\sigma^i$ is left-prolongable
on $av\sigma(v)\cdots\sigma^{i-1}(v)$. Otherwise, $^\omega w$
with $w=\sigma^i(v)\sigma^{i+1}(v)\cdots\sigma^{i+p-1}(v)$
is a left-infinite fixed point of $\sigma$.

\quad

\begin{proofof}{of Proposition~\ref{lemmaOneSidedFixedPoint}}
  We first show that the number of orbits is nonzero.
 Let $B,C,D,E$ and $n$ be as in Lemma~\ref{lemmaABC}. 
 Taking the restriction of $\sigma^n$ to $B$ and changing $\sigma$
  for $\sigma^n$, we may assume that $A=B\cup C\cup D\cup E$,
  with $\sigma(C)\subset (D\cup E)^*$,
  $\sigma(D)\subset E^*DE^*$ and $\sigma(E)=\{\varepsilon\}$
  and every letter $a\in B$ appears in every $\sigma(b)$
  for $b\in B$. 

   Changing again $\sigma$ for some power, we may assume
  that there is a letter $a\in B$ such that $\sigma(a)=uav$
  with $u\in (C\cup D\cup E)^*$. If $u$ is non-erasable,
  the elements of $C,D,E$ are non-growing and
  there is a periodic admissible fixed point by Lemma~\ref{lemma2cases}.
  If $u$ is erasable, some power $\sigma^n$ of $\sigma$ is right-prolongable
  on $w=\sigma^{\dpower{n}}(u)a$ and thus $\sigma^{n\omega}(w)$
  is a right-infinite fixed point of $\sigma^n$
  (recall that $\sigma^{n\omega}=(\sigma^n)^\omega$). If $v$ is growing,
  it contains a letter of $B$ and thus $a$ is in $\cL(\XS(\sigma))$,
  which implies that the right-infinite fixed point $\sigma^{n\omega}(w)$
  is admissible. Otherwise, since $v$ cannot be erasable,
  we obtain by a symmetric version of Lemma~\ref{lemma2cases}
  a periodic admissible left-infinite fixed point,
  and thus also a periodic right-infinite fixed point.

  There remains to show that the number of orbits of
  admissible right-infinite fixed points
  of a power of $\sigma$ is finite.
  Let $x$ be such a fixed point. By Proposition \ref{theoremHeadLando}, either
  $x$ is in $F(\sigma^n)^\omega$ or of the form $\sigma^{n\omega}(u)$ where $\sigma^n$
  is right-prolongable on $u\in A^+$. If $x$ is in $F(\sigma^n)^\omega$, 
  it is formed of non-growing letters and there is
  a finite number of orbits of such points by Proposition~\ref{lemmaNonGrowing}.
  Assume that  $x=\sigma^{n\omega}(u)$ with $\sigma^n$ right-prolongable
  on $u$. By Lemma~\ref{lemmaRightProl}, there is a
  growing letter $a$ such that
  $u=praq$ with $\sigma(p)=p$, the word $r$ erasable, $\sigma^n$
  right-prolongable on $ra$ and $x=\sigma^{n\omega}(pra)$.
  Then $x$ belongs to the orbit of the admissible right-infinite fixed
  point $y=\sigma^{n\omega}(ra)$. By Proposition~\ref{propositionErasing},
  there is a finite number of erasable words $r$ and thus
  a finite number of points $y$.
  
 \end{proofof}

\begin{example}\label{example(baab,1)2}
  Let $\sigma\colon a\mapsto baab,b\mapsto\varepsilon$ as in Example~\ref{example(baab,1)}. Then
  $\sigma(baab)=(baab)^2$. Thus
  $\sigma$ is right-prolongable on $ba$ and
  $\sigma^\omega(baab)=(baab)^\omega$ is an admissible one-sided fixed point of $\sigma$.
\end{example}

\begin{example}\label{example(abb,b)}
  Let $\sigma\colon a\mapsto abb,b\mapsto b$. Then
  $\XS(\sigma)=\{b^\infty\}$ and $b^\omega$ is an admissible right-infinite fixed
  point of $\sigma$. The sequence $ab^\omega$ is also a right-infinite
  fixed point but it is not admissible.
\end{example}
The following example shows that the number of admissible right-infinite
fixed points of $\sigma$ need not be finite.
\begin{example}\label{exampleInfiniteNbFP}
  Let $\sigma\colon a\mapsto bc, b\mapsto bd, c\mapsto ec,
  d\mapsto d, e\mapsto e$. Then $d^ne^\omega$ is an admissible
  right-infinite fixed point of $\sigma$ for every $n\ge 1$.
\end{example}
We now add a complement to Proposition~\ref{theoremHeadLando}
which characterizes right  infinite admissible fixed points.
\begin{proposition}\label{propositionAdmissibleOneSided}
  Let $\sigma\colon A^*\to A^*$ be a morphism. A right-infinite
  sequence $x\in A^\N$
  is an admissible right-infinite fixed point of $\sigma$ if and only if one the two
  following conditions is satisfied.
  \begin{enumerate}
  \item[\rm(i)] $x=uv^\omega$ with $\sigma(u)=u$, $\sigma(v)=v$
    and $uv^*\subset \cL(\XS(\sigma))$.
  \item[\rm(ii)] $x=\sigma^\omega(u)$ with $u\in\cL(\XS(\sigma))$  and
    $\sigma$  right-prolongable on $u$.
    \end{enumerate}
\end{proposition}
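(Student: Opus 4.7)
The plan is to treat the two implications separately. For the easy direction, if (i) holds, then $\sigma(uv^\omega) = \sigma(u)\sigma(v)^\omega = uv^\omega$, so $x$ is a fixed point, and every finite prefix of $x$ is a prefix of some $uv^n\in\cL(\XS(\sigma))$, giving $x\in\XS(\sigma)^+$. If (ii) holds, then $\sigma(\sigma^\omega(u)) = \sigma^\omega(u)$ by construction; since $u\in\cL(\XS(\sigma))$, Lemma~\ref{lemmaSigma} shows that $\sigma^n(u)\in\cL(\XS(\sigma))$ for every $n\ge 0$, and as the words $\sigma^n(u)$ form a cofinal family of prefixes of $x$, admissibility follows.

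For the converse, let $x$ be an admissible right-infinite fixed point. I would apply Proposition~\ref{theoremHeadLando} to split into two cases. In the first case, $x = \sigma^\omega(u)$ with $\sigma$ right-prolongable on some $u\in A^+$; since $u$ is a prefix of the admissible point $x\in\XS(\sigma)^+$, automatically $u\in\cL(\XS(\sigma))$, and (ii) holds immediately. In the second case, $x\in F(\sigma)^\omega$, say $x = f_0f_1f_2\cdots$ with $f_i\in F(\sigma)$. Each $f_i$ is $\sigma$-fixed and consists entirely of non-growing letters (the sequence $|\sigma^n(f_i)|=|f_i|$ stays bounded), so every letter of $x$ is non-growing. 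To conclude that $x$ is eventually periodic, I would extend $x$ to some $y\in\XS(\sigma)$ with $y^+ = x$ and apply Proposition~\ref{lemmaNonGrowing} if $y$ has only non-growing letters, or the symmetric form of Lemma~\ref{lemmaNonGrowingBis} at the rightmost position of a growing letter otherwise. Either way $x = \alpha\gamma^\omega$ with $\gamma$ primitive.

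The final step is to align this eventual periodicity with the $F(\sigma)$-decomposition so as to produce genuine $\sigma$-fixed factors. I would choose $N$ with $p_N := |f_0\cdots f_{N-1}| \ge |\alpha|$, set $u := f_0\cdots f_{N-1}$ (so $\sigma(u) = u$ because each $f_i$ is fixed), and let $v$ be the primitive period of the tail $x_{[p_N,\infty)}$; since this tail is a shift of $\gamma^\omega$, the word $v$ is a primitive conjugate of $\gamma$. Combining $\sigma(x)=x$ with $\sigma(u)=u$ yields $\sigma(v)^\omega = v^\omega$, whence Lemma~\ref{lemmaPeriodicFixedPoints} gives $\sigma(v)=v^n$ for some $n\ge 1$. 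Since $v$ is non-empty and consists of non-growing letters, the lengths $|\sigma^k(v)| = n^k|v|$ must remain bounded, which forces $n=1$ and hence $\sigma(v)=v$. Admissibility of $x$ finally gives $uv^m\in\cL(\XS(\sigma))$ for every $m\ge 0$, so (i) holds. The main obstacle is the second case: one must exploit both the primitivity of $v$ and the non-growing property of its letters to upgrade the weaker relation $\sigma(v)=v^n$ provided by Lemma~\ref{lemmaPeriodicFixedPoints} to the identity $\sigma(v)=v$ required by the statement.
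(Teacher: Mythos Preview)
Your argument is correct and follows the same skeleton as the paper's proof: both split via Proposition~\ref{theoremHeadLando} into the prolongable case (where $u$ is automatically a prefix of the admissible $x$, hence in $\cL(\XS(\sigma))$) and the $F(\sigma)^\omega$ case (where one first establishes eventual periodicity from the non-growing structure). You are in fact more careful than the paper in justifying eventual periodicity: the paper invokes Proposition~\ref{lemmaNonGrowing} directly on the one-sided sequence $x$, while you properly pass to a two-sided extension $y\in\XS(\sigma)$ with $y^+=x$ and handle possible growing letters on the left via the symmetric version of Lemma~\ref{lemmaNonGrowingBis}.

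The only substantive divergence is the endgame in case~(i). The paper observes (tersely) that an eventually periodic sequence in $F(\sigma)^\omega$ has only finitely many distinct suffixes, so two of the tails $f_if_{i+1}\cdots$ and $f_jf_{j+1}\cdots$ must coincide, giving $u=f_0\cdots f_{i-1}$ and $v=f_i\cdots f_{j-1}$ both in $F(\sigma)^*$ and hence $\sigma$-fixed. You instead fix $u\in F(\sigma)^*$ long enough, take $v$ to be the primitive period of the tail, and combine Lemma~\ref{lemmaPeriodicFixedPoints} with the boundedness of $|\sigma^k(v)|=n^k|v|$ (all letters of $v$ being non-growing) to force $n=1$ in $\sigma(v)=v^n$. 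Both routes are short; yours trades a pigeonhole on suffixes for a growth argument, and has the minor advantage of making fully explicit why the period word itself is $\sigma$-fixed.
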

\begin{proof}
  The conditions are clearly sufficient.
  
  Conversely, if $x$ contains a growing letter,
  let $u$ be the shortest prefix of $x$
  containing a growing letter. Then $u$ is growing and $\sigma$
  is right-prolongable on $u$. Thus (ii) is satisfied.
  Otherwise, by Lemma~\ref{lemmaNonGrowing}, $x$ is eventually periodic.
  On the other hand, by Proposition~\ref{theoremHeadLando},
  we have $x\in F(\sigma)^\omega$. Thus there
  are $u,v\in F(\sigma)^*$ such that $x=uv^\omega$. Thus condition (i)
  is satisfied.
  \end{proof}
\paragraph{Two-sided fixed points}
Let us now consider two-sided infinite fixed points.
Let $\sigma\colon A^*\to A^*$ be a morphism.
Recall that a
two-sided infinite fixed point of $\sigma$ is a sequence $x\in A^\Z$
such that $\sigma(x)=x$.

Assume that $\sigma$ is right-prolongable on $u \in A^+$ and
let $x=\sigma^\omega(u)$. Assume next that $\sigma$ is
left-prolongable on $v\in A^+$.
Denote by $y=\sigma^{\tilde{\omega}}(v)$  the left-infinite sequence
having all $\sigma^n(v)$ as suffixes. Let $z\in A^\Z$ be the two-sided infinite
sequence such that $x=z^+$ and $y=z^-$. Then $z$
is a two-sided infinite
fixed point of $\sigma$ denoted $\sigma^\omega(v\cdot u)$.
It can happen that $\sigma^{ \omega}(v \cdot u)$
does not belong to $X (\sigma )$. 
In fact, $\sigma^{ \omega}(v \cdot u)$ belongs to $X (\sigma )$
if and only if $vu$ belongs to $\mathcal{L}(\sigma)$.

The description of two-sided infinite fixed points follows directly
from Proposition~\ref{theoremHeadLando}. Indeed, $x\in A^\Z$
is a fixed point of $\sigma\colon A^*\to A^*$ if
and only if $x^-$ and $x^+$ are one-sided infinite fixed points of $\sigma$.
Thus, there are four cases according to whether none
of $x^-,x^+$ is in $^\omega F(\sigma)\cup F(\sigma)^\omega$ 
or one of them or both.

An {\em admissible}
two-sided infinite fixed point of $\sigma$ is a sequence $x\in \XS(\sigma)$
which is a two-sided fixed point. Thus a fixed point of the
form $\sigma^\omega(v\cdot u)$ with $vu\in\cL(\sigma)$
is admissible.

In the following result, the existence of admissible two-sided
infinite fixed points is a modification of
\cite[Proposition 1.4.13]{DurandPerrin2021} where
it is stated for a non-erasing morphism $\sigma$
such that $\XS(\sigma)$ is minimal.

\begin{theorem}\label{propositionFixedPointMinimal2}
  A morphism $\sigma\colon A^*\to A^*$ such that $\XS(\sigma)$ is non-empty
  has a power $\sigma^k$ with an admissible two-sided infinite fixed point.
  The set of orbits of these fixed points
  is a finite computable set and the exponent $k$ can be bounded
  in terms of $\sigma$.
\end{theorem}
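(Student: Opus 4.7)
The plan is to produce an admissible two-sided fixed point of some power $\sigma^k$ by anchoring at a growing letter in $\cL(\XS(\sigma))$ and iterating $\sigma^k$ symmetrically on both sides; the finiteness assertion will then be extracted from the already proved one-sided result. First I would apply Lemma~\ref{lemmaABC} together with the reductions in the proof of Proposition~\ref{lemmaOneSidedFixedPoint}: since $\XS(\sigma)$ is non-empty the set $B$ is non-empty and, up to replacing $\sigma$ by a power of exponent bounded by $\Card(A)!\cdot\mex(\sigma)!$, I may assume there is some $a\in B\cap\cL(\XS(\sigma))$ with $\sigma(a)=uav$. Writing $\sigma^m(a)=L_m\,a\,R_m$ with $L_m=\sigma^{m-1}(u)\cdots u$ and $R_m=v\cdots\sigma^{m-1}(v)$, I would aim to take the bi-infinite limit with $a$ fixed at position $0$.

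The construction of the two tails proceeds by the trichotomy of Lemma~\ref{lemma2cases} and its left-sided mirror, applied independently to $u$ and to $v$. If $u$ contains a growing letter, Lemma~\ref{lemmaGrowing} gives $|L_m|\to\infty$ and a genuine left prolongation. If $u$ is non-growing but eventually erased under iteration, then for some bounded $i$ the trimmed word $\sigma^{\dpower{i}}(u)\,a$ becomes left-prolongable under $\sigma^k$ (as in Lemma~\ref{lemmaRightProl} read on the left). If $u$ is non-growing and non-erasable, by Proposition~\ref{propositionNonGrowingLetters} the sequence $(\sigma^m(u))_m$ is ultimately periodic and $L_m$ converges to a periodic left-infinite sequence $^\omega w$ which is a left-infinite fixed point of $\sigma^k$ for $k$ a common multiple of the relevant cycle periods. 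A symmetric analysis applies to $v$. Choosing $k$ as the least common multiple of the bounded exponents produced on both sides, the two-sided sequence $z=L\cdot aR$ satisfies $\sigma^k(z)=z$; each of its factors is a factor of some $\sigma^{km}(a)\in\cL(\sigma)$ and, since $a\in\cL(\XS(\sigma))$, admissibility $z\in\XS(\sigma)$ follows as in the end of the proof of Proposition~\ref{lemmaOneSidedFixedPoint}.

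For the finiteness and computability assertion, every admissible two-sided fixed point $z$ of $\sigma^k$ has $z^+$ and $z^-$ as admissible right- and left-infinite fixed points of $\sigma^k$. By Proposition~\ref{lemmaOneSidedFixedPoint} and its left-sided analogue, there are only finitely many effectively computable orbits on each side. An orbit of admissible two-sided fixed points is determined by a compatible pair of such one-sided orbit classes together with at most $|\sigma^k(a)|$ choices for the splitting position; admissibility of a candidate gluing can be tested via Proposition~\ref{newPropositionDecidabilityL(X(sigma))}. Enumerating the compatible pairs yields the required finite, computable set of orbits along with an effective bound on $k$.

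The main obstacle is the doubly degenerate existence branch where neither $u$ nor $v$ is growing: one then has to verify that the periodic patterns forced on the left and on the right are simultaneously fixed by a single power $\sigma^k$, which requires $k$ to absorb at once the periods of the non-growing cycles accessible from $a$, the mortality exponent $\mex(\sigma)$, and the reduction bound $\Card(A)!$ of Lemma~\ref{lemmaABC}. Coordinating these three bounds is what makes the final exponent $k$ effective, and it is the bookkeeping in this case that the proof must carry through with care.
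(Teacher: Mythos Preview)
Your existence argument has a genuine gap in the branch where $u$ is non-growing and erasable. In that case $L_m=\sigma^{m-1}(u)\cdots u$ stabilises to a \emph{finite} word $L$, and the identity $\sigma(L)u=L$ shows only that $\sigma$ is \emph{right}-prolongable on $La$: indeed $\sigma(La)=Lav$ begins with $La$ but does not end with it. So your single anchor $a$ produces no left-infinite tail, and the sequence $L\cdot aR$ is only right-infinite. Reading Lemma~\ref{lemmaRightProl} ``on the left'' does not help, since that lemma concerns words on which $\sigma$ is already prolongable; here nothing ending in $a$ is fixed on the left. The paper avoids this by never trying to extract both tails from a single occurrence of a growing letter. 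When the graph on $B$ is not reduced to a loop it locates \emph{two} growing letters $b,c$ with $\sigma^m(a)=pbqcr$, $\sigma^m(b)=sbt$, $\sigma^m(c)=ucv$ and $q,t,u$ non-growing, so that $b$ is the last growing letter of $\sigma^m(b)$ (yielding a left tail after handling $t$) and $c$ the first growing letter of $\sigma^m(c)$ (yielding a right tail after handling $u$); the admissible fixed point is then $\sigma^\omega(z\sigma(q)\cdot w)$. When $B$ is reduced to a loop the paper falls back on a \emph{periodic} fixed point $w^\infty$ via Lemma~\ref{lemma2cases}, which is automatically two-sided.

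Your finiteness argument is also incomplete. Knowing the one-sided orbit of $z^+$ and of $z^-$ does not by itself bound the two-sided orbit of $z$: within a single one-sided orbit there may be infinitely many fixed points (Example~\ref{exampleInfiniteNbFP}), and you give no reason why the number of admissible gluings should be at most $|\sigma^k(a)|$. The paper handles the delicate case $z=\sigma^{n\omega}(aq\cdot rb)$ (both tails arising from growing seeds $a,b$ separated by a non-growing bridge $qr$) by proving in Lemma~\ref{lemmaRightExtendable} a direct bound on the length of the seed $aqrb$, via a derivation-tree argument that forces the bridge to be short. That lemma, rather than a product of one-sided orbit counts, is the missing ingredient.
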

The proof uses the following lemma.
\begin{lemma}\label{lemmaRightExtendable}
  Let $\sigma\colon A^*\to A^*$ be a morphism. There
  is a finite number of words $auvb\in\cL(\XS(\sigma))$
  with $u,v$ non-growing, $a,b$ growing, such that
  for some $k\ge 1$,
  $\sigma^k$ is left-prolongable on $au$ and
  right-prolongable on $vb$. Their length is bounded
  effectively in terms of $\sigma$.
\end{lemma}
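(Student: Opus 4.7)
The strategy is to reduce the problem, using the structure of prolongable morphisms, to showing that a particular ``middle'' fixed-point factor is of bounded length. First, apply Lemma~\ref{lemmaRightProl} to the right-prolongability of $\sigma^k$ on $vb$, where the only growing letter is $b$ at the end: this yields $vb = P_b r_b b$ with $\sigma^k(P_b) = P_b$ and $r_b$ erasable. The symmetric statement applied to $au$ gives $au = a r_a P_a$ with $\sigma^k(P_a) = P_a$ and $r_a$ erasable. Both applications also give a bound on $k$ in terms of $\sigma$ (since $au,vb \in \cL(\sigma)$). By Proposition~\ref{propositionErasing}, the lengths of $r_a, r_b$ are bounded, so the quadruples $(a, b, r_a, r_b)$ come from a finite, computable set. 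It thus suffices to bound the length of $P_a P_b$, a finite fixed point of $\sigma^k$ made entirely of non-growing letters.

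Suppose for contradiction that there is an infinite family of such words $au_m v_m b$ with $|P_{a,m} P_{b,m}| \to \infty$. Passing to a subsequence, we fix $a, b, r_a, r_b, k$. Choose $x^{(m)} \in \XS(\sigma)$ containing $au_m v_m b$ as factor, aligned so that $a$ sits at position $0$; by compactness of $\XS(\sigma)$, extract a convergent subsequence $x^{(m)} \to y \in \XS(\sigma)$. In $y$, position $0$ is the growing letter $a$, and every strictly positive position lies in $u_m v_m$ for large $m$, so it is non-growing. The symmetric form of Lemma~\ref{lemmaNonGrowingBis} then forces the right tail to have the bounded form $y_{(0,+\infty)} = zw^{k'}vu^\omega$ with $u,v,w,z$ of length bounded in terms of $\sigma$, so for each large $m$ we have $u_m v_m = zw^{k'}vu^{n_m}u'_m$ with $n_m \to \infty$. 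A symmetric compactness argument centered on $b$ combined with Lemma~\ref{lemmaNonGrowingBis} itself gives matching eventual periodicity from the right; by Fine--Wilf on the long overlap in the middle we may write $u_m v_m = X U^{n_m} Y$ with $X,Y$ bounded, $U$ a bounded non-growing word, and $U$ must be a finite fixed point of $\sigma^k$ in order for $\sigma^k(u_m v_m) = P_{a,m}P_{b,m}$ to match its own structure.

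The main obstacle is then to extract a contradiction from the existence, for arbitrarily large $n$, of $aXU^nYb \in \cL(\XS(\sigma))$ together with the prolongability of $\sigma^k$ on $aXU^n$ and $U^nYb$. The plan is to apply a second compactness limit centered on the middle $U^n$, producing both the bi-infinite periodic point $U^\infty \in \XS(\sigma)$ and one-sided points of the form $\ldots a X U^\omega$ and ${}^\omega U Y b \ldots$ in $\XS(\sigma)$. Iterating $\sigma^k$ on these, the left-prolongability equation $\sigma^k(a X U^n)$ ends with $aXU^n$ forces, after $n_m \to \infty$, a compatibility condition between the suffix structure of $\sigma^k(a)$, the word $X$, and the periodic word $U$: roughly, $\sigma^k(X)$ must end with $r_a X$ in a way that, on further iteration, propagates the growing letter $a$ into the periodic region. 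Since $U$ consists only of non-growing letters while $a$ is growing, this gives the sought contradiction and hence bounds $|P_a P_b|$, completing the proof.
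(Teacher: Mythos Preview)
Your opening reduction is sound: Lemma~\ref{lemmaRightProl} and its mirror do give $au=ar_aP_a$, $vb=P_br_bb$ with $r_a,r_b$ erasable of bounded length, $P_a,P_b$ finite fixed points of $\sigma^k$, and $k$ bounded in terms of~$\sigma$. So the problem does reduce to bounding $|P_aP_b|$.

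The gap is in the compactness step. From $x^{(m)}\to y$ you correctly deduce that $y_0=a$ and every $y_j$ with $j>0$ is non-growing, and then that $y_{(0,\infty)}$ has the eventually periodic form given by the mirror of Lemma~\ref{lemmaNonGrowingBis}. But you then assert that for large $m$ one has $u_mv_m=zw^{k'}vu^{n_m}u'_m$, i.e.\ that $u_mv_m$ is a prefix of $y_{(0,\infty)}$. Convergence in $A^\Z$ only gives agreement on windows $[-N,N]$ with $N$ fixed; since $|u_mv_m|\to\infty$, you cannot conclude agreement out to position $|u_mv_m|$. Nothing rules out, say, $u_mv_m$ having a different letter at position $\lfloor|u_mv_m|/2\rfloor$ for every $m$, with that position drifting to infinity. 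Consequently the two one-sided limits (centred at $a$ and at $b$) need not produce overlapping periodic pieces inside $u_mv_m$, and the Fine--Wilf step has nothing to bite on.

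The final paragraph is a plan rather than an argument, and the mechanism it sketches does not work as stated: left-prolongability of $\sigma^k$ on $au_m$ forces $a$ to be the \emph{last} growing letter of $\sigma^k(a)$, so iteration keeps $a$ at the boundary between the growing and non-growing parts rather than ``propagating $a$ into the periodic region''. No contradiction is produced this way.

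The paper's proof avoids limits altogether. After bounding $k$ and replacing $\sigma$ by $\sigma^k$, it looks at a derivation tree $T_\sigma(e,n)$ with $auvb$ a factor of $\sigma^n(e)$, and follows the two paths from the root down to the occurrences of $a$ and $b$. The crucial observation is that left-prolongability on $au$ forces $a$ to be the last growing letter of $\sigma(a)$, so along the path to $a$ each node is the last growing child of its parent; symmetrically for $b$. A pigeonhole repeat on these paths then forces the repeated letters to be $a$ and $b$ themselves, yielding $\sigma^{n_1}(e)=p_1aq_1br_1$ with $q_1$ non-growing and $n_1\le\Card(A)^2$; one more application of prolongability shows the pieces of $uv$ coming from $\sigma(a)$ and $\sigma(b)$ are erasable, giving the explicit bound $|uv|\le K|\sigma|^{\Card(A)^2}+2L$.
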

\begin{proof}
  Consider $a,u,v,b$ such that for some integer $k$
  the conditions are satisfied. By Lemma~\ref{lemmaRightProl},
  the least $k$ such that this is true
  is bounded in terms of $\sigma$. Thus we may replace $\sigma^k$
  by $\sigma$.
  
  Since $auvb\in\cL(\XS(\sigma))$, there is, for every large enough 
  integer $n$, a letter $e\in A$ such that $auvb$ is a factor of $\sigma^n(e)$.
  For each $e\in A$ and $n\ge 0$, consider
  the derivation tree $T_\sigma(e,n)$ of $e$ at order $n$.
  In this tree, there is  a unique path from the root
  to each of the two vertices labeled $a$ and $b$ at level
  $n$. Let us say that
  the tree $T_\sigma(e,n)$ is \emph{special} if the nodes
   at level $1$ on these paths are distinct.

  We distinguish two cases.

  Case 1. The integers $n$ such that the tree $T_\sigma(e,n)$ is special
    are bounded by $\Card(A)^2$.
   Then, the length of $auvb$ is bounded by
   $|\sigma|^{\Card(A)^2}$ and thus the conclusion holds.

   Case 2. There is a special tree $T_\sigma(e,n)$ with $n>\Card(A)^2$. 
  Since $a$
  and $b$ are growing, all vertices on these paths are labeled
  by a growing letter. On the path from the root to $a$,
  the successor of a vertex labeled by $c$ distinct of the root is the last
  growing letter $f$ of $\sigma(c)$ and is thus uniquely determined.
  Similarly, on the path from the root to $b$,
  the successor of a vertex labeled by $d$ distinct of the root is the first
  growing letter $g$ of $\sigma(d)$ and is thus uniquely determined
  (see Figure~\ref{figureFiniteaub} on the left).

  Since $n>\Card(A)^2$,
  we can write $n=n_1+kn_2+n_3$ with $n_1,n_2,n_3\le \Card(A)^2$, $n_2>0$ and $k\ge 1$,
  such that there is
  a path of length $n_1$ from the root to a node labeled by $c$ (resp. $d$),
  $k$ paths of length $n_2$ from $c$ to $c$ (resp. $d$ to $d$)
  and a path of length $n_3$
  from $c$ to $a$ (resp. from $d$ to $b$)
  (see Figure~\ref{figureFiniteaub}).
\begin{figure}[hbt]
    
    \centering
    \tikzset{node/.style={draw,minimum size=.4cm,inner sep=0pt}}
    \tikzset{title/.style={minimum width=.4cm,minimum height=.4cm}}
    \begin{tikzpicture}
      \node[node](e)at(-7,3){$e$};
      \node[node](ch)at(-8,2){};
      \node[node](dh)at(-6,2){};
      \node[node](c)at(-8,1){$c$};
      \node[node](d)at(-6,1){$d$};
      \node[node](f)at(-8,0){$f$};
      \node[node](g)at(-6,0){$g$};
      \node[node](a)at(-8,-1){$a$};
      \node[node](b)at(-6,-1){$b$};

      \draw[->](-7,2.8)--node{}(-8,2.2);
      \draw[->](-7,2.8)--node{}(-6,2.2);
      \draw[dashed,->](-8,1.8)--node{}(-8,1.2);
      \draw[dashed,->](-6,1.8)--node{}(-6,1.2);
      \draw[->](-8,.8)--node{}(-8,.2);
      \draw[->](-6,.8)--node{}(-6,.2);
      \draw[dashed,->](-8,-.2)--node{}(-8,-.8);
      \draw[dashed,->](-6,-.2)--node{}(-6,-.8);
      \node[node](e)at(-3,3){$e$};
      \node[node](ch)at(-4,2){$c$};
      \node[node](dh)at(-2,2){$d$};
      \node[node](cb)at(-4,1){$c$};
      \node[node](db)at(-2,1){$d$};
      \node[node](a)at(-4,0){$a$};
      \node[node](b)at(-2,0){$b$};
      
      \draw[->,above](-3,2.8)--node{$\sigma^{n_1}$}(-4,2.2);
      \draw[->](-3,2.8)--node{}(-2,2.2);
      \draw[->,left](-4,1.8)--node{$\sigma^{kn_2}$}(-4,1.2);
      \draw[->](-2,1.8)--node{}(-2,1.2);
      \draw[->,left](-4,.8)--node{$\sigma^{n_3}$}(-4,.2);
      \draw[->](-2,.8)--node{}(-2,.2);
    \end{tikzpicture}
    \caption{The pair $(c,d)$.}\label{figureFiniteaub}
    \end{figure}
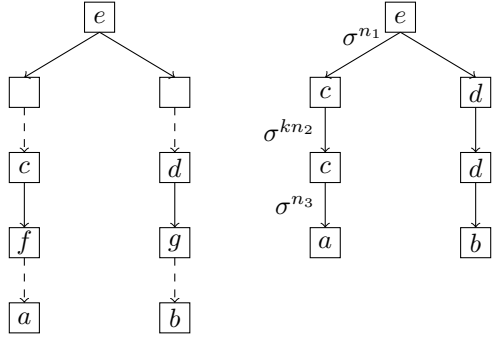

  Since $T_\sigma(e,n)$ is special, the nodes labeled by $c$ and $d$
  at level $n_1$ are distinct.  Since $\sigma$
  is left-prolongable on $au$, the letter $a$ is the last
  growing letter of $\sigma(a)$. This forces $c=a$. Similarly
  $d=b$. Consequently, we can choose $n_2=1$ and $n_3=0$.

\begin{figure}[hbt]
    
    \centering
    \tikzset{node/.style={draw,minimum size=.4cm,inner sep=0pt}}
    \tikzset{title/.style={minimum width=.4cm,minimum height=.4cm}}
    \begin{tikzpicture}

      \node[node](e)at(3,3){$e$};
      \node[node,text width=1cm](p1)at(1.6,2){};\node[title](pp1)at(1.6,2){$p_1$};
      \node[node](ch)at(2.3,2){$a$};
      \node[node,text width=1cm](q1)at(3,2){};\node[title](qq1)at(3,2){$q_1$};
      \node[node](dh)at(3.7,2){$b$};
      \node[node,text width=1cm](r1)at(4.4,2){};\node[title](rr1)at(4.4,2){$r_1$};
      \node[node,text width=1cm](p2)at(.6,1){};\node[title](pp2)at(.6,1){$p_2$};
      \node[node](cb)at(1.3,1){$a$};
      \node[node,text width=1cm](q2)at(2,1){};\node[title](qq2)at(2,1){$q_2$};
      \node[node,text width=1cm](q'2)at(3,1){};\node[title](qq'2)at(3,1){$q'_2$};
      \node[node,text width=1cm](q''2)at(4,1){};\node[title](qq''2)at(4,1){$q''_2$};
      \node[node](db)at(4.7,1){$b$};
      \node[node,text width=1cm](r2)at(5.4,1){};\node[title](rr2)at(5.4,1){$r_2$};
      \node[node,text width=1.5cm,](u)at(2.25,0){\qquad$u$};
      \node[node,text width=1.5cm](v)at(3.75,0){\qquad$v$};

      \draw[->,above](2.8,2.8)--node{$\sigma^{n_1}$}(1.1,2.2);
      \draw[->,above](3.2,2.8)--node{}(4.9,2.2);
      \draw[->,above](2.1,1.8)--node{}(.1,1.2);
      \draw[->,left](2.5,1.8)--node{$\sigma^{k}$}(2.5,1.2);
      \draw[->,left](3.5,1.8)--node{}(3.5,1.2);
      \draw[->,left](3.9,1.8)--node{}(5.9,1.2);
      
    \end{tikzpicture}
    \caption{The derivation tree $T_\sigma(e,n)$.}\label{figureFiniteaubCase1}
\end{figure}
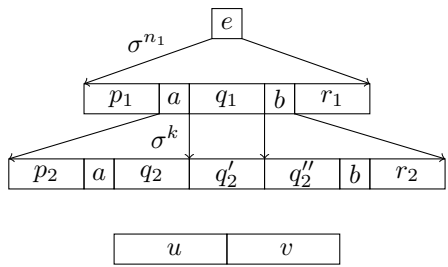
  Thus we have 
  \begin{align*}
    \sigma^{n_1}(e)&=p_1aq_1br_1,\\
    \sigma^{k}(a)=p_2aq_2,\quad \sigma^{k}(q_1)&=q'_2,\quad \sigma^{k}(b)=q''_2br_2
  \end{align*}
  and $uv=q_2q'_2q''_2$ (see Figure~\ref{figureFiniteaubCase1}).
   If $q_2$ is non-erasable, then $\sigma$ is not
  left-prolongable on $au$,
  a contradiction. Thus $q_2$ is erasable and similarly $q''_2$ is erasable.
  This shows that the length of $uv$ is bounded.
  One has actually
  \begin{eqnarray*}
|uv|&=&|q_2q'_2q''_2|=|q_2|+|\sigma^{k}(q_1)|+|q''_2|.
  \end{eqnarray*}
   Set
   $K=\max\{|\sigma^m(a)|\mid m\ge 1,\mbox{ and $a\in A$ non-growing}\}$.
   Let also $L$ be the maximal length of erasable words in $\cL(\sigma)$
   (see Proposition \ref{propositionErasing}). We have
   $|\sigma^k(q_1)|\le K|\sigma|^{n_1}\le K|\sigma|^{\Card(A)^2}$
   and since $q_2,q''_2$ are erasable, $|q_2|,|q''_2|\le L$. Thus
  \begin{displaymath}
    |uv|\le K|\sigma|^{\Card(A)^2}+2L
    \end{displaymath}
showing that $|uv|$ is bounded in terms of $\sigma$.
  
\end{proof}

\begin{proofof}{of Theorem~\ref{propositionFixedPointMinimal2}}
  Let $B,C,D,E$ and $n$ be as in Lemma~\ref{lemmaABC}. We change $\sigma$
  for $\sigma^n$. Suppose first that the
  graph induced by $G(\sigma)$ on $B$ is reduced to a loop on $a\in A$
  and thus that $\sigma(a)=uav$ with $u,v$ non-growing. Since
  $a$ is growing, we cannot have $u$ and $v$ erasable.
  We may assume that $u$ is non-erasable. Then, by Lemma~\ref{lemma2cases},
  some power $\sigma^n$ of $\sigma$ has an admissible periodic right-infinite
  fixed point $w^\omega$ and
  thus the admissible two-sided infinite fixed
  point $w^\infty$.

  Assume next that graph induced on $B$ is not reduced to a loop
  on one vertex. Then for every $a\in B$, considering the tree
  $T_\sigma(a,n)$ for $n$ large enough (as in Figure~\ref{figureFiniteaubCase1}),
  we can then find $m\ge 1$ (at most equal to the least integer $k$
  such that $M(\sigma)^k$ restricted to $B\times B$ is positive) and $b,c\in B$
  such that
  \begin{eqnarray*}
    \sigma^m(a)&=&pbqcr,\\
    \sigma^m(b)&=&sbt,\\
    \sigma^m(c)&=&ucv
    \end{eqnarray*}
  with $q,t,u\in (C\cup D\cup E)^*$.
  If $u$ or $t$ is non-erasable,
  then by Lemma~\ref{lemma2cases} or its symmetric version,
  some power of $\sigma$
  has a periodic admissible one-sided
  fixed point and thus a periodic admissible two-sided fixed point. Otherwise,
  assume that $\sigma^n(u)=\sigma^n(t)=\varepsilon$.
  Then, by Lemma~\ref{lemma2cases}, $\sigma^n$
  is right-prolongable on $w=\sigma^{\dpower{n}}(u)c$
  and, symmetrically, it is left-prolongable
  on $z=bt\sigma(t)\cdots\sigma^{n-1}(t)$.
  Since $q$ is non-growing, we may assume
  that $\sigma^{2n}(q)=\sigma^n(q)$.
  We can change $\sigma$ for $\sigma^n$.
   Then $\sigma$
  is left-prolongable on $z\sigma(q)$, right-prolongable
  on $w$ 
  and $z\sigma(q)w$ is in $\cL(\sigma)$. Therefore,
  $\sigma^{\omega}(z\sigma(q)\cdot w)$ is an admissible
  two-sided fixed point of $\sigma$.

  This proves the existence of the fixed point. Let now
  $x$ be an arbitrary admissible two-sided fixed point  of $\sigma^n$.
  Then $x^+$ and $x^-$ are one-sided fixed points. If $x^+$
  is in $F(\sigma^n)^\omega$
  and  $x^-$
   in $^\omega F(\sigma^n)$, then $x$ is formed of non-growing
  letters and there is a finite number of orbits of such points
  by Proposition~\ref{lemmaNonGrowing}. If $x^-$ is formed of
  non-growing letters and $x^+$ contains growing letters,
  then by Proposition~\ref{theoremHeadLando} we have $x^+=\sigma^{n\omega}(u)$
  where $\sigma^n$ is right-prolongable on $u$.
  By Proposition~\ref{lemmaOneSidedFixedPoint}, there is a finite number of
  orbits of points of this type which are admissible fixed points of a power
  of $\sigma$. The case where $x^-=\sigma^{n\omega}(v)$ and $x^+$
  is formed of non-growing letters is symmetrical. Finally,
  assume that $x=\sigma^{n\omega}(u\cdot v)$,
  where $\sigma^n$ is left-prolongable on $u$
  and right-prolongable on $v$. Set $u=paq$ and $v=rbs$
  with $q,r$ non-growing. Then $\sigma^n$ is left-prolongable
  on $aq$ and right-prolongable on $rb$.
  Then $x=\sigma^{n\omega}(aq\cdot rb)$ and 
by Lemma~\ref{lemmaRightExtendable},
  there is a finite number of points of this form.
\end{proofof}
\begin{example}
  Let $A=\{a,b\}$ and let $\sigma$ be morphism
  $a\mapsto a,b\mapsto baabab$. Then $\sigma^\omega(ba\cdot b)$
  is an admissible two-sided fixed point. 
\end{example}
Note that, as for one-sided fixed points, the number of admissible two-sided
fixed points need not be finite, as shown in the following example.
\begin{example}
  Let $\sigma\colon a\mapsto bc, b\mapsto bd, c\mapsto ec, d\mapsto d,
  e\mapsto e$, as in Example~\ref{exampleInfiniteNbFP}. Then, for any
  $n\ge 0$, the sequences $^\omega d e^n\cdot e^\omega$ and
  $^\omega d\cdot d^ne^\omega$ are admissible
  two-sided fixed points of $\sigma$.
\end{example}
In the same way as we did for one-sided fixed points
(Proposition~\ref{propositionAdmissibleOneSided}), we now
give the following characterization of two-sided admissible
fixed points.
\begin{proposition}\label{propositionAdmissibleTwoSided}
  Let $\sigma\colon A^*\to A^*$ be a morphism. A sequence
  $x\in \XS(\sigma)$ is a two-sided infinite fixed point of $\sigma$
  if and only if one of the following conditions is satisfied.
  \begin{enumerate}
  \item[\rm(i)] $x={}^\omega r s \cdot t u^\omega$ with $r,s,t,u\in F(\sigma)^*$
    and $r^*stu^*\subset\cL(\XS(\sigma))$.
  \item[\rm(ii)] $x={}^\omega r s \cdot \sigma^\omega(t)$
    with $r,s\in F(\sigma)^*$,   $\sigma$  right-prolongable on $t$ and $r^*st\subset \cL(\XS(\sigma))$.
  \item[\rm(iii)] $x=\sigma^{\tilde{\omega}}( r) \cdot s t^\omega$ with
     $\sigma$ left-prolongable on $r$,
    $s,t\in F(\sigma)^*$ and $rst^*\subset\cL(\XS(\sigma))$.
  \item[\rm(iv)] $x=\sigma^{\tilde{\omega}}(r)\cdot\sigma^\omega(s)$
    with  $\sigma$ left-prolongable on $r$ and right-prolongable on $s$ and $rs\in\cL(\sigma)$.
    \end{enumerate}
\end{proposition}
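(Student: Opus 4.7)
The plan is to reduce the two-sided statement to the one-sided statement proved in Proposition~\ref{propositionAdmissibleOneSided}, together with its obvious symmetric version for left-infinite fixed points. The starting observation is that a sequence $x\in A^\Z$ satisfies $\sigma(x)=x$ if and only if $\sigma(x^+)=x^+$ and $\sigma(x^-)=x^-$: indeed, by the very definition of how $\sigma$ acts on a two-sided infinite word, the equation $\sigma(x)=x$ is equivalent to the images of $x^-$ and $x^+$ staying on the left and on the right of position~$0$ with the correct alignment, and since $\sigma(x)$ is two-sided infinite (Lemma~\ref{lemmaSigma} applied to $x\in\XS(\sigma)$), this forces the two halves to be fixed separately.

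Given this decomposition, I would apply Proposition~\ref{propositionAdmissibleOneSided} to $x^+$, which gives two possibilities: either $x^+=s't^\omega$ with $\sigma(s')=s'$ and $\sigma(t)=t$, hence $s',t\in F(\sigma)^*$ by Proposition~\ref{propositionASTheorem723}; or $x^+=\sigma^\omega(t)$ with $t\in\cL(\XS(\sigma))$ on which $\sigma$ is right-prolongable. The mirror statement applied to $x^-$ gives analogously either $x^-={}^\omega r s'$ with $r,s'\in F(\sigma)^*$, or $x^-=\sigma^{\tilde\omega}(r)$ with $\sigma$ left-prolongable on $r$. Pairing the two cases for $x^+$ with the two cases for $x^-$ yields exactly the four alternatives (i)--(iv). (In cases (i), (ii), (iii) one may lump together the finite-fixed-point pieces coming from the one-sided result into the labels $r,s,t,u$ of the statement.)

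For the necessity of the language conditions, I would use that $x\in\XS(\sigma)$ means every finite factor of $x$ is in $\cL(\XS(\sigma))$. In case (i), the finite factors of ${}^\omega rs\cdot tu^\omega$ are precisely the factors of words of the form $r^i s t u^j$ for $i,j\ge 0$, so $r^*stu^*\subset\cL(\XS(\sigma))$; the same reading gives the conditions in (ii) and (iii), and in (iv) one reads $rs\in\cL(\sigma)$ directly as a factor of $x$ at position~$0$ (it lies in $\cL(\XS(\sigma))\subset\cL(\sigma)$).

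Conversely, sufficiency is a direct check: in each case, the definition of $F(\sigma)$ and of prolongability gives $\sigma(x)=x$ letter-by-letter on the left and right halves, while the language conditions imposed on $r,s,t,u$ guarantee that every finite factor of $x$ lies in $\cL(\XS(\sigma))$, hence $x\in\XS(\sigma)$. No step is a serious obstacle; the main care is to check, in case (iv), that $rs\in\cL(\sigma)$ is enough to place $x$ in $\XS(\sigma)$, which follows because every factor of $\sigma^{\tilde\omega}(r)\cdot\sigma^\omega(s)$ is a factor of some $\sigma^n(rs)$ with $rs\in\cL(\sigma)$, and hence lies in $\cL(\sigma)$, and one then uses prolongability together with Proposition~\ref{propositionCaractSubstitution} applied to the growing letters occurring in $r$ and $s$ to conclude that such factors actually lie in $\cL(\XS(\sigma))$.
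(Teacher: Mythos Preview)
Your approach is essentially the same as the paper's: the paper's proof is a single line, ``The result follows applying Proposition~\ref{propositionAdmissibleOneSided} to $x^-$ and $x^+$,'' and you have simply unpacked this in detail. One minor point: your sufficiency argument for case~(iv) is overcomplicated, since by definition $x\in\XS(\sigma)$ means exactly that $\cL(x)\subset\cL(\sigma)$ (not $\cL(\XS(\sigma))$), so once you observe that every factor of $x$ is a factor of some $\sigma^n(rs)\in\cL(\sigma)$ you are already done, and the appeal to Proposition~\ref{propositionCaractSubstitution} is unnecessary.
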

\begin{proof}
  The result follows applying Proposition \ref{propositionAdmissibleOneSided}
  to $x^-$ and $x^+$.
\end{proof}

\paragraph{Quasi-fixed points}
Let us say that a two-sided infinite
sequence $x$ is a \emph{quasi-fixed point} if its
orbit is stable by $\sigma$, that is  $\sigma(\Orb(x))\subset \Orb(x)$.
Thus $x$ is a quasi-fixed point
if and only if $\sigma(x)=S^kx$ for some $k\in\Z$.
A  quasi-fixed point $x$ is \emph{admissible} if 
$x$ is in $\XS(\sigma)$.
For two-sided infinite sequences  $x,y$,
we write $x\sim y$ if $x=S^ky$ with $k\in\Z$, that
is, if $x,y$ are in the same orbit.
If $a$ is a letter such that $\sigma(a)=uav$ with $u,v$ non-erasable
and $i=|ua|$, we denote
\begin{displaymath}
  \sigma^{\omega,i}(a)=\cdots \sigma^2(u)\sigma(u)u\cdot av\sigma(v)\sigma^2(v)\cdots
  \end{displaymath}
Note that $\sigma^{\omega,i}$ is defined for every $a\in A$
such that $\sigma(a)=uav$ with $u,v$ non-erasable and $i=|ua|$.
Otherwise, it is undefined.
The following result is from \cite{ShallitWang2002}
(see also \cite[Theorem 7.4.3]{AlloucheShallit2003}).
\begin{proposition}\label{propositionQuasiFixedPoints}
  Let $\sigma\colon A^*\to A^*$ be a morphism.
  A two-sided sequence $x\in A^\Z$ is a quasi-fixed point of $\sigma$ if and
  only if one of the following conditions is satisfied.
  \begin{enumerate}
  \item[\rm(i)] $x$ is a shift of a two-sided infinite fixed point of $\sigma$.
  \item[\rm(ii)] $x\sim\sigma^{\omega,i}(a)$ for some $a\in A$
    such that $\sigma(a)=uav$ with $u,v$ non-erasable and $i=|ua|$.
  \item[\rm(iii)] $x=(uv)^\infty$ for some non-empty words $u,v$
    such that $\sigma(uv)=vu$.
    \end{enumerate}
\end{proposition}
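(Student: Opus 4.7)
For sufficiency, I would verify each case directly. In (i), if $x = S^j y$ with $\sigma(y) = y$, then for $j \ge 0$ one computes $\sigma(x) = \sigma(S^j y) = S^{|\sigma(y_{[0,j)})|}(\sigma(y)) = S^{|\sigma(y_{[0,j)})|-j}x$ (and symmetrically for $j < 0$). In (ii), a direct block computation on $\sigma^{\omega,i}(a) = \cdots\sigma^2(u)\sigma(u)u\cdot av\sigma(v)\sigma^2(v)\cdots$ using $\sigma(a) = uav$ shows $\sigma(x) = S^{-|u|}x$, since applying $\sigma$ produces one extra $\sigma^n(u)$-block on the left of the central dot and one extra $\sigma^n(v)$-block on the right. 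In (iii), $\sigma((uv)^\infty) = (vu)^\infty = S^{|u|}(uv)^\infty$ because $\sigma(uv) = vu$.

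For the converse, assume $\sigma(x) = S^k x$. The case $k = 0$ is case (i) with $j = 0$. Otherwise I would split on whether $x$ contains a growing letter. If $x$ has no growing letter, Proposition~\ref{lemmaNonGrowing} writes $x$ as a shift of ${}^\omega u\cdot wv^\omega$ with $u, v, w$ fixed by a power of $\sigma$; the equation $\sigma(x) = S^k x$ then forces the eventually periodic tails on both sides to agree, so $x$ is purely periodic, $x = z^\infty$ for a primitive $z$. Reading $\sigma(z^\infty) = S^k z^\infty$ off one period forces $\sigma(z)$ to be a cyclic conjugate of a power of $z$, so writing $z = vu$ with $\sigma(uv) = vu$ gives case (iii), while the degenerate subcase $\sigma(z) = z^n$ places $x$ in the orbit of a fixed point, case (i). If instead $x$ contains a growing letter, then by replacing $(x, k)$ with $(S^n x, k')$ for appropriate $n$ and by invoking the symmetry $k \mapsto -k$, I arrange that $k < 0$ and $x_0 = a$ is growing. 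Setting $m = -k$ and $q = x_{[-m,0)}$, the identity $\sigma(x) = S^{-m}x$ gives $\sigma(x^+) = q\cdot x^+$, whence $\sigma(a) = qav$ for a word $v$ read off from $x^+$, and symmetrically $x^- = \sigma(x^-)\cdot q$.

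To reach case (ii) with $u = q$, I must verify that $u$ and $v$ are non-erasable. If either were totally erasable, then iterating the above identities would exhibit $x^-$ (or $x^+$) as a one-sided infinite fixed point followed (resp.\ preceded) by a bounded erasable block, using Proposition~\ref{propositionErasing} to bound the total length of the iterated erasable factors; this would place $x$ in the orbit of a two-sided fixed point, reducing to case (i). When $u$ and $v$ are both non-erasable, iteration of $\sigma(x^+) = qx^+$ and its mirror shows $x^+ = av\sigma(v)\sigma^2(v)\cdots$ and $x^- = \cdots\sigma^2(u)\sigma(u)u$, whence $x = \sigma^{\omega,i}(a)$ with $i = |ua|$, which is case (ii). The principal obstacle is this last step: proving non-erasability of $u$ and $v$ (or otherwise reducing to case (i)) requires a careful choice of anchor within the orbit of $x$ and delicate bookkeeping on how the iterated $\sigma$-images of erasable factors interact with the two-sided infinite extension.
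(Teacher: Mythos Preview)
The paper does not supply its own proof of this proposition: it is quoted from \cite{ShallitWang2002} (see also \cite[Theorem~7.4.3]{AlloucheShallit2003}). So there is no internal argument to compare against; what follows is an assessment of the soundness of your sketch.

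Your sufficiency checks are fine. In the converse direction there are two genuine gaps.

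\emph{The non-growing case.} You invoke Proposition~\ref{lemmaNonGrowing} to conclude that $x$ is a shift of ${}^\omega u\cdot w v^\omega$. That proposition requires $x\in\XS(\sigma)$, whereas here $x$ is an arbitrary element of $A^\Z$. A correct route is to use Proposition~\ref{propositionNonGrowingLetters}: since every letter of $x$ is non-growing, $\sigma^{i+p}(x)=\sigma^i(x)$ as two-sided sequences. Combined with $\sigma^n(x)=S^{k_n}x$ this yields $S^{k_{i+p}-k_i}x=x$; one then has to argue separately that either $k_{i+p}\ne k_i$ (so $x$ is periodic and lands in (iii) or (i) after the cyclic-conjugate analysis you sketch) or else the sequence $(k_n)$ is eventually constant, which forces some shift of $x$ to be a genuine fixed point of $\sigma$. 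Your passage from ``eventually periodic on both sides'' to ``purely periodic'' is also unjustified as written.

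\emph{The growing case.} Having shifted so that $x_0=a$ is growing and $k=-m<0$, you assert $\sigma(a)=qav$ with $q=x_{[-m,0)}$. This is only true when $|\sigma(a)|>m$; nothing in your setup guarantees it. The letter $a$ being growing says that $|\sigma^n(a)|\to\infty$, not that $|\sigma(a)|$ is large, and the value of $m$ itself depends on which shift of $x$ you chose. The right formulation is to look for a position $n$ that is its own parent under the covering $\Z=\bigcup_n[f(n)+k,f(n{+}1)+k)$ induced by $\sigma(x)=S^kx$; showing that such a position exists, that the letter there gives the required decomposition $\sigma(x_n)=u x_n v$, and that $u,v$ are non-erasable (or else reducing to case~(i)) is exactly the work done in \cite{ShallitWang2002} and \cite[Theorem~7.4.3]{AlloucheShallit2003}. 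Your final paragraph shows you sense this difficulty, but the crucial step---obtaining $\sigma(a)=qav$ at all---is asserted, not proved. Also, the ``symmetry $k\mapsto -k$'' is not an internal symmetry of the problem; it is the symmetry under simultaneous reversal of $\sigma$ and $x$, which should be stated explicitly.
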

The following example illustrates case (ii).
\begin{example}
  Let $\sigma\colon a\mapsto bab,b\mapsto b$. Then
  $\sigma^{\omega,2}(a)={}^\omega b\cdot a b^\omega$ is a quasi-fixed point of $\sigma$.
\end{example}
Let us now characterize admissible quasi-fixed points.
\begin{proposition}\label{propositionAdmissibleQuasiFP}
 Let $\sigma\colon A^*\to A^*$ be a morphism.
 A two-sided infinite sequence $x\in \XS(\sigma)$
 is a quasi-fixed point of $\sigma$ if and
 only if one of the following conditions is satisfied.
  \begin{enumerate}
  \item[\rm(i')] $x$ is a shift of an admissible two-sided infinite
    fixed point of $\sigma$.
  \item[\rm(ii')] $x\sim\sigma^{\omega,i}(a)$ for some $a\in A$
    such that $\sigma(a)=uav$ with $u,v$ non-erasable and $i=|ua|$.
  \item[\rm(iii')] $x=(uv)^\infty$ for some non-empty words $u,v$
    such that $\sigma(uv)=vu$.
  \end{enumerate}
  There is a finite and computable
  number of orbits of admissible quasi-fixed points
  of powers of $\sigma$.
\end{proposition}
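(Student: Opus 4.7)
The plan is to derive the statement from Proposition~\ref{propositionQuasiFixedPoints} (which classifies \emph{all} quasi-fixed points, not only admissible ones) by intersecting with the admissibility condition $x\in\XS(\sigma)$, and then to bound the orbits case by case.

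\emph{Equivalence.} Conditions (ii') and (iii') are formally identical to (ii) and (iii) of Proposition~\ref{propositionQuasiFixedPoints}, so given $x\in\XS(\sigma)$ each of them directly implies that $x$ is a quasi-fixed point and, conversely, each corresponding case of Proposition~\ref{propositionQuasiFixedPoints} yields them verbatim. The only nontrivial point is to upgrade (i) to (i'): if $x=S^k(y)$ with $\sigma(y)=y$ and $x\in\XS(\sigma)$, then $y=S^{-k}(x)\in\XS(\sigma)$ by shift invariance, so $y$ is itself an admissible two-sided fixed point and (i') holds.

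\emph{Finiteness of orbits for powers of $\sigma$.} For case (i'), Theorem~\ref{propositionFixedPointMinimal2} directly supplies a bound on the exponent and a finite computable set of orbits. For case (ii'), I would show that whenever $\sigma^k(a)=u_kav_k$ with $u_k,v_k$ non-erasable, one has $u_{mk}=\sigma^{(m-1)k}(u_k)\cdots\sigma^k(u_k)u_k$ and $v_{mk}=v_k\sigma^k(v_k)\cdots\sigma^{(m-1)k}(v_k)$, so that the quasi-fixed point defined relative to $\sigma^k$ coincides as a two-sided sequence with the one defined relative to $\sigma^{mk}$ for every $m\ge 1$. Hence each letter $a\in A$ contributes at most one orbit of type (ii'), giving at most $\Card(A)$ such orbits; the relevant minimal $k$ can be read off from the multigraph $G(\sigma)$, yielding computability.

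For case (iii'), the main obstacle, let $x=(uv)^\infty\in\XS(\sigma)$ be an admissible quasi-fixed point of $\sigma^k$ with $\sigma^k(uv)=vu$, and set $w=uv$. Since $vu$ is a cyclic conjugate of $w$, both have the same Parikh vector, so for every $m\ge 1$,
\[
  |\sigma^{mk}(w)|=|\sigma^{(m-1)k}(\sigma^k(w))|=|\sigma^{(m-1)k}(vu)|=|\sigma^{(m-1)k}(w)|,
\]
and by induction $|\sigma^{mk}(w)|=|w|$ for all $m\ge 0$. If some letter $a$ of $w$ were growing, Lemma~\ref{lemmaGrowing} would force $|\sigma^{mk}(a)|\to\infty$, so that $|\sigma^{mk}(w)|\ge|\sigma^{mk}(a)|$ would be unbounded, a contradiction. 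Therefore $x$ has only non-growing letters, and Proposition~\ref{lemmaNonGrowing} provides a finite computable set of orbits containing those of case (iii'). The main difficulty lies precisely in this Parikh-vector reduction for case (iii'); once it is in hand, combining the three cases yields the announced finite and computable bound on the number of orbits of admissible quasi-fixed points of powers of $\sigma$.
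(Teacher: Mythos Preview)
Your proof is correct and follows the same three-case strategy as the paper: invoke Proposition~\ref{propositionQuasiFixedPoints}, use shift invariance of $\XS(\sigma)$ to upgrade (i) to (i'), then bound the orbits via Theorem~\ref{propositionFixedPointMinimal2} for (i'), the $\Card(A)$ count for (ii'), and Proposition~\ref{lemmaNonGrowing} for (iii'). Your Parikh-vector argument in case (iii') spells out the step the paper states without proof (``the quasi-fixed points of type (iii') are made of non-growing letters''), and your treatment of powers in case (ii') is likewise more explicit than the paper's one-line assertion.
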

\begin{proof}
  The conditions are clearly sufficient.

  Conversely, let $x\in \XS(\sigma)$ be  a quasi-fixed point of $\sigma$.
  By Proposition \ref{propositionQuasiFixedPoints}, one
  of the three conditions (i), (ii) or (iii) holds.
  If condition (i) holds, we have $x\sim y$ where $\sigma(y)=y$.
  Since $\XS(\sigma)$ is closed under the shift, we have also
  $y\in \XS(\sigma)$ and thus $y$ is admissible. Thus (i') holds.
  Next (ii) is the same as (ii') and (iii) is the same as (iii').

  Finally, by Theorem \ref{propositionFixedPointMinimal2}, there
  is a finite and computable number of orbits of quasi-fixed points
  of type (i'). The number of orbits of quasi-fixed points of type
  (ii') is at most $\Card(A)$. Finally, the quasi-fixed points
  of type (iii') are made of non-growing letters and there
  is a finite number of orbits of points of this type by Proposition~\ref{lemmaNonGrowing}.
  \end{proof}
\section{Periodic points}\label{sectionPeriodic}
We now investigate the periodic points in substitution shifts.

We will need the following technical lemma.
\begin{proposition}\label{propositionnewX(sigman)}
  Let $\sigma\colon A^*\to A^*$ be a morphism. There is an
  alphabet $B$ containing $A$ and a morphism $\tau\colon B^*\to B^*$
  such that $\XS(\sigma)=\XS(\tau)$ and $\XS(\tau^n)=\XS(\tau)$
  for every $n\ge 1$. Moreover $\sigma^n$ and $\tau^n$
  have the same finite and infinite fixed points for every $n\ge 1$.
\end{proposition}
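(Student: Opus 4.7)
The plan is to enlarge $A$ to an alphabet $B = A \sqcup A'$, where $A'$ is a set of ``phase witness'' letters, and to define $\tau\colon B^* \to B^*$ by $\tau(a) = \sigma(a)$ for $a \in A$ together with $\tau(a') \in A^*$ for every $a' \in A'$. Concretely, I would take $A' = \{a'_i : a \in A,\ 0 \le i < P\}$ for a suitable integer $P$ to be determined from the structure of $\sigma$, with $\tau(a'_i) = \sigma^i(a)$. Since $\tau(B) \subseteq A^*$, we have $\tau^m(b) \in A^*$ for every $m \ge 1$ and every $b \in B$.

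Three structural consequences are immediate from this setup. First, $\tau^n|_{A^*} = \sigma^n|_{A^*}$, so every fixed point of $\sigma^n$ (finite or bi-infinite) is a fixed point of $\tau^n$; conversely, no word or bi-infinite sequence containing a letter of $A'$ can be a fixed point of any $\tau^n$ with $n \ge 1$, because $\tau$ eliminates $A'$-letters after one application. Second, no $A'$-letter appears in $\tau^m(b)$ for $m \ge 1$, hence not in any length-$\ge 2$ factor of $\cL(\tau)$, so $\XS(\tau) \subseteq A^\Z$; combined with $\tau(a') \in \cL(\sigma)$, this yields $\XS(\tau) = \XS(\sigma)$. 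Third, since $\tau^{nk}(a'_i) = \sigma^{nk + i - 1}(a)$ for $k \ge 1$, the language $\cL(\tau^n)$ contains the factors of $\sigma^m(a)$ for every $m$ lying in the explicit arithmetic progressions $\{nk + i - 1 : k \ge 1\}$ with $0 \le i < P$.

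The crucial step is verifying $\XS(\tau^n) = \XS(\tau)$, i.e.\ $\cL(\XS(\sigma)) \subseteq \cL(\XS(\tau^n))$ for every $n$. The main obstacle is that no fixed $P$ can directly cover all residues modulo an arbitrarily large $n$. To overcome this I would show that, for every $w \in \cL(\XS(\sigma))$, the set $S_w = \{m \ge 0 : w \text{ is a factor of some } \sigma^m(c),\ c \in A\}$ is eventually periodic with a period $p$ depending only on $\sigma$, by combining Proposition~\ref{propositionNonGrowingLetters} (which governs the non-growing letters) with the analysis of the periods of the non-trivial strongly connected components of $G(\sigma)$ carried out in Lemma~\ref{lemmaABC}. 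Taking $P = p$, for any residue $m_0 \bmod p$ occurring in $S_w$ and any $n \ge 1$ one can solve the congruence $nk + i - 1 \equiv m_0 \pmod p$ in $i \in \{0, \ldots, p-1\}$ and $k \ge 1$, which places $w$ in the factor set of $\tau^{nk}(a'_i)$ and hence in $\cL(\tau^n)$. Establishing this eventual-periodicity claim for $S_w$ uniformly in $w$, and handling growing, non-growing and erasable letters simultaneously, is the technical heart of the proof.
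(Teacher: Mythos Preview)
Your approach is essentially the paper's: both enlarge $A$ by phase-witness letters indexed by residues modulo $p$, the least common multiple of the periods of the strongly connected components of $G(\sigma)$, and both reduce $\XS(\tau^n)=\XS(\tau)$ to the fact that every $w\in\cL(\XS(\sigma))$ occurs in $\sigma^{\ell+qp}(a)$ for some fixed $a,\ell$ and all $q\ge 0$. The only differences are cosmetic---the paper uses a delay chain $\tau(a,i)=(a,i-1)$, $\tau(a,1)=a$ rather than your direct $\tau(a'_i)=\sigma^i(a)$---and that the eventual-periodicity claim is obtained more directly by a pigeonhole on the derivation tree together with the SCC period structure than via Proposition~\ref{propositionNonGrowingLetters} and Lemma~\ref{lemmaABC}.
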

\begin{proof}
  Let $p$ be the least common multiple of the periods of
  the strongly connected components of the graph $G(\sigma)$.
  Let $B=A\cup A\times\{1,\ldots p-1\}$. For every $a\in A$,
  set $\tau(a)=\sigma(a)$,
  $\tau(a,1)=a$ and $\tau(a,i)=(a,i-1)$
  for every $2\le i\le p-1$.

  We have clearly $\XS(\tau)=\XS(\sigma)$. Indeed, $\cL(X(\sigma))\subset \cL(X(\tau))$. Conversely, if $w\in \cL(X(\tau))$, then
  $w$ is in $A^*$ and thus $w\in\cL(X(\sigma))$.

  Let us show that $\XS(\tau^n)=\XS(\tau)$. Consider a word $w\in \cL(\XS(\tau))$.
  There are arbitrary large integers $k$ such that $w$ is a factor
  of $\tau^k(a)$ for some $a\in A$. If $k$ is large enough, we have
  $k=k_1+k_2+k_3$ and $b\in A$ such that $b$ appears in $\tau^{k_1}(a)$
  and in $\tau^{k_2}(b)$ and that $w$ is a factor of $\tau^{k_3}(b)$.
  By the definition of $p$, this implies that there exists $\ell$
  such that $w$ is a factor of $\tau^{\ell+qp}(a)$ for every $q\ge 0$.
  Let $t$ be such that $tn\ge \ell$ and set $tn-\ell=qp+r$
  with $q\ge 0$ and $0\le r<p$. Then $w$ is a factor of
  $\tau^{tn}(a,r)=\tau^{\ell+qp+r}(a,r)=\tau^{\ell+qp}(a)$ and
  thus $w$ is in $\cL(\tau^n)$.

  It is clear that $\sigma^n$ and $\tau^n$ have the same finite and
  infinite fixed points for $n\ge 1$.
  \end{proof}

The following result relates periodic points
to fixed points.
It is a complement to Proposition~\ref{lemmaNonGrowing}.
\begin{proposition}\label{lemmaNouveau}
  Let $\sigma\colon A^*\to A^*$ be a morphism. Every periodic point
  $x\in \XS(\sigma)$ is a  shift of some fixed point of a power of $\sigma$.
\end{proposition}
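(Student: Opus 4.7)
Let $x=w^\infty\in\XS(\sigma)$ be periodic with $w$ primitive of length $p=|w|$. By Lemma~\ref{lemmaSigma}, $\sigma^n(x)\in\XS(\sigma)$ for every $n\ge 0$, and since $\sigma(x)$ must be two-sided infinite we have $\sigma(w)\ne\varepsilon$; consequently $\sigma^n(x)=\sigma^n(w)^\infty$ is again a periodic point for every $n$. Proving the proposition is therefore equivalent to exhibiting integers $N,k\ge 1$ with $\sigma^N(w)=w^k$. My plan is to split the argument according to whether the primitive period $w$ contains a growing letter.

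If $w$ contains only non-growing letters, then $x$ consists entirely of non-growing letters and Proposition~\ref{lemmaNonGrowing} applies to it. A close reading of the proof of that proposition reveals that the periodic sequences among the points without growing letters arise precisely in Cases~1 and~2 of the case analysis, and they are always of the form $z^\infty$ with $z$ a finite fixed point of some power $\sigma^N$ (only Case~3 produces eventually-periodic but not periodic sequences). Hence $\sigma^N(x)=\sigma^N(z)^\infty=z^\infty=x$, and we are done in this case.

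In the main case, $w$ contains a growing letter. The plan has two steps. First, I would show that $x$ is an admissible quasi-fixed point of some power $\sigma^M$ of $\sigma$, i.e.\ $\sigma^M(x)\in\Orb(x)$. For this I would use that every iterate $\sigma^n(x)=\sigma^n(w)^\infty$ is a periodic point of $\XS(\sigma)$ and therefore, once it is shown to be a quasi-fixed point of some power, lies in one of the finitely many orbits enumerated by Proposition~\ref{propositionAdmissibleQuasiFP}. Pigeonhole on this finite collection forces $\Orb(\sigma^n(x))=\Orb(\sigma^m(x))$ for some $n<m$, and using Proposition~\ref{propositionBKM} to construct an unbounded $\sigma$-pre-history of $x$ inside $\XS(\sigma)$ one then transfers this equality of orbits from the iterates back to $x$ itself, showing that $\Orb(x)$ is $\sigma^M$-invariant for a suitable $M$. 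Second, once $x$ is known to be a periodic quasi-fixed point of $\sigma^M$, Proposition~\ref{propositionAdmissibleQuasiFP} tells us that $x$ falls into case (i') or (iii') (case (ii') cannot produce a periodic point outside the degenerate situation already covered by (i')). In case (i'), $x$ is a shift of an admissible fixed point of a power of $\sigma$, and a finite pigeonhole on the $|w|$-element orbit upgrades this to $x$ itself being a fixed point of a higher power. In case (iii'), $x=(uv)^\infty$ with $\sigma^M(uv)=vu$; each subsequent iterate $\sigma^{kM}(uv)$ is then a rotation of $uv$, and pigeonhole on the finitely many rotations gives $\sigma^{KM}(uv)=uv$, hence $\sigma^{KM}(x)=x$.

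The main obstacle is the first step: promoting finiteness of orbits of iterates to the assertion that the orbit of $x$ itself is preserved by a power of $\sigma$. The difficulty is that $\sigma$ need not be injective on $\XS(\sigma)$, so a priori $x$ could sit in the tail of an eventually-periodic forward $\sigma$-orbit rather than on its cycle; it is precisely the $\sigma$-preimages guaranteed by Proposition~\ref{propositionBKM}, together with Proposition~\ref{propositionAdmissibleQuasiFP}, that are used to rule out this pathology and place $x$ on the cycle.
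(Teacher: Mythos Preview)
Your argument has a genuine circularity in the central step. In the growing-letter case you want to place the forward iterates $\sigma^n(x)$ into the finite collection of quasi-fixed orbits furnished by Proposition~\ref{propositionAdmissibleQuasiFP}, and then pigeonhole. But that proposition enumerates orbits of \emph{quasi-fixed points}, not orbits of arbitrary periodic points; to invoke it you must first know that each $\sigma^n(x)$ is already a quasi-fixed point of some power of $\sigma$---and that is exactly the conclusion you are after. You acknowledge this (``once it is shown to be a quasi-fixed point of some power'') but never supply the missing argument. No independent bound on the number of periodic orbits in $\XS(\sigma)$ is available at this point in the paper: the finiteness asserted in Theorem~\ref{theoremDecidableAperiodic} relies on the present proposition. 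Your appeal to a $\sigma$-pre-history via Proposition~\ref{propositionBKM} does not close the gap either, because the preimages $y^{(n)}$ are not known to be periodic (that would need Lemma~\ref{lemma6.4}, whose proof again uses the present proposition), so one cannot pigeonhole on them.

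There is a second gap in your Step~2. Even granting that $x$ is a periodic quasi-fixed point of some $\sigma^N$, the ``pigeonhole on the $|w|$-element orbit'' only shows that some forward iterate $\sigma^{kN}(x)$ is fixed by a further power, not that $x$ itself is: the self-map $\sigma^N\colon\Orb(x)\to\Orb(x)$ need not be a bijection, and $x$ could lie on the tail rather than the cycle. (A similar issue affects your non-growing case: Proposition~\ref{lemmaNonGrowing} expresses $x$ only as a \emph{shift} of a fixed point, and Case~3 can produce periodic points too.) The paper avoids both problems by a different mechanism: it uses the pre-history to isolate a sub-alphabet $B$ stable under $\sigma$, restricts to $\tau=\sigma|_{B^*}$, shows $\cL(\tau)\subset\cL(x)$, and then applies the minimality criterion of Theorem~\ref{theoremCaractMinimal} to conclude that $\XS(\tau)$ is exactly the finite orbit of $x$. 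Once this is known, $\sigma$ genuinely permutes that finite orbit, and $x$ is fixed by a power.
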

\begin{proof}
We have seen in Proposition~\ref{lemmaNonGrowing} that
  the result is true if $x$ has only non-growing letters.
  Let us assume that $x$ contains growing letters.
  Let $p$ be the minimal period of $x$. Set $y^{(0)}=x$.
  Since $x$ is in $\XS(\sigma)$,
  there is a sequence  $(y^{(n)},k_n)_{n\ge 1}$ such that
  $(y^{(n+1)},k_{n+1})$ is a $\sigma$-representation of $y^{(n)}$
  for $n\ge 0$ with $y^{(n)}\in \XS(\sigma)$
  (by Proposition \ref{propositionBKM}). Thus
  \begin{equation}
    y^{(n)}=S^{k_{n+1}}\sigma(y^{(n+1)}).
  \end{equation}
  
  Let $A_n$ be the set of letters appearing in $y^{(n)})$.
  By the pigeonhole principle, there is an infinity of $n$
  such that $A_n=B$. 
  This forces the sequence $(A_n)$ to be periodic.
  Changing $\sigma$ for some power, we may assume that
  $A_0=B$ and $\sigma(B)\subset B^*$.
  Then $\sigma(B)\subset B^*$ and every letter of $B$ appears
  in some $\sigma(b)$ with $b\in B$.  Let
  $\tau$ be the restriction of $\sigma$ to $B^*$.

  Since every
  letter of $B$ appears in every $y^{(n)}$, every word of
  $\cL(\tau)$ appears in $x$. Thus, since $x$ is periodic,
  every growing letter $a$ of $x$
  appears with bounded gaps and every letter $b\in B$
  appears in some $\sigma^n(a)$. 
  This implies that the shift $\XS(\tau)$ is minimal. Since
  $x$ is in $\XS(\tau)$, the shift $\XS(\tau)$ is formed
  of the shifts of $x$, and thus $x$ is a shift of a fixed point of a power
  of $\tau$.
\end{proof}

A morphism $\sigma\colon A^*\to C^*$ is \emph{elementary}
if for every decomposition $\sigma=\alpha\circ\beta$ with
$\beta\colon A^*\to B^*$ and $\alpha\colon B^*\to C^*$,
one has $\Card(B)\ge\Card(A)$. An elementary morphism
is clearly non-erasing.

This notion has been
introduced in \cite{EhrenfeuchtRozenberg1978}
where it is shown that, if $\sigma$ is elementary, then
$\sigma$ defines an injective map from $A^\N$
to $C^\N$.

Note that the property
of being elementary is decidable. Indeed, if
$\sigma\colon A^*\to C^*$ is a morphism
consider the finite family $\cal F$ of sets $U\subset C^*$
such that $\sigma(A)\subset U^*\subset C^*$
with every $u\in U$ being a factor of some $\sigma(a)$ for $a\in A$.
Then $\sigma$ is elementary if and only if
$\Card(U)\ge \Card(A)$ for every $U\in\cal F$.

The proof of the following result follows
an argument from \cite[Lemma 2]{Pansiot1986}
(where the result is actually stated for admissible fixed points
of morphisms),
see also \cite[Exercise 2.38]{DurandPerrin2021}.
\begin{lemma}\label{lemmaPansiotPeriodicity}
  Let $\sigma\colon A^*\to A^*$ be an elementary morphism
  having a periodic point $x\in \XS(\sigma)$. 
  The minimal period of $x$ can be effectively bounded in terms of
  the morphism $\sigma$.
\end{lemma}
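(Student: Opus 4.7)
The plan is to adapt the argument of \cite[Lemma 2]{Pansiot1986} to the present (possibly non-primitive) elementary setting, reducing the bounding of the period to a combinatorial rigidity statement forced by elementarity.

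First, I would apply Proposition~\ref{lemmaNouveau} to obtain some $k \ge 1$ with $\sigma^k(x) = x$. Writing $x = w^\infty$ where $w$ is the primitive word of length $p$ equal to the minimal period of $x$, Lemma~\ref{lemmaPeriodicFixedPoints} then yields an integer $n \ge 1$ with $\sigma^k(w) = w^n$. Since $\sigma$ is elementary it is in particular non-erasing, so $n$ is a well-defined positive integer and the equation sits inside the free monoid $A^*$. From this point on the goal is purely combinatorial: bound $p$ (and along the way $k$) in terms of $\sigma$.

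Second, I would exploit elementarity to pin down the ``shape'' of each $\sigma^k(a)$. Because $\sigma$ is elementary, the restriction $a \mapsto \sigma(a)$ is already injective on $A$: any identification $\sigma(a)=\sigma(b)$ for $a \ne b$ would give a factorization of $\sigma$ through the quotient alphabet, contradicting elementarity. Replacing $k$ by a multiple whose size can be controlled by the growth data of $\sigma$ (e.g.\ a power of the period of the graph $G(\sigma)$ large enough that $|\sigma^k(a)|\ge 2p$ for every growing letter $a$ occurring in $w$), the primitivity of $w$ and Fine--Wilf ensure that each factor of $w^\infty$ of length $\ge p$ determines its starting position modulo $p$. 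Hence each $\sigma^k(a)$ acquires a well-defined phase $\phi(a)\in\Z/p\Z$, and $\sigma^k(a)$ is entirely determined by the pair $(\phi(a),|\sigma^k(a)|)$.

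Third, and crucially, I would use this phase data to force a contradiction when $p$ is too large. Following Pansiot's argument, if $p$ exceeds an explicit function of $|\sigma|$ and $\Card(A)$, then the constraints coming from $\sigma^k(w)=w^n$ (consecutive phases differ by $|\sigma^k(a_i)| \bmod p$, phases at equal letters coincide, the global phase sum is $0$) become so rigid that they permit a non-trivial factorization $\sigma=\alpha\circ\beta$ with $\Card(B)<\Card(A)$, contradicting elementarity.

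The main obstacle is precisely this last transfer: elementarity is not automatically preserved under taking powers, so one cannot naively rephrase everything in terms of $\sigma^k$ and invoke elementarity of $\sigma^k$. One must carry the decomposition back to $\sigma$ itself, which requires a careful analysis of letters that appear in $w$ versus those that do not, together with the Ehrenfeucht--Rozenberg fact (recalled in the paper) that an elementary morphism acts injectively on $A^\N$. Once $p$ is bounded, the statement of the lemma is established, since the minimal period of $x$ is $p$ by definition.
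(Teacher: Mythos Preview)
Your sketch departs from the paper's argument and, as written, has a genuine gap at the decisive step.

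The paper (following Pansiot) does \emph{not} argue via phases and a putative factorization through a smaller alphabet. Instead it uses elementarity only through the Ehrenfeucht--Rozenberg injectivity on $A^{\N}$ to prove that \emph{no right-special factor of $x$ begins with a growing letter}: starting from such a factor one can iterate $\sigma$ to produce right-special factors of unbounded length, contradicting periodicity. Separately, a Perron--Frobenius argument on $M(\sigma)$ (using that $\ell(w)$ is a nonnegative eigenvector for the eigenvalue $n$ in $\sigma(w)=w^n$) bounds, in terms of $\sigma$ alone, the ratio of non-growing to growing letters in $w$; this forces two occurrences of a growing letter $b$ at distance at most $(2\rho+1)(\Card(A)+1)$. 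Combining the two claims, $b$ has a unique return word of that bounded length, which is the minimal period.

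In your proposal, the crucial third step---``if $p$ is large then the phase constraints force a factorization $\sigma=\alpha\circ\beta$ with $\Card(B)<\Card(A)$''---is asserted but not argued, and I do not see how to make it work. Two concrete obstructions: (a) you choose $k$ ``controlled by the growth data of $\sigma$'' yet simultaneously ``large enough that $|\sigma^k(a)|\ge 2p$ for every growing letter $a$ in $w$''; since $p$ is precisely the quantity you are trying to bound, these two requirements are inconsistent unless you are already inside a proof by contradiction, but then $k$ depends on $p$ and you cannot transfer any decomposition of $\sigma^k$ back to $\sigma$ in a controlled way. (b) Your phase map $\phi$ is only defined for growing letters; non-growing letters of $w$ have bounded $\sigma^k$-images and acquire no well-defined phase, so the claim that ``$\sigma^k(a)$ is determined by $(\phi(a),|\sigma^k(a)|)$'' fails for them, and it is exactly the interaction between growing and non-growing letters that must be controlled (this is what the eigenvector argument in the paper does). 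Without a replacement for these two points, the proposal does not yield an effective bound on $p$.
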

\begin{proof}
Let $x\in \XS(\sigma)$ be a periodic point. By Proposition~\ref{lemmaNouveau},
  it is a shift of some fixed point of a power of $\sigma$.

  If $x$ has no growing letter, its period can be bounded
  by Proposition~\ref{lemmaNonGrowing}. Assume that $x$
  contains at least one growing letter. 
  
  We first note that since $x$ is periodic,
  there are no right-special words in $\cL(x)$
  of length larger than the period of $x$.
    We claim that there is no  word in $\cL(x)$
  which is right-special with respect to $\cL(x)$
   and which begins with a growing letter.

  Assume the contrary. Let $u=u_0\in\cL(x)$ be a right-special factor of $x$
  beginning with a growing letter $a$.
  Let $b,c\in A$ be distinct letters such that $ub,uc\in\cL(x)$.
  Since $\sigma$ is elementary, it is injective on $A^{\N}$
  and thus there exist $v,w\in\cL(x)$ such that
  $ubv,ucw\in\cL(x)$ with $\sigma(bv)\ne\sigma(cw)$.
  Let $u_1$ be the longest common prefix of $\sigma(ubv)$ and $\sigma(ucw)$.
  Then $u_1$ is again a right-special factor of $x$
  beginning with $\sigma(a)$. Continuing
  in this way, we build a sequence $u_0,u_1,u_2,\ldots$ of
  right-special factors of $x$ beginning with $\sigma^n(a)$.
  Since $a$ is growing, their lengths go to $\infty$
  and this contradicts the fact that $x$ is periodic.
  This proves the first claim.

  We now claim that the length of the factors of $x$ formed
  of non-growing letters is bounded. To prove this, let $a$ be a growing
  letter of $x$. Since $x$ is periodic, we may assume that $a=x_0$.
  We may also assume that $x$ is a fixed point of $\sigma$
  and thus that $x=w^\infty$ with $\sigma(w)=w^n$
  (Lemma~\ref{lemmaPeriodicFixedPoints}). Since $x$
  has some growing letters, we have $n\ge 2$. We
  may assume that all letters occur in $w$.

 Let $\ell(w)$ be the row vector with components $|w|_b$
 for $b\in A$. We have then $M(\sigma)\ell(w)=n\ell(w)$, that is,
  $\ell(w)$ is a non-zero column eigenvector of $M(\sigma)$
  for the eigenvalue $n$.
  Since, for a nonnegative matrix, the dominant eigenvalue
  is the only eigenvalue having a positive eigenvector,
  the eigenvalue $n$ is the dominant eigenvalue
  of $M$ and  the eigenvalue $n$ is simple.
  The coefficients of a corresponding eigenvector can be
  effectively bounded in terms of $\sigma$.
  
  For every eigenvector $z$ of $M(\sigma)$ relative to $n$,
  consider the ratio
  \begin{displaymath}
    r(z)=\sum_{b \in A_f}z_b/\sum_{b\in A_i}z_b,
  \end{displaymath}
  where $A_i$ is the set of growing letters and $A_f$ is its complement.
  Since $r(z)=r(\lambda z)$, this ratio
  is a constant $\rho$ depending only on $\sigma$. Now consider an
  integer $e$ such that $w^e$ has more than $\Card(A)$
  occurrences of growing letters.
  We have a factorization $w^e=u_1u_2\cdots u_ts$ where each $u_i$
  has $\Card(A)+1$ growing letters and $s$ has less than
  $\Card(A)+1$ growing letters. If, for some integer $k$,
  every $u_i$ has a run of
  $k(\Card(A)+1)$ non-growing letters, we have
  \begin{eqnarray*}
    \rho&=&r(\ell(w))=r(\ell(w^e)))=\frac{\sum_{i=1}^t\ell_f(u_i)+\ell_f(s)}{\sum_{i=1}^t\ell_i(u_i)+\ell_i(s)}\\
    &\ge&\frac{\sum_{i=1}^t\ell_f(u_i)}{\sum_{i=1}^t\ell_i(u_i)+(\Card(A)+1)}\\
      &\ge&\frac{k(\Card(A)+1)t}{(\Card(A)+1)(t+1)}\ge k/2
  \end{eqnarray*}
  with $\ell_f(u)=\sum_{b\in A_f}|u|_b$ and $\ell_i(u)=\sum_{b\in A_i}|u|_b$.
  We conclude, taking $k=2\rho+1$, that $w^e$ has a factor with $\Card(A)+1$
  growing letters separated by at most $(2\rho+1)(\Card(A)+1)-1$
  non-growing letters. Such a factor has a factor of the form
  $bub$ where $b$ is a growing letter. By the first claim,
  $ub$ is the only return word to $b$ since otherwise,
  there would be a right-special factor of $x$
  beginning with $b$. As a consequence, $|ub|$ is a period
  of $x$.

   Since $|ub|< (2\rho+1)(\Card(A)+1)\Card(A)$, this shows that the minimal period
   of $x$ is bounded by $(2\rho+1)(\Card(A)+1)\Card(A)$.
\end{proof}
When $\sigma$ is growing,
the above proof shows that the period of $x$ is bounded by $\Card(A)$.
Note the following subtle point. We know, by Theorem~\ref{propositionFixedPointMinimal2}, that the number of orbits of admissible fixed points of $\sigma$,
and thus the number of orbits of its periodic points, is
effectively bounded. However, this does not imply that
the period of such points is effectively bounded
and thus that the problem of deciding whether
a fixed point is periodic is decidable. Lemma~\ref{lemmaPansiotPeriodicity}
implies that it is decidable for an elementary morphism
and Theorem \ref{theoremDecidableAperiodic}
below shows that it is true for every morphism.
\begin{example}
  The morphism $\sigma\colon a\mapsto a, b\mapsto baab$, is elementary.
  Since $\sigma(aab)=(aab)^2$, the sequence $x=(aab)^\infty$
  is a fixed point of $\sigma$.
\end{example}
The following was proved in~\cite{Pansiot1986}
and also \cite{HarjuLinna1986} for a primitive morphism
(the result of \cite{Pansiot1986}  and \cite{HarjuLinna1986}
is formulated in terms
of a fixed point, see the comments after the proof).
\begin{theorem}\label{theoremDecidableAperiodic}
  It is decidable whether a morphism $\sigma\colon A^*\to A^*$ is aperiodic.
  The set of periodic points in $\XS(\sigma)$ is finite and
  their periods are effectively bounded in terms of $\sigma$.
  \end{theorem}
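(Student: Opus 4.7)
The plan is to combine the finiteness results for admissible fixed points obtained above with the period bound for elementary morphisms (Lemma~\ref{lemmaPansiotPeriodicity}). By Proposition~\ref{lemmaNouveau}, every periodic point of $\XS(\sigma)$ is a two-sided fixed point of some power $\sigma^k$, and by Theorem~\ref{propositionFixedPointMinimal2}, the set of orbits of admissible two-sided fixed points of $\sigma^k$ is finite, effectively computable, with $k$ effectively bounded in terms of $\sigma$. This immediately yields that the set of periodic points is finite. Decidability of aperiodicity then reduces to deciding, for each of the finitely many candidate fixed points $x$ produced by Theorem~\ref{propositionFixedPointMinimal2}, whether $x$ is periodic, with an effective bound on its minimal period.

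To establish such a bound, I would proceed by induction on $\Card(A)$. If $\sigma$ is elementary (a decidable property, since one can inspect the finite family of candidate factorizations $\sigma(A)\subset U^*$), Lemma~\ref{lemmaPansiotPeriodicity} supplies an effective bound on the minimal period of any periodic point in $\XS(\sigma)$, and periodicity of $x$ is then tested by checking, for each period $p$ up to that bound, whether $x_{[0,2p)}$ is a power of $x_{[0,p)}$. If $\sigma$ is not elementary, choose a decomposition $\sigma = \alpha\circ\beta$ with $\beta\colon A^*\to B^*$ and $\alpha\colon B^*\to A^*$ with $\Card(B) < \Card(A)$, and set $\tau = \beta\circ\alpha\colon B^*\to B^*$. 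Whenever $\sigma^k(x)=x$, the sequence $y=\beta(x)$ satisfies $\tau^k(y)=\beta(\alpha(y))=\beta(x)=y$, so $y$ is an admissible periodic point of $\XS(\tau)$, and the minimal periods of $x$ and $y$ are linked by bounded multiplicative factors depending only on $|\alpha|$ and $|\beta|$. The induction hypothesis then bounds the minimal period of $y$ in terms of $\tau$, and hence the minimal period of $x$ in terms of $\sigma$.

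The main obstacle is that elementary morphisms are by definition non-erasing, and that the Ehrenfeucht--Rozenberg reduction works most cleanly when there are neither erasable nor non-growing letters. To circumvent this, I would first apply Proposition~\ref{propositionNonGrowingLetters} to replace $\sigma$ by a power for which the action on non-growing letters stabilizes; then use Proposition~\ref{propositionErasing} to bound the length of maximal erasable runs in $\cL(\sigma)$, so that the non-growing and erasable parts can be absorbed into bounded-length data. The induction is then carried out on the restriction of $\sigma$ to the subalphabet of growing letters, and the bounded correction is reinjected afterwards to obtain a period bound for $x$ on the full alphabet $A$. Once an effective bound $P$ on the minimal period is known, one tests aperiodicity by running through the finitely many orbits supplied by Theorem~\ref{propositionFixedPointMinimal2} and checking, for each, whether some period $p \le P$ works on a prefix of length $2P$, which is effective.
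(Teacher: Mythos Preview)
Your overall architecture---reduce to fixed points via Proposition~\ref{lemmaNouveau}, bound periods by induction on $\Card(A)$ using the elementary/non-elementary dichotomy and Lemma~\ref{lemmaPansiotPeriodicity}---is exactly the paper's approach. The differences lie in how the two arguments handle non-growing and erasable letters, and here your proposed fix has a genuine problem.

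The paper does \emph{not} try to restrict $\sigma$ to the growing subalphabet. Instead it splits into two cases up front: periodic points consisting entirely of non-growing letters are handled directly by Proposition~\ref{lemmaNonGrowing}, which already gives an effective period bound. For periodic points containing at least one growing letter, the Ehrenfeucht--Rozenberg reduction $\sigma=\alpha\circ\beta$, $\tau=\beta\circ\alpha$ is applied to $\sigma$ as it stands (erasing or not). The key observation that makes the induction close is that $\beta\circ\sigma^n=\tau^n\circ\beta$, so $x$ has a growing letter for $\sigma$ if and only if $y=\beta(x)$ has a growing letter for $\tau$; and for a periodic fixed point, having a growing letter is precisely what guarantees membership in the corresponding shift (every factor of $x$ is then a factor of some $\sigma^n(a)$). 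This is how the paper justifies $x\in\XS(\sigma)\Leftrightarrow y\in\XS(\tau)$ and passes the problem down to $B$.

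Your alternative---``restrict $\sigma$ to the subalphabet of growing letters''---does not give a well-defined endomorphism, since $\sigma(a)$ for growing $a$ may contain non-growing letters; one cannot simply project them away without altering the fixed points. The machinery you invoke (Propositions~\ref{propositionNonGrowingLetters} and~\ref{propositionErasing}) is useful for bounding lengths of non-growing runs, but it does not produce a morphism on a smaller alphabet to which you can apply the induction hypothesis. Replacing this step by the paper's two-case split (Proposition~\ref{lemmaNonGrowing} for the purely non-growing case, growing-letter preservation under $\beta$ for the rest) repairs the argument with no further changes needed. The detour through Theorem~\ref{propositionFixedPointMinimal2} to enumerate candidate orbits is then unnecessary for decidability, though it is a legitimate way to conclude finiteness of the set of periodic points.
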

\begin{proof}
  By Proposition~\ref{lemmaNouveau}, it is enough to prove that one
  may decide whether a morphism $\sigma$ has a power with
  a periodic admissible fixed point.  From Proposition \ref{lemmaNonGrowing},
  it is decidable whether a morphism has a periodic
  point with non-growing letters.

  Thus
  it is enough to decide whether $\sigma$ has a power
  with a periodic admissible  fixed point
  with some growing letters.
  
   Assume first that $\sigma$ is elementary. By
  Lemma~\ref{lemmaPansiotPeriodicity},
  the minimal period $p$ of every periodic
  point $x\in X(\sigma)$  is bounded in terms of $\sigma$.
To
  check whether $\sigma$ acts as a permutation
  of a set of points of period $p$,
  we consider the set $F$ of
  primitive words of length $p$ and look for cycles
  in the graph with edges $(u,v)$ with
  $u,v\in F$ and $\sigma(u)$ a power of $v$.
  Since these periodic points are assumed to have at least one
  growing letter, they will be in $\XS(\sigma)$.
  Indeed, if $a$ is such a letter, all factors of
  $x$ are factors of some $\sigma^n(a)$. This proves
  the decidability in this case.
  
Otherwise, write $\sigma=\alpha\circ\beta$ with
$A^*\edge{\beta}B^*\edge{\alpha}A^*$ and $\Card(B)<\Card(A)$.
Using induction on $\Card(A)$, we may assume that we can check
whether $\tau=\beta\circ\alpha$ is aperiodic. Let $x$ be  a fixed
point of $\sigma$ and set $y=\beta(x)$. Then
\begin{displaymath}
  \tau(y)=\beta\circ\sigma(x)=\beta(x)=y
\end{displaymath}
and thus $y$ is a fixed point of $\beta\circ\alpha$.
Moreover, $x$ is periodic if and only if $y$ is periodic
(because $y=\beta(x)$ and $x=\alpha(y)$)
and $x$ has a growing letter for $\sigma$ if and only
if $y$ has a growing letter for $\tau$
(because $\beta\circ\sigma^n=\tau^n\circ\beta$
for all $n\ge 1$). Thus
$x\in \XS(\sigma)$ if and only if $y\in \XS(\tau)$. Since,
by induction hypothesis,
we can test the existence of $y$, the same is true for $x=\alpha(y)$.
Using again induction on $\Card(A)$, we conclude that the
number of periodic points is finite and that their periods are bounded
in terms of $\sigma$.
\end{proof}
\begin{example}
  Let $\sigma\colon a\mapsto ab,b\mapsto ac,c\mapsto ac$. Then
  $\sigma^\omega(c\cdot a)=(abac)^\infty$, which is periodic
  of period $4>\Card(A)$. The morphism $\sigma$
  is not elementary since $\sigma=\alpha\circ\beta$ with
  $\alpha\colon d\mapsto ab,e\mapsto ac$ and $\beta\colon a\mapsto d,
  b\mapsto e, c\mapsto e$. The point $y=\beta((abac)^\infty)$ is
  $(de)^\infty$.
\end{example}
We note the following corollary, concerning the case
of an admissible fixed point.
\begin{corollary}\label{corollaryPeriodicOneSided}
  It is decidable whether a one-sided (resp. two-sided) admissible fixed point
  of a morphism $\sigma\colon A^*\to A^*$ is periodic.
\end{corollary}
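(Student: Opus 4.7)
The plan is to reduce the question to Theorem~\ref{theoremDecidableAperiodic}, which provides an effectively computable bound $P$ on the minimal period of any periodic point of $\XS(\sigma)$. Since the admissible fixed point $x$ belongs to $\XS(\sigma)$, $x$ is periodic if and only if $S^p x = x$ for some $p$ with $1 \le p \le P$. My algorithm will simply try each such $p$ in turn; if none works, $x$ is aperiodic.

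For a given $p$, the explicit description of $x$ provided by Proposition~\ref{propositionAdmissibleOneSided} (one-sided case) or Proposition~\ref{propositionAdmissibleTwoSided} (two-sided case) makes any finite window of $x$ effectively computable. In particular, I can compute $w = x_{[0,p)}$. If $x$ does have period $p$, then $x = w^\omega$ and, since $\sigma(x)=x$, Lemma~\ref{lemmaPeriodicFixedPoints} forces $\sigma(w) = w^n$ for some $n \ge 1$. This last condition is a finite check on $w$; if it fails for all $p \le P$, then $x$ is aperiodic. Otherwise, if the condition holds for some candidate $w$, it remains to verify that $x$ really equals this specific $w^\omega$ (as opposed to some other fixed point of $\sigma$ with period $p$).

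I reduce the verification $x=w^\omega$ to a finite check using the explicit form of~$x$. In case~(ii) of Proposition~\ref{propositionAdmissibleOneSided}, where $x = \sigma^\omega(u)$ with $\sigma$ right-prolongable on $u$, I claim that $x = w^\omega$ if and only if $u$ is a prefix of $w^\omega$. The non-trivial direction uses the relation $\sigma(w) = w^n$: writing $u = w^a v$ with $v$ a proper prefix of $w$, one has $\sigma(u) = w^{an}\sigma(v)$, and $\sigma(v)$ is a prefix of $\sigma(w) = w^n$, hence of $w^\omega$; iterating shows that $\sigma^k(u)$ is a prefix of $w^\omega$ for every $k$, forcing $\sigma^\omega(u) = w^\omega$. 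Case~(i) of the same proposition is handled by a direct finite comparison of $uv^\omega$ with $w^\omega$, and the two-sided cases of Proposition~\ref{propositionAdmissibleTwoSided} follow by applying these one-sided arguments on $x^-$ and on $x^+$.

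The main technical point is precisely this reduction of the infinite equality $x = w^\omega$ to a finite check; once it is in hand, the whole test is effective, since both the bound $P$ from Theorem~\ref{theoremDecidableAperiodic} and the generating data for $x$ are given effectively.
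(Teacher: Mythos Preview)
Your proposal is correct and follows essentially the same approach as the paper: both reduce the question to the effective period bound provided by Theorem~\ref{theoremDecidableAperiodic}. The paper's proof is a one-liner (``the period of such a fixed point is bounded in terms of $\sigma$''), leaving implicit the verification step; you correctly fill in this detail by showing how, for each candidate period $p\le P$, the equality $x=w^\omega$ (with $w=x_{[0,p)}$) reduces to a finite check via the explicit description of $x$ from Propositions~\ref{propositionAdmissibleOneSided} and~\ref{propositionAdmissibleTwoSided}.
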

\begin{proof}
  This follows from Theorem~\ref{theoremDecidableAperiodic} since
  the period of such a fixed point is bounded in terms of $\sigma$.
\end{proof}
Note that when the substitution is primitive, Corollary~\ref{corollaryPeriodicOneSided} is equivalent to Theorem~\ref{theoremDecidableAperiodic}
since, the shift being in this case minimal, it is periodic
if and only if every point is periodic.
The result of \cite{Pansiot1983} and \cite{HarjuLinna1986} is actually
formulated as the decidability of the eventual periodicity of
an admissible one-sided fixed point $\sigma^\omega(a)$ of a primitive morphism.
Note also that this statement was generalized in
\cite{Durand2013} as the decidability of the eventual periodicity
of the image $\phi(\sigma^\omega(u))$
by a morphism $\phi$ of an admissible one-sided fixed point
of a morphism $\sigma$.

We now prove the following complement to
Theorem~\ref{theoremDecidableAperiodic}.
\begin{theorem}\label{theoremDecidablePeriodic}
  It is decidable whether the shift $\XS(\sigma)$
  generated by a morphism $\sigma$ is periodic.
\end{theorem}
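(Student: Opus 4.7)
The plan is to reduce the periodicity of $\XS(\sigma)$ to a computation of the factor complexity at a bounded length. I would first dispose of the trivial case $\XS(\sigma)=\emptyset$ using Proposition~\ref{lemmaDecidability} (the shift is empty iff no letter of $A$ lies in $\cL(\XS(\sigma))$). Next, Theorem~\ref{theoremDecidableAperiodic} decides whether $\XS(\sigma)$ contains any periodic points: if not, it is certainly not periodic, since a non-empty shift with no periodic point has aperiodic points. Otherwise, the same theorem yields an effectively computable integer $N$ that bounds the minimal periods of all periodic points of $\XS(\sigma)$; this is uniform because, by Proposition~\ref{lemmaNouveau}, every periodic point of $\XS(\sigma)$ is a fixed point of some power of $\sigma$, and Theorem~\ref{theoremDecidableAperiodic} bounds the periods of those.

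The crux is the characterization: $\XS(\sigma)$ is periodic if and only if $|\cL_N(\XS(\sigma))| = |\cL_{N+1}(\XS(\sigma))|$. For the implication $\Rightarrow$, if every point of $\XS(\sigma)$ is periodic with period at most $N$, then $\XS(\sigma)$ is a finite union of periodic orbits, and for each such orbit the complexity function is constant once $n \geq N$; hence the overall complexity is constant from $N$ onward. For $\Leftarrow$, one uses the standard fact that in any shift space every word $u \in \cL_n$ has at least one right-extension and at least one left-extension, which gives $|\cL_{n+1}| \geq |\cL_n|$ via either a right or left counting; equality at $n=N$ therefore forces each word of length $N$ to have a unique right- and left-extension, and this propagates to all $n \geq N$. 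Consequently, each word of length $N$ extends uniquely to a bi-infinite sequence, so $|\XS(\sigma)| \leq |\cL_N(\XS(\sigma))| < \infty$, and a finite shift is automatically periodic.

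With this criterion, decidability is immediate: enumerate the words of lengths $N$ and $N+1$ over $A$, test each for membership in $\cL(\XS(\sigma))$ via Proposition~\ref{newPropositionDecidabilityL(X(sigma))}, and compare the two counts. The hardest part is really the uniform effective bound $N$ on the periods of all periodic points in $\XS(\sigma)$, which relies on the full strength of Theorem~\ref{theoremDecidableAperiodic} together with Proposition~\ref{lemmaNouveau}; once $N$ is in hand, the complexity comparison is a short and purely combinatorial argument, requiring nothing beyond the decidability of $\cL(\XS(\sigma))$.
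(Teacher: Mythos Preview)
Your proof is correct and takes a genuinely different route from the paper. The paper proceeds via a structural characterization (Lemma~\ref{lemmaCaractPeriodicSubstitution}): $\XS(\sigma)$ is periodic if and only if every admissible quasi-fixed point of a power of $\sigma$ is periodic; it then invokes Proposition~\ref{propositionAdmissibleQuasiFP} (finitely many computable orbits of such quasi-fixed points) together with the effective period bound of Theorem~\ref{theoremDecidableAperiodic} to check each one. You bypass quasi-fixed points entirely: you pull the period bound $N$ directly from Theorem~\ref{theoremDecidableAperiodic} and reduce periodicity of the shift to the factor-complexity equality $|\cL_N(\XS(\sigma))|=|\cL_{N+1}(\XS(\sigma))|$, which by the standard propagation argument forces unique bilateral extensions and hence a finite (thus periodic) shift. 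Your argument is shorter and needs less machinery, relying only on Theorem~\ref{theoremDecidableAperiodic} and the decidability of $\cL(\XS(\sigma))$; the paper's approach, in contrast, yields the extra structural fact that any witness to non-periodicity can be taken to be a quasi-fixed point, which is of independent interest even if not reused elsewhere.
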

To prove Theorem~\ref{theoremDecidablePeriodic}, we use
the following characterization of periodic substitution shifts.
\begin{lemma}\label{lemmaCaractPeriodicSubstitution}
  The shift $\XS(\sigma)$ is periodic if and only if every admissible
  quasi-fixed point of a power of $\sigma$ is periodic.
\end{lemma}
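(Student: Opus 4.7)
The forward direction is immediate: every admissible quasi-fixed point of a power of $\sigma$ lies in $\XS(\sigma)$, so if $\XS(\sigma)$ is periodic then every such point is periodic.

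For the converse I argue by contraposition. Assume $y\in \XS(\sigma)$ is aperiodic; I produce an aperiodic admissible quasi-fixed point of some power of $\sigma$. Iterating Proposition~\ref{propositionBKM} yields a sequence $\xi_0=y,\xi_1,\xi_2,\ldots\in \XS(\sigma)$ together with bounded non-negative integers $k_{n+1}$ satisfying $\xi_n=S^{k_{n+1}}(\sigma(\xi_{n+1}))$, and by composition integers $L_n$ with $y=S^{L_n}(\sigma^n(\xi_n))$. Each $\xi_n$ is aperiodic, for otherwise periodicity would propagate forward through $\sigma$ and shifts back to $\xi_0=y$.

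Now apply compactness and a diagonal extraction in $\XS(\sigma)^{\N}$: extract a subsequence $n_j\to\infty$ such that $\xi_{n_j+i}\to z_i\in \XS(\sigma)$ for each $i\geq 0$, with the $k_{n_j+i+1}$ eventually equal to constants $\kappa_{i+1}$. By continuity of $\sigma$ on $\XS(\sigma)$ (Lemma~\ref{lemmaSigma}) the limits satisfy $z_i=S^{\kappa_{i+1}}(\sigma(z_{i+1}))$. A further compactness-plus-pigeonhole argument on the sequence $(z_i)_{i\geq 0}$ yields indices $0\leq i<j$ with $z_j\in\Orb(z_i)$; chaining the $\sigma$-relations between consecutive $z$'s then gives $\sigma^{j-i}(z_i)\in\Orb(z_i)$, so that $z_i$ is an admissible quasi-fixed point of $\sigma^{j-i}$.

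If $z_i$ is aperiodic we have exhibited the desired aperiodic admissible quasi-fixed point and are done. Otherwise $z_i$ is periodic and, by Proposition~\ref{lemmaNouveau}, is a fixed point of some power of $\sigma$; in particular it belongs to the set $F$ of admissible periodic quasi-fixed points of powers of $\sigma$, which by Proposition~\ref{propositionAdmissibleQuasiFP} together with the standing assumption is finite. Pigeonholing on the finite $\sigma$-orbit of $z_i$ in $F$ extracts a fixed $z^{\ast}\in F$ with $\sigma^{n_j}(z_i)=z^{\ast}$ along a subsequence; since $\xi_{n_j}$ agrees with $z_0$ on arbitrarily long windows around the origin and $\sigma^{n_j}$ propagates such agreement on windows whose length tends to infinity (via growing letters, and using Proposition~\ref{lemmaNonGrowing} and Lemma~\ref{lemmaNonGrowingBis} to handle portions containing only non-growing or erasing letters), the identity $y=S^{L_{n_j}}(\sigma^{n_j}(\xi_{n_j}))$ forces $y$ to coincide with $S^{L_{n_j}}(z^{\ast})$ on arbitrarily long central windows. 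A final pigeonhole on $L_{n_j}$ modulo the minimal period of $z^{\ast}$ then identifies $y$ with a shift of $z^{\ast}$, contradicting aperiodicity of $y$. The hardest part of the plan is making this last ``growing-window'' argument fully rigorous, in particular ensuring uniform propagation of agreement through $\sigma^{n_j}$ when the central portion of $z_0$ contains non-growing or erasing letters.
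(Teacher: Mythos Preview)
Your approach differs substantially from the paper's. The paper argues by a direct structural case analysis on the growing letters of an aperiodic $x\in\XS(\sigma)$: if $x$ has no growing letters, Proposition~\ref{lemmaNonGrowing} gives finitely many orbits of such points, and since $\sigma$ sends non-growing aperiodic points to non-growing aperiodic points it permutes this finite set of orbits, so some orbit is stable under a power of $\sigma$; if $x$ has exactly one growing letter one manufactures a quasi-fixed point of type $\sigma^{\omega,i}(a)$; and if $x$ has at least two growing letters one explicitly builds an aperiodic admissible fixed point of the form $\sigma^\omega(aq\cdot urb)$. No limits or diagonal extractions are used.

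Your argument has a real gap at the sentence ``a further compactness-plus-pigeonhole argument on the sequence $(z_i)_{i\ge 0}$ yields indices $0\le i<j$ with $z_j\in\Orb(z_i)$''. Compactness of $\XS(\sigma)$ only produces accumulation points of the sequence $(z_i)$; it gives no control over orbit equivalence. A typical aperiodic substitution shift (Fibonacci, Thue--Morse) has uncountably many orbits, so there is no finite set on which to pigeonhole. Without this step you never establish that any $z_i$ is a quasi-fixed point, and the subsequent dichotomy (aperiodic $z_i$ versus periodic $z_i$) never gets started. One could try to salvage the idea by iterating the diagonal extraction on $(z_i)$ itself, but making this terminate amounts to proving a finiteness statement you do not yet have.

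The final paragraph has its own problem, which you partly acknowledge. Even granting a periodic $z_0$, the passage from ``$\xi_{n_j}$ agrees with $z_0$ on a window of size $N_j\to\infty$'' to ``$y$ agrees with a shift of $z^\ast$ on long windows'' is not justified: $\sigma^{n_j}$ does propagate agreement, but the recentering shift $L_{n_j}$ is of the same order as $|\sigma^{n_j}|$ applied to the window, so after shifting back to the origin there is no reason any agreement survives. I do not see how to close this without importing precisely the kind of growing-letter bookkeeping that the paper's case analysis performs directly.
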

\begin{proof}
  The condition is clearly necessary. Conversely,
  let $\sigma\colon A^*\to A^*$ be such that $\XS(\sigma)$
  is not periodic. Let $x\in \XS(\sigma)$ be a sequence that
  is not periodic. We distinguish three cases.
  
  Case 1. Suppose that $x$ is formed of non-growing letters.
  Since there is a finite number of orbits of sequences
  formed of non-growing letters, and in particular of non-periodic
  ones, there exists at least one such orbit stable under
  a power of $\sigma$. Thus there is
  a quasi-fixed point of a power of $\sigma$ which is not periodic.

  Case 2. There is exactly one growing letter $a$ in $x$.
  If some $\sigma^n(x)$ has more than one growing letter,
  we change $x$ for $\sigma(x)$ and go to Case 3.
  Otherwise, replacing $\sigma$
  by some power, $a$ by another growing letter,
  and $x$ by its image by this power, we may assume
  that $\sigma(a)=uav$ with $u,v$ non-growing. We may also assume that for every $n\ge 1$,
  $x_{[-n,n]}$ is a factor of some $\sigma^m(b)$ with $b$ a growing letter.
  By choosing $n$ large enough and replacing $x$ by
  some $\sigma^n(x)$, we may assume that $b=a$. This implies
  that $u,v$ are both non-erasable. Then $\sigma^{\omega,i}(a)$
  is, for $i=|ua|$, a non-periodic quasi-fixed point of a power
  of $\sigma$.

  Case 3. There are several growing letters in $x$. Let
  $aub$ be a factor of $x$ such that $a,b$ are growing letters
  and $u$ a non-growing word.  We may assume, replacing $\sigma$
  by some power $x$ by some $\sigma^n(x)$ and $a,b$ by other growing letters,
  that $\sigma(a)=paq$, $\sigma(q)=q$, $\sigma(u)=u$, $\sigma(b)=rbs$
  and $\sigma(r)=r$. Then $\sigma^\omega(aq\cdot urb)$
  is a non-periodic fixed point of a power of $\sigma$.
  \end{proof}
\begin{proofof}{of Theorem~\ref{theoremDecidablePeriodic}}
  By Theorem \ref{propositionAdmissibleQuasiFP}, there
  is a finite and computable number of orbits of admissible
  quasi-fixed points of a power of $\sigma$. Since
  the period of those which are periodic is effectively bounded
  by Theorem~\ref{theoremDecidableAperiodic},
  the result follows from Lemma~\ref{lemmaCaractPeriodicSubstitution}.
  \end{proofof}
\section{Erasable letters}\label{sectionErasable}
Let $\sigma:B^*\to B^*$ 
and  $\phi:B^*\to A^*$ be two morphisms. We say that the pair $(\sigma,\phi)$
is \emph{admissible} if  $\phi(x)$ is
a two-sided infinite sequence for every $x\in \XS(\sigma)$.
When $(\sigma,\phi)$ is admissible, we let $\XS(\sigma,\phi)$ denote
the closure under the shift of $\phi(\XS(\sigma))$.

A shift $\XS(\sigma,\tau)$, for an admissible pair $(\sigma,\tau)$,
is called a \emph{morphic} shift. A substitution shift
$\XS(\sigma)$ is also called a \emph{purely morphic} shift.

As a closely related notion, a \emph{morphic sequence}
is a right-infinite sequence $x=\phi(\sigma^\omega(u))$ where
$\sigma\colon B^*\to B^*$ is a morphism
right-prolongable on $u$ and $\phi\colon B^*\to A^*$
is a morphism.

The following statement  is a 
 result essentially due to Cobham \cite{Cobham1968}
(see also \cite{Pansiot1983}, \cite{CassaigneNicolas2003},
\cite{Durand2013}, \cite{Honkala2009} and
\cite{CharlierLeroyRigo2015}
and \cite[Exercise 7.38]{DurandPerrin2021}).
It is also a variant of a result known as Rauzy Lemma
(see Proposition~\ref{propositionRauzyLemma} below).
It is a normalization result, showing that,
for a shift generated by a morphic sequence,
one can replace a pair
$(\tau,\phi)$  by an equivalent pair $(\zeta,\theta)$
with $\zeta$ non-erasing and $\theta$ letter-to-letter
(see Figure~\ref{figureRauzyProper}).
Our statement is slightly more general than the previous
ones, as the fixed point is of the form $\sigma^\omega(u)$
for $u$ a word and not a letter.

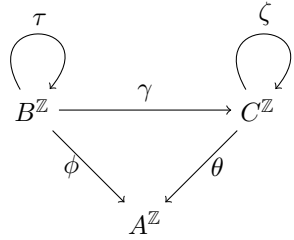
\begin{figure}[hbt]
\centering
\tikzset{node/.style={circle,draw,minimum size=0.1cm,inner sep=0pt}}
\tikzstyle{every loop}=[->,shorten >=.3pt]
\tikzstyle{loop above}=[in=50,out=120,loop]
\begin{tikzpicture}
\node(B)at(0,1.5){$B^\Z$};
\node(C)at(3,1.5){$C^\Z$};
\node(A)at(1.5,0){$A^\Z$};

\draw[above](B)edge[loop above]node{$\tau$}(B);
\draw[above](C)edge[loop above]node{$\zeta$}(C);
\draw[left,->](B)edge node{$\phi$}(A);
\draw[right,->](C)edge node{$\theta$}(A);
\draw[above,->](B)edge node{$\gamma$}(C);
\end{tikzpicture}
\caption{The pairs $(\tau,\phi)$ and  $(\zeta,\theta)$}\label{figureRauzyProper}
\end{figure}

\begin{theorem}\label{theoremNormalization}
  Let $\tau:B^*\to B^*$ 
   be a morphism right-prolongable on $u\in B^+$ and
   let $y=\tau^\omega(u)$. Let $\phi\colon B^*\to A^*$
   be such that $x=\phi(y)$ is an infinite sequence.
  There exist an
  alphabet $C$, a morphism $\gamma:B^*\to C^*$, a non-erasing
  morphism $\zeta:C^*\to C^*$,  
  right-prolongable on $v=\gamma(u)$
  and a letter-to-letter morphism $\theta:C^*\to A^*$
  such that 
  \begin{equation}
    \phi\circ\tau^m=\theta\circ\gamma\label{eqThetaGamma}
  \end{equation}
 and
\begin{equation}
    \zeta\circ\gamma=\gamma\circ\tau^n\label{eqZetaGamma}
\end{equation}
for some $m,n\ge 1$. As a consequence, 
$x=\theta(\zeta^\omega(v))$.
\end{theorem}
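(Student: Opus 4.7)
The plan is to carry out the normalization in two successive steps: the first eliminates erasure to produce a non-erasing intermediate pair $(\zeta_1,\phi_1)$ on an enlarged alphabet $C_1$, and the second applies a sliding-block coding to replace the intermediate coding by a letter-to-letter morphism while keeping the new substitution non-erasing. Composing the two reductions will give the desired $\gamma$, $\zeta$, $\theta$.

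For the first step, I replace $\tau$ by $\tau^{\mex(\tau)}$, so that $\tau(e)=\varepsilon$ for every erasable letter $e\in E$; this does not change the fixed point $y=\tau^\omega(u)$. By Proposition~\ref{propositionErasing}, $E^*\cap \cL(\tau)$ is finite, so I may set
\[
  C_1=\{(b,e)\in (B\setminus E)\times (E^*\cap\cL(\tau))\mid be\in\cL(\tau)\},
\]
define $\gamma_1\colon B^*\to C_1^*$ by grouping each non-erasable letter of a word with the maximal block of erasable letters immediately following it (discarding any initial erasable prefix), and set $\zeta_1(b,e)=\gamma_1(\tau(b))$ (using $\tau(e)=\varepsilon$) and $\phi_1(b,e)=\phi(be)$. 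Since $b$ is non-erasable, $\tau(b)$ contains at least one non-erasable letter, so $\zeta_1$ is non-erasing. One then checks that $\zeta_1\circ\gamma_1=\gamma_1\circ\tau$ and $\phi_1\circ\gamma_1=\phi$ hold on words whose first letter is non-erasable; composing with $\tau^m$ for $m$ large enough absorbs the initial erasable prefix of $u$ and gives the identities in the form required by~\eqref{eqThetaGamma}.

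For the second step, starting from the non-erasing $(\zeta_1,\phi_1)$ on $C_1^*$, I take
\[
  C=\{(c,i)\mid c\in C_1,\ 1\le i\le|\phi_1(c)|\},
\]
$\theta(c,i)=\text{the $i$-th letter of }\phi_1(c)$, and $\gamma_2(c)=(c,1)(c,2)\cdots(c,|\phi_1(c)|)$, so that $\theta\circ\gamma_2=\phi_1$ and $\theta$ is letter-to-letter by construction. I then choose an exponent $n$ such that $|\phi_1(\zeta_1^n(c))|\ge|\phi_1(c)|$ for every $c\in C_1$ that appears in $\zeta_1^\omega(\gamma_1(u))$: growing letters satisfy this for large $n$ by Lemma~\ref{lemmaGrowing}, while non-growing letters whose $\zeta_1$-orbit is finite are handled by taking $n$ a multiple of the periods of these orbits, in which case the inequality is an equality. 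For each relevant $c$, I partition the word $\gamma_2(\zeta_1^n(c))$ into $|\phi_1(c)|$ non-empty consecutive blocks and define $\zeta(c,i)$ to be the $i$-th block. This produces a non-erasing morphism $\zeta\colon C^*\to C^*$ satisfying $\zeta\circ\gamma_2=\gamma_2\circ\zeta_1^n$. Setting $\gamma=\gamma_2\circ\gamma_1$ and composing with the first step yields the required identities~\eqref{eqThetaGamma} and~\eqref{eqZetaGamma}, and $\zeta$ is right-prolongable on $v=\gamma(u)$ because $\tau$ is right-prolongable on $u$.

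The main obstacle is securing the length inequality $|\phi_1(\zeta_1^n(c))|\ge|\phi_1(c)|$ \emph{uniformly} in $c$: this is what allows $\zeta$ to be defined as a genuine non-erasing morphism on all of $C^*$. Non-growing letters of $\zeta_1$ that persist in the fixed point form the delicate case and require the exponent $n$ to be chosen in a way that respects the (eventually periodic) orbit structure of $\zeta_1$, which is what forces a power $\tau^n$ rather than $\tau$ itself in~\eqref{eqZetaGamma}. Once this uniform choice of $n$ is secured, the remaining verifications---non-erasure of $\zeta$, commutativity of the two diagrams, and right-prolongability on $\gamma(u)$---are routine bookkeeping on the two coding constructions.
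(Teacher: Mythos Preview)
Your two-step plan is natural, but Step~2 has a genuine gap at exactly the point you flag as the main obstacle: the inequality $|\phi_1(\zeta_1^{\,n}(c))|\ge|\phi_1(c)|$ need not hold for \emph{any} $n\ge 1$.

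For growing $c$ your appeal to Lemma~\ref{lemmaGrowing} is misplaced: that lemma gives $|\zeta_1^{\,n}(c)|\to\infty$, but $\phi_1$ may erase, so nothing forces $|\phi_1(\zeta_1^{\,n}(c))|$ to grow or even to be eventually $\ge|\phi_1(c)|$. For non-growing $c$ the orbit $\{\zeta_1^{\,n}(c)\}_{n\ge 0}$ is eventually periodic but may have a nontrivial preperiod, and every value in the periodic part can be strictly smaller than $|\phi_1(c)|$. A concrete instance arising from your Step~1: take $B=\{a,b,c,e\}$, $\tau(a)=abea$, $\tau(b)=c$, $\tau(c)=c$, $\tau(e)=\varepsilon$, and $\phi(a)=\varepsilon$, $\phi(b)=\phi(e)=0$, $\phi(c)=1$. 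Then the letter $(b,e)\in C_1$ satisfies $\zeta_1(b,e)=(c,\varepsilon)$, $\zeta_1(c,\varepsilon)=(c,\varepsilon)$, while $|\phi_1(b,e)|=|\phi(be)|=2$ and $|\phi_1(\zeta_1^{\,n}(b,e))|=|\phi(c)|=1$ for all $n\ge 1$; the pair $(b,e)$ does occur in $\gamma_1(\tau^\omega(a))$ and $\phi(\tau^\omega(a))$ is infinite, so all your hypotheses are met yet no $n$ works.

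The paper's proof resolves this in one stroke via Lemma~\ref{lemmaCN}: a pigeonhole argument on the sequences $\ell_n(b)=|\phi\circ\tau^n(b)|$ produces \emph{both} an offset $m$ and a step $n$ with $\ell_m(b)\le\ell_{m+n}(b)$ for every $b\in B$ (strict for the growing letter $a$ of $u$). One then sets $\phi'=\phi\circ\tau^m$ and performs the letter-splitting construction directly with $\phi'$, skipping your Step~1 entirely (the resulting $\gamma$ is allowed to be erasing). Your use of a power $\tau^m$ only to ``absorb the initial erasable prefix'' misses that the specific value of $m$ is what makes the length comparison go through.

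A smaller issue: your $\gamma_1$ (grouping each non-erasable letter with the following maximal erasable block) is not a monoid morphism, since for non-erasable $b$ and erasable $e$ one has $\gamma_1(be)=(b,e)\ne(b,\varepsilon)=\gamma_1(b)\gamma_1(e)$; hence $\gamma=\gamma_2\circ\gamma_1$ is not a morphism either, contrary to what the theorem asserts.
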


For this, we use the following lemma (see~\cite{CassaigneNicolas2003}).
It is not a constructive statement since the sequence $(n_i)$ is
not shown to be computable. This weakness was
overcome in \cite{Durand2013} and \cite{Honkala2009}
independently.

\begin{lemma}\label{lemmaCN}
  Let $B$ be a finite set and for each $b\in B$, let
  $(\ell_n(b))_{n\ge 0}$ be a sequence of natural integers
  with $(\ell_n(a))$ unbounded for some $a\in B$.
  There is an infinite sequence $n_0<n_1<\ldots$ such that
  for each $b\in B$, the sequence $(\ell_{n_i}(b))$ is
  either strictly increasing or constant,
  with  $(\ell_{n_i}(a))$  strictly increasing.
\end{lemma}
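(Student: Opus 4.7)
The plan is to proceed by successive refinement of subsequences, handling one coordinate of $B$ at a time. The key elementary fact is that any sequence $(m_n)_{n \ge 0}$ of natural integers admits an infinite subsequence which is either strictly increasing or constant: if $(m_n)$ is bounded, then by the pigeonhole principle some value is attained infinitely often and the corresponding index set gives a constant subsequence; if $(m_n)$ is unbounded, a strictly increasing subsequence can be extracted greedily by always picking the next index whose value exceeds all previously chosen values.

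First I would use the unboundedness of $(\ell_n(a))$ to extract an infinite sequence $(n_i^{(0)})_{i \ge 0}$ along which $(\ell_{n_i^{(0)}}(a))$ is strictly increasing. Then, enumerating $B \setminus \{a\} = \{b_1, \ldots, b_k\}$ in arbitrary order, I would proceed inductively: given a sequence $(n_i^{(j)})_{i \ge 0}$, apply the elementary fact above to the integer sequence $(\ell_{n_i^{(j)}}(b_{j+1}))_{i \ge 0}$ to extract a further subsequence $(n_i^{(j+1)})_{i \ge 0}$ along which that coordinate is either strictly increasing or constant. Setting $(n_i) = (n_i^{(k)})$ yields the desired sequence: for each previously treated coordinate (including $a$), the property of being strictly increasing or constant is preserved under passage to subsequences, so every $(\ell_{n_i}(b))$ for $b \in B$ has the required form and $(\ell_{n_i}(a))$ remains strictly increasing.

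There is essentially no obstacle in this argument beyond bookkeeping; the proof is a finite iteration of the standard dichotomy for integer sequences. The subtlety flagged by the authors is that the construction is not effective, because deciding whether a computable sequence of integers is bounded or unbounded is undecidable in general, and consequently one cannot extract the subsequence $(n_i)$ in a computable way from the data. This is precisely the point remedied by the refinements in \cite{Durand2013} and \cite{Honkala2009}, which will be used later to obtain the effective version of Theorem~\ref{theoremNormalization}.
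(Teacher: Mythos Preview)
Your proof is correct and follows essentially the same approach as the paper: successive refinement of subsequences, handling one element of $B$ at a time via the bounded/unbounded dichotomy for integer sequences. The paper phrases this as induction on $\Card(B)$ while you phrase it as an explicit iteration, but the content is identical.
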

\begin{proof}
  The proof is by induction on $\Card(B)$. The statement
  is clear if $B=\{a\}$. Assuming that it holds
  for $B\setminus \{b\}$, either the sequence  $(\ell_{n_i}(b))$
  is bounded and we can find a subsequence $m_i$ of the $n_i$
  such that $(\ell_{m_i}(b))$ is constant, or
  $(\ell_{n_i}(b))$
  is unbounded and we can find a subsequence $m_i$
  of the $n_i$ such that $(\ell_{m_i}(b))$ is stricly increasing.
\end{proof}
\begin{proofof}{of Theorem~\ref{theoremNormalization}}
By Lemma~\ref{lemmaRightProl}, and changing $\tau$ for
some of its powers, we may assume that $u=pra$ with
$\tau(p)=p$ and $\tau(r)=\varepsilon$  and $a\in B$ growing.

We first prove that we can change the pair $(\tau,\phi)$
into a pair $(\tau',\phi')$ such that 
\begin{equation}
|\phi'\circ\tau'(b)|\ge|\phi'(b)|\label{eqphiCircvarphi}
\end{equation}
for every $b\in B$ and with strict inequality when $b=a$.

Since,
by Lemma~\ref{lemmaGrowing},
$\lim_{n\to +\infty}|\tau^n(a)|=+\infty$, we can apply Lemma~\ref{lemmaCN}
to the sequence $\ell_n(b)=|\phi\circ\tau^n(b)|$. Thus,
there are $m,n\ge 1$ such that
$|\phi\circ\tau^m(b)|\le|\phi\circ\tau^{m+n}(b)|$
for every $b\in B$ with strict inequality if $b=a$.
Set  $\tau'=\tau^{n}$, $\phi'=\phi\circ\tau^m$.
Then 
\begin{eqnarray*}
|\phi'\circ\tau'(b)|&=&|\phi\circ\tau^m\circ\tau^{n}(b)|=|\phi\circ\tau^{m+n}(b)|\\
&\ge&|\phi\circ\tau^m(b)|=|\phi'(b)|
\end{eqnarray*}
for every $b\in B$ with strict inequality when $b=a$.
This proves \eqref{eqphiCircvarphi}.

We define an alphabet $C=\{b_p\mid b\in B, 1\le p\le|\phi'(b)|\}$,
a map $\theta:C\to A$ by $\theta(b_p)=(\phi'(b))_p$
where $(\phi'(b))_p$ denotes the $p$-th letter of $\phi'(b)$ and a map
$\gamma:B\to C^*$ by $\gamma(b)=b_1b_2\cdots b_{|\phi'(b)|}$
with $\gamma(b)=\varepsilon$ if $\phi'(b)=\varepsilon$.
In this way, we have $\theta\circ\gamma=\phi'$
and thus \eqref{eqThetaGamma}.

For every $b\in B$, we have 
\begin{displaymath}
|\gamma\circ\tau'(b)|=|\phi'\circ\tau'(b)|\ge|\phi'(b)|
\end{displaymath}
by \eqref{eqphiCircvarphi}. Thus, we can define words $w_1,w_2,\ldots,w_{|\phi(b)|}\in C^+$ such that
\begin{displaymath}
\gamma\circ\tau'(b)=w_1w_2\ldots w_{|\phi(b)|}.
\end{displaymath}
Then we define a non-erasing
morphism $\zeta\colon C^*\to C^*$
by $\zeta(b_p)=w_p$.
We  have by construction $\zeta\circ\gamma=\gamma\circ\tau'$
and thus \eqref{eqZetaGamma}. Since $\zeta\circ\gamma(y)=\gamma\circ\tau'(y)=\gamma(y)$, the sequence $\gamma(y)$ is a fixed point of $\zeta$.
Since  $\zeta(v)=\zeta\circ\gamma(u)=\gamma\circ\tau'(u)$,
the sequence $\zeta(v)$ begins with $v$. Moreover,
since $|\phi'(r)|\le|\phi\circ\tau'(r)|=0$, we have
\begin{eqnarray*}
  |\zeta(v)|&=&|\gamma\circ\tau'(u)|=|\phi'\circ\tau'(pra)|
  =|\phi'(p)|+|\phi'\circ\tau'(a)|\\
  &>&|\phi'(p)|+|\phi'(a)|=|\phi'(u)|=|\gamma(u)|=|v|.
  \end{eqnarray*}
Thus $\zeta$ is right-prolongable on $v$. Finally, we have
\begin{displaymath}
  \theta(\zeta^\omega(v))=\theta(\zeta^\omega(\gamma(u)))
  =\theta\circ\gamma(\tau'^\omega(u))=\phi'(y)=\phi\circ\tau^m(y)=\phi(y)=x.
  \end{displaymath}
\end{proofof}

We illustrate the proof of Theorem~\ref{theoremNormalization}
with the following example.
\begin{example}
  Consider the morphism
  $\tau\colon a\mapsto abc,b\mapsto ac,c\mapsto\varepsilon$
  and the morphism $\phi$ which is the identity on $A=B=\{a,b,c\}$.
  The sequence $\tau^\omega(a)$ is the Fibonacci sequence with
  letters $c$ inserted at the division points.
  
  We first replace the pair $(\tau,\phi)$ by the pair $(\tau,\tau)$
  in such a way that $|\tau\circ\tau(e)|\ge|\tau(e)|$ for
  every letter $e\in B$. We define $C=\{a_1,a_2,a_3,b_1,b_2\}$,
  $\gamma(a)=a_1a_2a_3$, $\gamma(b)=b_1b_2$ and $\gamma(c)=\varepsilon$.
  The map $\theta\colon C\to A$ is defined as shown below.
  \begin{displaymath}
    \begin{array}{|c|c|c|c|c|c|c|}\hline
      c&a_1&a_2&a_3&b_1&b_2\\ \hline
      \theta(c)& a& b  &c & a &c\\ \hline
    \end{array}
  \end{displaymath}
  Next, we set $\zeta(a_1)=\zeta(b_1)=a_1a_2$
  and $\zeta(a_2)=a_3b_1$, $\zeta(b_2)=a_3$, $\zeta(a_3)=b_2$. Then $\zeta\circ\gamma=\gamma\circ\tau$
  as one may verify.

  Note that a more economical solution is $\zeta\colon a\mapsto ab$,
  $b\mapsto ca$ and $c\mapsto c$, since $\tau^\omega(a)=\zeta^\omega(a)$
  as one may verify easily.
\end{example}
\begin{corollary}\label{corollaryCobhamShift}
  For every  morphic shift $X=\XS(\sigma,\phi)$ such that $\XS(\sigma)$
  is minimal, there
  is an admissible pair $(\zeta,\theta)$, with $\zeta$ non-erasing
  and $\theta$ letter-to-letter, such that $X=\XS(\zeta,\theta)$.
\end{corollary}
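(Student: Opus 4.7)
My plan is to apply Theorem~\ref{theoremNormalization} to a fixed-point generator of $X:=\XS(\sigma,\phi)$ obtained from minimality. Since $\XS(\sigma)$ is minimal and hence non-empty, Theorem~\ref{propositionFixedPointMinimal2} supplies a power $\sigma^k$ with an admissible two-sided fixed point $y$; because $\XS(\sigma^k)$ is a non-empty shift-invariant subset of the minimal $\XS(\sigma)$, they must coincide, so I replace $\sigma$ by $\sigma^k$ in order to assume $\sigma(y)=y$. Uniform recurrence of $\XS(\sigma)$ (Proposition~\ref{UR=Minimal}) then implies that every finite factor of any $\phi(x)$ with $x\in\XS(\sigma)$ already occurs in $\phi(y)$; combined with admissibility of $(\sigma,\phi)$, this shows that $\phi(y)$ is uniformly recurrent and that $X$ is simply the shift-orbit-closure of $\phi(y)$.

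Writing $y=\sigma^{\tilde\omega}(r)\cdot\sigma^\omega(s)$ via Proposition~\ref{propositionAdmissibleTwoSided}(iv) (the case where $y$ has no growing letter forces $\XS(\sigma)$ to be a single periodic orbit by Proposition~\ref{lemmaNonGrowing}, which I will handle by a direct elementary construction with $\zeta$ mapping every letter of a length-$p$ alphabet to the same period word), I apply Theorem~\ref{theoremNormalization} to $(\sigma,\phi)$ and the right-infinite fixed point $\sigma^\omega(s)$. This produces an alphabet $C$, a morphism $\gamma\colon B^*\to C^*$, a non-erasing morphism $\zeta\colon C^*\to C^*$ and a letter-to-letter morphism $\theta\colon C^*\to A^*$ with $\phi\circ\sigma^m=\theta\circ\gamma$ and $\zeta\circ\gamma=\gamma\circ\sigma^n$ for some $m,n\ge 1$. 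Evaluating these functional identities at the two-sided fixed point $y$ yields $\phi(y)=\theta(\gamma(y))$ and $\zeta(\gamma(y))=\gamma(y)$; setting $y':=\gamma(y)$, I obtain a two-sided fixed point of $\zeta$ with $\theta(y')=\phi(y)$.

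Next, I restrict to the subalphabet $C'\subset C$ of letters occurring in $y'$: since $\zeta(y')=y'$, the restriction $\zeta':=\zeta|_{C'^*}$ is a well-defined non-erasing endomorphism of $C'^*$, and $\theta':=\theta|_{C'}$ is letter-to-letter, so $(\zeta',\theta')$ is admissible. To establish $\XS(\zeta',\theta')=X$, the inclusion $\XS(\zeta',\theta')\subset X$ is straightforward: each $\zeta^n(c)$ with $c\in C'$ is a factor of $\zeta^n(y')=y'$, so $\cL(\zeta')\subset\cL(y')$, and applying the letter-to-letter $\theta$ gives $\theta(\cL(\zeta'))\subset\cL(\phi(y))=\cL(X)$, whence $\theta'(\XS(\zeta'))\subset X$. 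The converse inclusion $X\subset\XS(\zeta',\theta')$ reduces to checking $y'\in\XS(\zeta')$, since then $\phi(y)=\theta'(y')\in\theta'(\XS(\zeta'))$ and the shift-orbit-closure of $\phi(y)$, which equals $X$, is contained in $\XS(\zeta',\theta')$.

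The main obstacle will be the verification that $y'\in\XS(\zeta')$, i.e., that every finite factor of $y'$ belongs to $\cL(\zeta')$. Given a factor $w$ of $y'=\gamma(y)$, it is contained in $\gamma(u)$ for some finite factor $u$ of $y$; using uniform recurrence of $\XS(\sigma)$ and the existence of a growing letter in $y$, I can take $u$ inside $\sigma^{nj}(b)$ for some growing $b\in B$ and $j$ arbitrarily large, giving $w\in\Fac(\zeta^j\gamma(b))$ via the relation $\zeta^j\circ\gamma=\gamma\circ\sigma^{nj}$. Iterating this reduction along a growing letter of $\zeta$ within $y'$ --- whose existence follows from $|\zeta^j\gamma(b)|=|\phi\sigma^{nj}(b)|\to\infty$, itself a consequence of minimality and admissibility --- eventually fits $w$ inside $\zeta^N(c)$ for a single $c\in C'$, so that $w\in\cL(\zeta')$ and the proof is complete.
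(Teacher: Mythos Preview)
Your approach is essentially the paper's: produce a fixed-point generator of $X$, apply Theorem~\ref{theoremNormalization}, and conclude by minimality. The paper works with a \emph{one-sided} admissible fixed point $y$ of some power of $\sigma$, handles the periodic case separately, applies Theorem~\ref{theoremNormalization} to obtain $\phi(y)=\theta(\zeta^\omega(v))$, and then simply says that since $X$ is minimal and generated by $\phi(y)$, the conclusion follows. You instead take a two-sided fixed point, use the functional identities \eqref{eqThetaGamma}--\eqref{eqZetaGamma} evaluated at $y$, restrict to the alphabet $C'$ of $y'=\gamma(y)$, and verify $y'\in\XS(\zeta')$ by hand. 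This is more work, but it is exactly the work the paper's final sentence hides; your version is in that sense more honest.

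Your final paragraph is correct in spirit but could be tightened considerably. The phrase ``iterating this reduction'' is vague; the clean argument is the following. Since $y\in\XS(\sigma)$ is uniformly recurrent and $\gamma$ is a morphism with $\gamma(y)$ two-sided infinite, the sequence $y'=\gamma(y)$ is uniformly recurrent. You have already shown there is a $\zeta'$-growing letter $c\in C'$. For every $N$, the word $\zeta'^N(c)$ is a factor of $\zeta'^N(y')=y'$, and its length tends to infinity; uniform recurrence of $y'$ then forces every factor of $y'$ to occur inside some $\zeta'^N(c)$, so $\cL(y')\subset\cL(\zeta')$ and $y'\in\XS(\zeta')$. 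This replaces the somewhat murky ``iteration'' by a single appeal to uniform recurrence.

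One minor point: when you replace $\sigma$ by $\sigma^k$, the justification that $\XS(\sigma^k)=\XS(\sigma)$ via minimality needs $\XS(\sigma^k)\ne\emptyset$, which is not completely automatic; it follows from the same uniform-recurrence-plus-growing-letter argument (any factor of $y$ lies in some $(\sigma^k)^m(a)$), but is worth one line.
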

\begin{proof}
  Let $y$ be an admissible one-sided fixed point of a power of $\sigma$.
  By Proposition~\ref{propositionAdmissibleOneSided}, either
  $y$ is eventually periodic or
  $y=\sigma^{n\omega}(u)$ where $\sigma^n$ is right-prolongable
  on $u$.
  If $y$ is eventually periodic, it has to be periodic
  since $\XS(\sigma)$ is minimal. Then $\phi(y)$ is periodic
  and there exists a non-erasing morphism
  $\zeta$ such that $\XS(\zeta)=X$. Otherwise, 
  By Theorem~\ref{theoremNormalization},
  there exists a pair $(\zeta,\theta)$ with $\zeta$ non-erasing
  and $\theta$ letter-to-letter such that $\phi(y)=\theta(\zeta^\omega(v))$.
  Since $\XS(\sigma)$ is minimal, $X=\XS(\sigma,\phi)$ is also
  minimal and thus $X$ is genererated by $\phi(y)$,
  which implies our conclusion.
\end{proof}
It is possible to prove Corollary \ref{corollaryCobhamShift}
without the hypothesis
that $X(\sigma)$ is minimal (see~\cite{BealDurandPerrin2023}).



The following example is from \cite{CassaigneNicolas2003}.
It shows that, in Theorem \ref{theoremNormalization},
even when the morphism $\phi$ is the identity, one cannot dispense
with the morphism $\theta$.
\begin{example}\label{exampleCassaigneNicolas}
  Let $\sigma\colon a\mapsto abccc,b\mapsto baccc,c\mapsto\varepsilon$
  and let $x=\sigma^\omega(a)$.
  We will show that there is no non-erasing morphism $\sigma'$ such that
  $\XS(\sigma')=\XS(\sigma)$.

  Assume the contrary.
  Let $\chi\colon \{a,b,c\}^*\to\{a,b\}^*$ be the morphism which erases $c$,
  that is, $\chi\colon a\mapsto a,b\mapsto b,c\mapsto \varepsilon$.
  Then $\chi\circ\sigma=\mu\circ\chi$ where $\mu\colon a\mapsto ab,b\mapsto ba$
  is the Thue-Morse morphism.

  For every  $u\in\cL(\sigma)$, the word $\sigma'(u)$ is in $\cL(\sigma)$
  and thus $\chi\circ\sigma'(u)$ is a factor of the Thue-Morse
  sequence. Set $A=\{a,b,c\}$. Since
  $\sigma'(\cL(\sigma))\subset\cL(\sigma)$,
  we have $\sigma'(A)\subset A^*$.

  Let us first show that $\sigma'(c)=c$. Indeed, since $\chi\circ\sigma'(ccc)=(\chi\circ\sigma'(c))^3$ and since $\cL(\mu)$ does not contain cubes (see
  for example~\cite[Corollary2.2.4]{Lothaire1983}), we have $\chi\circ\sigma'(c)=\varepsilon$. Since
  $\cL(\sigma)\cap c^*=\{\varepsilon, c,cc,ccc\}$, this forces $\sigma'(c)=c$.

  Next, $\sigma'(a)$ (resp. $\sigma'(b)$)
  cannot begin or end with $c$. Indeed, if for example, $\sigma'(a)=cw$, then $\sigma'(ccca)$
  begins with $c^4$, which is impossible.

  One of $\sigma'(a),\sigma'(b)$ contains $c$ since otherwise
  $\{\sigma'(ab),\sigma'(ba)\}=\{ab,ba\}$ which contradicts
  the fact that some power of $\sigma'$ has a two-sided
  infinite fixed point (since $\XS(\sigma)=\XS(\sigma')$ is aperiodic, this fixed
  point is aperiodic and thus $\sigma'$ has at least one growing letter).

  If $\sigma'(a)$ does not contain $c$, then $\sigma'(a)=a$ or $\sigma'(a)=b$.
  In the first case, $\sigma'(b)$ begins with $bccc$ and ends with $cccb$
  and thus $\sigma'^2(b)$ has a factor $cccbccc$, which is impossible.
  The second case is similar.

  Similarly, $\sigma'(b)$ contains $c$.

   Thus, either $\sigma'(a)$ and $\sigma'(b)$ are both in
  $\{a,b\}ccc\{a,b\}A^*\cap A^*\{a,b\}ccc\{a,b\}$
  or  both in $\{a,b\}^2cccA^*\cap A^*ccc\{a,b\}^2$. In the first case,
  \begin{eqnarray*}
    \sigma'^2(a)&\in& \sigma'(\{a,b\}ccc\{a,b\}A^*)
    \subset\sigma'(\{a,b\})cccA^*
    \subset A^*\{a,b\}ccc\{a,b\}cccA^*,
    \end{eqnarray*}
  which is impossible since two factors $c^3$ must be separated
  by a word of length $2$. In the second case,
  $\sigma'(ab)\in A^*\{a,b\}ccc\{a,b\}^4ccc\{a,b\}A^*$,
  which is also impossible. Thus, such a $\sigma'$
  cannot exist.
\end{example}

We do not know,  for an endomorphism $\tau$, whether it is decidable if
there is
a non-erasing morphism $\zeta$ such that $\XS(\tau)=\XS(\zeta)$.
\section{Recognizability}\label{sectionRecognizability}
Let $\sigma\colon A^*\to B^*$ be a morphism and let $X$ be a shift
space on $A$. The morphism $\sigma$ is \emph{recognizable}
in $X$ for a point $y\in B^\Z$ if $y$
has at most one centered $\sigma$-representation $(x,k)$ with $x\in X$.
It is \emph{fully recognizable} in $X$ if it is recognizable
in $X$ for every point $y\in B^\Z$.

The following result is proved in~\cite{BealPerrinRestivo2021}.
\begin{theorem}\label{theoremRecognizable}
  Every morphism $\sigma$ is recognizable in $\XS(\sigma)$
  for aperiodic points.
  \end{theorem}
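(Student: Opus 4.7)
The plan is to prove the contrapositive: if $y\in B^\Z$ has two distinct centered $\sigma$-representations $(x,k)$ and $(x',k')$ with $x,x'\in\XS(\sigma)$, then $y$ is periodic.

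The first step is to reduce to the non-erasing case. By Proposition~\ref{propositionErasing} the erasable words in $\cL(\sigma)$ have bounded length, so in any $x\in\XS(\sigma)$ the gaps between consecutive non-erasable letters are uniformly bounded. I would introduce an alphabet $A'$ whose letters encode a non-erasable letter together with the bounded erasable block following it, and build the corresponding morphism $\sigma'\colon A'^*\to A'^*$; then $\sigma'$ is non-erasing and the natural recoding $\XS(\sigma)\to\XS(\sigma')$ is a conjugacy intertwining $\sigma$ and $\sigma'$. Because a centered $\sigma$-representation forces $\sigma(x_0)\neq\varepsilon$, the recoding sends centered $\sigma$-representations bijectively to centered $\sigma'$-representations, and aperiodicity of $y$ is preserved. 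It therefore suffices to establish the theorem for non-erasing~$\sigma$.

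For non-erasing $\sigma$ I would follow a Moss\'e-style argument. To each centered representation $(x,k)$ of $y$ associate the set of cut positions
\[
C(x,k)=\{-k+|\sigma(x_0\cdots x_{n-1})|\mid n\in\Z\}\subset\Z,
\]
which demarcates the blocks $\sigma(x_n)$ inside $y$. Two distinct centered representations yield different cut sets $C\neq C'$. Passing, via Proposition~\ref{propositionsigma_k}, to a suitable higher block presentation $\sigma_\ell$, one can ensure that the local configuration near any cut is determined by a bounded window of $y$, so only finitely many "mismatch patterns" between $C$ and $C'$ can occur at any position. A pigeonhole argument on these patterns produces two arbitrarily far-apart positions where the local configuration of the pair $(C,C')$ coincides exactly; since this local configuration determines $y$ up to translation by the offset $d$ between corresponding cuts, one concludes that $y$ has period $d$, contradicting the aperiodicity of $y$.

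The main obstacle is the reduction to the non-erasing case: one has to define the recoding $A\to A'$ so that the resulting morphism $\sigma'$ genuinely makes the natural diagram commute and, crucially, so that centered $\sigma$-representations correspond exactly to centered $\sigma'$-representations---no information about the offset $k$ inside $\sigma(x_0)$ must be lost when absorbing the erasable letters around $x_0$. Once this is arranged, the remainder follows the classical Moss\'e combinatorics; the only subtle adjustment is that aperiodicity is assumed pointwise on $y$ rather than globally on $\XS(\sigma)$, so the period produced by mismatched cuts must be extracted directly from $y$ itself and not from a uniform property of the shift.
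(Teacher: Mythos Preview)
The paper does not prove this theorem; it merely cites it from \cite{BealPerrinRestivo2021} and recounts its history (Moss\'e for primitive aperiodic morphisms, then \cite{BezuglyiKwiatkowskiMedynets2009} for aperiodic non-erasing ones, then \cite{BertheSteinerThuswaldnerYassawi2019} for non-erasing morphisms at aperiodic points, and finally \cite{BealPerrinRestivo2021} for arbitrary morphisms). So there is no ``paper's own proof'' to compare with.

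Your high-level strategy---reduce to the non-erasing case, then run a Moss\'e-type cut-position argument---is indeed the overall shape of the known proofs, but what you have written is a plan, not a proof, and both halves are substantially harder than your sketch suggests. For the reduction, you acknowledge the difficulty yourself: defining $\sigma'$ on blocks ``non-erasable letter $+$ trailing erasable run'' so that $\XS(\sigma')$ is conjugate to $\XS(\sigma)$, \emph{and} so that centered $\sigma$-representations correspond bijectively to centered $\sigma'$-representations, requires real work (in particular, erasable runs can straddle the position $0$, and $\sigma$ applied to a non-erasable letter may itself begin or end with erasable letters, so the block alphabet and the induced morphism must be set up with care). For the non-erasing part, the sentence ``the remainder follows the classical Moss\'e combinatorics'' is optimistic: Moss\'e's original argument is for primitive morphisms, and the extensions to the non-primitive non-erasing case in \cite{BezuglyiKwiatkowskiMedynets2009} and \cite{BertheSteinerThuswaldnerYassawi2019} are not routine adaptations---in particular one cannot assume a global bound on the ratio $|\sigma^n(a)|/|\sigma^n(b)|$, and the pigeonhole on ``mismatch patterns'' must be organised so that the period it produces is a period of $y$ itself (your last paragraph notes this point but does not explain how it is handled). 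As written, your proposal identifies the right architecture but does not supply the arguments that make either step go through.
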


This result has a substantial history.
It was first proved by Moss\'e \cite{Mosse1992,Mosse1996}
that every aperiodic primitive morphism
is recognizable in $\XS(\sigma)$. Later, this important result was generalized
in \cite{BezuglyiKwiatkowskiMedynets2009} where
it is shown that every aperiodic non-erasing morphism $\sigma$ is
recognizable in $\XS(\sigma)$. As a further extension, it is
proved in \cite{BertheSteinerThuswaldnerYassawi2019} that
a non-erasing morphism $\sigma$ is recognizable in $\XS(\sigma)$
at aperiodic points. Finally, the last result was extended
in \cite{BealPerrinRestivo2021} to arbitrary morphisms
(see also the more recent \cite{BealPerrinRestivoSteiner2023} for a further extension).

We prove the following result.
\begin{theorem}\label{theoremDecidabilityRecognizable}
  A morphism $\sigma$ is fully recognizable in $\XS(\sigma)$
  if and only if every periodic point in $\XS(\sigma)$
  is formed of non-growing letters. Consequently,
it is decidable whether $\sigma$ is fully recognizable in $\XS(\sigma)$.
\end{theorem}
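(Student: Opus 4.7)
My plan is to reduce, via Theorem~\ref{theoremRecognizable}, to recognizability at periodic points of $\XS(\sigma)$, since that theorem already delivers recognizability at every aperiodic point; both directions of the characterization therefore concern only periodic points. For the ``only if'' direction I would take a periodic $y \in \XS(\sigma)$ of minimal period $p$ containing a growing letter and, after a shift, assume $y_0$ is growing. By Proposition~\ref{lemmaNouveau} some power of $\sigma$ fixes $y$, and since $y_0$ is growing, Lemma~\ref{lemmaGrowing} lets me pick $n$ with $\sigma^n(y) = y$ and $|\sigma^n(y_0)| > p$. Then $(y, 0)$ and $(y, p)$ are two distinct centered $\sigma^n$-representations of $y$: both are centered (because $0, p < |\sigma^n(y_0)|$) and both represent $y$ (because $\sigma^n(y) = y$ and $S^p y = y$). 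Hence $\sigma^n$ is not fully recognizable in $\XS(\sigma^n)$, noting that $y \in \XS(\sigma^n)$ since $y = \sigma^n(y)$ forces all its factors into $\cL(\sigma^n)$. To pass from $\sigma^n$ back to $\sigma$, I would use the standard argument that full recognizability of $\sigma$ on $\XS(\sigma)$ implies that of $\sigma^n$ on $\XS(\sigma^n) \subseteq \XS(\sigma)$: iterating the unique centered $\sigma$-representation $n$ times assembles a uniquely determined centered $\sigma^n$-representation of $y$, so two distinct centered $\sigma^n$-representations would force two distinct centered $\sigma$-representations at some intermediate stage.

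For the ``if'' direction, assume every periodic point of $\XS(\sigma)$ consists of non-growing letters. Given $y \in \XS(\sigma)$ and two centered $\sigma$-representations $(x, k), (x', k')$ with $x, x' \in \XS(\sigma)$, the case $y$ aperiodic is handled by Theorem~\ref{theoremRecognizable}. When $y$ is periodic, the hypothesis forces $y$ to consist only of non-growing letters, and since $\sigma$ sends growing letters to words containing growing letters, a growing letter in $x$ (or in $x'$) would produce a growing letter in $\sigma(x) = S^{-k}y$, hence in $y$; so both $x$ and $x'$ consist only of non-growing letters. By Proposition~\ref{lemmaNonGrowing} such points lie in a finite, effectively computable union of orbits, and Proposition~\ref{propositionNonGrowingLetters} supplies bounded integers $N, P$ with $\sigma^{N+P}(a) = \sigma^N(a)$ for every non-growing letter $a$, so $\sigma$ acts on this set as an eventually periodic finite-state map. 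Combining this finite-state behaviour with the periodicity of $y$, one shows by a direct combinatorial matching of the $\sigma$-tiles against the periodic pattern of $y$ that the centered representation $(x, k)$ is uniquely determined, and therefore $(x, k) = (x', k')$.

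Decidability then follows at once: Theorem~\ref{theoremDecidableAperiodic} produces an effectively computable finite list of periodic points of $\XS(\sigma)$ whose periods are effectively bounded in terms of $\sigma$, and the presence of a growing letter in such a point is testable in linear time from $G(\sigma)$ by the characterization of growing letters recalled before Lemma~\ref{lemmaGrowing}. The principal obstacle will be the uniqueness step of the ``if'' direction: one must show that the eventually periodic finite-state behaviour of $\sigma$ on the non-growing subshift, combined with the fixed period of $y$, leaves no room for two genuinely different factorizations of $y$ as a shift of $\sigma(x)$ with $x \in \XS(\sigma)$ having only non-growing letters.
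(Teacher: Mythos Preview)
Your ``only if'' direction is correct and in fact cleaner than the paper's treatment. Your argument that $y\in\XS(\sigma^n)$ works because $y_0$ is growing: since $y=(\sigma^n)^k(y)$ for every $k$, each factor of $y$ eventually sits inside $(\sigma^n)^k(y_0)$. The passage from non-recognizability of $\sigma^n$ back to $\sigma$ via iterated unique desubstitution is standard and valid.

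The genuine gap is exactly where you flag it: the uniqueness step in the ``if'' direction. Your ``direct combinatorial matching of the $\sigma$-tiles against the periodic pattern of $y$'' is not an argument, and the tools you list (Proposition~\ref{lemmaNonGrowing} and Proposition~\ref{propositionNonGrowingLetters}) only tell you that the possible $x$'s live in finitely many orbits and that $\sigma$ is eventually periodic on non-growing words; they do not by themselves rule out two distinct $x,x'\in\XS(\sigma)$ with only non-growing letters and $S^k\sigma(x)=S^{k'}\sigma(x')=y$. Knowing that $x$ has only non-growing letters is strictly weaker than knowing $x$ is periodic, and it is periodicity of $x$ that is needed.

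The paper closes this gap with a separate lemma it proves just before the theorem (Lemma~\ref{lemma6.4}): for every aperiodic $x\in\XS(\sigma)$, the point $\sigma(x)$ is aperiodic. By contraposition, any $x\in\XS(\sigma)$ appearing in a $\sigma$-representation of the periodic $y$ is itself periodic. Once that is known, the paper passes to a power of $\sigma$ fixing every periodic point, writes $y=w^\infty$ with $\sigma(w)=w^{n}$, observes that the preimage $x$ must then also satisfy $\sigma(x)=x$, and concludes that the centered representations of $y$ are exactly $(y,0),(y,|w|),\dots,(y,(n-1)|w|)$. Thus recognizability at $y$ holds precisely when $n=1$, which happens if and only if $w$ (hence $y$) consists of non-growing letters. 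Your observation that a growing letter in $x$ would force one in $y$ is correct but is a consequence of this picture rather than a substitute for it; without Lemma~\ref{lemma6.4} you cannot get from ``$x$ has only non-growing letters'' to a finite explicit list of candidate representations.
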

We first prove the following lemma, which does not seem to
have been proved before.
\begin{lemma}\label{lemma6.4}
  Let $\sigma\colon A^*\to A^*$ be a morphism. For every
  aperiodic point $x\in \XS(\sigma)$, the point $\sigma(x)$
  is aperiodic.
\end{lemma}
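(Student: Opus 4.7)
The plan is to argue by contrapositive: I show that if $\sigma(x)$ is periodic for some $x\in\XS(\sigma)$, then $x$ is periodic.

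First, by Lemma~\ref{lemmaSigma} we have $\sigma(x)\in\XS(\sigma)$, and by Proposition~\ref{lemmaNouveau}, $\sigma(x)$ is a fixed point of some power $\sigma^N$; hence $\sigma^{N+1}(x)=\sigma(x)$. Writing $\sigma(x)=u^\infty$ with $u$ primitive of length $p$, Lemma~\ref{lemmaPeriodicFixedPoints} yields $\sigma^N(u)=u^m$ for some $m\ge 1$. Consequently, the point $z:=\sigma^N(x)=(\sigma^{N-1}(u))^\infty\in\XS(\sigma)$ is periodic and satisfies $\sigma(z)=\sigma(x)$, so the task reduces to showing that $x$ lies in the $S$-orbit of $z$.

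Next, assuming for contradiction that $x$ is aperiodic, I would iterate Proposition~\ref{propositionBKM}, using Theorem~\ref{theoremRecognizable} for uniqueness, to produce a uniquely determined sequence $y^{(0)}=x,\,y^{(1)},\,y^{(2)},\ldots$ of centered $\sigma$-preimages in $\XS(\sigma)$, with $y^{(n)}=S^{k_{n+1}}(\sigma(y^{(n+1)}))$. Each $y^{(n)}$ is aperiodic, since $\sigma(y^{(n+1)})$ is a shift of the aperiodic $y^{(n)}$, so recognizability applies at every step. Following the proof of Proposition~\ref{lemmaNouveau}, the alphabets $A_n$ of letters appearing in $y^{(n)}$ are eventually periodic in $n$; after replacing $\sigma$ by a suitable power (cf.~Proposition~\ref{propositionnewX(sigman)}) we may assume $A_n=B$ is constant, $\sigma(B)\subseteq B^*$, and every letter of $B$ appears in $x$. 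The restricted morphism $\tau:=\sigma|_{B^*}$ then satisfies $x\in\XS(\tau)$ and every word of $\cL(\tau)$ appears in $x$.

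Finally, I would close the argument using $\sigma^N(u)=u^m$: this identity shows that $\tau$ permutes the finite orbit $\Orb(u^\infty)$, and combined with the density of $\cL(\tau)$ in $\cL(x)$ just established, Theorem~\ref{theoremCaractMinimal} identifies $\XS(\tau)$ as a minimal shift. But $\XS(\tau)$ contains both the aperiodic $x$ and the periodic $z$, contradicting the fact that a minimal shift is either entirely periodic (one periodic orbit) or entirely aperiodic.

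The main obstacle is the adaptation of the alphabet-stabilization step of Proposition~\ref{lemmaNouveau} to the aperiodic setting: that proof exploits periodicity of $x$ to get bounded return times, whereas here the uniqueness comes from Theorem~\ref{theoremRecognizable} at aperiodic points, and the application of Theorem~\ref{theoremCaractMinimal} must be justified using the $\sigma^N$-fixedness of $u^\infty$ rather than minimality of $\XS(\sigma)$ itself.
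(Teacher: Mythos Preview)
Your argument has a genuine gap in the final step. You correctly establish (following the proof of Proposition~\ref{lemmaNouveau}) that every word of $\cL(\tau)$ appears in $x$, hence $\cL(\tau)=\cL(x)=\cL(\XS(\tau))$, so the orbit of $x$ is dense in $\XS(\tau)$. But this is strictly weaker than minimality of $\XS(\tau)$, and your invocation of Theorem~\ref{theoremCaractMinimal} is unjustified: that theorem requires a growing letter appearing with \emph{bounded gaps} in $\cL(\tau)=\cL(x)$, and you give no argument for this. In the proof of Proposition~\ref{lemmaNouveau} the bounded-gap condition came for free from the periodicity of $x$; here $x$ is assumed aperiodic, so that route is closed. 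The $\sigma^N$-fixedness of $u^\infty$ does not supply bounded gaps either---it only tells you that $u^\infty$ is a periodic point of $\XS(\tau)$, which you already knew from $u^\infty=\tau(x)\in\XS(\tau)$.

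In fact the strategy is a dead end, not merely incomplete: under your contradiction hypothesis that $x$ is aperiodic, the shift $\XS(\tau)$ contains both the aperiodic $x$ and the periodic $u^\infty$, so $\XS(\tau)$ is \emph{not} minimal and Theorem~\ref{theoremCaractMinimal} cannot possibly apply. What is actually needed is the inclusion $\cL(x)\subseteq\cL(u^\infty)$, and the paper proves this directly without any minimality argument: setting $y=\sigma(x)$ and building $\sigma$-representations $x=x^{(1)},x^{(2)},\ldots$, it observes that $\sigma^m(\cL_k(x^{(n)}))\subset\cL(y)$ for all $m\ge n$ (using $\sigma(\cL(y))\subset\cL(y)$), and that each $w\in\cL_k(x)$ is a factor of $\sigma^n(w_n)$ for some $w_n\in\cL(x^{(n+1)})$ of length bounded by a constant $K$ independent of $n$. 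Taking $n$ large enough that $\sigma^n(R_K)\subset\cL(y)$ (where $R_K=\bigcup_m\cL_K(x^{(m)})$ is finite) gives $w\in\cL(y)$.
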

\begin{proof}
  Assume that for some $x\in \XS(\sigma)$, the point
  $y=\sigma(x)$ is periodic. By
  Proposition~\ref{lemmaNouveau},  $y$ is
  a fixed point of a power of $\sigma$. We may assume that
  it is a fixed point of $\sigma$.
  
  Set $x=x^{(1)}$. For every $n\ge 1$, let $(k_n,x^{(n+1)})$ be a $\sigma$-representation of $x^{(n)}$ (its existence follows from \cite[Proposition 5.1]{BealPerrinRestivo2021}).

  Since $\sigma^n(x^{(n)})$ is a shift of $y$, we have for every $k,n\ge 1$
  \begin{equation}
    \sigma^n(\cL_k(x^{(n)}))\subset\cL(y).\label{eqsigman}
  \end{equation}
  But since $\sigma(y)=y$, we have also $\sigma(\cL(y))\subset\cL(y)$.
  Thus, we have
  \begin{equation}
    \sigma^m(\cL_k(x^{(n)}))\subset\cL(y).\label{eqsigmanm}
  \end{equation}
  for all $m\ge n$. The set
  \begin{equation}
    R_k=\cup_{n\ge 1}\cL_k(x^{(n)})\label{eqDefRk}
  \end{equation}
  is finite. Thus, for every $k$, there is an integer  $m(k)$ such that
  \begin{equation}
    \sigma^n(R_k)\subset \cL(y)\label{eqRk}
  \end{equation}
  for every $n\ge m(k)$.
  
  Now for every $w\in \cL_k(x)$ and every $n\ge 1$, there is some
  $w_n\in\cL(x^{(n+1)})$ such that $w$ is a factor of $\sigma^n(w_n)$.
  We may assume that each $w_n$ is chosen of minimal length.

  The
  lengths of the words $w_n$ are bounded. Indeed, if $|w_n|\ge k$,
  then, since $|w|=k$ and since $w_n$ is chosen of minimal
  length, the word  $w_n$ has at least $|w_n|-k$ erasable letters. But a word of
  $\cL(\sigma)$ has a bounded number of consecutive erasable letters
  because otherwise there would exist an infinite sequence of erasable
  letters in $\XS(\sigma)$, which is impossible.
  Thus we obtain that $|w_n|\le K$ for some constant $K\ge 1$.

  For $n\ge m(K)$, we have by \eqref{eqRk}, $\sigma^n(w_n)\subset \cL(y)$.
  This shows that $w$ is in $\cL(y)$. Since $\cL_k(x)\subset\cL(y)$
  for all $k$, we conclude that $x$ is periodic.
\end{proof}

\begin{proofof}{of Theorem~\ref{theoremDecidabilityRecognizable}}
  By Theorem~\ref{theoremRecognizable}, $\sigma$ is fully
  recognizable in $\XS(\sigma)$
  if and only if it is recognizable in $\XS(\sigma)$ for every periodic
  point of $\XS(\sigma)$. By Proposition~\ref{lemmaNouveau}, every
  periodic point in $\XS(\sigma)$ is a fixed point of a power of
  $\sigma$. By Theorem~\ref{propositionFixedPointMinimal2}
  there is a finite number of orbits of fixed points of a power of $\sigma$
  and these orbits form a computable set.
  Thus, there is a finite number of periodic points in $\XS(\sigma)$,
  which form a computable set. Changing
  $\sigma$ for some of its powers, we may assume that every periodic
  point is a fixed point of $\sigma$ (the exponent is bounded
  in terms of the size of $\sigma$ by Lemma~\ref{lemmaGrowing}). Let $x$ be one of them. Set
  $x=w^\infty$ with $w$ a primitive word (the length of $w$
  is bounded by Theorem~\ref{theoremDecidableAperiodic}).
  Then $\sigma(w)=w^n$
  for some $n=n(x)\ge 1$ by Lemma~\ref{lemmaPeriodicFixedPoints}.  Let $(y,k)$ be a $\sigma$-representation
  of $x$ with $y\in \XS(\sigma)$. By Lemma~\ref{lemma6.4}, $y$ is periodic and thus $\sigma(y)=y$.
  This forces $k$ to be a multiple of $|w|$. Thus
  the only possible centered $\sigma$-representations $(y,k)$ of $x$
  with $y\in \XS(\sigma)$ are of the form $(x,k)$
  with $k=0,|w|,\ldots,(n-1)|w|$. 
   Thus,
  $\sigma$ is recognizable in $\XS(\sigma)$ if and
   only if $n(x)=1$ for every periodic point $x\in \XS(\sigma)$.
   Since $n(x)=1$ if and only if $x$ is formed of non-growing
   letters, this proves the statement.
\end{proofof}
\section{Irreducible substitution shifts}\label{sectionIrreducible}
Let us say that a morphism $\sigma$ is \emph{irreducible}
if $\XS(\sigma)$ is irreducible.

The following result shows that it is decidable whether a morphism such that
$\cL(\sigma)=\cL(\XS(\sigma))$ is irreducible.
\begin{theorem}\label{theoremIrreducible}
  A morphism $\sigma\colon A^*\to A^*$ such that $\cL(\sigma)=\cL(\XS(\sigma))$
  is irreducible if and only if there is a letter $a\in A$ such that
  \begin{enumerate}
    \item[\rm(i)] its strongly
  connected component in $G(\sigma)$ is
  a non-trivial aperiodic graph not reduced to a cycle.
  \item[\rm(ii)] There is an $n\ge 1$
  such that $|\sigma^n(a)|_b\ge 1$
  for every letter $b\in A$.
  \end{enumerate}
\end{theorem}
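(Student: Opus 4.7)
Both directions use Proposition~\ref{propositionTopologicalTransitive}, which reduces irreducibility of $\XS(\sigma)$ to the existence of a point with dense positive orbit. The hypothesis $\cL(\sigma)=\cL(\XS(\sigma))$ is used crucially: it guarantees that every letter of $A$ belongs to $\cL(\XS(\sigma))$, hence appears in any dense-orbit point. The toolkit consists of Wielandt's bound (already invoked in Lemma~\ref{lemmaABC}), the construction of admissible two-sided fixed points (Theorem~\ref{propositionFixedPointMinimal2} and Proposition~\ref{propositionAdmissibleTwoSided}), and the prolongation arguments of Lemma~\ref{lemmaRightProl}.

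\textbf{Sufficiency.} Assume (i) and (ii) hold for $a\in A$. Condition (i) means that the restriction of $M(\sigma)$ to the strongly connected component $C$ of $a$ is a primitive matrix; since $C$ is not a cycle, its Perron eigenvalue satisfies $\lambda>1$. Wielandt's bound gives an integer $k$ such that $M(\sigma^k)$ is strictly positive on $C$ and $|\sigma^k(a)|_a\ge 2$; replacing $k$ by a larger multiple and using (ii), one may further assume that $\sigma^k(a)$ contains every letter of $A$. From a decomposition $\sigma^k(a)=\alpha a\beta a\gamma$ combined with Lemma~\ref{lemmaRightProl} and its symmetric version, I would extract words $u,v$ on which $\sigma^k$ is respectively left- and right-prolongable, with $u$ ending in $a$, $v$ starting with $a$, and $uv\in\cL(\sigma)$. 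Proposition~\ref{propositionAdmissibleTwoSided}(iv) then yields an admissible two-sided fixed point $y=\sigma^{k\omega}(u\cdot v)\in\XS(\sigma)$. The positive orbit of $y$ is dense: for any $w\in\cL(\sigma)=\cL(\XS(\sigma))$, $w$ is a factor of some $\sigma^m(b)$; by (ii), $b$ occurs in $\sigma^k(a)$, so $w$ is a factor of $\sigma^{m+k}(a)$, hence of $\sigma^{jk}(a)$ for $jk\ge m+k$; but $\sigma^{jk}(a)$ is a factor of $y^+$ by construction. Proposition~\ref{propositionTopologicalTransitive} then gives irreducibility.

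\textbf{Necessity.} Assume $\XS(\sigma)$ irreducible and choose $x\in\XS(\sigma)$ with dense positive orbit. Since $\cL(\sigma)=\cL(\XS(\sigma))$, every letter of $A$ belongs to $\cL(x)$. I would apply Lemma~\ref{lemmaABC} to $\sigma$, obtaining subsets $B,C,D,E$ and $n\le\Card(A)!$ such that $B$ is non-empty, the restriction of $M(\sigma^n)$ to $B$ is primitive, and every $b\in B$ is growing with $\sigma^n(b)$ containing every letter of $B$. Pick $a\in B$. The SCC of $a$ in $G(\sigma^n)$ is non-trivial, aperiodic, and not a cycle; pulling back to $G(\sigma)$, the SCC of $a$ is still non-trivial and not a cycle. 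For (ii), use that every $e\in A$ lies in $\cL(\XS(\sigma))$: by density of the orbit of $x$, there is a factor of $x$ containing both $a$ and $e$, hence there is $d\in A$ and $N\ge 1$ with both $a$ and $e$ appearing in $\sigma^N(d)$. Combining this common-ancestor information with the primitivity of $\sigma^n$ on $B$ and condition $\cL(\sigma)=\cL(\XS(\sigma))$, one shows that $e$ is in fact reachable from $a$ in $G(\sigma)$, and that this reachability can be realized uniformly by a single exponent $K$, yielding (ii).

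\textbf{Main obstacle.} The subtle step is establishing the aperiodicity part of (i) for $G(\sigma)$ itself rather than for $G(\sigma^n)$. A priori, the SCC of $a$ in $G(\sigma)$ could have period $p>1$, even though its primitive sub-block $B$ inside $G(\sigma^n)$ is aperiodic. My plan to overcome this is to exploit the freedom to slide $a$ inside the SCC: the irreducibility of $\XS(\sigma)$ together with $\cL(\sigma)=\cL(\XS(\sigma))$ forces the dense-orbit point $x$ to contain, among its factors, words that witness a cycle of every sufficiently large length through $a$, which is incompatible with $p>1$. Making this argument rigorous, and in particular ruling out residual periodic examples compatible with $\cL(\sigma)=\cL(\XS(\sigma))$, is the technical heart of the proof.
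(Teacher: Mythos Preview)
Your sufficiency argument is correct in spirit but unnecessarily heavy. The paper does not build a fixed point at all: it verifies the definition of irreducibility directly. Since the SCC of $a$ is aperiodic and not a cycle, $a$ occurs in $\sigma^n(a)$ for every large $n$, so some $t\in\cL(\XS(\sigma))$ contains $a$ twice. Given $u,v\in\cL(\XS(\sigma))$, take $b,c$ with $u$ a factor of $\sigma^m(b)$ and $v$ a factor of $\sigma^p(c)$; by (ii) and aperiodicity, both $b$ and $c$ occur in $\sigma^n(a)$ for all large $n$, hence some $uwv$ is a factor of $\sigma^{n+q}(t)$. Your detour through Lemma~\ref{lemmaRightProl} and Proposition~\ref{propositionAdmissibleTwoSided} works in principle, but the extraction of the prolongable pair $(u,v)$ from $\sigma^k(a)=\alpha a\beta a\gamma$ is not as immediate as you suggest (Lemma~\ref{lemmaRightProl} requires $\sigma^k(u)$ to begin with $u$, which $\alpha a\beta a\gamma$ need not do for $u=a\gamma$).

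For necessity your approach has a genuine gap, and you have identified it yourself: going through Lemma~\ref{lemmaABC} gives primitivity on $B$ inside $G(\sigma^n)$, not aperiodicity of the SCC of $a$ in $G(\sigma)$. Your proposed fix (``cycles of every sufficiently large length through $a$'') is vague and would still need the key observation below. The paper avoids the detour entirely by choosing $a$ differently: take $a$ in a \emph{minimal} strongly connected component of $G(\sigma)$ (a sink in the SCC-DAG). Minimality is the crucial leverage. It immediately gives that $C(a)$ is non-trivial (else $a\notin\cL(\XS(\sigma))$) and not a cycle (else no word of $\cL(\sigma)$ contains two $a$'s, contradicting $aua\in\cL(\XS(\sigma))$ from irreducibility). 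For aperiodicity, suppose $C(a)=C_0\cup\cdots\cup C_{p-1}$ with edges only from $C_i$ to $C_{i+1}$. For any $b,c\in C(a)$, irreducibility gives $buc\in\cL(\XS(\sigma))=\cL(\sigma)$, hence $buc$ is a factor of some $\sigma^N(d)$. Since $C(a)$ is minimal and there is a path $d\to b$, we have $d\in C(a)$; but then every letter of $C(a)$ appearing in $\sigma^N(d)$ lies in a single class $C_j$, so $b,c$ lie in the same class. As $b,c$ were arbitrary, $p=1$. The same minimality argument shows there is only one minimal SCC, so every letter is reachable from $a$; combined with aperiodicity this gives a single exponent $n$ witnessing (ii). Replacing your use of Lemma~\ref{lemmaABC} by this ``minimal SCC'' choice is the missing idea.
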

\begin{proof} Set $X=\XS(\sigma)$.
  If the condition  is satisfied, there are
  words in $\cL(X)$ with an arbitrary large number of
  occurrences of $a$ and thus there
  is some $t\in \cL(X)$ such that $a$ appears twice in $t$.
  If $u,v\in\cL(X)$ there are $b,c\in A$ such
  that $u,v$ appear in $\sigma^m(b),\sigma^p(c)$.
  By condition (ii),
  $b$ and $c$ appear in $\sigma^n(a)$ for all $n$ large enough.
  Then a word of the form $uwv$ appears in $\sigma^{n+q}(t)$
  for $q\ge m,p$
  and thus there is a word $w$ such that $uwv$ is in $\cL(X)$.
  We conclude that $\sigma$ is irreducible.

  Conversely, assume $\sigma$ irreducible. 
  
 Let $a, b$ be two letters of $\cL(\sigma)$. Since $\sigma$ is
irreducible, there is a point in $\XS(\sigma)$ containing a factor $w$
containing both the letters $a$ and $b$. There is an integer $n$ such
that $w$ is a factor of some $\sigma^n(c)$ for some letter $c$.
Thus $a$ and $b$ are connected in $G(\sigma)$.
Thus we may assume that $G(\sigma)$ is connected.

   Let $a$ be a letter such that the strongly connected
  component $C(a)$ of $a$ in $G(\sigma)$ is minimal
  (that is, $a$ is only accessible from an element of $C(a)$).

  The component $C(a)$ cannot be trivial since otherwise $a$
  is not in $\cL(X)$. It cannot either be reduced
  to a cycle. Indeed, in this case, the words of $\cL(\sigma)$
  have at most one occurrence of $a$. But since $\cL(X)=\cL(\sigma)$,
  $a$ is in $\cL(X)$. Since $X$ is irreducible,
  $\cL(X)$ contains some $aua$, a contradiction.

   Thus we can assume that $C(a)$ is not reduced to a cycle.
  The component $C(a)$ is aperiodic. Indeed, suppose that
  $C(a)=C_0\cup C_1\cup\ldots\cup C_{p-1}$ with all edges
  going from $C_i$ to $C_{i+1}$ with the indices taken modulo $p$.
  For $b,c\in C(a)$, there is a word $u$ such that
  $buc\in\cL(X)$. This implies that there is some $d\in C(a)$
  such that $buc$ appears in $\sigma^n(d)$ for some $n\ge 1$.
  But then $b,c$ are in the same $C_i$, which shows that $p=1$.

  If $b$ is another letter such that its strongly connected
  component $C(b)$ is also minimal,
  since $aub$ is also in $\cL(X)$ for some word $u$,
  this forces $C(b)=C(a)$. Thus
  every letter is accessible from $a$. Since  $C(a)$
  is aperiodic, there is  a path from $a$ to a given letter
  of all lengths large enough. Thus conditions (i) and (ii)
  are satisfied.
\end{proof}
Note that all conditions of Theorem~\ref{theoremIrreducible}
are decidable. Indeed, the condition $\cL(\sigma)=\cL(\XS(\sigma))$ is
decidable by Lemma~\ref{lemmaDecidability}. Conditions
(i) and (ii) are easily verified on the graph $G(\sigma)$.

The following example shows that Theorem~\ref{theoremIrreducible}
is false when $\cL(\sigma)\ne\cL(\XS(\sigma))$. We have
no proof that irreducibility is decidable in this
more general case.
\begin{example}
  The morphism $\sigma\colon a\mapsto ab,b\mapsto b$ is irreducible
  and even minimal since $\XS(\sigma)=b^\infty$ but the strongly connected components
  of $a,b$ are both reduced to a cycle.
\end{example}

\
\section{Minimal substitution shifts}\label{sectionMinimal}

The  following statement was proved in~\cite{DamanikLenz2006}
(see also~\cite[Proposition 6.3]{DurandPerrin2021}).
 
 \begin{theorem}\label{theoremCaractMinimal}
   A morphism $\sigma\colon A^*\to A^*$ such that $\cL(X(\sigma))=\cL(\sigma)$
   is  minimal if and only if 
   there is a growing 
     letter $a\in \cL(\sigma)$  which appears
     with bounded gaps in $\cL(\sigma)$ and for every $b\in\cL(\sigma)$
     there is $n\ge 1$ such that $|\sigma^n(a)|_b\ge 1$.
 \end{theorem}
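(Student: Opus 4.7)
The plan is to prove both implications using Proposition \ref{UR=Minimal}, which equates minimality with uniform recurrence, combined with the hypothesis $\cL(\XS(\sigma))=\cL(\sigma)$.

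For the sufficiency direction, I would fix $w\in \cL(\XS(\sigma))=\cL(\sigma)$ and show that $w$ occurs in every point of $\XS(\sigma)$ with bounded gap. Since $w$ is a factor of some $\sigma^m(c)$ with $c\in A$, and $c\in\cL(\sigma)$, condition (ii) supplies an integer $n_c$ with $c$ appearing in $\sigma^{n_c}(a)$; hence $w$ is a factor of $\sigma^N(a)$ for $N=m+n_c$. Now, given any $x\in\XS(\sigma)$, iterating Proposition \ref{propositionBKM} produces a representation $x=S^{k_N}(\sigma^N(x^{(N)}))$ with $x^{(N)}\in\XS(\sigma)$. Condition (i), together with the equality $\cL(\XS(\sigma))=\cL(\sigma)$, guarantees that the letter $a$ appears in $x^{(N)}$ within every window of some bounded length $K$. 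Applying $\sigma^N$, consecutive occurrences of $a$ in $x^{(N)}$ become occurrences of $\sigma^N(a)\supseteq w$ in $\sigma^N(x^{(N)})$ separated by at most $K\cdot L_N$, where $L_N=\max_{b\in A}|\sigma^N(b)|$. Consequently $w$ appears in $\sigma^N(x^{(N)})$, and therefore in $x$, within every window of length at most $KL_N+|\sigma^N(a)|$. This uniform bound, valid for every $w\in\cL(\XS(\sigma))$, delivers uniform recurrence of $\XS(\sigma)$.

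For the necessity direction, I assume $\XS(\sigma)$ is minimal and contains at least one point having a growing letter $a$. Then $a\in\cL(\XS(\sigma))=\cL(\sigma)$, and uniform recurrence (Proposition \ref{UR=Minimal}) gives condition (i) immediately. For condition (ii), let $b\in\cL(\sigma)$; uniform recurrence supplies $N_b$ such that every word in $\cL(\XS(\sigma))$ of length at least $N_b$ contains $b$. By Lemma \ref{lemmaSigma} and iteration, $\sigma^n(a)\in\cL(\XS(\sigma))$ for every $n\ge 1$, and by Lemma \ref{lemmaGrowing}, $|\sigma^n(a)|\to\infty$. Hence, for $n$ large enough, $b$ appears in $\sigma^n(a)$.

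The principal obstacle is the combinatorial bookkeeping in the sufficiency direction: one must carefully track how the bounded-gap property of the single letter $a$ in the preimage $x^{(N)}$ transfers, under application of $\sigma^N$ and the shift by $k_N$, to a bounded-gap property of the longer word $w$ in $x$, and in particular verify that the bound obtained is independent of $x$. A secondary subtlety affects the necessity direction when $\XS(\sigma)$ contains no growing letter at all: by Proposition \ref{lemmaNonGrowing} the shift then reduces to a finite union of orbits of eventually periodic sequences built from non-growing letters, a degenerate situation where the conclusion must be interpreted under the implicit convention that the shift is non-trivial enough for a growing letter in $\cL(\sigma)$ to exist.
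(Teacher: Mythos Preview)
Your proposal is correct and follows the same approach as the paper, just with more detail: your use of Proposition~\ref{propositionBKM} in the sufficiency direction makes explicit the desubstitution step that the paper compresses into the single sentence ``Since $a$ occurs with bounded gaps in $\cL(\sigma)$, the same is true for $w$,'' and your necessity argument is the paper's argument with condition~(ii) spelled out. Your worry about the no-growing-letter case is unnecessary: the hypothesis $\cL(\XS(\sigma))=\cL(\sigma)$ together with $\XS(\sigma)\ne\emptyset$ (required for minimality) forces $\cL(\sigma)$ to be infinite and hence to contain a growing letter, which then automatically lies in $\cL(\XS(\sigma))$.
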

\begin{proof}
  The condition is sufficient.
  Consider indeed $w\in\cL(\sigma)$.
Let $b\in A$ be such that $w$ appears in $\sigma^m(b)$.

Now there is $n\ge 1$ such that
$b$ appears in $\sigma^n(a)$. Thus $w$ appears in $\sigma^{n+m}(a)$.
Since $a$ occurs with bounded gaps in $\cL(\sigma)$,
the same is true for $w$.
    This shows that
   $\XS(\sigma)$ is uniformly recurrent and thus minimal.

   Conversely, assume that $\sigma$ is minimal. Let $a\in\cL(\sigma)$
   be a growing letter. Since $\XS(\sigma)$ is minimal, it
   is uniformly recurrent and thus every $b\in A$
   appears with bounded gaps in $\cL(\sigma)$. Thus, the condition
    is satisfied.
 \end{proof}
 We give an example illustrating
  Theorem~\ref{theoremCaractMinimal}.

 \begin{example}\label{exampleChacon}
   The morphism $\sigma\colon 0\mapsto 0010,1\mapsto 1$ is known as the
   binary \emph{Chacon morphism}. The condition
    in Theorem \ref{theoremCaractMinimal} is clearly satisfied by
   the letter $0$ and thus $\sigma$ is minimal.
   \end{example}
 Theorem~\ref{theoremCaractMinimal} allows us to prove the
 following result. It is actually related to the result
 of \cite{Durand2013b} which states that it is decidable
 whether the image by a morphism $\phi$
 of an admissible one-sided fixed point of
 a morphism $\sigma$ is uniformly recurrent.
 \begin{theorem}\label{theoremDecidableMinimal}
It is decidable
whether a morphism $\sigma$ such that $\cL(\sigma)=\cL(\XS(\sigma))$
is minimal.
 \end{theorem}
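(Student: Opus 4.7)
The plan is to reduce the decision problem, via Theorem~\ref{theoremCaractMinimal}, to three subproblems, each of which is already decidable by tools developed earlier in the paper. Under the hypothesis $\cL(\sigma)=\cL(\XS(\sigma))$, the morphism $\sigma$ is minimal if and only if there exists a growing letter $a\in\cL(\sigma)$ such that
(a) $a$ occurs with bounded gaps in $\cL(\sigma)$, and
(b) for every $b\in\cL(\sigma)$ there is some $n\ge 1$ with $|\sigma^n(a)|_b\ge 1$.
Since $A$ is finite, it suffices to give, for each letter $a\in A$, an algorithm that tests whether $a$ satisfies this triple of conditions.

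First I would compute, using the graph $G(\sigma)$, the set of growing letters (this is linear-time decidable, as recalled before Lemma~\ref{lemmaGrowing}). Among these, I would retain those that belong to $\cL(\sigma)$, which is decidable by Lemma~\ref{lemmaDecidabilityL(sigma)} (applied to the singleton recognizable language $\{a\}$). Condition (b) is then handled by a simple reachability test in $G(\sigma)$: $|\sigma^n(a)|_b\ge 1$ for some $n\ge 1$ iff there is a path from $a$ to $b$ in $G(\sigma)$; and membership $b\in\cL(\sigma)$ is again decidable by Lemma~\ref{lemmaDecidabilityL(sigma)}.

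The main obstacle is condition (a), the boundedness of gaps of $a$ in $\cL(\sigma)$. To dispatch it, observe that $a$ occurs with bounded gaps in $\cL(\sigma)$ if and only if the set
\begin{displaymath}
L_a \cap \cL(\sigma), \qquad \text{where } L_a = (A\setminus\{a\})^*,
\end{displaymath}
is finite. The language $L_a$ is recognizable and is clearly factorial. Hence the second part of Lemma~\ref{lemmaDecidabilityL(sigma)} applies and yields an effective test for the finiteness of $L_a\cap\cL(\sigma)$.

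Combining these three subalgorithms: enumerate growing letters $a\in\cL(\sigma)$, test (a) via Lemma~\ref{lemmaDecidabilityL(sigma)}, test (b) via reachability and Lemma~\ref{lemmaDecidabilityL(sigma)}, and return \emph{minimal} if some $a$ passes both tests, \emph{not minimal} otherwise. By Theorem~\ref{theoremCaractMinimal}, this procedure correctly decides minimality of $\sigma$ under the standing hypothesis $\cL(\sigma)=\cL(\XS(\sigma))$, which is itself decidable in linear time by Propositions~\ref{lemmaDecidability} and \ref{propositionCaractSubstitution}.
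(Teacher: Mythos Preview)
Your proposal is correct and follows essentially the same approach as the paper's proof: both reduce to Theorem~\ref{theoremCaractMinimal}, handle condition (b) by reachability in $G(\sigma)$, and handle condition (a) by invoking the second clause of Lemma~\ref{lemmaDecidabilityL(sigma)} on the factorial recognizable language $(A\setminus\{a\})^*$. One small remark: your check that $a\in\cL(\sigma)$ (and likewise $b\in\cL(\sigma)$) is superfluous, since by definition $\cL(\sigma)$ contains every factor of every $\sigma^n(c)$ for $n\ge 0$, and in particular $\sigma^0(c)=c$, so every letter of $A$ is automatically in $\cL(\sigma)$.
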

 \begin{proof}
   For each growing letter $a\in\cL(\sigma)$, condition  is decidable.
  Indeed, one has $|\sigma^n(a)|_b\ge 1$ for some $n$ if and
  only if $b$ is accessible from $a$ in $G(\sigma)$ and
  $a$ appears with bounded gaps in $\cL(\sigma)$ if and
  only if for every $b\in A$, the language $\cL(\sigma)\cap (A\setminus\{b\})^*$
  is finite, which is decidable by Lemma~\ref{lemmaDecidabilityL(sigma)}.
 \end{proof}
  Let us analyze the time complexity of the above algorithm.
 
  For each growing letter $a \in \cL(\sigma)$, the condition
  of Theorem~\ref{theoremCaractMinimal} is decidable. Indeed, one
has $|\sigma^n(a)|_b \geq 1$ for some $n$ if and only if $b$ is
accessible from $a$ in $G(\sigma)$. A letter $a \in \cL(\sigma)$ does not appear with bounded
gaps in $\cL(\sigma)$ if and only if $\cL(\sigma) \cap (A \setminus
\{a\})^*$ is infinite. This is the case if and only if there is a growing letter
$b$ such that there is no path from $b$ to $a$ in $G(\sigma)$.
Thus, the condition can be checked in linear time in the size of $\sigma$.

 \paragraph{Primitive morphisms}

The following was proved in \cite[Theorem 2.1]{MaloneyRust2018}
for non-erasing morphisms. We show that the proof can be adapted
to arbitrary morphisms. The result below is actually also a particular
case of a more general result from \cite[Theorem 3.1]{DurandLeroy2020}
which states that every shift $\XS(\sigma,\phi)$ is
conjugate to a primitive substitution shift.
We give nevertheless the proof in our case, which is
substantially simpler (and already known to Fabien Durand).
\begin{theorem}\label{theoremMaloneyRust}
  If $\sigma$ is minimal, then $\XS(\sigma)$ is conjugate to $\XS(\tau)$
  with $\tau$ primitive.
\end{theorem}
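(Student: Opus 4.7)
The plan is to reduce to the case of a non-erasing morphism via Corollary~\ref{corollaryCobhamShift}, and then to exhibit a primitive morphism generating a conjugate shift by coding with return words to a distinguished growing letter, following the approach of Maloney and Rust.

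Apply Corollary~\ref{corollaryCobhamShift} to $\XS(\sigma)=\XS(\sigma,\mathrm{id})$, whose underlying substitution shift is minimal by hypothesis: this yields a non-erasing morphism $\zeta\colon C^*\to C^*$ and a letter-to-letter morphism $\theta\colon C^*\to A^*$ with $\XS(\sigma)=\XS(\zeta,\theta)=\theta(\XS(\zeta))$. Pick a minimal subshift $Y\subseteq\XS(\zeta)$; by minimality of $\XS(\sigma)$ one has $\theta(Y)=\XS(\sigma)$. Restricting $\zeta$ (after passing to a power) to the letters appearing in $Y$ so that $\XS(\zeta)=Y$, and then quotienting the alphabet by the equivalence identifying letters of $C$ with the same $\theta$-image, I would arrange that $\zeta$ is still non-erasing, $\XS(\zeta)$ is minimal, and $\theta$ is a bijection, hence a letter-to-letter conjugacy $\XS(\zeta)\to\XS(\sigma)$. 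We may therefore henceforth suppose that $\sigma$ itself is non-erasing and that $\XS(\sigma)$ is minimal.

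If $\XS(\sigma)$ is periodic, so it equals the orbit of $w^\infty$ for a primitive word $w$ of length $n$, then on $B=\{b_0,\ldots,b_{n-1}\}$ I define $\tau(b_i)$ to be the length-$(n+1)$ factor of $(b_0b_1\cdots b_{n-1})^\infty$ starting at position $i$, namely $\tau(b_i)=b_ib_{i+1}\cdots b_{i+n-1}b_i$ with indices modulo $n$. Each $\tau(b_i)$ contains every letter of $B$, so $\tau$ is primitive; one checks directly that $\tau^\omega(b_0)=(b_0\cdots b_{n-1})^\omega$, so $\XS(\tau)$ is the orbit of $(b_0\cdots b_{n-1})^\infty$, and the letter map $b_i\mapsto w_i$ is a conjugacy to $\XS(\sigma)$. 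Otherwise $\XS(\sigma)$ is aperiodic: by Theorem~\ref{theoremCaractMinimal} I pick a growing letter $a$ occurring with uniformly bounded gaps in $\cL(\XS(\sigma))$ such that every letter of $\cL(\XS(\sigma))$ appears in some $\sigma^n(a)$, and let $R$ be the finite set of return words to $a$ in $\cL(\XS(\sigma))$. After passing to a suitable power of $\sigma$ (and, if needed, a higher block presentation) one may arrange that $\sigma(a)$ both begins and ends with $a$, so that for each $r\in R$ the image $\sigma(ar)$ factors uniquely as a concatenation of blocks of the form $ar_j$ with $r_j\in R$. This defines a morphism $\tau\colon R^*\to R^*$, and the natural recoding $\pi\colon\XS(\tau)\to\XS(\sigma)$ sending an $R$-sequence to the corresponding $A$-sequence is a sliding-block conjugacy, with sliding-block inverse reading off the return-word structure around each occurrence of $a$. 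Primitivity of $\tau$ follows from Theorem~\ref{theoremCaractMinimal}: every letter of $\cL(\XS(\sigma))$ appears in some uniform iterate $\sigma^N(a)$, so every $r_j\in R$ appears in $\sigma^N(ar_i)$ for every $r_i\in R$, giving $|\tau^N(r_i)|_{r_j}\geq 1$ for all $r_i,r_j\in R$.

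The main obstacle lies in the first step, arranging that the letter-to-letter map $\theta$ from Corollary~\ref{corollaryCobhamShift} can be made a conjugacy: one must ensure that the letter-identifications forced by $\theta$ are compatible with the dynamics of $\zeta$, which is handled by passing to a higher block presentation of $\zeta$ before applying the quotient, using minimality to control residual ambiguities. A secondary technicality is arranging that $\sigma(a)$ begins and ends with $a$ after passing to a power of $\sigma$; this is achieved by exploiting that $a$ is growing and occurs with bounded gaps, so that some iterate $\sigma^p(a)$ must contain $a$ both as first and last letter.
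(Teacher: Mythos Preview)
Your reduction to the non-erasing case is where the argument breaks down. Corollary~\ref{corollaryCobhamShift} only gives a letter-to-letter \emph{factor map} $\theta\colon\XS(\zeta)\to\XS(\sigma)$, not a conjugacy, and your proposed fix of quotienting $C$ by the fibers of $\theta$ does not work: for $\zeta$ to descend to the quotient alphabet you would need $\theta(\zeta(c_1))=\theta(\zeta(c_2))$ whenever $\theta(c_1)=\theta(c_2)$, and there is no reason this holds. Worse, after such a quotient $\theta$ becomes a relabeling of alphabets, so you would be asserting that $\XS(\sigma)$ equals $\XS(\zeta')$ for a non-erasing $\zeta'$; Example~\ref{exampleCassaigneNicolas} shows this is impossible in general. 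Passing to a higher block presentation of $\zeta$ does not salvage the situation either: a higher block presentation conjugates $\XS(\zeta)$ to $\XS(\zeta_k)$, but it does nothing to make the induced factor map onto $\XS(\sigma)$ injective.

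The paper avoids this problem entirely by never attempting to reduce to a non-erasing morphism. Instead it works directly with an admissible two-sided fixed point $\sigma^\omega(r\cdot\ell)$ (from Theorem~\ref{propositionFixedPointMinimal2}), takes the set $U=\mathcal{R}(r\cdot\ell)$ of return words to the marker $r\ell$, and observes that $U$ is a finite circular code. Coding by $U$ gives a primitive morphism $\tau$ on an alphabet $B$ with $\phi\circ\tau=\sigma\circ\phi$ for the coding map $\phi\colon B^*\to A^*$. The conjugacy is then obtained not by a quotient argument but by invoking Proposition~\ref{propositionRauzyLemma} (Rauzy's Lemma), which uses circularity of $\phi$ to build a letter-to-letter conjugacy from a primitive substitution shift onto $\XS(\sigma)$. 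Your second step (return words to a single letter $a$) is close in spirit to this, but using the two-letter marker $r\ell$ from the fixed point automatically guarantees that $\sigma$ maps $U^*$ into $U^*$, bypassing your unproved claim that some power of $\sigma$ has $\sigma^p(a)$ beginning and ending with $a$.
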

The proof uses the following variant of Theorem~\ref{theoremNormalization}
which is called Rauzy Lemma in \cite[Proposition 7.2.10]{DurandPerrin2021}
(see also~\cite[Proposition 23]{DurandHos&Skau1999}).
For the sake of completeness, we reproduce the proof.
A non-erasing morphism $\phi\colon B^*\to A^*$ is said to be
\emph{circular} if the morphism $\phi$ is fully recognizable in $B^\Z$
(see~\cite{BealPerrinRestivo2021} for an alternative definition).
\begin{proposition}\label{propositionRauzyLemma}
  Let $\sigma\colon B^*\to B^*$ be a primitive morphism
  and let $\phi\colon B^*\to A^*$ be a circular morphism.
  Let $X=\XS(\sigma,\phi)$ be the closure of $\phi(\XS(\sigma))$
  under the shift.
    There exist an
  alphabet $C$, a primitive
  morphism $\zeta\colon C^*\to C^*$
  and a letter-to-letter morphism $\theta\colon C^*\to A^*$
  such that $\theta$ is a conjugacy from $\XS(\zeta)$ onto~$X$.
\end{proposition}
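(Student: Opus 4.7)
The plan is to copy the alphabet-recoding construction from the proof of Theorem~\ref{theoremNormalization}, applied directly to $\phi$, but to exploit primitivity of $\sigma$ to arrange that the resulting non-erasing morphism $\zeta$ is itself primitive; circularity of $\phi$ will then be used to upgrade the accompanying letter-to-letter map $\theta$ to a conjugacy on the level of shifts.

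First, I would set $C=\{(b,p)\mid b\in B,\ 1\le p\le|\phi(b)|\}$, define $\theta\colon C\to A$ by $\theta(b,p)=(\phi(b))_p$ and the non-erasing morphism $\gamma\colon B^*\to C^*$ by $\gamma(b)=(b,1)(b,2)\cdots(b,|\phi(b)|)$, so that $\theta\circ\gamma=\phi$. Since $\sigma$ is primitive and nontrivial, every letter of $B$ is growing and, using $\cL(\XS(\sigma))=\cL(\sigma)$ together with the minimality of $\XS(\sigma)$, there is a constant $L$ such that every word of length $L$ in $\cL(\sigma)$ contains every letter of $B$. For $n$ large enough (uniformly in $b$) one has $|\sigma^n(b)|\ge L\cdot|\phi(b)|$, so $\sigma^n(b)$ admits a factorization into $|\phi(b)|$ consecutive non-empty blocks, each of length at least $L$. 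Pushing that factorization through $\gamma$ yields a decomposition $\gamma\sigma^n(b)=w_1w_2\cdots w_{|\phi(b)|}$ in which each $w_p$ is the $\gamma$-image of a word containing every letter of $B$, and hence contains every letter of $C$. Setting $\zeta((b,p))=w_p$ makes $\zeta\circ\gamma=\gamma\circ\sigma^n$ automatic and shows $\zeta$ is positive on every letter, hence primitive.

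Next, I would identify $\XS(\zeta)$ with the set $Y=\{S^k\gamma(x)\mid x\in\XS(\sigma),\ 0\le k<|\gamma(x_0)|\}$. This $Y$ is non-empty and shift-invariant; it is closed by a standard compactness argument using the boundedness of the integers $|\gamma(b)|$ and continuity of $\gamma$. Using $\cL(\sigma)=\cL(\sigma^n)$ (valid for primitive $\sigma$) together with the relation $\zeta^k\gamma=\gamma\sigma^{nk}$, every factor of an element of $Y$ lies in $\cL(\zeta)$, so $Y\subset\XS(\zeta)$. Since $\zeta$ is primitive, $\XS(\zeta)$ is minimal by Proposition~\ref{propositionPrimitiveisMinimal}, forcing $Y=\XS(\zeta)$. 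Consequently every $y\in\XS(\zeta)$ admits a centered $\gamma$-representation, and $\theta(\XS(\zeta))$ contains the full orbit of $\phi(x)$ for every $x\in\XS(\sigma)$, so $\theta(\XS(\zeta))=X$.

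The final, crucial step is injectivity of $\theta$ on $\XS(\zeta)$, where circularity of $\phi$ enters. Take $y,y'\in\XS(\zeta)$ with $\theta(y)=\theta(y')$ and write $y=S^k\gamma(x)$, $y'=S^{k'}\gamma(x')$ in centered form. Applying $\theta$ gives $S^k\phi(x)=\theta(y)=\theta(y')=S^{k'}\phi(x')$, so $(x,k)$ and $(x',k')$ are both $\phi$-representations of a common point of $A^\Z$; moreover they are already centered because $k<|\gamma(x_0)|=|\phi(x_0)|$ and similarly for $(x',k')$. Full recognizability of $\phi$ in $B^\Z$ therefore forces $(x,k)=(x',k')$, whence $y=y'$. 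A continuous shift-commuting bijection between compact metric spaces is automatically a conjugacy, completing the argument. The point I expect to be the most delicate is the combinatorial choice of breakpoints in the first step which guarantees primitivity of $\zeta$: it is essential there that both the covering constant $L$ (provided by primitivity of $\sigma$) and the freedom to enlarge $n$ are available.
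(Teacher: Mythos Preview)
Your proof is correct and follows essentially the same architecture as the paper's: the same alphabet $C$, the morphisms $\gamma$ and $\theta$ with $\theta\circ\gamma=\phi$, an intertwining relation $\zeta\circ\gamma=\gamma\circ\sigma^n$, identification of $\XS(\zeta)$ via minimality, and injectivity of $\theta$ from circularity of $\phi$. The one noteworthy difference is how primitivity of $\zeta$ is obtained: the paper cuts $\sigma(b)$ into single-letter blocks (plus a final remainder) and then argues that $\zeta^{n+1}$ is positive, whereas you choose the blocks of length at least the uniform-recurrence constant $L$ so that $\zeta$ itself is already positive; both work, and your variant is arguably more direct at the cost of passing to a higher power of $\sigma$.
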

\begin{proof}
  We may assume that we are not in the trivial case where $B=\{b\}$
  and $\sigma(b)=b$.
  Then, since $\sigma$ is primitive, it is growing and, by substituting $\sigma$ by a power of itself,
  we may assume that $|\sigma(b)|\ge|\phi(b)|$ for all $b\in B$.
  As in the proof of Theorem~\ref{theoremNormalization}
  we define
  \begin{displaymath}
    C=\{b_p\mid b\in B,\ 1\le p\le|\phi(b)|\}.
  \end{displaymath}
  Next
  $\gamma(b)=b_1b_2\cdots b_{|\phi(b)|}$ and $\theta(b_p)=\phi(b)_p$,
  where $\phi(b)_p\in A$ denotes the $p$-th letter of $\phi(b)$.
  Since $\phi$ is non-erasing, we have for every $b\in B$,
  $|\gamma\circ\sigma(b)|\ge|\sigma(b)|\ge|\phi(b)|$.

  For $b\in B$ and $1\le p\le |\phi(b)|$, we define
  \begin{displaymath}
    \zeta(b_p)=\begin{cases}
    \gamma(\sigma(b)_p)&\mbox{if $1\le p<|\phi(b)|$}\\
    \gamma(\sigma(b)_{[|\phi(b)|,|\sigma(b)|]})&\mbox{ if $p=|\phi(b)|$}
    \end{cases}
  \end{displaymath}

   We have then by definition for every $b\in B$,
  \begin{eqnarray*}
    \zeta\circ\gamma(b)&=&\zeta(b_1\cdots b_{|\phi(b)|})\\
    &=&\gamma(\sigma(b)_1)\gamma(\sigma(b)_2)\cdots \gamma(\sigma(b)_{|\phi(b)|-1})
    \gamma(\sigma(b)_{[|\phi(b)|,|\sigma(b)|]})
    =\gamma\circ\sigma(b).
  \end{eqnarray*}
  Thus $\zeta\circ\gamma=\gamma\circ\sigma$. 

  We claim that $\zeta$ is primitive. 
Let $n$ be an
integer such that $b$ occurs in $\sigma^n(a)$ for all $a,b\in B$. Let
$b_p$ and $c_q$ be in $C$. 
By construction, $\zeta(b_p)$
contains $\gamma\left( \sigma(b)_p\right)$ as a factor, thus
$\zeta^{n+1}(b_p)$ contains  $\zeta^n\left( \gamma\left( \sigma(b)_p\right)\right)=
\gamma\left(\sigma^n\left(\sigma(b)_p\right)\right)$ as a factor. 
By the
choice of $n$, the letter $c$ occurs in $\sigma^n\left(\sigma(b)_p\right)$, thus
$\gamma(c)$ is a factor of
$\gamma\left(\sigma^n\left(\sigma(b)_p\right)\right)$, and also of 
$\zeta^{n+1}(b_p)$. 
Since $c_q$ is a letter of $\gamma(c)$, $c_q$
occurs in  $\zeta^{n+1}(b_p)$ and our claim is proved.

\begin{figure}[hbt]
\centering
\tikzset{node/.style={circle,draw,minimum size=0.1cm,inner sep=0pt}}
\tikzstyle{every loop}=[->,shorten >=.3pt]
\tikzstyle{loop above}=[in=50,out=120,loop]
\begin{tikzpicture}
\node(B)at(0,1.5){$\XS(\sigma)$};
\node(C)at(3,1.5){$\XS(\zeta)$};
\node(A)at(1.5,0){$\XS(\tau,\phi)$};

\draw[left,->](B)edge node{$\phi$}(A);
\draw[right,->](C)edge node{$\theta$}(A);
\draw[above,->](B)edge node{$\gamma$}(C);
\end{tikzpicture}
\caption{The pairs $(\sigma,\phi)$ and  $(\zeta,\theta)$}\label{figureRauzyLemma}
\end{figure}
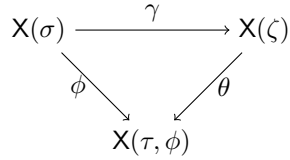

Changing $\sigma$ for some of its
powers, we may assume that some
$x\in \XS(\sigma)$ is an admissible fixed point of $\sigma$.
Since $\zeta\circ\gamma(x)=\gamma\circ\sigma(x)=\gamma(x)$,
$y=\gamma(x)$ is a fixed point of $\zeta$. Since $\zeta$ is primitive,
$\XS(\zeta)$ is minimal and thus $\XS(\zeta)$ is generated by $y$.
Since $\theta(y)=\theta\circ\gamma(x)=\phi(x)$, we have $\theta(\XS(\zeta))\subset \XS(\sigma,\phi)$.
Since $\XS( \sigma,\phi)$ is minimal, this implies $\theta(\XS(\zeta))=\XS(\sigma,\phi)$
(see Figure~\ref{figureRauzyLemma}).

There remains to show that $\theta$ is injective.
Consider $x\in X$ and $z\in \XS(\zeta)$ such that $x=\theta(z)$. Set $z_0=b_{p+1}$
with $b\in B$ and $0\le p<|\phi(b)|$. Then $b_0=z_{-p-1}$ and thus
we have $z=S^p\gamma(y)$ for some $y\in B^\Z$ and
\begin{displaymath}
  x=\theta(z)=\theta(S^p(\gamma(y))=S^p(\theta(\gamma(y)))=S^p(\phi(y)).
  \end{displaymath}Since
$\phi$ is circular, the pair $(y,p)$ is unique and thus $\theta$
is injective.
  \end{proof}
\begin{proofof}{of Theorem~\ref{theoremMaloneyRust}}
If $\XS(\sigma)$ is periodic, we have $\XS(\sigma)=\XS(\tau)$ where $\tau$
sends every letter to the primitive word $w$ such that $\XS(\sigma)=w^\infty$.
Since $\tau$ is primitive, the result is true.

Otherwise, since $\XS(\sigma)$ is minimal, $\XS(\sigma)$ is aperiodic.
By Theorem~\ref{propositionFixedPointMinimal2}, there
is an admissible fixed point of the form $x=\sigma^\omega(r\cdot \ell)$
where $r,\ell$ are non-empty and $r\ell\in\cL(\sigma)$.
Let  $\RR(r\cdot\ell)$ be the set of non-empty words $u\in\cL(\sigma)$
such that the word $r u \ell$
\begin{enumerate}
  \item[(i)] begins and ends with $r\ell$,
  \item[(ii)] has only two occurrences of $r\ell$ and
    \item[(iii)] is in $\cL(\sigma)$.
\end{enumerate}
Note that every word  $u$ such that $ru\ell$ begins and ends with $r\ell$
and is in $\cL(\sigma)$
belongs to $\RR(r\cdot\ell)^*$, that is, is a concatenation
of words of the set $U=\RR(r\cdot\ell)$.

Since $\XS(\sigma)$ is minimal, it is uniformly recurrent. It
implies that the set $U$
is finite. Moreover, it is a circular code. This means
(see \cite{DurandPerrin2021})
that every two-sided infinite sequence $x$ has at most
one factorization in terms of words of $U$.
Let $\phi\colon B\to U$ be a bijection from
an alphabet $B$ onto $U$. We extend $\phi$ to a morphism
from $B^*$ into $A^*$. Since $\XS(\sigma)$ is minimal,
there is an infinity of occurrences of $r\ell$ in $x$
at positive and at negative indices.
Since $U$ is a circular
code, there is a unique $y\in B^\Z$ such that $\phi(y)=x$.

For every $b\in B$, the word $r(\sigma\circ\phi(b))\ell$
begins and ends with $r\ell$. This implies that $\sigma\circ\phi(b)$
is in $U^*$ and thus there is a unique $w\in B^*$ such that
$\sigma\circ\phi(b)=\phi(w)$. Set $\tau(b)=w$. This defines a morphism
$\tau\colon B^*\to B^*$ characterized by
\begin{displaymath}
  \phi\circ\tau=\sigma\circ\phi
\end{displaymath}
Let $k>0$ be an occurrence of $r\cdot \ell$
(in the sense that $x_{[0,k)}\ell$ ends with $r\ell$)
  large enough so that   every word
$w\in\mathcal{R} (r\cdot\ell)$   appears  in the decomposition of $x_{[0,k)}$, that is,   every $b\in
B$   occurs in the finite word $u\in B^+$ defined by $\phi(u)=x_{[0,k)}$.
Let $n$ be  large enough so that $|\sigma^n(\ell)|>k$. Let $b,c\in B$. As above,
$x_{[0,k)}$ is a prefix of $\sigma^n(\ell)$, which is a prefix of
$\sigma^n(\phi(b))=\phi(\tau^n(b))$. Thus $u$ is a prefix of $\tau^n(b)$, and
  $c$ occurs in $\tau^n(b)$, hence $\tau$ is primitive.

  Clearly, $\XS(\sigma)$ is the closure under the shift of $\phi(\XS(\tau))$.
  Thus, by Proposition~\ref{propositionRauzyLemma}, $\XS(\sigma)$
  is conjugate to a primitive substitution shift. 
\end{proofof}
A shift space $X$ is called \emph{linearly recurrent} if
 there is a constant $K$ such that every word of $\cL_n(\sigma)$
 appears in every word of $\cL_{Kn}(\sigma)$.
 It is well known that a primitive substitution shift is linearly
 recurrent (see \cite[Proposition 2.4.24]{DurandPerrin2021}).
 
 A linearly recurrent shift is clearly
 minimal. The converse is true for substitution
 shifts by a result  from \cite{DamanikLenz2006} (see also \cite[Proposition 7.3.2]{DurandPerrin2021}) in the case of a substitution $\sigma$
 such that $\cL(\XS(\sigma))=\cL(\sigma)$. A proof in the
 general case was given by \cite{Shimomura2018}. We show
 how this can be derived from Theorem~\ref{theoremMaloneyRust}.
 \begin{theorem}\label{theoremDamanikLentz}
   A non-empty substitution shift is minimal if and only if it is
   linearly recurrent.
 \end{theorem}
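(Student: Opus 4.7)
The plan is to prove the two directions separately, with the nontrivial direction being an immediate consequence of Theorem~\ref{theoremMaloneyRust} combined with a transfer argument under conjugacy.

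For the easy direction, if $X$ is linearly recurrent with constant $K$, then for every $w \in \cL(X)$ with $|w| = n$, every word of length $Kn$ in $\cL(X)$ contains $w$, so $X$ is uniformly recurrent and hence minimal by Proposition~\ref{UR=Minimal}. This part requires no use of the substitution structure.

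For the converse direction, I would assume that $\XS(\sigma)$ is minimal and non-empty. By Theorem~\ref{theoremMaloneyRust}, there is a primitive morphism $\tau\colon B^*\to B^*$ such that $\XS(\tau)$ is conjugate to $\XS(\sigma)$. Since primitive substitution shifts are linearly recurrent, $\XS(\tau)$ is linearly recurrent with some constant $K'$. The remaining task is therefore to transfer linear recurrence along the conjugacy, which I expect to be the main technical point.

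To perform this transfer, let $\varphi\colon \XS(\tau)\to \XS(\sigma)$ be the conjugacy. By the Curtis--Hedlund--Lyndon theorem, $\varphi$ and $\varphi^{-1}$ are both realized by sliding block codes, so there exists an integer $r\ge 0$ such that every factor $w\in\cL_n(\XS(\sigma))$ lifts (not necessarily uniquely) to a factor $w'\in\cL_{n+2r}(\XS(\tau))$, and symmetrically every factor $v'\in\cL_m(\XS(\tau))$ projects to a factor $v\in\cL_{m-2r}(\XS(\sigma))$ (for $m\ge 2r$) in a shift-equivariant way, so that if $w'$ occurs in $v'$ then $w$ occurs in the corresponding projection $v$. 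Given $w\in\cL_n(\XS(\sigma))$, lift it to $w'\in\cL_{n+2r}(\XS(\tau))$; by linear recurrence of $\XS(\tau)$, every word of $\cL_{K'(n+2r)}(\XS(\tau))$ contains $w'$. Projecting back, every word of $\cL_{K'(n+2r)-2r}(\XS(\sigma))$ contains $w$. Since $K'(n+2r)-2r\le (K'+2r(K'-1))n$ for $n\ge 1$, setting $K=K'+2r(K'-1)$ gives linear recurrence of $\XS(\sigma)$ with constant $K$. The main obstacle is the careful bookkeeping of the window radius $r$ in the sliding block representation of the conjugacy, but this is routine once the framework is set up.
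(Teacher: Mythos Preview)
Your proof is correct and follows the same approach as the paper: both directions are proved identically, with the nontrivial implication deduced from Theorem~\ref{theoremMaloneyRust} together with the fact that primitive substitution shifts are linearly recurrent. The paper simply asserts that linear recurrence passes to $\XS(\sigma)$ through the conjugacy, while you spell out the sliding-block-code transfer explicitly; this extra detail is fine and the bookkeeping you give is accurate.
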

 \begin{proof}
   Let $\sigma\colon A^*\to A^*$ be a morphism.
   If $\XS(\sigma)$ is minimal, it is conjugate to a primitive substitution
   shift $\XS(\tau)$ by Theorem~\ref{theoremMaloneyRust}. Thus
   $\XS(\sigma)$ is linearly recurrent.

   Conversely, if $\XS(\sigma)$ is linearly recurrent then
   it is clearly uniformly recurrent and thus minimal.
 \end{proof}
 
 \paragraph{Quasi-Minimal shifts}

 We now investigate the question of whether a substitution shift 
 has a finite number of subshifts.
 Following the terminology introduced in \cite{Salo2017}
 these shifts are called \emph{quasi-minimal} .

 First of all, observe
 that for every  shift space $X$ on the alphabet $A$
 the following properties are equivalent.
 \begin{enumerate}
 \item[(i)] $X$ is quasi-minimal.
   \item[(ii)] The number of distinct
     languages $\cL(x)$ for $x\in X$ is finite.
     \end{enumerate}
 Indeed, for every subshift $Y$ of $X$, one has $\cL(Y)=\cup_{y\in Y}\cL(y)$
 and thus (ii) implies (i). Conversely, for every $x\in X$, let $\alpha(x)
 =\{y\in A^\Z\mid\cL(y)\subset\cL(x)\}$. Then $\alpha(x)$
 is a subshift of $X$ and $\alpha(x)=\alpha(x')$ implies
 $\cL(x)=\cL(x')$. Thus (i) implies (ii).

 Next, if $X$ is quasi-minimal, it has a finite number of minimal
 subshifts. 

 The following example shows that a  shift
 may have a finite number of minimal subshifts without being quasi-minimal.

 \begin{example}
   Let, for $n\ge 1$,
   \begin{displaymath}
     x_n=\cdots ba^{n+2}ba^{n+1}b\cdot a^nba^{n+1}ba^{n+2}b\cdots
     \end{displaymath}
 and let $O(x_n)$ denote the orbit of $x_n$.
 Let $X=(\cup_{n\ge 1}O(x_n))\cup a^\infty$. Then
 $a^\infty$ is the  only minimal subshift of $X$
 but each set $O(x_n)\cup a^\infty$ is a subshift of $X$.
 \end{example}
 The shifts with only one minimal subshift, as in the above example, have been
 called \emph{essentially minimal} in \cite{HermanPutnamSkau1992}.
 
 The following result is proved in \cite[Proposition 5.14]{BertheSteinerThuswaldnerYassawi2019}
 for non-erasing morphisms and was proved before in
 \cite[Proposition 5.6]{BezuglyiKwiatkowskiMedynets2009} for the
 case of aperiodic non-erasing morphisms. It is interesting
 to note that, in this case, the number of minimal
 subshifts is at most $\Card(A)$.

\begin{proposition}\label{lemmaLx}
  Let $\sigma\colon A^*\to A^*$ be a morphism. The shift $\XS(\sigma)$
  is quasi-minimal.
\end{proposition}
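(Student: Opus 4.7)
The plan is to use the equivalence noted just before the proposition: $\XS(\sigma)$ is quasi-minimal if and only if $\{\cL(x)\mid x\in\XS(\sigma)\}$ is a finite set. So I will bound the number of factor languages.

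The key tool is iterated desubstitution. Given $x\in\XS(\sigma)$, Proposition~\ref{propositionBKM} (applied repeatedly) yields a sequence $x=x^{(0)},x^{(1)},x^{(2)},\ldots$ of points of $\XS(\sigma)$ together with integers $(k_n)$ such that $x^{(n)}=S^{k_{n+1}}\sigma(x^{(n+1)})$. Let $B_n\subseteq A$ be the alphabet of $x^{(n)}$; since $x^{(n)}$ is a shift of $\sigma(x^{(n+1)})$, one has $B_n=\alp(\sigma(B_{n+1}))$. The sequence $(B_n)$ is thus determined by a forward map on $2^A$, which forces it to be ultimately periodic with indices and period bounded by $2^{|A|}$. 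After passing to a suitable $x^{(n_0)}$, we obtain a subset $B\subseteq A$ and an integer $p\ge 1$ with $\alp(\sigma^p(B))=B$, so $\tau:=\sigma^p{\restriction}_{B^*}$ is an endomorphism of $B^*$ and $x^{(n_0)}\in\XS(\tau)$.

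The language $\cL(x)$ is recovered from $\cL(x^{(n_0)})$ by applying $\sigma^{n_0}$ and closing under the shift; more precisely $\cL(x)$ equals the set of factors of $\sigma^{n_0}(w)$ for $w\in\cL(x^{(n_0)})$. Hence $\cL(x)$ is determined by the datum $(n_0,B,p,\cL(x^{(n_0)}))$, and since $(n_0,B,p)$ ranges over a finite effectively bounded set, it suffices to prove that for each such $\tau$ the set $\{\cL(y)\mid y\in\XS(\tau)\}$ is finite. This permits an induction on $\Card(A)$: if $B\subsetneq A$, the inductive hypothesis applies to $\tau$, and if $B=A$ and $p=1$ we are reduced to the case where the desubstitution alphabet is already stable, so every point of $\XS(\sigma)$ uses every recurrent letter.

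In this reduced situation, the factor language $\cL(x^{(n_0)})$ is controlled by the finite structural data supplied by the earlier results: Proposition~\ref{lemmaNonGrowing} gives finitely many orbits for points with only non-growing letters; Lemma~\ref{lemmaNonGrowingBis} and its mirror pin down the one-sided tails consisting of non-growing letters up to finitely many choices; and Theorem~\ref{propositionFixedPointMinimal2} together with Proposition~\ref{propositionAdmissibleTwoSided} describe the finitely many orbits of admissible fixed points of powers of $\tau$, which are exactly the points that arise as the stable endpoint of a desubstitution tower built from growing letters. Combining these, the factor language of any $y\in\XS(\tau)$ coincides with the factor language of one of a finite list of canonical points (fixed points, quasi-fixed points, and the purely non-growing orbits), yielding the finiteness claim.

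The main obstacle is the last paragraph: controlling $\cL(y)$ when the desubstitution tower does not stabilize at a fixed point but wanders among several minimal subshifts joined by non-growing transitions. Making this precise requires combining the structural normal forms of Proposition~\ref{propositionAdmissibleTwoSided} with the bounded-length control provided by Lemma~\ref{lemmaRightExtendable} on the transition words between growing letters, so that only finitely many "gluing patterns" can occur, each contributing a single factor language.
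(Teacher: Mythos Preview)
Your inductive skeleton---reduce the alphabet via desubstitution, then handle the residue---is the same shape as the paper's argument, but the load-bearing step is missing, and two earlier steps are not correct as written.

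First, the sentence ``the sequence $(B_n)$ is thus determined by a forward map on $2^A$'' is backward. You have $B_n=f(B_{n+1})$ with $f(C)=\alp(\sigma(C))$; this determines $B_n$ from $B_{n+1}$, not the other way around. Pigeonhole still gives you indices $i<j\le 2^{|A|}$ with $B_i=B_j$ and hence a set $B$ with $\alp(\sigma^{j-i}(B))=B$, but it does \emph{not} give $B_{n_0+kp}=B$ for all $k\ge 0$; the tail of $(B_n)$ past $j$ is not controlled, because $f$ need not be injective.

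Second, even granting a bounded $n_0$ and a stable $B$, the claim $x^{(n_0)}\in\XS(\tau)$ is unjustified. You only know $x^{(n_0)}\in B^\Z\cap\XS(\sigma)$; a factor of $x^{(n_0)}$ is a factor of some $\sigma^m(a)$ with $a\in A$, not necessarily $a\in B$, so there is no reason it lies in $\cL(\tau)$. The paper runs into exactly this issue: after defining the restricted morphism $\sigma'$ on the smaller alphabet, it must separately treat the points of $Y_K\setminus\XS(\sigma')$.

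Third, and most seriously, your base case is not a proof. When $B=A$ the induction does nothing, and you assert that every $\cL(y)$ ``coincides with the factor language of one of a finite list of canonical points (fixed points, quasi-fixed points, and the purely non-growing orbits)''. But the results you cite (Proposition~\ref{propositionAdmissibleTwoSided}, Lemma~\ref{lemmaRightExtendable}, Theorem~\ref{propositionFixedPointMinimal2}) describe the fixed and quasi-fixed points themselves; none of them says that an \emph{arbitrary} point of $\XS(\sigma)$ has the same factor language as one of these. This is precisely the content of the proposition. The paper closes this gap with two ingredients you do not have. It first builds a filtration $Y_0\subset Y_1\subset\cdots\subset Y_K$ (via carefully chosen witness words $w^{(k)}$) so that $\XS(\sigma)\setminus Y_K$ contributes at most one factor language. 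It then proves a dedicated lemma (Lemma~\ref{lemmaLimit}): if $x$ admits $\sigma^n$-representations $(x^{(n)},0)$ centered at $0$ for all $n$, then $x$ is a fixed point of $\sigma^r$ for some bounded $r$. This is the bridge from ``arbitrary residual point'' to ``fixed point of a bounded power'', after which Theorem~\ref{propositionFixedPointMinimal2} finishes. Without an analogue of Lemma~\ref{lemmaLimit}, your final paragraph is a hope rather than an argument.
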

We use the following lemma (see~\cite{BertheSteinerThuswaldnerYassawi2019}
for the non-erasing case).
\begin{lemma} \label{lemmaLimit}
  Let $\sigma\colon A^*\to A^*$ be a morphism and $x$ a point in
  $\XS(\sigma)$ such that $x$ has a $\sigma^{n}$-representation
$(x^{(n)}, 0)$ with $x^{(n)}\in \XS(\sigma)$ for all $n \geq 0$. Then $x$ is a fixed
point of some power $r$ of $\sigma$, where $r$ is bounded by a value
depending only on $\sigma$.
\end{lemma}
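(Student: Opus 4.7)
The plan is to split on whether $x$ is periodic or aperiodic. If $x$ is periodic, Proposition~\ref{lemmaNouveau} gives that $x$ is a fixed point of some power of $\sigma$. Since $\sigma$ maps $\XS(\sigma)$ into itself (Lemma~\ref{lemmaSigma}), the $\sigma$-orbit of $x$ is contained in the set of periodic points of $\XS(\sigma)$, which by Theorem~\ref{theoremDecidableAperiodic} is finite with cardinality bounded in terms of $\sigma$. Hence the minimal $r$ with $\sigma^r(x)=x$ is bounded in terms of $\sigma$ alone.

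Assume now $x$ is aperiodic. I first observe that every $x^{(n)}$ is aperiodic, by Lemma~\ref{lemma6.4}: if some $x^{(n)}$ were periodic, then $x=\sigma^n(x^{(n)})$ would also be periodic. The full recognizability of $\sigma$ at aperiodic points (Theorem~\ref{theoremRecognizable}) then makes the backward chain $(x^{(n)})$ rigid: the centered $\sigma$-representation of each $x^{(n-1)}$ is unique, and both $(x^{(n)},0)$ and $(\sigma(x^{(n+1)}),0)$ yield the same centered form, giving $\sigma(x^{(n+1)})=x^{(n)}$ up to a canonical bounded shift. Iterating, $x^{(n)}=\sigma^{m-n}(x^{(m)})$ up to centering for every $n<m$.

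I then apply the pigeonhole principle to the pair $(x^{(n)}_{-1},x^{(n)}_0)\in A^2$: among $n=0,1,\ldots,\Card(A)^2$ two indices $0\le n<m\le \Card(A)^2$ must satisfy $(x^{(n)}_{-1},x^{(n)}_0)=(x^{(m)}_{-1},x^{(m)}_0)=(b,a)$. The matching central letters pin down the canonical centering, so the equality $x^{(n)}=\sigma^{m-n}(x^{(m)})$ holds on the nose; reading at positions $-1$ and $0$ shows that $\sigma^{m-n}(a)$ begins with $a$ and $\sigma^{m-n}(b)$ ends with $b$, with $ba\in\cL(\sigma)$. When $a$ and $b$ are growing, iterating the relation gives that $x^{(n),+}$ begins with $\sigma^{k(m-n)}(a)$ for all $k$, so these prefixes tend to infinite length and converge to $\sigma^{(m-n)\omega}(a)$; hence $x^{(n),+}=\sigma^{(m-n)\omega}(a)$, a right-infinite fixed point of $\sigma^{m-n}$, and symmetrically $x^{(n),-}=\sigma^{(m-n)\tilde{\omega}}(b)$. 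Both halves being fixed by $\sigma^{m-n}$ yields $\sigma^{m-n}(x^{(n)})=x^{(n)}$, and applying $\sigma^n$ on both sides gives $\sigma^{m-n}(x)=\sigma^{m-n}(\sigma^n(x^{(n)}))=\sigma^n(\sigma^{m-n}(x^{(n)}))=\sigma^n(x^{(n)})=x$, with $r=m-n\le\Card(A)^2$. When one of $a,b$ is non-growing, the corresponding side of $x^{(n)}$ is described by Proposition~\ref{lemmaNonGrowing}, yielding the same fixed-point conclusion with bounded exponent.

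The main obstacle is the centering/alignment issue in applying Theorem~\ref{theoremRecognizable}: the uniqueness it provides is only modulo the canonical centering, so pigeonholing on the single letter $x^{(n)}_0$ is insufficient to force $x^{(n)}=\sigma^{m-n}(x^{(m)})$ on the nose. Pigeonholing on the bounded window $(x^{(n)}_{-1},x^{(n)}_0)$, together with careful bookkeeping of the erasable letters controlled by Proposition~\ref{propositionErasing}, resolves this and allows the right and left sides of $x^{(n)}$ to be simultaneously identified with admissible one-sided fixed points of $\sigma^{m-n}$.
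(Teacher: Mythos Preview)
Your periodic case is correct: once you know $x$ is a fixed point of some power of $\sigma$ (Proposition~\ref{lemmaNouveau}) and the set of periodic points of $\XS(\sigma)$ has cardinality bounded in terms of $\sigma$ (Theorem~\ref{theoremDecidableAperiodic}), the $\sigma$-orbit of $x$ is a cycle inside that finite set, so its length is bounded.

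The aperiodic case, however, has two genuine gaps.

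\emph{First gap: the tower relation.} You claim recognizability gives $\sigma(x^{(n+1)})=x^{(n)}$ up to a bounded shift, and that pigeonholing on $(x^{(n)}_{-1},x^{(n)}_0)$ pins down this shift. But the hypothesis only says $x=\sigma^n(x^{(n)})$ for each $n$; it does \emph{not} say $(x^{(n)},0)$ is a $\sigma$-representation of $x^{(n-1)}$. What recognizability (of $\sigma^n$, say) gives you is that the \emph{centered} $\sigma^n$-representation of $x$ is unique. Centering $(x^{(n)},0)$ and $(\sigma^{m-n}(x^{(m)}),0)$ introduces shifts by the number of initial $\sigma^n$-erasable letters on each side, and knowing $(x^{(n)}_{-1},x^{(n)}_0)=(x^{(m)}_{-1},x^{(m)}_0)$ does not force these shifts to coincide: the letter $(\sigma^{m-n}(x^{(m)}))_0$ is the first letter of $\sigma^{m-n}(a)$, not $a$ itself, and there is no reason its $\sigma^n$-image is nonempty. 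The phrase ``careful bookkeeping of erasable letters controlled by Proposition~\ref{propositionErasing}'' does not by itself resolve this.

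\emph{Second gap: the iteration.} Even granting $x^{(n)}=\sigma^{m-n}(x^{(m)})$ exactly, with $x^{(n)}_0=x^{(m)}_0=a$ growing, you only obtain that $x^{(n),+}$ begins with $\sigma^{m-n}(a)$ and that $\sigma^{m-n}(a)$ begins with $a$. To conclude $x^{(n),+}=\sigma^{(m-n)\omega}(a)$ you would need $x^{(n),+}$ to begin with $\sigma^{k(m-n)}(a)$ for \emph{every} $k$, i.e.\ you would need $x^{(n+k(m-n))}_0=a$ for all $k\ge 0$. A single pigeonhole collision at indices $n$ and $m$ does not give this; the letters $x^{(2m-n)}_0,\,x^{(3m-2n)}_0,\ldots$ are uncontrolled. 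One can repair this by observing that (once a clean tower is set up) the sequence of central letters is eventually periodic under the ``first letter of $\sigma$'' map, but that argument is missing from your proposal.

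For comparison, the paper's proof avoids recognizability entirely. It argues directly: if $x$ has a growing letter at some index $i_0\ge 0$, then by Proposition~\ref{propositionErasing} each $x^{(n)}$ has a growing letter in a window $[0,K]$ of bounded size; pigeonholing on the \emph{whole prefix} $x^{(n)}_{[0,j]}$ up to that growing letter (not just on two central letters) yields an infinite set of $n$ with the same prefix $ub$, and then Proposition~\ref{propositionNonGrowingLetters} lets one read off $x_{[0,\infty)}$ explicitly as a fixed point of $\sigma^{p'}$. The non-growing half is handled via Lemma~\ref{lemmaNonGrowingBis}. This sidesteps both of your gaps at once.
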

\begin{proof}
Let us assume that $x$ contains a growing letter $a$ at some
index $i_0 \geq 0$. 
By \cite[Lemma 2]{BealPerrinRestivo2021}, $\cL(\sigma)$ contains a finite
number of  erasable words. Thus there is an integer $K$
such that $x^{(n)}_{[0,K]}$ contains  at least $i_0+1$ non-erasable letters for
all $n$.
It follows that $x^{(n)}_{[0,K]}$ contains a growing letter for all $n$.
As a consequence, there is an index $0 \leq j \leq K$ and an infinite
set of integers $E$ such that, for all $n \in E$, $x^{(n)}_j= b$, where $b$ is a growing
letter, and $x^{(n)}_{[0,j)}=u$, where $u$ is a non-growing word.
Let $i, p$ be the constants of Lemma
\ref{propositionNonGrowingLetters}. We can choose $p'$ a multiple of
$p$ larger than $i$. Hence for each non-growing
word $z$,  $\sigma^{kp'}(z) = \sigma^{p'}(z)$ for every $k \geq 1$.

Hence we have for any $k \geq 1$,
$\sigma^{kp'}(u) = \sigma^{p'}(u)$,
$\sigma^{p'}(b) = vcw$, $\sigma^{p'}(c) =v'cw'$,
$\sigma^{kp'}(v) = \sigma^{p'}(v)$, $\sigma^{kp'}(v') = \sigma^{p'}(v')$,
where $c$ is a growing letter, $v, v'$ are non-growing words.

It follows that $v'$ is erasing and thus $w'$
is non-erasing. Hence $\sigma^{p'}(v') = \varepsilon$.
Let $u' = \sigma^{p'}(u)$, $v'' = \sigma^{p'}(v)$.

Considering an infinite number of integers $n$ of $E$ such that
$n = kp' + r$ for some fixed integer $0 \leq r < p'$, we get that
$x_{[0,\infty)} = \sigma^r(u'v''v')\sigma^r(\sigma^{p'}(c)^\omega)$
and thus $x_{[0,\infty)}$ is a fixed point of $\sigma^{p'}$.
Similarly, if $x$ contains a growing letter at some
negative index $x_{(\infty,-0)}$ is a fixed point of $\sigma^{p'}$.

Let us now assume that $x$ contains only non-growing letters at
non-negative indices. Then $x^{(n)}$ contains only non-growing letters at
non-negative indices for all $n$. Thus by Lemma
\ref{lemmaNonGrowingBis} there are finite non-growing words $z,
t^{(n)}$ such that for an infinite number of $n$,
$x^{(n)}_{[0, \infty)} = t^{(n)}z^\omega$.
  
We have
$\sigma^{p'}(z) = z'$, $\sigma^{p'}(z') = z'$, $\sigma^{p'}(t^{(p')})
= t'^{(p')}$ and $\sigma^{p'}(t'^{(p')}) = t'^{(p')}$.
Then $\sigma^{p'}(x_{[0, \infty)}) = x_{[0, \infty)}$.
Similarly if $x$ contains only non-growing letters at negative
indices, $\sigma^{p'}(x_{(-\infty, 0)}) = x_{(-\infty, 0)}$.
  \end{proof}
\begin{proofof}{of Proposition~\ref{lemmaLx}}
  Assume $\XS(\sigma)\ne\emptyset$.
We will define recursively two finite sequences of words
$w^{(k)} \in \cL(\sigma)$ and shifts $Y_k$  for $0\le k\le K$,
and for each such $k$ and all $n\ge 0$, an infinite sequence of alphabets $A_n^{(k)}$  as
follows. We set $w^{(0)} = \varepsilon$. For $k \geq 0$, when $w^{(k)}$ is defined, we define
\begin{align*}
  A_n^{(k)} &= \{ a \in A \mid w^{(k)} \text{ is not a factor of }
              \sigma^n(a)\}\\
  Y_k & = \{ x \in \XS(\sigma) \mid \text{ $x$ has a
          $\sigma^n$-representation in } \XS(\sigma) \cap (A_n^{(k)})^\Z \text{ for all } n
          \geq 0 \}.
\end{align*}
Thus  $A_n^{(0)}= \emptyset$, $Y_0 = \emptyset$.
Assume that $w^{(k)}$, $A_n^{(k)}$ and $Y_k$ are defined, we construct
$w^{(k+1)}$ when $\Card\{\cL(x) \mid x \in \XS(\sigma) \setminus Y_k\} \geq 2$ as follows.

We choose
$x,y \in \XS(\sigma) \setminus Y_k$ such that $\cL(x) \neq \cL(y)$.
Up to an exchange of $x,y$,
we may assume that $\cL(y)$ is not contained in $\cL(x)$.
We choose a word $w^{(k+1)} \in \cL(y) \setminus \cL(x)$.
Since $y \in \XS(\sigma) \setminus Y_k$, there is an integer $n \geq 0$
such that $y$ has a $\sigma^n$-representation $(z,i)$ where $z$ is not in
$(A_n^{(k)})^\Z$. Indeed every point in $\XS(\sigma)$ has for every $n\ge 0$, a
$\sigma^n$-representation in $\XS(\sigma)$ by \cite[Proposition 5.1]{BealPerrinRestivo2021}.
Hence, for some $n$, $y$ has a $\sigma^n$-representation $(z,i)$
such that for some letter  $a$ of $z$, the word $w^{(k)}$
is a factor of $\sigma^n(a)$ and thus $y$ contains the factor $w^{(k)}$.
Thus we may choose $w^{(k+1)} \in \cL(y) \setminus \cL(x)$ such that
$w^{(k)}$ is a factor of $w^{(k+1)}$. As a consequence, we obtain
$A_n^{(k)} \subset
A_n^{(k+1)}$. 


Since $x \in \XS(\sigma) \setminus Y_k$, there is an integer $m \geq 0$
such that it has a $\sigma^m$-representation in $\XS(\sigma)$
which is not in $(A_m^{(k)})^\Z$. Then there exists a letter $a \in A \setminus A_m^{(k)}$
such that $\sigma^m(a) \in \cL(x)$. Hence, $\sigma^m(a)$ does not
contain $w^{(k+1)}$ as a factor and $a \in A_m^{(k+1)}$.
Thus $A_m^{(k)} \subsetneq A_m^{(k+1)}$. 

Now for any point $z \in \XS(\sigma) \setminus Y_k$,
there is an $n \geq 0$ such that
none of the $\sigma^n$-representation of $z$  in $\XS(\sigma)$  is in $(A_n^{(k)})^\Z$ . This holds for all sufficiently large $n$ since
$\sigma(A_n^{(k)})^\Z \subset (A_{n-1}^{(k)})^\Z$.

It follows that for sufficiently large $n$, we have $A_n^{(k)} \subsetneq
A_n^{(k+1)}$. Moreover, since $w^{(k+1)} \in \cL(y)$, we have
$A_n^{(k+1)} \subsetneq A$ for all large $n$. Hence for all  $n$ large enough,
we have 
\[
A_n^{(0)} \subsetneq A_n^{(1)} \subsetneq \cdots  \subsetneq
A_n^{(k-1)} \subsetneq A_n^{(k)} \subsetneq A.
\]
As a consequence there is a  value $k=K \geq 0$ such that
$\Card\{\cL(x) \mid x \in \XS(\sigma) \setminus Y_K\} \le 1$. 

Let us show that the sequence $A_n^{(K)}$ is eventually periodic.
Following the construction of Proposition \ref{lemmaDecidabilityL(sigma)},
we define a finite automaton $\A=(Q,I,T)$ recognizing $A^*w^{(K)}A^*$
and the binary  relation $\varphi(w)$ on $Q$ by
  \begin{displaymath}
    \varphi(w)=\{(p,q)\in Q\times Q\mid p\longedge{w}q\}.
  \end{displaymath}
Let $\psi_n=\varphi\circ \sigma^n$. There are positive integers $m,p$
such $\psi_m=\psi_{m+p}$. Then $\psi_{m+r}=\psi_{m+p+r}$ for all $r\geq 0$.
Now a letter $a$ belongs to $A_n^{(K)}$ if and only if $\psi_n(a)$
contains no pair $(i,t)$ with $i \in I, t \in T$. Thus
the sequence $A_n^{(K)}$ is eventually periodic.

Let $m,p\ge 1$ be such that
$A_{m}^{(K)} = A_{m+p}^{(K)}$. We may choose $m$
multiple of $p$.
We define $\sigma' \colon A_{m}^{(K)} \rightarrow (A_{m}^{(K)})^*$ as the
restriction of $\sigma^p$ to $A_{m+p}^{(K)} = A_{m}^{(K)}$.

Let $x \in Y_K\setminus \XS(\sigma')$.
The point $x$ has a $\sigma^{pn}$-representation
$(x^{(pn)}, k_{pn})$ in
$\XS(\sigma) \cap (A_m^{(K)})^\Z$ for every $n$ such that $pn \geq m$.
Since $x \notin \XS(\sigma')$, 
there is an integer $\ell$ such that $x_{[-\ell, \ell]}$ is not a factor
of $\sigma'^r(a)$ for all $r \geq 0$ and all $a \in A_{m}^{(k)}$.
Thus there is a shift $S^h(x)$ with $-\ell \leq h \leq \ell$
such that $S^h(x)$ has a $\sigma^{pn}$-representation
$(x^{(pn)}, 0)$ in $\XS(\sigma) \cap (A_m^{(K)})^\Z$ for $n$ in some
infinite set of positive integers $E$.
Thus $S^h(x)$ has a $\sigma^{pn}$-representation
$(x^{(pn)}, 0)$ in $\XS(\sigma)$ for all $n \geq 0$.
By Lemma \ref{lemmaLimit} $x' = S^h(x)$ is a fixed point of
$\sigma^{pr}$ for some $r$ bounded by a value depending on $\sigma$.

By Theorem \ref{propositionFixedPointMinimal2} there is a finite number
of orbits of points $x$ such that $\sigma^{pr}(x) =x$. Thus there is a
finite number of languages $\cL(x)$ corresponding to such points.

As a consequence there is a finite number $N_1$ of sets $\cL(x)$ for
 $x \in Y_K \setminus X(\sigma')$.
We thus have
\begin{align*}
  \Card\{\cL(x) \mid x \in \XS(\sigma)\} &\leq  \Card\{\cL(x) \mid x \in Y_K\} +
                                     \Card\{\cL(x) \mid x \in \XS(\sigma)
                                     \setminus Y_K\} \\
                                     &\leq  \Card\{\cL(x) \mid x \in \XS(\sigma')\}\\
                                     &\quad + \Card\{\cL(x) \mid x \in Y_K
    \setminus \XS(\sigma')\}  + 1.
\end{align*}
Assume by induction hypothesis that $\Card\{\cL(x) \mid x \in
\XS(\sigma')\}$ is a finite number $N_2$, then
$\Card\{\cL(x) \mid x \in \XS(\sigma)\} \leq N_1 + N_2 +1$.
Thus $\Card\{\cL(x) \mid x \in \XS(\sigma)\}$ is finite.
\end{proofof}

\bibliographystyle{plain}
\bibliography{periodicity}

\end{document}